\newtheorem{theorem}{Theorem}[section]
\newtheorem{definition}{Definition}[section]
\newtheorem{lemma}{Lemma}[section]
\newtheorem{remark}{Remark}[section]
\numberwithin{equation}{section}
\numberwithin{figure}{section}
\makeatletter \@addtoreset{equation}{section} \makeatother
\newcommand{\N}{\mathbb{N}}
\newcommand{\R}{\mathbb{R}}
\newcommand{\eps}{\varepsilon}
\newcommand{\dive}{\mathrm{div}}
\newcommand{\pa}{\partial}
\newcommand{\pt}{\partial_t}
\newcommand{\vp}{\varphi}
\newcommand{\al}{\alpha}
\newcommand{\D}{\nabla}
\newcommand{\hu}{\hat{u}}
\newcommand{\Id}{\mathbb{I}_d}
\newcommand{\cuti}[1]{{\textbf{\underline{#1}}}}
\newcommand{\tn}{\tilde{n}}
\newcommand{\tw}{\tilde{w}}
\newcommand{\tvp}{\tilde{\vp}}
\newcommand{\W}{\mathcal{W}}
\newcommand{\supp}{\text{supp}}
\newcommand{\mk}{\mathcal{K}}
\newcommand{\zb}{{N,\mk,\eps}}
\newcommand{\zbo}{{N,\mk}}
\newcommand{\rmd}{{\rm{d}}}
\begin{document}

\title[Euler-Poisson with vacuum]{Global-in-time convergence  in infinity-ion-mass limit for bipolar Euler-Poisson equations}

\author{Zhongmin Qian}
\address[Z. Qian]{Mathematical Institute, University of Oxford,	Oxford OX2 6GG, UK
}
\email{\tt zhongmin.qian@maths.ox.ac.uk}

\author{Liang Zhao}
\address[L. Zhao]{Department of Mathematics, National University of Singapore, Singapore 119076, Singapore.}
\email{\tt matv185@nus.edu.sg}

\author{Shengguo Zhu}
\address[S.  Zhu]{School of Mathematical Sciences, CMA-Shanghai and MOE-LSC,  Shanghai Jiao Tong University, Shanghai 200240, P.R. China.}
\email{\tt zhushengguo@sjtu.edu.cn}

\keywords{Multi-dimensional bipolar Euler-Poisson equations, infinity-ion mass limit,  global-in-time convergence, global-in-time error estimates, unipolar Euler-Poisson equations, vacuum}

\subjclass[2020]{35B25, 35L45, 35L60, 35Q35, 35Q60. }
\date{\today}

\begin{abstract}
In this paper, the Cauchy problem for the multi-dimensional (M-D) bipolar  Euler-Poisson equations  with far field vacuum is considered. Based on  physical observations and  some elaborate analysis of this system's intrinsic symmetric hyperbolic-elliptic coupled structures, for a class of smooth initial data that are of small  scaled  density but possibly large  mean velocity,  we give one rigorous   global-in-time convergence proof for regular solutions   from M-D bipolar  Euler-Poisson equations to M-D unipolar Euler-Poisson equations through the infinity-ion mass limit. Here the initial scaled  density is required to decay to zero in the far field, and the spectrum of the Jacobi matrix of the initial mean velocity are all positive. In order to deal with such kind of singular  limits, the global-in-time uniform tame estimates of regular solutions to M-D bipolar  Euler-Poisson equations with respect to the ratio of electron mass over ion mass are established, based on which  the  corresponding error estimates in smooth function spaces   between the two systems considered   are also given.  To achieve these, our main strategy is to regard the  original  problem for M-D bipolar  Euler-Poisson equations as the limit of a series of carefully designed approximate problems   which have  truncated convection operators and  compactly supported initial data. For such artificial  problems, we can derive careful a-priori estimates that are independent of the mass ratio, the size  of the initial data' supports and the truncation parameters.  Then the global uniform existence of regular  solutions of the original problem are attained via careful compactness.
\end{abstract}

\maketitle

\tableofcontents

\vspace{2mm}

\section{Introduction}

The d-dimensional scaled bipolar Euler-Poisson system read (\cites{Chen1984,Markowich1990,Sitenko1995}):
\begin{equation}\label{origin1}
	\begin{cases}
		\pt \rho_\nu  +\dive(\rho_\nu u_\nu)=0,\\[6pt]
		m_\nu\pt(\rho_\nu u_\nu)+m_\nu\dive (\rho_\nu u_\nu\otimes u_\nu)+\nabla p_\nu(\rho_\nu)= q_\nu\rho_\nu\nabla\vp,\\[6pt]
		\Delta \vp=\rho_i-\rho_e,
	\end{cases}
\end{equation}
which plays an important role in describing movements of charged fluids in semi-conductors or plasma when the magnetic field is weak. Here, $x=(x_1,...,x_d)\in \R^d$, $t\geq 0$ are the space and time variables, respectively; $\nu=i,e$ with $i$ standing for ions and $e$ for electrons;   $\rho_\nu\geq 0$ denotes
the scaled  density, $u_\nu=(u_\nu^{(1)}, \cdots, u_\nu^{(d)})^\top$ the mean velocity  of the particle, $\vp$ the scaled electric potential, and $p_\nu(\rho_\nu)$ the pressure.
The physical parameters $(q_e,q_i)=(-1,1)$ and $m_\nu>0$ stand for the charge and mass of a single particle, the ratio $m_e/m_i$ is small compared to the size of the physical interest. As usual, we denote the mass ratio by 
$$m_e/m_i=\eps^2$$ with   $\eps\in (0,1]$. Moreover,  for
polytropic fluids, the constitutive relations are given by
\begin{equation}\label{pressure}
	p_\nu=A_\nu\rho_\nu^{\gamma_\nu}, \quad A_\nu>0,\quad  \gamma_\nu> 1,
\end{equation}
where $A_\nu$ are entropy constants and $\gamma_\nu$ are adiabatic exponents.  In this paper, for  the Cauchy problem (\ref{origin1})-(\ref{pressure}) with the following  initial data and far field behavior:
\begin{align}
	\label{oini} (\rho_\nu,u_\nu)(0,x)=(\rho_\nu^0(x)\geq 0,u_\nu^0(x)) \qquad\text{for}&\,\, \,\,x\in \R^d,\\[6pt]
	\label{ofar} \rho_\nu(t,x)\to 0 \quad \text{as}\,\, |x|\to \infty \qquad\text{for}& \,\, \,\,t\geq 0,
\end{align}
we will establish  the uniform global  existence of  solutions in smooth function spaces  with respect to $\eps$, which allows us to show  rigorously  the global-in-time convergence and  error estimates  for these solutions  in the infinity-ion mass limit 
\begin{equation}\label{infmass}
m_e=1,\quad m_i=\eps^{-2}\to \infty
\end{equation}
 from  the  classical M-D bipolar Euler-Poisson equations to unipolar ones:
\begin{equation}\label{Euler0}
	\begin{cases}
		\pt \rho_e  +\dive(\rho_e u_e)=0,\\[6pt]
		\pt(\rho_e u_e)+\dive (\rho_e u_e\otimes u_e)+\nabla p_e(\rho_e)=-\rho_e\D\vp,\\[6pt]
		\Delta\vp=b(t,x)-\rho_e,
	\end{cases}
\end{equation}
where $b(t,x)$ is called the doping profile that  means the background density.

For fixed  $m_\nu=1$, when the initial data are away from the vacuum,  there are rich literatures on the well-posedness  and behavior of solutions to the bipolar Euler-Poisson equations \eqref{origin1}. The local well-posedness of classical solutions  follows
from the standard symmetric hyperbolic structure, c.f. \cites{Chen2002, Kato1975, Lax1973,Majda1984}. A typical phenomenon observed in such Euler-type systems is the development of singularities, i.e. shock wave, no matter how small and smooth the data are, which has been justified in a series of works by Wang-Chen \cite{Wang1998}, Wang \cite{Wang2014}, Yuen \cite{Yuen2011} and so on.  However, such an approach on the local well-posedness  fails  in the presence of the vacuum due to the degeneracies of the time evolution. Generally vacuum will appear in the far field under some physical requirements such as finite total mass and finite momentum in  $ \mathbb{R}^d$.  For the isentropic flow, one of the  main issues in the presence of vacuum is   to understand the behavior of the  velocity field near the vacuum.
In 1987, when the initial density vanishes in some open domain or decays to zero in the far field, via introducing the local sound speed
$$
c=\sqrt{p'(\rho)}=\sqrt{A\gamma} \rho^{\frac{\gamma-1}{2}}
$$
to provide  a new  symmetrization scheme,   Makino-Ukai-Kawashima \cite{Makino1986} gave   the  local existence of  the unique regular solution in inhomogeneous Sobolev space  to the  M-D compressible Euler equations, and  the  global well-posedness of smooth solutions   with small data  was established  by Grassin \cite{Grassin1998} and Serre \cite{Serre1997} in some homogeneous Sobolev spaces via
extracting a dispersive effect after some invariant transformation (see also  Blanc-Danchin-Ducomet-Ne\v{c}asov\'{a} \cite{Blanc2021},  Danchin-Ducomet \cite{Danchin2021} and   Makino-Perthame \cite{mp} for \eqref{Euler0}). We also refer readers to \cites{Ali2003,Athanasiou2023,Chen2021,Chen2017,GerardVaret2013,Germain2013, Gu2016, Guo2011,Guo2014a, Hadzic 2019,Hsiao2003,Li2012,Liu2019a,Peng2015,Xu2015,Zheng2019,Huang2012,Hsiao2000} for  other related   progress  and the references therein.

Due to the complex hyperbolic-elliptic coupled  structure of the bipolar Euler-Poisson equations \eqref{origin1}, both the mathematical analysis and numerical simulations of their solutions face great challenges, in which high computing power, special algorithms and strong techniques are needed, especially for M-D cases \cites{Jiang1995,Nisar2016}. Therefore,  physicists and mathematicians try to approximate the bipolar Euler-Poisson equations with some simpler models, which should be physical, less restrictive and easy to carry out. Actually, in plasma physics \cites{Chen1984,Goudon1999,Sitenko1995},  the  ion-mass is  much higher than  electron-mass, i.e., $m_e/m_i=\eps^2\approx 5.45\times 10^{-4}$.  It is then a natural question to establish rigorous analysis in simplifying the bipolar Euler-Poisson equations \eqref{origin1} by the infinity-ion-mass limit \eqref{infmass} to obtain a simplfied unipolar Euler-Poisson equations for electrons \eqref{Euler0}. This limit is another interpretation of the zero-electron-mass limit 
$$m_i=1,\quad m_e\to 0.$$
We refer readers to Al\`i-Chen-J\"{u}ngel-Peng \cite{Ali2010}, Acheritogaray-Degond-Frouvelle-Liu \cite{Acheritogaray2011}, Guo-Pausader \cite{Guo2011}, Xu-Zhang \cite{Xu2013}, Peng \cite{Peng2015}, Li-Peng-Xi \cite{Li2018} and references cited therein for more details. Denoting the limit of $(\rho_\nu,u_\nu,\vp)$ as $(\bar{\rho}_\nu,\bar{u}_\nu,\bar{\vp})$ and formally letting $\eps\to 0$ in the equations for $(\rho_i,u_i)$ in \eqref{origin1} yield
\begin{equation}\label{assm-0}
    \pt\bar{\rho}_i+\dive(\bar{\rho}_i\bar{u}_i)=0,\quad \pt \bar{u}_i+(\bar{u}_i\cdot\D)\bar{u}_i=0.
\end{equation}
Further assuming that the initial data of $u_i$ tends to zero as $\eps\to 0$, one obtains $\bar{u}_i=0$ and thus $\bar{\rho}_i$ is a function of $x$, which we regard as the background density and denote as $b(x)$. Substituting the solved $\bar{\rho}_i$ into the limit equations for $(\rho_e,u_e,\vp)$ will leads to a decoupled system for electrons, that is \eqref{Euler0} (See \cites{Xi2024,Zhao2021}). The physical assumption for this kinds of simplifications is that ions move more slowly than electrons, so that they can be regarded as non-moving, and $\bar{\rho}_i(x)$ becomes a uniform background density. However, the key idea of the simplification through infinity-ion-mass limits for bipolar systems is to decouple the limit equations for ions and electrons. Consequently, one only needs to ensure the global-in-time well-posedness of \eqref{assm-0}  (more specifically, the Burgers equations for $\bar{u}_i$) to carry-out subsequent analysis of this limit rather than requiring  $\bar{u}_i=0$, which is so strong that it eliminates the possibility that the doping profile $b$ may depend on the time. It is then quite natural to consider the  Cauchy problem of the Burgers equations with rarefaction initial data $u_i^0$.  Hence $u_i^0$ is required to satisfy some monotonicity conditions, in which a careful description on the spectrum of $\D u_i^0$ is needed.

In order to carry out the global-in-time analysis of the infinity-ion-mass limit, the first task is to identify a  proper class of initial data and the corresponding  regularity propagation mechanism along with the time in  some energy space to establish the desired uniform  global-in-time existence of regular solutions with respect to $\eps$. Some essential difficulties occur accordingly. On the one hand, the monotonicity conditions on $u_i^0$  make $u_i$ and $\D u_i$ non-integrable over $\R^d$.  On the other hand, compared with the Euler equations studied in Serre \cite{Serre1997} and Grassin \cite{Grassin1998}, the presence of the electric potential term $\D\vp$ causes extra great troubles. First, in order to deal with the degeneracy in the time evolution caused by the appearance of vacuum, one needs to apply the Makino-Ukai-Kawashima's  symmetrization scheme introduced in  \cite{Makino1986} to   \eqref{origin1}.  However, this scheme will  result in the destruction of the dissipative structure of $\D\vp$ in \eqref{origin1}, the loss of the proper entropy inequalities for the error system as well as the destruction of the conservation laws, which  are  necessary for  obtaining  the zero-th order estimates on the error variables. Second, although it seems that $\D\vp$ can be bounded by densities through the Poisson equation, this term is non-local and has highly coupling effects, which imply that in the corresponding weighted energy estimates, the power of  $\eps$ in the weight function  of  $\D\vp$ in both ion and electron momentum equations should be exactly the same. Due to this, we have to consider $(\rho_\nu,\sqrt{m_\nu}u_\nu)$ as variables instead of $(\frac{1}{\sqrt{m_\nu}}\rho_\nu, u_\nu)$, which results in the vanishing of $u_i^0$ after the limit is applied, leading to a contradiction with the strict monotonicity condition on $u_i^0$. Last but not least, due to the Poisson structure, $\vp$ should be some harmonic functions instead of $0$ when the densities $\rho_\nu$ degenerate. This implies that in vacuum region, the time evolution of the velocities do not degenerate into the simply Burgers equations, which makes it hard for us to describe the dynamics of velocities. In order to conquer these obstacles, we introduce the reference velocities: $$w_e=u_e-\hu_e\quad \text{and} \quad \eps^{-1}w_i=u_i-\hu_i$$ with $\hu_\nu$ being the solutions to the following Cauchy problem of the Burgers equations:
\begin{equation}\label{burgers}
	\pt \hu_\nu+(\hu_\nu\cdot\D)\hu_\nu =0 \quad \text{in} \ \ (0,T]\times\R^d;\quad 
  \hu_\nu(0,x)=u_\nu^0(x) \quad\text{for} \quad x\in\R^d.
\end{equation}
The boundedness of $\eps^{-1}w_i$ in some energy spaces fix the paradox between vanishing condition and dispersive condition on $u_i^0$, which in addition, provides a time-dependent damping mechanism necessary for global-in-time analysis of solutions. Based on this, a series of approximate problems having truncated convection operators and compactly supported initial data are carefully designed, of which the uniform a priori estimates with respect to $\eps$, support size and truncation parameters are established.  Furthermore, our reformulation naturally admits some dissipative structure  for error systems, which  makes it possible to obtain the global-in-time convergence towards \eqref{Euler0} based on asymptotic expansions of solutions and get the global-in-time error estimates between \eqref{origin1} and \eqref{Euler0}.

Throughout this paper, we adopt the following simplified notations,
\begin{equation*}\begin{split}
		& L^p=L^p(\R^d),\quad H^r=H^r(\R^d),\quad |f|_p=\|f\|_{L^p(\R^d)},\\[3pt]
		&\|f\|_r=\|f\|_{H^r(\R^d)},\quad  \|u\|_{\dot{H}^r}:=|\D^r u|_2,\quad \Gamma=L^\infty\cap \dot{H}^1\cap\dot{H}^s, \\[3pt]
		&\Xi=\{f\in L^1_{\text{loc}}(\mathbb{R}^d): |\nabla f|_\infty+\|\nabla^2 f\|_{s-1}<\infty\}, \\[3pt]
		&  \|f(t,x)\|_{T,\Xi}=\|\nabla f(t,x)\|_{L^\infty([0,T]\times\mathbb{R}^d)}+\|\nabla^2 f(t,x)\|_{L^\infty([0,T]; H^{s-1}(\mathbb{R}^d))},\\[3pt]
		&  \|f\|_{\Xi}=|\nabla f|_\infty+\|\nabla^2 f\|_{s-1},\quad \|f\|_\Gamma=|f|_\infty+\|f\|_{\dot{H}^1}+\|f\|_{\dot{H}^s},\\[3pt]
		&\|f\|_{\Gamma\cap L^q}=\|f\|_\Gamma+|f|_q, \quad \|(f,g)\|_X=\|f\|_{X}+\|g\|_{X}.
	\end{split}
\end{equation*}

We first introduce a proper class of solutions called regular
solutions to the Cauchy problem \eqref{origin1} with \eqref{oini}-\eqref{ofar}.

\begin{definition}\label{d1} Let $T>0$ be a constant. $(\rho_\nu, u_\nu, \vp)(t,x)$  is called a regular solution to the  Cauchy problem \eqref{origin1} with \eqref{oini}-\eqref{ofar} in  $[0,T]\times \R^d$, if $(\rho_\nu, u_\nu, \vp)(t,x)$ solves this problem in the sense of distributions,  and
	\begin{itemize}
		\item[$(\rm A)$] $\rho_\nu\geq 0$,\, $\rho_\nu^{\frac{\gamma_\nu-1}{2}}\in C([0,T]; H_{\rm{loc}}^{s'})\cap L^\infty([0,T]; \Gamma\cap L^q)$;
		
		\vspace{3mm}
		
		\item[$(\rm B)$] $(u_\nu-\hu_\nu,\D\vp)\in C([0,T];H_{\rm{loc}}^{s'})\cap L^\infty([0,T];\Gamma)$;
		
				\vspace{3mm}
		
		\item[$(\rm C)$] $\pt u_\nu+(u_\nu\cdot\D)u_\nu=q_\nu\D\vp$  \ \text{as} \  $\rho_\nu=0$,
	\end{itemize}
	where $s'\in[0,s)$ is an arbitrary constant and $q$ satisfies \eqref{res1}.
\end{definition}

Now we are ready to show  the main results in this  paper. For simplicity, we denote 
$$
 \underline{\gamma}=\min\{\gamma_i,\gamma_e\},\quad \bar{\gamma}=\max\{\gamma_i,\gamma_e\}.
$$
The first theorem  concerns the global existence and uniform estimates with respect to $\eps$.

\begin{theorem}\label{globalthm} Let  the parameters $(s,d,q,\gamma_\nu)$ satisfy that $s,d$ are  integers, and
	\begin{equation}\label{res1}
		s,\ d\geq 3, \qquad \dfrac{d}{2}+1<s\leq \frac{2}{\bar{\gamma}-1}+\frac{3}{2}, \qquad 1<\gamma_\nu<1+\min\Big\{\frac{2}{3},\frac{4}{d-1},\dfrac{4}{q},\dfrac{2d}{d+q}\Big\},
	\end{equation}
	and there is no upper bound for $s$ if $\frac{2}{\gamma_\nu-1}$ are integers. If the   initial data $( \rho_\nu^0, u_\nu^0)$ satisfy
	\begin{itemize}
		\item[$(\rm A_1)$] $u_\nu^0\in \Xi$, and there exist constants  $\kappa>0$ independent of $\eps$ such that
		$$
		\text{Dist}\big(\text{Sp}( \nabla u_\nu^0(x)), \mathbb{R}_{-} \big)\geq \kappa \quad \text{for\  all}\quad  x\in \mathbb{R}^d,
		$$
		
		\item[$(\rm A_2)$] $\rho_\nu^0\geq 0$\  and  \ $\Big\| \rho_\nu^{\frac{\gamma_\nu-1}{2}}\Big\|_\Gamma+\Big| \rho_\nu^{\frac{\gamma_\nu-1}{2}}\Big|_q\leq B_\nu^0(\gamma_\nu,  A_\nu, \kappa, \|u_\nu^0\|_{\Xi})$,
	\end{itemize}
	where $B_\nu^0>0$ are some constants depending on $(\gamma_\nu, A_\nu, \kappa, \|u_\nu^0\|_{\Xi})$ but independent of $\eps$,
	then  the  Cauchy problem  \eqref{origin1} with \eqref{oini}-\eqref{ofar} has a   unique  regular solution $(\rho_\nu, u_\nu,\vp)$ in  $[0,T]\times \mathbb{R}^d$ for any $0<T<\infty$.
\end{theorem}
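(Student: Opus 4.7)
\textbf{Proof plan for Theorem \ref{globalthm}.}

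The plan is to combine three ingredients: (i) the Makino--Ukai--Kawashima symmetrization adapted to the bipolar setting, (ii) Grassin--Serre type dispersive estimates supplied by the reference Burgers flows $\hu_\nu$, and (iii) a carefully designed hierarchy of approximate problems. First, I introduce the normalized sound speeds $c_\nu := \frac{2\sqrt{A_\nu\gamma_\nu}}{\gamma_\nu-1}\rho_\nu^{(\gamma_\nu-1)/2}$ so that the system becomes symmetric hyperbolic in $(c_\nu,\sqrt{m_\nu}u_\nu)$ and no longer degenerates on $\{\rho_\nu=0\}$. Solving \eqref{burgers} under the spectrum assumption $(\mathrm{A}_1)$ yields, by the Serre/Grassin analysis, global smooth $\hu_\nu$ with the decisive dispersive bound
\begin{equation*}
\bigl|\D\hu_\nu(t,\cdot)\bigr|_\infty\lesssim \frac{1}{1+\kappa t},\qquad \|\D^k\hu_\nu(t,\cdot)\|_{\dot{H}^{s-k}}\lesssim (1+t)^{-\alpha_k},
\end{equation*}
which is the \emph{only} source of time-decay available (since $u_\nu$ is not integrable). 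Then I change unknowns to $w_e=u_e-\hu_e$ and $w_i=\eps(u_i-\hu_i)$; the weighting $\eps$ in $w_i$ is chosen precisely so that the Poisson term $\D\vp$ carries the same $\eps$-weight in both momentum equations, curing the paradox described after \eqref{burgers}.

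Second, I set up the approximate problems: cut off the convection by replacing $u_\nu\cdot\D$ with $J_\delta(u_\nu)\cdot\D$ (Friedrichs mollification) and take compactly supported, smooth initial data $(c_\nu^{0,N,\mk},w_\nu^{0,N,\mk})$ approximating $(c_\nu^0,w_\nu^0)$ while preserving the spectrum condition for $\hu_\nu^0+O(\eps)$. For each such regularized problem the symbol becomes strictly hyperbolic (no vacuum) and $\D\vp$ is globally defined via $\vp = \Delta^{-1}(\rho_i-\rho_e)$ with good decay thanks to the compact supports, so local well-posedness in $H^s$ follows from classical theory. The heart of the argument is to push these local solutions to arbitrary $T$ with bounds independent of $(\eps,N,\mk,\delta)$. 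For this I perform weighted energy estimates for $(c_\nu,w_\nu)$ in the $\Gamma$-norm of Definition \ref{d1}: one applies $\D^k$ to the reformulated equations, pairs with $\D^k c_\nu$ and $\D^k w_\nu$, and keeps careful track of the commutator $[\D^k,\hu_\nu\cdot\D]$. The key point is that the lowest-order term produced by linearizing around $\hu_\nu$ is $(\D\hu_\nu)w_\nu$, which by the spectrum condition behaves as a \emph{damping} of strength $\kappa/(1+\kappa t)$; this damping dominates all nonlinear errors provided $\|c_\nu^0\|_\Gamma$ is sufficiently small, which is exactly assumption $(\mathrm{A}_2)$.

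Third, the non-local contribution $\rho_\nu\D\vp$ has to be handled without destroying $\eps$-uniformity. I estimate it via the Poisson equation in $\dot H^{-1}\cap L^2\cap\dot H^{s-1}$ type bounds on $\D\vp$, using $|\D\vp|_\infty$ estimates by interpolation with the $L^q$ control $|\rho_\nu^{(\gamma_\nu-1)/2}|_q$ of $(\mathrm{A}_2)$. The constraints on $(\gamma_\nu,s,q,d)$ in \eqref{res1} are exactly what is needed to make the nonlinear terms $c_\nu^{2/(\gamma_\nu-1)}\D\vp$ and the product estimates in the $\Gamma$-norm close. Combining the damping from $\D\hu_\nu$ with the smallness of $c_\nu^0$, I obtain a bootstrap inequality of the form
\begin{equation*}
\frac{d}{dt}E(t)+\frac{\kappa}{1+\kappa t}E(t)\leq \frac{C E(t)^{3/2}}{1+t}+\frac{C E(t)^2}{(1+t)^2},
\end{equation*}
which, for small $E(0)\lesssim (B_\nu^0)^2$, gives $E(t)\lesssim E(0)$ for all $t\ge 0$, uniformly in $(\eps,N,\mk,\delta)$.

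Finally, I pass to the limit. Sending $\delta\to 0$ removes the convection mollification by standard compactness (the estimates are strong enough to give equicontinuity in time via the equations themselves). Sending $N,\mk\to\infty$ and extending the supports recovers the original Cauchy data by a diagonal argument, using weak-$*$ convergence in $L^\infty_t\Gamma$ together with strong convergence in $C_tH^{s'}_{\mathrm{loc}}$ for $s'<s$; this gives the regularity stated in (A)--(B) of Definition \ref{d1}. Condition (C) on the vacuum set follows by taking the limit of the symmetrized momentum equation on $\{c_\nu=0\}$. Uniqueness is obtained by a standard $L^2$ estimate on the difference of two regular solutions using the $\Gamma$-bounds as Lipschitz control. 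The main obstacle throughout is the simultaneous management of two competing scales: the $O(1/(1+\kappa t))$ damping produced by the Burgers background must beat both the nonlinear self-interactions \emph{and} the non-local coupling through $\D\vp$, and it is only the delicate choice of variables $(c_\nu,w_e,\eps^{-1}w_i)$ together with the range \eqref{res1} of $(\gamma_\nu,s,q,d)$ that makes these two balances compatible in one and the same energy functional.
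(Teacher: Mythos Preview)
Your overall architecture matches the paper's---MUK symmetrization, subtraction of the Burgers flow $\hu_\nu$, an $\eps$-weighted ion velocity, a layered approximation, and a time-weighted bootstrap---but two of your steps, as written, would not close.

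\textbf{The approximation scheme.} Replacing $u_\nu\cdot\D$ by $J_\delta(u_\nu)\cdot\D$ does not address the real obstruction, which is not roughness but \emph{unboundedness}: under $(\mathrm{A}_1)$ the background satisfies $|\hu_\nu(t,x)|\lesssim 1+|x|$, so $\hu_\nu\notin L^\infty$ and the Kato/Majda local theory you invoke is unavailable; mollification does nothing to linear growth. The paper instead \emph{spatially truncates} the background in the convection term only, $\hu_\nu^N:=\hu_\nu\,F(|x|/N)$, and then proves (Ascoli--Arzel\`a) that $(1+t)\D\hu_\nu^N\to(1+t)\D\hu_\nu$ uniformly, so the truncation error enters the energy identity as $\eta(N)/(1+t)$ with $\eta(N)\to 0$ and can be absorbed into the damping. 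Without this device both your local step and the later uniform passage to the limit are unjustified. (Your remark that regularization makes the symbol ``strictly hyperbolic (no vacuum)'' is also a misreading: the MUK variables are symmetric hyperbolic \emph{with} vacuum, and the approximate data remain compactly supported throughout.)

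\textbf{The bootstrap.} Your inequality $\dot E+\tfrac{\kappa}{1+\kappa t}E\le CE^{3/2}/(1+t)+CE^2/(1+t)^2$ would close for every $\gamma_\nu>1$, so it cannot be the mechanism that singles out \eqref{res1}. After inserting the correct time weights on $|n_\nu|_\infty$, $|n_\nu|_q$, $\dot X_{\nu,1}$, $\dot X_{\nu,s}$ separately, the paper's functional obeys
\[
\frac{d}{dt}Y_s+\frac{a}{1+t}Y_s\le \frac{CY_s}{(1+t)^2}+CY_s^2+C(1+t)^{\frac{2}{\gamma-1}}Y_s^{\frac{2}{\gamma-1}},
\]
whose last term \emph{grows} in $t$ and arises from bounding $|\D\vp|_\infty$ via $|n_\nu|_q^{q/2}|n_\nu|_\infty^{2/(\gamma_\nu-1)-q/2}$. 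Closing this forces $(1-a)\tfrac{2}{\gamma-1}+a<-1$, hence $\gamma_\nu<5/3$, while the $L^q$ route to $\D\vp$ forces $\tfrac{2}{\gamma_\nu-1}>\tfrac{q}{2}$ and $\tfrac{2}{\gamma_\nu-1}>\tfrac{d+q}{d}$; together with the $\dot H^{s-1}$ composition bound on $n_\nu^{2/(\gamma_\nu-1)}$ this is exactly \eqref{res1}. In particular the $L^q$ norm of $n_\nu$ is not cosmetic: once compact support is removed it is the only handle on $\D\vp$, and it must be propagated as part of the bootstrap rather than estimated a posteriori.
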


The second  theorem concerns the global-in-time convergence in infinity-ion mass limit from \eqref{origin1} to \eqref{Euler0} as $\eps \to 0$.

\begin{theorem}\label{thm2.2} Let $(\rho_\nu,u_\nu,\vp)$ be the global-in-time regular solution to the  Cauchy problem  \eqref{origin1} with \eqref{oini}-\eqref{ofar} obtained in Theorem \ref{globalthm}.
	If the initial data satisfy
	\begin{align}
		(\rho_\nu^0)^{\frac{\gamma_\nu-1}{2}}\rightharpoonup (\bar{\rho}_\nu^0)^{\frac{\gamma_\nu-1}{2}}\quad &\text{weakly in}\,\,\, L^q\cap \Gamma,\nonumber\\
		u_\nu^0\rightharpoonup \bar{u}_\nu^0\quad &\text{weakly in}\,\,\, \Xi,\nonumber
	\end{align}
	then there exist functions $\bar{\rho}_\nu$, $\bar{w}_e$, $\bar{\vp}$ and $\bar{\hu}_\nu$ satisfying $$\bar{\rho}_\nu^{\frac{\gamma_\nu-1}{2}}\in L^\infty([0,T];\Gamma\cap L^q),\quad \bar{w}_e\in L^\infty([0,T];\Gamma),\quad  \bar{\hu}_\nu\in L^\infty([0,T];\Xi),$$ such that as $\eps\to 0$, up to subsequences,
	\begin{align*}
	(u_e-\hu_e,\D\vp)\rightharpoonup (\bar{w}_e,\D\bar{\vp})\quad &\text{weakly-*}\,\,in \,\,\,L^\infty([0,T];\Gamma),\\
		 u_i-\hu_i\longrightarrow 0\quad &\text{strongly}\,\,in \,\,\,L^\infty([0,T];\Gamma),\\
		\rho_\nu^{\frac{\gamma_\nu-1}{2}}\rightharpoonup \bar{\rho}_\nu^{\frac{\gamma_\nu-1}{2}} \quad &\text{weakly-*}\,\, in\,\,\, L^\infty([0,T];\Gamma\cap L^q),\\
		\hu_\nu\rightharpoonup\bar{\hu}_\nu\quad &\text{weakly-*}\,\, in\,\,\, L^\infty([0,T];\Xi),
	\end{align*}
	in which $\bar{\hu}_\nu$ are solutions to the problem \eqref{burgers} with initial data $\bar{u}_\nu^0$ in the sense of distributions. Let $\bar{u}_e=\bar{w}_e+\bar{\hu}_e$. Then $(\bar{\rho}_e,\bar{u}_e,\bar{\vp})$ is the global solution to the Cauchy problem of \eqref{Euler0} with $b(t,x)=\bar{\rho}_i$  and the following initial data and far field behavior:
	\begin{eqnarray}
		\label{oinibar} (\bar{\rho}_e,\bar{u}_e)(0,x)=(\bar{\rho}_e^0(x)\geq 0,\bar{u}_e^0(x)), &\text{for}&\,\, x\in \R^d,\\
		\label{ofarbar} \bar{\rho}_e(t,x)\to 0 \quad \text{as}\,\, |x|\to \infty  &\text{for}& \,\, t\geq 0.
	\end{eqnarray}
\end{theorem}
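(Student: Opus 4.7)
The plan is to derive the statement from the $\eps$-uniform bounds supplied by Theorem \ref{globalthm} through a weak/strong compactness argument. First, I would invoke Banach--Alaoglu on the uniform bounds
$$\|\rho_\nu^{\frac{\gamma_\nu-1}{2}}\|_{L^\infty([0,T];\Gamma\cap L^q)},\quad \|u_e-\hu_e\|_{L^\infty([0,T];\Gamma)},\quad \|\D\vp\|_{L^\infty([0,T];\Gamma)},\quad \|\hu_\nu\|_{L^\infty([0,T];\Xi)}$$
provided by the global existence result, extracting subsequences (not relabelled) converging weakly-$*$ to the claimed limits $\bar{\rho}_\nu^{(\gamma_\nu-1)/2}$, $\bar{w}_e$, $\D\bar{\vp}$ and $\bar{\hu}_\nu$. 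For the ion drift $u_i-\hu_i=\eps\cdot(\eps^{-1}w_i)$, the reformulation built into Theorem \ref{globalthm} provides a uniform bound of $\eps^{-1}w_i$ in $L^\infty([0,T];\Gamma)$, so that $u_i-\hu_i$ converges \emph{strongly} to $0$ in this space at rate $O(\eps)$, which is exactly the strong statement in the theorem.

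To identify the limiting equations, the weak-$*$ convergence above must be promoted to strong \emph{local} convergence so that pairs of weakly-convergent factors can be handled in the nonlinear terms. I would extract uniform bounds on time derivatives of the symmetrized variables $(\rho_\nu^{(\gamma_\nu-1)/2},w_\nu,\D\vp)$ at one derivative below the uniform spatial regularity, reading them off from the reformulated system itself (every $\pt$-term is controlled by spatial derivatives already bounded uniformly in $\eps$). Combined with the regularity $C([0,T];H^{s'}_{\mathrm{loc}})$ of Definition \ref{d1} and the compact embedding $H^{s'}_{\mathrm{loc}}\hookrightarrow\hookrightarrow H^{s''}_{\mathrm{loc}}$ for $s''<s'<s$, an Aubin--Lions--Simon argument with a diagonal extraction over balls $B_R\subset\R^d$ yields strong convergence in $C([0,T];H^{s''}_{\mathrm{loc}})$ for $\rho_\nu^{(\gamma_\nu-1)/2}$, $w_e$, $\D\vp$ and $\hu_\nu$.

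With these two forms of convergence in hand, I would pass to the distributional limit in each equation separately. The Burgers problems \eqref{burgers} pass to $\pt\bar{\hu}_\nu+(\bar{\hu}_\nu\cdot\D)\bar{\hu}_\nu=0$ with initial data $\bar{u}_\nu^0$, using the weak convergence $u_\nu^0\rightharpoonup\bar{u}_\nu^0$ in $\Xi$. The ion continuity equation, after noting $u_i=(u_i-\hu_i)+\hu_i\to\bar{\hu}_i$ strongly on compact sets, yields $\pt\bar{\rho}_i+\dive(\bar{\rho}_i\bar{\hu}_i)=0$; multiplying the ion momentum equation by $\eps^2$ before taking the limit confirms compatibility with the Burgers equation for $\bar{\hu}_i$. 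The electron continuity and momentum equations pass directly to their counterparts in \eqref{Euler0} with $\bar{u}_e=\bar{w}_e+\bar{\hu}_e$, and the Poisson equation passes to $\Delta\bar{\vp}=\bar{\rho}_i-\bar{\rho}_e$, identifying the doping profile $b(t,x)=\bar{\rho}_i(t,x)$. The initial conditions \eqref{oinibar} follow from the weak convergence of the initial data together with time-continuity of the limit, and the far-field behavior \eqref{ofarbar} from the uniform $L^q$-integrability of $\rho_\nu^{(\gamma_\nu-1)/2}$ inherited from the assumption $(\mathrm{A}_2)$.

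The hard part will be producing the uniform time-derivative estimates needed for Aubin--Lions without losing powers of $\eps$. As already emphasized in the introduction, the Makino--Ukai--Kawashima symmetrization destroys the natural dissipative structure carried by $\D\vp$, and the Poisson coupling is non-local; furthermore the natural variables $(\rho_\nu,\sqrt{m_\nu}u_\nu)$ force $\sqrt{m_i}\,u_i=\eps^{-1}u_i$ to appear. The strategy is to express $\pt$ on $(\rho_\nu^{(\gamma_\nu-1)/2},w_e,\eps^{-1}w_i,\D\vp)$ directly from the reformulated system used in Theorem \ref{globalthm}, in which the $\eps$-weight in the Poisson term cancels symmetrically between the ion and electron momentum equations, so that every resulting right-hand side is controlled by quantities already bounded uniformly in $\eps$. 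Once this is done, the compactness extraction and the passage to the limit proceed as sketched above.
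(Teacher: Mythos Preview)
Your proposal is correct and follows essentially the same route as the paper: extract weak-$*$ limits from the $\eps$-uniform bounds of Theorem \ref{globalthm} via Banach--Alaoglu, obtain the strong convergence $w_i\to 0$ directly from the bound on $\eps^{-1}w_i$, read off uniform $H^{s-1}_{\mathrm{loc}}$ bounds on the time derivatives from the reformulated system \eqref{kepsEP}, apply Aubin--Lions to upgrade to strong local convergence, and pass to the limit in each equation. The paper's Section 6.3 carries this out in exactly this order, with the only minor simplification that it does not bother bounding $\pt\D\vp$ separately---strong convergence of $n_\nu$ suffices to pass to the limit in the Poisson equation.
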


The last theorem  concerns the global-in-time  error estimates in some  homogeneous Sobolev spaces between  \eqref{origin1} and \eqref{Euler0}.

\begin{theorem}\label{thm2.3} Let the conditions introduced  in Theorems \ref{globalthm}-\ref{thm2.2} hold,  $(\rho_\nu,u_\nu,\vp)$ be the   solution to the problem \eqref{origin1} with \eqref{oini}-\eqref{ofar} obtained in Theorem \ref{globalthm}, and $(\bar{\rho}_e,\bar{u}_e,\bar{\vp})$ be the solution to \eqref{Euler0} with \eqref{oinibar}-\eqref{ofarbar} obtained in Theorem \ref{thm2.2}.  Furthermore, assume $u_\nu^0$ are independent of $\eps$, $(\bar{\rho}_\nu^0)^{\frac{\gamma_\nu-1}{2}}\in\dot{H}^{s+1}$ and $(s,d,q,\gamma_\nu)$ satisfy
	\begin{equation}\label{res2}
 \begin{split}
		 s,\ d\geq 3, \ \ \dfrac{d}{2}+1<s+1\leq \frac{2}{\bar{\gamma}-1}+\frac{3}{2},\ \ 
   1<\gamma_\nu<1+\min\Big\{\frac{2}{3},\frac{4}{d+1},\dfrac{4}{q},\dfrac{2d}{d+q}\Big\}.
  \end{split}
	\end{equation}
 If the initial data $\rho_\nu^0$ admit the following convergence:
	\[
	|(\rho_\nu^0)^{\frac{\gamma_\nu-1}{2}}-(\bar{\rho}_\nu^0)^{\frac{\gamma_\nu-1}{2}}|_\infty+\|(\rho_\nu^0)^{\frac{\gamma-1}{2}}-(\bar{\rho}^0)^{\frac{\gamma-1}{2}}\|_{\dot{H}^ 1\cap\dot{H}^s}\leq \eps^r,
	\]
	for some positive $r$ independent of $\eps$ and any time, then it holds that
	\begin{align}
		\|(\rho_\nu^{\frac{\gamma_\nu-1}{2}}-\bar{\rho}_\nu^{\frac{\gamma_\nu-1}{2}}, u_\nu-\bar{u}_\nu)(t)\|_{\Gamma}\leq&C_0\eps^{\min\{r,1\}}, \quad t\in[0,T],
	\end{align}
	for $\sigma\in[1,s]$, where the   constant $C_0>0$ is  independent of $\eps$.
\end{theorem}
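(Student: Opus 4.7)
The plan is to derive an error system comparing \eqref{origin1} with \eqref{Euler0} together with the Burgers problem \eqref{burgers}, symmetrize it in the Makino--Ukai--Kawashima style, and run a weighted energy estimate in $\Gamma = L^\infty \cap \dot{H}^1 \cap \dot{H}^s$ to obtain the rate $\eps^{\min\{r,1\}}$. Set $\sigma_\nu := \rho_\nu^{(\gamma_\nu-1)/2} - \bar{\rho}_\nu^{(\gamma_\nu-1)/2}$, $\xi_\nu := u_\nu - \bar{u}_\nu$, and $\psi := \vp - \bar{\vp}$. The crucial algebraic observation is that, because $u_\nu^0$ is assumed to be independent of $\eps$, the weak limits from Theorem \ref{thm2.2} force $\bar{u}_\nu^0 = u_\nu^0$, so the reference Burgers flows coincide: $\hu_\nu = \bar{\hu}_\nu$. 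Consequently $\xi_i$ reduces to $u_i - \hu_i$ (the very quantity already controlled uniformly in Theorem \ref{globalthm}) and $\xi_e$ reduces to $w_e - \bar{w}_e$, so that all error bookkeeping can be performed in the perturbed velocities used throughout the global existence argument.

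First I would treat the ion block in isolation. Dividing the ion momentum equation in \eqref{origin1} by $m_i = \eps^{-2}$, using mass conservation together with the MUK symmetrization via $c_i = \sqrt{A_i \gamma_i}\, \rho_i^{(\gamma_i-1)/2}$, one obtains
\[
\pt u_i + (u_i \cdot \D) u_i = \eps^2 \Bigl(q_i \D \vp - \tfrac{2 A_i \gamma_i}{\gamma_i - 1}\, \rho_i^{(\gamma_i-1)/2} \D \rho_i^{(\gamma_i-1)/2}\Bigr),
\]
whose right-hand side is $O(\eps^2)$ in $\Gamma$ uniformly in time by Theorem \ref{globalthm}. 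Subtracting \eqref{burgers} and the analogous symmetrized identity for $\sigma_i$, a transport-type energy estimate along $\hu_i$ with initial data of size $\eps^r$ yields $\|(\sigma_i, \xi_i)\|_\Gamma \leq C \eps^{\min\{r,1\}}$ uniformly on $[0,T]$; the initial piece contributes $\eps^r$ and the forcing contributes at worst $\eps$.

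Next I would handle the electron block, which is genuinely coupled through the Poisson equation. Subtracting \eqref{Euler0} from the electron part of \eqref{origin1} and symmetrizing in $\sigma_e$ gives a symmetric hyperbolic--elliptic system for $(\sigma_e, \xi_e, \psi)$ whose source is quadratic in the electron errors, plus an $O(\eps^{\min\{r,1\}})$ contribution fed in from the ion analysis through $\Delta \psi = (\rho_i - \bar{\rho}_i) - (\rho_e - \bar{\rho}_e)$. Kato--Ponce commutator estimates at the $\dot{H}^s$ level force one extra derivative on $\bar{\rho}_\nu^{(\gamma_\nu-1)/2}$; this is precisely why the hypothesis $(\bar{\rho}_\nu^0)^{(\gamma_\nu-1)/2} \in \dot{H}^{s+1}$ is imposed and why \eqref{res1} is upgraded to \eqref{res2}, so that the MUK-compatibility $s+1 \leq \frac{2}{\bar\gamma-1} + \frac{3}{2}$ and the endpoint integrability conditions on $\gamma_\nu$ remain valid at the level of one additional derivative. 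Standard elliptic regularity combined with the $L^q$-integrability built into Definition \ref{d1} controls the non-local Poisson contribution.

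The main obstacle is the global-in-time closure. The resulting energy inequality takes the schematic form $\frac{d}{dt} \mathcal{E}(t) \leq C_1(t) \mathcal{E}(t) + C_2 \eps^{2\min\{r,1\}}$, and one must verify $\int_0^\infty C_1(t)\,dt < \infty$. This is exactly where the dispersive decay coming from the positive-spectrum assumption $\text{Dist}(\text{Sp}(\D u_\nu^0), \R_-) \geq \kappa$ of Theorem \ref{globalthm} must be transferred to the error system: the same time-dependent damping mechanism through $\hu_\nu$ that produced uniform-in-$\eps$ global existence in Theorem \ref{globalthm}, together with the $\eps$-smallness already inherent in $\eps^{-1} w_i$, supplies the integrable coefficient $C_1(t)$ here. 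A Gronwall argument then delivers $\mathcal{E}(t) \leq C \eps^{2\min\{r,1\}}$ for all $t \in [0,T]$, which is exactly the claimed estimate.
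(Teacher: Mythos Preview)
Your overall plan --- derive an error system in the MUK variables, treat the ion block first (using that $\xi_i = u_i - \hu_i = w_i$ is already $O(\eps)$ in $\Gamma$ from Theorem~\ref{globalthm}), feed this into the electron/Poisson block, and close by Gronwall --- is exactly the paper's strategy, and your identification of why $(\bar\rho_\nu^0)^{(\gamma_\nu-1)/2}\in\dot H^{s+1}$ and the strengthened restriction \eqref{res2} are needed is correct. But two points need attention.

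\textbf{The genuine gap: you cannot run the energy estimate directly on the untruncated system.} The reference flow $\hu_\nu$ grows linearly in $x$ (see \eqref{galilean}), so the basic symmetric-hyperbolic cancellation
\[
\int_{\R^d} (\D^l U)^\top(\hu_\nu\cdot\D)\D^l U\,\rmd x \;=\; -\tfrac12\int_{\R^d}\dive\hu_\nu\,|\D^l U|^2\,\rmd x
\]
is \emph{not} justified: $\hu_\nu\,|\D^l U|^2$ is not known to be in $L^1(\R^d)$ when $\D^l U$ is merely $L^2$. This is precisely the obstruction flagged in \S2.2 and is why the paper's \S7 first proves the error estimate between the \emph{truncated} bipolar problem ({\bf TEP}) with convection $\hu_\nu^N$ and compactly supported data $n_{\nu,0}^{\mk}$, and its $\eps\to 0$ limit (the truncated unipolar system \eqref{TEuler}). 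The resulting bound \eqref{errnaka} is uniform in $(N,\mk)$; one then passes $N\to\infty$ and $\mk\to\infty$ via weak-$*$ compactness and lower semicontinuity of the $\Gamma$-norm to recover the estimate for the original systems (\S7.2). Your proposal skips this entirely, and without some truncation or localization the $\dot H^l$ energy identity does not even make sense.

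\textbf{A misidentification of the main obstacle.} You write that one ``must verify $\int_0^\infty C_1(t)\,\rmd t<\infty$'' to close Gronwall. This is not required: the constant $C_0$ in the theorem is only asserted to be independent of $\eps$, not of $T$. The paper works throughout \S7 with generic constants depending on $T$, and after Lemmas~\ref{errl1}--\ref{errl2} simply applies Gronwall on $[0,T]$ to $\mathcal X_e(t)\le C\eps^{\min\{1,r\}}+C\int_0^t\mathcal X_e(\tau)\,\rmd\tau$. The time decay from \eqref{univ0}--\eqref{univp20} is used to keep the \emph{forcing terms} bounded, not to make the linear Gronwall coefficient integrable on $(0,\infty)$. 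So the dispersive mechanism enters more modestly than you suggest, and there is no need to reproduce the global-existence machinery at the level of the error system.
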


\begin{remark} The restriction \eqref{res1} on $\gamma_\nu,s$ and $q$ in Theorem \ref{globalthm}  is mainly used in the analysis  on  the electric potential $\D\vp$. When the initial vacuum appears, due to the high non-linearity of the pressure, one encounters some difficulties in the derivation of $L^2$ estimate for $\D \vp$ through the Poisson equation, which requires  high integrability conditions of densities $\rho_\nu$.
\end{remark}

\begin{remark}The conditions (A1)-(A2) in Theorem \ref{globalthm} identify a class of admissible initial data that
	provides unique solvability to  the  Cauchy problem  \eqref{origin1} with \eqref{oini}-\eqref{ofar}. Such initial data contain the following examples:
	$$\rho_\nu^0(x)=\lambda_1e^{-x^2}, \quad u_\nu^0(x)=Ax+b+\lambda_2 f(x)$$
	for $A$ being a $d\times d$ constant matrix whose eigenvalues are all great than $2\kappa$, $\lambda_1>0$ and $\lambda_2>0$ are sufficiently small constants, $f\in\Xi$ and $b\in\R^d$ is a constant vector.
\end{remark}



The rest of the paper is organized as follows. In \S 2, we will   give some basic settings of our problem. We first introduce Makino-Ukai-Kawashima's symmetrization scheme and reference velocities $w_\nu$ to  reformulate the original  problem. Then, we give a formal description of the limit process $\eps\to 0$ from bipolar Euler-Poisson equations \eqref{origin1} to unipolar Euler-Poisson equations \eqref{Euler0}. At the end of this section, we provide a brief introduction of the main proof strategies adopted by the present paper. \S 3 is devoted to  the proof of local-in-time  existence of the classical solutions for  a series of carefully designed approximate problems  with  truncated convection operators and  compactly supported initial data. These approximate problems fill into the classical framework for the first-order quasi-linear hyperbolic systems, of which the local-in-time analysis are established over a time period independent of $\eps$ and the truncation parameters. In \S 4, we extend the local-in-time classical solutions obtained in \S 3 to global-in-time ones  by establishing   uniform estimates with respect to $\eps$, the size  of the initial data' supports and the truncation parameters. The  global existence of regular solutions with compactly supported  and  general initial data will be shown in \S 5 and \S 6, respectively. In addition, the global convergence analysis in infinity-ion mass limit between bipolar Euler-Poisson equations and unipolar Euler-Poisson equations for general initial data can be found at the end of \S 6. \S 7 is devoted to the proof of the global error estimates. Finally, we give an appendix to  show some basic lemmas  used in this paper.

\section{Reformulation and Main Strategies}

In this section, we will give   some basic settings including  the reformulation of \eqref{origin1},  the formal derivation of limit equations, and  a brief introduction of proof strategies. In the rest of this section, $C\geq 1$ will denote a generic  constant depending only on fixed constant $(A_\nu,\gamma_\nu,\kappa,T)$, but independent of the truncation parameter $N$ as well as the initial data' supports $R_\nu$, which may differ from line to line. 

\subsection{Reformulation and formal limit system} For $\gamma_\nu>1$, denote
\begin{equation}\label{MUKtran}
    n_\nu=\sqrt{\dfrac{4A_\nu\gamma_\nu}{(\gamma_\nu-1)^2}}\rho_\nu^{(\gamma_\nu-1)/2},\quad w_\nu=u_\nu-\hu_\nu=(w_\nu^{(1)},\cdots,w_\nu^{(d)})^\top,
\end{equation}
where $\hu_\nu=(\hu_\nu^{(1)},\cdots,\hu_\nu^{(d)})^\top$ are the solutions to \eqref{burgers}.  Then from  \eqref{origin1} and \eqref{burgers},  the  Cauchy problem  \eqref{origin1} with \eqref{oini}-\eqref{ofar} is of the form:
\begin{equation}\label{sys-MUK}
	\begin{cases}
		\pt n_e+(w_e+\hu_e)\cdot\nabla n_e+\dfrac{\gamma_e-1}{2}n_e\dive w_e=-\dfrac{\gamma_e-1}{2} n_e\dive \hu_e,\\[1mm]
		\pt w_e+((w_e+\hu_e)\cdot\nabla)w_e+\dfrac{\gamma_e-1}{2}n_e\nabla n_e=-\nabla\vp-(w_e\cdot\nabla)\hu_e,\\[1mm]
		\pt n_i+(w_i+\hu_i)\cdot\nabla n_i+\dfrac{\gamma_i-1}{2}n_i\dive w_i=-\dfrac{\gamma_i-1}{2} n_i\dive \hu_i,\\[1mm]
		\eps^{-2}\pt w_i+\eps^{-2}((w_i+\hu_i)\cdot\nabla)w_i+\dfrac{\gamma_i-1}{2}n_i\nabla n_i=\nabla\vp-\eps^{-2}(w_i\cdot\nabla)\hu_i,\\[1mm]
		\Delta\vp=C_i n_i^{2/(\gamma_i-1)}-C_e n_e^{2/(\gamma_e-1)},\\
		t=0:\,\,(n_\nu,w_\nu)(0,x):=(n_\nu^0(x),0):=\Big(\sqrt{\dfrac{4A_\nu\gamma_\nu}{(\gamma_\nu-1)^2}}(\rho_\nu^0)^{(\gamma_\nu-1)/2},0\Big),
	\end{cases}
\end{equation}
where $C_\nu^1=\Big(\dfrac{(\gamma_\nu-1)^2}{4A_\nu\gamma_\nu}\Big)^{1/(\gamma_\nu-1)}$. If denoting $(\beta_e,\beta_i)=(0,1), \ W_\nu=(n_\nu,\eps^{-\beta_\nu}w_\nu^\top)^\top$, 
then system \eqref{sys-MUK} can be written as:
\begin{equation}\label{sys-symm}
\begin{cases}
	\pt W_\nu+ \displaystyle\sum_{j=1}^d A_\nu^j(W_\nu;\hu_\nu,\eps)\pa_{x_j}W_\nu=Q_\nu^1(\D\vp)-Q_\nu^2(W_\nu;\D\hu_\nu),\\
	\Delta\vp=C_i n_i^{2/(\gamma_i-1)}-C_e n_e^{2/(\gamma_e-1)},\\
 	(n_\nu,w_\nu)(0,x)=(n_\nu^0(x),0),
\end{cases}
\end{equation}
where   $\{\xi_j\}_{j=1}^d$ forms a canonical basis of $\R^d$ and
\begin{equation}\label{defQ}
	\begin{split}
		A_\nu^j(W_\nu;\hu_\nu,\eps)=&\left(
		\begin{matrix}
			w_\nu^{(j)}+\hu_\nu^{(j)}&\eps^{\beta_\nu}\dfrac{\gamma_\nu-1}{2}n_\nu\xi_j^\top\\
			\eps^{\beta_\nu}\dfrac{\gamma_\nu-1}{2}n_\nu\xi_j&(w_\nu^{(j)}+\hu_\nu^{(j)})\Id
		\end{matrix}
		\right),\\
		Q_\nu^1(\D\vp)=&\left(\begin{matrix}0\\ \eps^{\beta_\nu}q_\nu\D\vp\end{matrix}\right), \quad Q_\nu^2(W_\nu;\D\hu_\nu)=\left(\begin{matrix}\dfrac{\gamma_\nu-1}{2}n_\nu\dive\hu_\nu\\[2mm] \eps^{-\beta_\nu}(w_\nu\cdot\D)\hu_\nu\end{matrix}\right).
	\end{split}
\end{equation}
It is clear that $W_\nu$ satisfy symmetric hyperbolic systems and $\vp$ satisfies an elliptic equation.

Based on the reformulation \eqref{sys-MUK}, we are ready to show the  infinity-ion mass limit $\eps\to 0$ formally. Denote the formal limits of $(n_\nu,w_\nu,\vp, \hu_\nu)$ and $(n_\nu^0, u_\nu^0)$ as $(\bar{n}_\nu,\bar{w}_\nu,\bar{\vp},\bar{\hu}_\nu)$ and $(\bar{n}_\nu^0,\bar{u}_\nu^0)$ respectively. The limits of \eqref{sys-MUK}$_4$ and \eqref{burgers} are
\[
\begin{cases}
  \pt \bar{w}_i+((\bar{w}_i+\bar{\hu}_i)\cdot\D)\bar{w}_i+(\bar{w}_i\cdot\D)\bar{\hu}_i=0,\\
  \pt\bar{\hu}_i+(\bar{\hu}_i\cdot\D)\bar{\hu}_i=0,\\
  t=0:\,\,(\bar{w}_i,\bar{\hu}_i)(0,x)=(0,\bar{u}_i^0).
\end{cases}
\] 
This implies that $u_i$ converges to $\bar{\hu}_i$ when $\eps\to 0$. The formal limit of \eqref{sys-MUK}$_3$ yields
\begin{equation}\label{sys-lim-i}
\pt\bar{n}_i+\bar{\hu}_i\cdot\D\bar{n}_i+\frac{\gamma_i-1}{2}\bar{n}_i\dive\bar{\hu}_i=0,\quad \bar{n}_i(0,x)=\bar{n}_i^0,
\end{equation}
which admits a unique solution $\bar{n}_i(t,x)$. The limits  for \eqref{sys-MUK}$_1$, \eqref{sys-MUK}$_2$ and \eqref{sys-MUK}$_5$ are 
\begin{equation}\label{sys-lim}
    \begin{cases}
       \pt\bar{n}_e+(\bar{w}_e+\bar{\hu}_e)\cdot\D\bar{n}_e+\dfrac{\gamma_e-1}{2}\bar{n}_e\dive\bar{w}_e=-\dfrac{\gamma_e-1}{2}\bar{n}_e\dive\bar{\hu}_e,\\[2mm]
       \pt\bar{w}_e+((\bar{w}_e+\bar{\hu}_e)\cdot\D)\bar{w}_e+\dfrac{\gamma_e-1}{2}\bar{n}_e\D\bar{n}_e=-\D\bar{\vp}-(\bar{w}_e\cdot\D)\bar{\hu}_e,\\
       \Delta\bar{\vp}=C_i (\bar{n}_i(t,x))^{2/(\gamma_i-1)}-C_e \bar{n}_e^{2/(\gamma_e-1)},\\
       (\bar{n}_e,\bar{w}_e)(0,x)=(\bar{n}_e^0(x),0),
    \end{cases}
\end{equation}
which is a decoupled system for $(\bar{n}_e,\bar{w}_e,\bar{\vp})$ since $\bar{n}_i$ is solved through \eqref{sys-lim-i}. Besides, the limit system for \eqref{burgers} with $\nu=e$ is
\begin{equation}\label{eqeqeq}
\pt\bar{\hu}_e+(\bar{\hu}_e\cdot\D)\bar{\hu}_e=0, \qquad \bar{\hu}_e(0,x)=\bar{u}_e^0(x).
\end{equation}
Let $\bar{u}_e=\bar{w}_e+\bar{\hu}_e$. It follows from the equations in  \eqref{sys-lim} and \eqref{eqeqeq} that
\begin{equation}\label{sys-lim-MUK}
\begin{cases}
\pt \bar{n}_e+\bar{u}_e\cdot\nabla \bar{n}_e+\dfrac{\gamma_e-1}{2}\bar{n}_e\dive \bar{u}_e=0,\\
\pt\bar{u}_e+(\bar{u}_e\cdot\D)\bar{u}_e+\dfrac{\gamma_e-1}{2}\bar{n}_e\D\bar{n}_e=0,\\
\Delta\bar{\vp}=C_i (\bar{n}_i(t,x))^{2/(\gamma_i-1)}-C_e \bar{n}_e^{2/(\gamma_e-1)},
\end{cases}
\end{equation}
which recovers system \eqref{Euler0} by defining $
\bar{\rho}_e=\Big(\bar{n}_e\sqrt{\frac{(\gamma_e-1)^2}{4A_e\gamma_e}}\Big)^{\frac{2}{\gamma_e-1}}$.

\subsection{Main strategies} Now we sketch the main strategy for the proof. The study of global convergence  of infinity-ion mass limit from  the system in \eqref{sys-symm} to  \eqref{Euler0} is based on the uniform  global  existence of the regular solutions for \eqref{sys-symm}. The key point of our proof is to first study a series of approximate problems to \eqref{sys-symm} with truncated convection operators and compactly supported initial density. If regarding $\D\vp$ in \eqref{sys-MUK}$_2$ and \eqref{sys-MUK}$_4$ as source terms, then \eqref{sys-symm}$_1$ is symmetric hyperbolic for $W_\nu$.   In order to take advantage of the classical theories for symmetric  hyperbolic systems, the source term and the coefficient functions should belong to $H^s$ space if  $W_\nu\in H^s$ is assumed  (c.f. Theorem II in \cite{Kato1975}). However, on the one hand,  in order to ensure that  $\D\vp$ belongs to $L^2(\R^d)$, one needs  $n_\nu$  to be  compactly supported. Actually, by Lemma \ref{Poisson}, the Poisson equation \eqref{sys-MUK}$_5$ yields
\[
|\D\vp|_2\leq C\sum_{\nu=i,e}|n_\nu^{\frac{2}{\gamma_\nu-1}}|_\infty R_\nu^{\frac{d+2}{2}},
\]
where $R_\nu>0$ are the sizes of the support for $n_\nu$.  On the other hand, due to the non-integrability of $\nabla \hu_\nu$ and the unbounded-ness of $\hu_\nu$, the convection terms do not belong to  $L^\infty(\R^d)$, and  $\hu^{(j)} w_\nu\cdot w_\nu\notin L^1(\R^d)$ for all $1\leq j\leq d$. Then one can not take advantage of the symmetric hyperbolic structure of \eqref{sys-symm}, since 
\[
\int_{\R^d} \eps^{-2\beta_\nu}w_\nu\cdot(\hu_\nu\cdot\D) w_\nu\rmd x\neq -\frac{1}{2}\int_{\R^d}\dive \hu_\nu |\eps^{-\beta_\nu}w_\nu|^2\rmd x.
\]
For solving these issues,  one needs to consider the following   truncated approximate problem of \eqref{sys-symm} ({\bf{TEP}}):
\begin{equation}\label{TEP}
\begin{cases}
	\pt W_\nu+ \displaystyle\sum_{j=1}^d A_\nu^j(W_\nu;\hu_\nu^N,\eps)\pa_{x_j}W_\nu=Q_\nu^1(\D\vp)-Q_\nu^2(W_\nu;\D\hu_\nu),\\
	\Delta\vp=C_i n_i^{2/(\gamma_i-1)}-C_e n_e^{2/(\gamma_e-1)},\\
 	(n_\nu,w_\nu)(0,x)=(n_\nu^0(x),0),
\end{cases}
\end{equation}
where $\hu_\nu^N=\hu_\nu F(|x|/N)$ for $N\geq 1$, and $F(x)$ is the regular bump function satisfying
\begin{equation}\label{bump}
	F(x)\in C_c^\infty(\R^d), \qquad 0\leq F(x)\leq 1, \qquad F(x)=\begin{cases}
		1,\quad  |x|\leq 1,\\
		0, \quad |x|\geq 2.
	\end{cases}
\end{equation}
For simplicity, let $F^N:=F(|x|/N)$. Moreover, if  $n_\nu^0$ is compactly supported, it is not hard to show  that  $\D\vp\in H^s$ if $n_\nu\in H^s$ at least on a finite time interval. Besides, the truncation $\hu_\nu$ in $A_\nu^j(W_\nu;\hu_\nu^N,\eps)$ ensures that $\hu^N\in L^\infty(\R^d)$ for finite $N$.

Based on the above observations, the study of the global convergence  in the infinity-ion mass limit for  \eqref{sys-symm} can be divided into the following four steps:
 \begin{itemize}
\item the local existence of the unique classical solution to ({\bf{TEP}}) with compactly supported initial density for fixed  $\eps$, which can be obtained by some standard iterations for symmetric hyperbolic systems (\S3).
\item  the global uniform estimates for solutions to ({\bf{TEP}}) with respect to  $\eps, N$ and the size of the support for $n_\nu^0$ (\S 4). For this purpose,  the major difficulty is caused by the appearance of vacuum and the absence of the damping mechanism.  The key idea here  relies on an elaborate argument on extracting a dispersive effect after some invariant transformation  and  energy methods based on suitable choice of time weights. Mathematically speaking, the dispersive conditions on $u_\nu^0$ enable us to establish the following energy ODE inequality:
\begin{equation}\label{functional}
\dfrac{\rmd}{\rmd t}Y_s+\dfrac{a}{1+t}Y_s\leq \dfrac{CY_s}{(1+t)^2}+CY_s^2+C(1+t)^{\frac{2}{\gamma-1}}Y_s^{\frac{2}{\gamma-1}},
\end{equation}
where $Y_s(t)$ is the energy functional concerning $W_\nu$ and $a>1$ a constant. The term $\dfrac{a}{1+t}Y_s$ on the left hand side serves indeed as a time-dependent damping mechanism, which decreases as $t\to \infty$. \eqref{functional} allows us to establish uniform estimates of solutions to ({\bf{TEP}}) in proper energy spaces for small $Y_s(0)$.
\item  the global existence of strong solution to \eqref{sys-symm} with compactly supported initial density (\S 5), which can be obtained by passing to the limit $N\rightarrow \infty$.
\item the global existence of regular solution to \eqref{origin1} with general initial data as well as the global convergence for infinity-ion mass limit (\S 6). The proof here  is based on  a cut-off technique, in which a sequence of cut-off functions are multiplied to the initial data $n_\nu^0$ so that they become compactly supported (\S6.1). Then a series of approximate solutions are obtained and the global existence of the unique regular solution  to \eqref{sys-symm} for the general data is just the limit of these approximate solutions. Here, due to the Poisson equation, it is hard to obtain uniform estimates which are independent of the support size for $n_\nu^0$ in the classical $H^s$ spaces. We close the energy estimates for $n_\nu$ in $L^\infty([0,T];L^\infty\cap L^q\cap\dot{H}^1\cap\dot{H}^s)$ with sufficiently large $q$ and for $(\eps^{-\beta_\nu}w_\nu,\D\vp)$ in $L^\infty([0,T];L^\infty\cap\dot{H}^1\cap\dot{H}^s)$ (\S6.2). Moreover, based on uniform estimates with respect to $\eps$, one can establish the global convergence of the infinity-ion mass limit by using classical compactness theories (\S6.3).
\end{itemize}

Finally, for the global-in-time error estimates between \eqref{origin1} and \eqref{Euler0},  due to the special structure for the Poisson equation as well as the non-integrability of $(u_\nu,\D u_\nu)$, one needs to first establish the corresponding error estimates between  these two systems with  truncated convection operators. A similar energy differential inequality as \eqref{functional} for the error variables is then established,  based on which one can obtain  some uniformly bounded estimates with respect to $N$. Due to the uniqueness of weak limits and lower semi-continuity of norms in Sobolev spaces, the desired error estimates between \eqref{origin1} and \eqref{Euler0} can be obtained by passing to the limit $N\rightarrow \infty$ (\S 7).

\section{Local existence for truncated system with compactly supported density}\label{s3}

This section is devoted to  establishing the local-in-time  existence of the classical solutions to \eqref{TEP} with compactly supported initial data. Notice that system \eqref{TEP} is symmetric hyperbolic for all $\eps$ and $A_\nu$. For simplicity, in the rest of this section, we always  assume that  $\eps=A_\nu=1$ and $\beta_\nu=0$. The other cases can be dealt with  via the completely  same arguments used in this section.

The main result of this section can be stated as follows:

\begin{theorem}\label{comthmloc} Let $\hu_\nu$ be the solution to \eqref{burgers} defined in $[0,T^0]\times \R^d$ and $s,d$ be integers with $(d,s,\gamma_\nu)$ satisfying
	\[
	s,\ d\geq 3, \qquad \frac{d}{2}+1<s\leq\frac{2}{\bar{\gamma}-1}+\frac{1}{2}, \qquad \bar{\gamma}<1+ \min\Big\{\frac{4}{5},\frac{4}{d+1}\Big\},\quad \underline{\gamma}>1,
	\]
	and there is no upper bound for $s$ if $\frac{2}{\gamma_\nu-1}$ are integers. 	If $(\rho_\nu^0,u_\nu^0)$ satisfies the  assumptions $(\rm A_1)$-$(\rm A_2)$ in Theorem \ref{globalthm}, and $\text{supp}_x \rho_\nu^0(x)\subset B_{R_\nu}$ for some finite $R_\nu>0$ with $B_{R_\nu}$ being the ball centered at the origin with radius $R_\nu$,
	then there exist a time $T^*\in (0,T^0]$ independent of $N$ and a unique classical solution $(n_\nu,w_\nu,\vp)$ in $[0,T^{*}]\times\R^d$ for every given $N\geq 1$ to \eqref{TEP} satisfying
	\begin{equation}\label{locregu}
		(n_\nu,w_\nu)\in C([0,T^{*}];H^s)\cap C^1([0,T^{*}];H^s), \quad \D\vp\in C([0,T^{*}];H^s).
	\end{equation}
\end{theorem}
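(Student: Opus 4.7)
The plan is a Picard iteration that exploits the symmetric-hyperbolic/elliptic splitting of \eqref{TEP}: regard $\D\vp$ as an external source so that the $W_\nu$-system is quasilinear symmetric hyperbolic (the $A_\nu^j$ are symmetric by construction), while $\vp$ is recovered from the Poisson equation at each step. Setting $W_\nu^{0}(t,x) = (n_\nu^0(x),0)^\top$, I would define $W_\nu^{k+1}$ as the unique $C([0,T];H^s)$ solution (cf.\ Kato \cite{Kato1975}) of the linear symmetric hyperbolic system with coefficient $A_\nu^j(W_\nu^k;\hu_\nu^N,1)$, lower-order term $-Q_\nu^2(W_\nu^{k+1};\D\hu_\nu)$, and source $Q_\nu^1(\D\vp^k)$, where $\vp^k$ solves the Poisson equation with right-hand side built from $n_\nu^k$. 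Finite propagation speed for the transport-dominated $n_\nu^k$-equation (speed controlled by $|w_\nu^k+\hu_\nu^N|_\infty$) propagates the initial compact support of $n_\nu^0$ to a common support $B_{R_\nu^*}$ on a short time interval, after which Lemma \ref{Poisson} yields a bound on $\|\D\vp^k\|_s$ in terms of $\|W_\nu^k\|_s$ and $R_\nu^*$.

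The core is a uniform $H^s$ bound on $[0,T^*]$ with $T^*$ independent of $k$ and $N$. A standard Moser / Kato--Ponce commutator argument, together with the symmetry of the $A_\nu^j$ for the top-order cancellation, produces an inequality of the schematic form
\begin{equation*}
\tfrac{d}{dt}\|W_\nu^{k+1}\|_s^2 \le C\bigl(1 + \|W_\nu^k\|_s + \|\D\hu_\nu^N\|_s + \|\D\vp^k\|_s\bigr)\bigl(1+\|W_\nu^{k+1}\|_s^2\bigr),
\end{equation*}
which closes on a uniform time interval once $R_\nu^*$ is kept bounded by a bootstrap on $|w_\nu^k|_\infty$ via Sobolev embedding. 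Contraction is then obtained by running the parallel $L^2$ energy estimate on the differences $\delta W_\nu^k = W_\nu^{k+1} - W_\nu^k$, with the source $Q_\nu^1(\D\delta\vp^{k-1})$ controlled via Lemma \ref{Poisson} applied to the linearized Poisson equation. Interpolating with the uniform $H^s$ bound yields convergence in $C([0,T^*];H^{s'})$ for any $s'<s$; the limit satisfies \eqref{TEP} distributionally, the full regularity \eqref{locregu} follows from the equation itself and elliptic regularity for $\vp$, and uniqueness is obtained by applying the same $L^2$ argument to the difference of two solutions.

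The main obstacle I expect is the $N$-independence of $T^*$. The truncation $\hu_\nu^N = \hu_\nu F^N$ satisfies $|\hu_\nu^N|_\infty \sim N$, because $\hu_\nu$ is only in $\Xi$ and is typically linearly growing (recall the canonical example $u_\nu^0 = Ax + b + \lambda_2 f$ noted in the introduction), so one cannot afford $\hu_\nu^N$ to appear without a derivative in any coefficient-type term of the energy estimate. This is precisely the reason for reformulating in the variable $w_\nu = u_\nu - \hu_\nu$: in the $w_\nu$-equation, $\hu_\nu$ only enters through $\D\hu_\nu$ in coefficient position and through the damping-type term $(w_\nu\cdot\D)\hu_\nu$. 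Decomposing $\D\hu_\nu^N = F^N\D\hu_\nu + \hu_\nu\otimes\D F^N$ then bounds $\|\D\hu_\nu^N\|_s$ uniformly in $N$ by $\|\hu_\nu\|_\Xi$ together with the scale $|\D F^N|_\infty \le C/N$ (the growing piece $\hu_\nu\otimes\D F^N$ lives on the annulus $N\le|x|\le 2N$ where $|\hu_\nu| \le CN$, so the product stays bounded uniformly in $N$). A careful Moser-type accounting of these contributions, combined with the $|w_\nu|_\infty$-bootstrap for $R_\nu^*$, is the technical heart of the local-in-time construction.
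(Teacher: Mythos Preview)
Your overall architecture (Picard iteration on the symmetric-hyperbolic part with $\D\vp$ as external source, propagation of compact support for $n_\nu^k$ via characteristics, uniform $H^s$ bound then $L^2$ contraction) is exactly what the paper does in \S3.1--3.2. The gap is in the step you flag yourself as ``the technical heart'': your claim that $\|\D\hu_\nu^N\|_s$ is uniformly bounded in $N$ is false. The argument you give (``the product $\hu_\nu\otimes\D F^N$ stays bounded uniformly in $N$'') is an $L^\infty$ statement, not an $H^s$ one. Since $\D\hu_\nu$ is only in $L^\infty$ (for the model case $u^0=Ax$ it is a nonzero constant matrix), one has $|F^N\D\hu_\nu|_2\sim N^{d/2}$, and likewise $|\D\hu_\nu\otimes\D F^N|_2\lesssim |\D\hu_\nu|_\infty|\D F^N|_2\sim N^{d/2-1}$; so neither $|\D\hu^N|_2$ nor $|\D^2\hu^N|_2$ is uniformly bounded, and a Kato--Ponce commutator estimate that produces $|\D^l\hu^N|_2$ for small $l$ cannot close uniformly in $N$.

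The paper's Lemma~3.2 circumvents this by never invoking a global product/commutator estimate on the $\hu^N$ piece. Instead it expands $[\D^l,\hu^N\cdot\nabla]W$ term by term into $\D^\sigma\hu^N\,\D^{l-\sigma+1}W$ and, inside each factor $\D^\sigma\hu^N=\sum_{m'}\D^{m'}\hu_\nu\,\D^{\sigma-m'}F^N$, places the norms so that only the quantities which \emph{are} uniformly bounded appear: $|\D\hu^N|_\infty$ and $|\D^2\hu^N|_\infty$ for low $\sigma$ (both controlled because on the annulus $N\le|x|\le 2N$ the linear growth of $\hu$ is cancelled by $|\D^k F^N|\le CN^{-k}$), and $|\D^{m'}\hu_\nu|_p$ for $m'\ge 2$ (genuinely in $L^p$ since $\D^2\hu\in H^{s-1}$) for higher $\sigma$, glued via Gagliardo--Nirenberg to the $W$ factor; see the case analysis \eqref{midd3}--\eqref{midd9}. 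The output is an energy inequality with coefficient $C(\|u^0\|_\Xi,\|W^k\|_s)$ rather than $\|\D\hu^N\|_s$, which is what makes $T^*$ independent of $N$. Your schematic inequality should therefore read with $\|\hu_\nu\|_\Xi$ in place of $\|\D\hu_\nu^N\|_s$, and obtaining it requires this finer decomposition rather than an off-the-shelf Moser estimate.
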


The rest of this section is devoted to the proof of Theorem \ref{comthmloc}.

\subsection{Linearization and uniform estimates}\label{3.1} In order to show the local existence for \eqref{TEP}, we  first need to consider its linear approximate problem. Let $T^0$ be the lifespan of $\hu_\nu$ and $(\tilde{\pi}_\nu,\tilde{v}_\nu,\D\tilde{\Phi})$ be known functions satisfying
\begin{equation}\label{cond1}
\begin{split}
& (\tilde{\pi}_\nu,\tilde{v}_\nu)(0,x)=(n_\nu^0(x),0),\quad  \D\tilde{\Phi}\in C([0,T^0];H^s),\\
&(\tilde{\pi}_\nu,\tilde{v}_\nu)\in C([0,T^0];H^s)\cap C^1([0,T^0];H^{s-1}),
\end{split}
\end{equation}
where $\tilde{\pi}_\nu$ is required to be compactly supported in the sense that  for $t\in[0,T^0]$,
\begin{equation}\label{cond2}
	\tilde{\pi}_\nu(t,X_\nu^N(t;\xi_0^\nu))=0, \qquad \text{for}\,\,\,\,\xi_0^\nu\in\R^d\backslash\text{supp}_xn_\nu^0(x),
\end{equation}
with the characteristic curve $X_\nu^N(t;\xi_0^\nu)$ defined as
\begin{equation}\label{character}
	\dfrac{\rmd X_\nu^N(t;\xi_0^\nu)}{\rmd t}=(\tilde{v}_\nu+\hu_\nu^N)(t,X_\nu(t;\xi_0^\nu)), \qquad X_\nu^N(0;\xi_0^\nu)=\xi_0^\nu.
\end{equation}
Denote $W_\nu^N=(n_\nu^N,(w_\nu^N)^\top)^\top$ and $\tilde{W}_\nu=(\tilde{\pi}_\nu,\tilde{v}_\nu^\top)^\top$, and  we introduce  the  linear problem:
\begin{equation}\label{LTEP}
	\begin{cases}
		\pt W_\nu^N+ \displaystyle \sum_{j=1}^d A_\nu^j(\tilde{W}_\nu;\hu_\nu^N,1)\pa_{x_j}W_\nu^N=Q_\nu^1(\D\tilde{\Phi})-Q_\nu^2(W_\nu;\D\hu_\nu),\\
	\Delta\vp^N=C_i \tilde{\pi}_i^{2/(\gamma_i-1)}-C_e \tilde{\pi}_e^{2/(\gamma_e-1)},\\
	\end{cases}
\end{equation}
with the initial data
\begin{equation}\label{LTEPini}
	(n_\nu^N,w_\nu^N)(0,x)=(n_\nu^0(x),0).
\end{equation}
The functions $A_\nu^j(\cdot;\cdot,\cdot), Q_\nu^1(\,\cdot\,)$ and $Q_\nu^2(\,\cdot\,;\D\hu_\nu)$ are defined in \eqref{defQ}.

The global  well-posedness  of classical solutions to  the Cauchy problem \eqref{LTEP}--\eqref{LTEPini} at least  for $1\leq N<\infty$ can be  stated as follows.

\begin{lemma}\label{exisloc}Let the conditions in Theorem \ref{comthmloc} hold, then there exists a unique classical solution $(n_\nu^{N},w_\nu^{N},\vp^{N})$ in $[0,T^0]\times \R^d$ to \eqref{LTEP}--\eqref{LTEPini} satisfying
	\[
	(n_\nu^{N},w_\nu^{N})\in C([0,T^0]; H^s)\cap C^1([0,T^0];H^{s-1}), \qquad \D\vp^N\in C([0,T^0]; H^s).
	\]
\end{lemma}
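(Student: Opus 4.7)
The key observation is that in the linearized system \eqref{LTEP}, the Poisson equation and the symmetric hyperbolic block are in fact \emph{decoupled}: the source term driving the hyperbolic block is $Q_\nu^1(\nabla\tilde{\Phi})$ with $\tilde{\Phi}$ prescribed, while $\vp^N$ only appears in the elliptic equation. My plan is therefore to solve the two subsystems separately by classical theory.

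\textbf{Step 1 (Poisson equation).} For each fixed $t\in[0,T^0]$, consider
$\Delta\vp^N=C_i\tilde{\pi}_i^{2/(\gamma_i-1)}-C_e\tilde{\pi}_e^{2/(\gamma_e-1)}$.
The compact-support condition \eqref{cond2}, together with the ODE \eqref{character} for the characteristic curves, ensures that $\tilde{\pi}_\nu(t,\cdot)$ remains compactly supported for all $t\in[0,T^0]$. Since $\tilde{\pi}_\nu\in C([0,T^0];H^s)\hookrightarrow C([0,T^0];L^\infty)$ (as $s>d/2$), and the restriction $s\le \frac{2}{\bar{\gamma}-1}+\frac{1}{2}$ gives enough regularity for the composition $z\mapsto z^{2/(\gamma_\nu-1)}$ to preserve $H^s$ locally (with the case $\frac{2}{\gamma_\nu-1}\in\N$ requiring only the algebra property of $H^s$), the right-hand side lies in $C([0,T^0];H^s\cap L^1\cap L^\infty)$ with uniform compact support. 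The Newton-potential solution $\vp^N$, combined with the Poisson-type estimate stated in the introduction (Lemma referenced as \emph{Poisson} in the paper), then gives $\nabla\vp^N\in C([0,T^0];H^s)$.

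\textbf{Step 2 (linear symmetric hyperbolic block).} The matrices $A_\nu^j(\tilde{W}_\nu;\hu_\nu^N,1)$ in \eqref{defQ} are symmetric by inspection, so the subsystem \eqref{LTEP}$_1$ is a linear symmetric hyperbolic system for $W_\nu^N$ with
\begin{itemize}
\item coefficients $\tilde{W}_\nu\in C([0,T^0];H^s)\cap C^1([0,T^0];H^{s-1})$ by \eqref{cond1};
\item truncated drift $\hu_\nu^N=\hu_\nu F^N$, which is bounded and compactly supported in $x$ for each $N$, with $\hu_\nu^N,\nabla\hu_\nu^N\in L^\infty\cap H^{s-1}$ (using $\nabla\hu_\nu\in L^\infty$, $\nabla^2\hu_\nu\in H^{s-1}$ and $F^N\in C_c^\infty$);
\item source $Q_\nu^1(\nabla\tilde{\Phi})\in C([0,T^0];H^s)$ by the hypothesis on $\tilde{\Phi}$, and $Q_\nu^2(W_\nu^N;\nabla\hu_\nu)$ linear in the unknown with $L^\infty\cap H^{s-1}$-coefficients.
\end{itemize}
These fall exactly into the hypotheses of the linear hyperbolic theory of Kato \cite{Kato1975} (or the standard treatment in \cite{Majda1984}). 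I would apply it directly to obtain a unique solution $W_\nu^N\in C([0,T^0];H^s)\cap C^1([0,T^0];H^{s-1})$ on the entire interval $[0,T^0]$: linearity means the standard $H^s$ energy estimate produces a Gr\"onwall bound involving only $\|\tilde{W}_\nu\|_{L^\infty_TH^s}$, $\|\hu_\nu^N\|_{L^\infty_T(L^\infty\cap\dot{H}^1\cap H^{s-1})}$, $\|\nabla\hu_\nu\|_{L^\infty_T(L^\infty\cap H^{s-1})}$, and $\|\nabla\tilde{\Phi}\|_{L^\infty_TH^s}$, all finite by the standing assumptions, so no small-time restriction arises.

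The only mildly delicate point, and what I expect to be the main technical step, is the composition estimate $\tilde{\pi}_\nu\in H^s\Rightarrow \tilde{\pi}_\nu^{2/(\gamma_\nu-1)}\in H^s$ in Step 1; this is precisely why the hypothesis $s\le\frac{2}{\bar{\gamma}-1}+\frac{1}{2}$ appears in Theorem \ref{comthmloc} (with the exception for integer exponents noted). Beyond this, both steps are entirely routine given the linearity of the problem and the symmetric hyperbolic structure made transparent by the Makino--Ukai--Kawashima reformulation \eqref{MUKtran}.
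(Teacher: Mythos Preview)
Your proposal is correct and follows essentially the same route as the paper: exploit the decoupling of \eqref{LTEP}, solve the Poisson block for $\vp^N$ using the compact support of $\tilde{\pi}_\nu$ together with the composition estimate (Lemma~\ref{comm0}) and Lemma~\ref{Poisson}, and solve the symmetric hyperbolic block by the linear theory of Kato~\cite{Kato1975}. Two small corrections: the coefficient $\nabla\hu_\nu$ in $Q_\nu^2$ is \emph{not} in $H^{s-1}$ (it fails to lie in $L^2$), only in $L^\infty$ with $\nabla^2\hu_\nu\in H^{s-1}$, though the product $W_\nu^N\,\nabla\hu_\nu$ is still in $H^s$ by a Moser-type estimate; and for the Poisson block one only needs (and gets, under the stated constraint $s\le\frac{2}{\bar\gamma-1}+\frac12$) $\tilde{\pi}_\nu^{2/(\gamma_\nu-1)}\in H^{s-1}$, not $H^s$---the paper then obtains the time-continuity $\nabla\vp^N\in C([0,T^0];H^s)$ by a difference-quotient argument using $\partial_t\tilde{\pi}_\nu\in C([0,T^0];H^{s-1})$ and the estimate $\|\tilde{\pi}_\nu^{2/(\gamma_\nu-1)-1}\|_{s-1}\le C$, which is precisely where the sharper restriction $\bar\gamma<1+\min\{\frac45,\frac{4}{d+1}\}$ enters.
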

\begin{proof} By \eqref{cond2}, for $\xi_0^\nu\in\R^d\backslash \supp_xn_\nu^0(x)$, it holds
	\[
	n_\nu^N(t,X_\nu^N(t;\xi_0^\nu))=n_\nu^0(\xi_0^\nu)\text{exp}\Big(-\dfrac{\gamma_\nu-1}{2}\int_0^t\dive(w_\nu^N+\hu_\nu)(\tau,X_\nu^N(\tau;\xi_0^\nu))\rmd\tau\Big),
	\]
which yields that  $n_\nu^N(t,X_\nu^N(t;\xi_0^\nu))=0$ for $\xi_0\in \R^d\backslash \supp_x n_\nu^0$. Then if $n_\nu^N(t,X_\nu^N(t;\xi_0^\nu))\neq 0$,  one obtains $|\xi_0^\nu|\leq R_\nu$. Consequently, by \eqref{galilean}, it holds
	\begin{align*}
		|X_\nu^N(t;\xi_0^\nu)|\leq& |\xi_0^\nu|+\Big|\int_0^t(\tilde{v}_\nu+\hu_\nu^N)(\tau,X_\nu^N(\tau;\xi_0^\nu))\rmd\tau\Big|\nonumber\\
		\leq& R_\nu+\|\tilde{v}_\nu\|_{L^\infty([0,T^0]\times\R^d)} T^0\nonumber\\
  &+C(1+\|\hu_\nu\|_{T^0,\Xi})\exp\Big(\int_0^{T^0}|\D\hu_\nu(s,\cdot)|_\infty\rmd s\Big)(T^0+\int_0^t|X_\nu^N(\tau;\xi_0^\nu)|\rmd\tau)\nonumber\\
  :=&\tilde{C}_0(R_\nu,T^0)+C(1+\|\hu_\nu\|_{T^0,\Xi})\exp\Big(\int_0^{T^0}|\D\hu_\nu(s,\cdot)|_\infty\rmd s\Big)\int_0^t|X_\nu^N(\tau;\xi_0^\nu)|\rmd\tau,
	\end{align*}
	with the natural correspondence of $\tilde{C}_0(R_\nu,T^0)$. By Gronwall's inequality, it holds that
	\begin{equation}\label{compr}
		|X_\nu^N(t;\xi_0)|\leq \tilde{C}_0(R_\nu,T^0)\exp\Big((1+\|\hu_\nu\|_{T^0,\Xi})\exp(\|\hu_\nu\|_{T^0,\Xi}T^0)T^0\Big):=C_0(R_\nu,T^0),
	\end{equation}
which implies $\supp_x n_\nu^N\subset B_{C_0(R_\nu,T^0)}$. Notice that $Q_\nu^1(\D\tilde{\Phi}) $ and $Q_\nu^2(W_\nu^N;\D\hu_\nu)$ belong to $H^s$ if one assumes $W^N\in H^s$. Consequently, by classical arguments (c.f. \cites{Kato1975,Lax1973}),  the existence of classical solution $(n_\nu^{N},w_\nu^{N})$ is obtained, which satisfies the following properties:
\[
(n_\nu^{N},w_\nu^{N})\in C([0,T^0];H^s)\cap C^1([0,T^0];H^{s-1}).
\]
Meanwhile, it follows from \eqref{compr} that $\supp_x\tilde{\pi}_\nu\subset B_{C_0(R_\nu,T^0)}$. According to Lemmas \ref{Poisson}-\ref{comm0}, if $\gamma_\nu\leq 3$ and $0\leq s-1\leq  {\displaystyle\min_{\nu=i,e}}\{\frac{2}{\gamma_\nu-1}\}+\frac{1}{2}$, then
\begin{equation}\label{univp}
\|\D\vp^{N}\|_s\leq \sum_{\nu=i,e}C(R_\nu,T^0)\|(\tilde{\pi}_\nu)^{\frac{2}{\gamma-1}}\|_{s-1}\leq \sum_{\nu=i,e}C(R_\nu,T^0)|\tilde{\pi}_\nu|_\infty^{\frac{3-\gamma}{\gamma-1}}\|\tilde{\pi}_\nu\|_{s-1},
\end{equation}
 in which $C(R_\nu,T^0)>0$ is a generic constant that depends solely on $R_\nu$ and $T^0$.  It follows from  the requirements on $\tilde{\pi}_\nu$  that  $\D\vp^{N}\in L^\infty([0,T^0];H^{s})$. Since
 \begin{equation}\label{scon}
 	s>\frac{d}{2}+1\quad \text{and} \quad s,d\geq 3,
 \end{equation} 
one needs the following restriction on $\gamma$:
\[
\bar{\gamma}<1+\min\Big\{\frac{4}{3},\dfrac{4}{d-1}\Big\}.
\]
Obviously there is no upper bound for $s$ if $\frac{2}{\gamma_\nu-1}$ are integers. In addition, let $t^1$ and $t^2$ be two moments within $[0,T^0]$. Then
\[
\Delta(\vp^N(t^1,x)-\vp^N(t^2,x))=\sum_{\nu=i,e}C_\nu q_\nu(\tilde{\pi}_\nu(t^1,x)^{\frac{2}{\gamma_\nu-1}}-\tilde{\pi}_\nu(t^2,x)^{\frac{2}{\gamma_\nu-1}}).
\]
By Lemma \ref{Poisson}, it holds that 
\begin{equation}\label{midd}
	\begin{split}
		\|\D(\vp^N(t^1,x)-\vp^N(t^2,x))\|_{s}\leq& C\sum_{\nu=i,e}\|\tilde{\pi}_\nu(t^1,x)^{\frac{2}{\gamma-1}}-\tilde{\pi}_\nu(t^2,x)^{\frac{2}{\gamma-1}}\|_{s-1}\\
		\leq& C\sum_{\nu=i,e}|t^1-t^2|\|\tilde{\pi}_\nu(\tilde{t})^{\frac{2}{\gamma-1}-1}\pt \tilde{\pi}_\nu(\tilde{t})\|_{s-1}, 
	\end{split}
\end{equation}
for $\tilde{t}$ being within $t^1$ and $t^2$. By Lemma \ref{comm0}, if $\gamma_\nu<3$ and $0\leq s-1\leq {\displaystyle\min_{\nu=i,e}}\{\frac{2}{\gamma_\nu-1}\}-\frac{1}{2}$,
\begin{equation}\label{middd}
\|\tilde{\pi}_\nu^{\frac{2}{\gamma-1}-1}\|_{s-1}\leq C(R,T^0) |\tilde{\pi}_\nu|_{\infty}^{\frac{2}{\gamma-1}-2}\|\tilde{\pi}_\nu\|_{s-1}\leq C(R,T^0)\|\tilde{\pi}_\nu\|_{s-1}^{\frac{2}{\gamma-1}-1}.
\end{equation}
Similarly, because of \eqref{scon}, one needs the following restriction on $\gamma$:
\[
\bar{\gamma}<1+\min\Big\{\frac{4}{5},\frac{4}{d+1}\Big\}.
\]
Substituting \eqref{middd} into \eqref{midd} and noticing that $\pt\tilde{\pi}_\nu\in C([0,T^0];H^{s-1})$, one obtains
\begin{equation}\label{mid}
\|\D(\vp^N(t^1,x)-\vp^N(t^2,x))\|_{s}\leq C|t^1-t^2|\to 0, \quad \text{as}\,\,\,t^1\to t^2,
\end{equation}
which implies $\D\vp\in C([0,T^0];H^s)$. This ends the proof.
\end{proof}

Next, we give some a priori estimates for solutions $(n_\nu^{N},w_\nu^{N},\D\vp^N)$ in $H^s$, which are independent of $N$ but dependent on $R$.  For this purpose, we fix a positive constant $c_0$ large enough, which is independent of $N$, such that
\begin{equation}\label{linini}
1+\|n_\nu^0\|_s+\|u^0\|_{\Xi}\leq c_0, \ \  \sup_{0\leq t\leq T^*}\left(\|\tilde{\pi}_\nu(t)\|_s+\|\tilde{v}_\nu(t)\|_s\right)\leq c_1, \ \  \sup_{0\leq t\leq T^*}\|\D\tilde{\Phi}(t)\|_s\leq c_2,
\end{equation}
for some constants $c_1$ and $c_2$ independent of $N$ satisfying
\[
c_2>c_1>c_0>1,
\]
and time $T^*\in(0,T^0)$, which  will be determined later (see \eqref{T*1}) and depends on $R, c_0$ and $T^0$ but independent of $N$. For simplicity, the superscripts of $N$ are not expressed explicitly in the rest of this section.

In the rest of this section, $C\geq 1$ will denote a generic positive constant depending only on fixed constant $(A_\nu,\gamma_\nu,\kappa,T^0,R_\nu)$, but independent of $N$, which may differ from line to line. It follows from \eqref{fujia1}--\eqref{wuqiong}, ({\bf IH(j)}$_1$) and \eqref{fujiajia0}  in the proof for Lemma \ref{hu}  that
\begin{equation}\label{gujishuyun}\begin{split}
\|\hu_\nu\|_{T_0,\Xi}\leq C c^4_0, \quad \text{for}\,\,T_0=\min\{T^*,c_1^{-4}\},
\end{split}
\end{equation}
and based on this, one obtains
\[
|\hu_\nu(t,x)|\leq C(1+c_0^4)e^{Cc_0^4t}(1+|x|)\leq Cc_0^4(1+|x|), \quad \text{for}\quad T_0=\min\{T^*,c_1^{-4}\}.
\]

Let $(n_\nu,w_\nu,\vp)$ be the unique classical solution to \eqref{LTEP} in $[0,T^0]\times \R^d$ obtained in Lemma \ref{exisloc}. One has the following uniform estimates on $(n_\nu,w_\nu)$.
\begin{lemma}\label{locuni} It holds
\begin{equation}\label{locunieq} 
\|(n_\nu,w_\nu)(t)\|_{s}^2\leq Cc_0^2, \qquad {\rm{for}}\,\,\, 0\leq t\leq T_1=\min\{T_0,c_2^{-4}\}.
\end{equation}

\end{lemma}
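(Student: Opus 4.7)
The proof will carry out a standard $H^s$ energy estimate for the linear symmetric hyperbolic system \eqref{LTEP}, closed by a Gr\"onwall argument on a short time interval determined by the known bounds $c_0,c_1,c_2$. The plan is to apply $\D^\al$ for $|\al|\le s$ to \eqref{LTEP}, take $L^2$ inner products with $\D^\al n_\nu^N$ and $\D^\al w_\nu^N$ respectively, and sum. Because the matrices $A_\nu^j(\tilde W_\nu;\hu_\nu^N,1)$ are symmetric, the principal couplings $\frac{\gamma_\nu-1}{2}\tilde\pi_\nu\D\D^\al n_\nu$ and $\frac{\gamma_\nu-1}{2}\tilde\pi_\nu\,\dive\D^\al w_\nu$ cancel after one integration by parts, leaving a residue controlled by $|\D\tilde\pi_\nu|_\infty\le C\|\tilde\pi_\nu\|_s\le Cc_1$. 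The convection terms give $\frac12\int\dive(\tilde v_\nu+\hu_\nu^N)|\D^\al(\cdot)|^2$; here $|\dive\tilde v_\nu|_\infty\le Cc_1$ and, using $\D F^N=O(1/N)$ supported on $|x|\sim N$ together with the linear growth bound $|\hu_\nu|\le Cc_0^4(1+|x|)$ from \eqref{gujishuyun}, one sees $|\dive\hu_\nu^N|_\infty\le Cc_0^4$ uniformly in $N$.

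The remaining task is to bound the commutators $[\D^\al,(\tilde v_\nu+\hu_\nu^N)\cdot\D]$ and $[\D^\al,\frac{\gamma_\nu-1}{2}\tilde\pi_\nu\,\dive]$ by Kato--Ponce / Moser-type inequalities (see Lemma \ref{comm0}). The piece containing $\tilde v_\nu$ and $\tilde\pi_\nu$ is immediate from \eqref{linini}. The source $Q_\nu^1(\D\tilde\Phi)$ contributes at most $c_2\|(n_\nu^N,w_\nu^N)\|_s$, and $Q_\nu^2(W_\nu^N;\D\hu_\nu)$ is handled by Moser's inequality together with $\|\hu_\nu\|_{T_0,\Xi}\le Cc_0^4$ (note that $\D^{|\al|+1}\hu_\nu\in L^2$ for $|\al|\ge 1$ because $\D^2\hu_\nu\in H^{s-1}$, while the $|\al|=0$ case uses only $|\D\hu_\nu|_\infty$). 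Combining everything yields an ODE inequality of the form
\begin{equation*}
\frac{\rmd}{\rmd t}\|(n_\nu^N,w_\nu^N)\|_s^2
\le P(c_0,c_1)\bigl(1+\|(n_\nu^N,w_\nu^N)\|_s^2\bigr)
+Cc_2\,\|(n_\nu^N,w_\nu^N)\|_s,
\end{equation*}
for a polynomial $P$ independent of $N$. Since $\|(n_\nu^0,0)\|_s\le c_0$, Gr\"onwall's inequality on $[0,T_1]$ with $T_1=\min\{T_0,c_2^{-4}\}$ produces the claimed bound $\|(n_\nu^N,w_\nu^N)(t)\|_s^2\le Cc_0^2$.

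The main obstacle is the commutator $[\D^\al,\hu_\nu^N\cdot\D]w_\nu^N$, because $w_\nu^N$ is \emph{not} compactly supported while $\hu_\nu^N$ is only in the class $\Xi$ (so $\hu_\nu$ itself is neither in $L^\infty$ nor in $L^2$). Applying Kato--Ponce reduces matters to controlling $\|\D^s\hu_\nu^N\|_{L^2}$ uniformly in $N$. I would decompose
\begin{equation*}
\D^s(\hu_\nu F^N)=\sum_{\beta\le\al,\,|\al|=s}\binom{\al}{\beta}\D^\beta\hu_\nu\,\D^{\al-\beta}F^N,
\end{equation*}
and treat the three types of terms separately: for $|\beta|\ge 2$ the factor $\D^\beta\hu_\nu$ lies in $L^2$ by $\hu_\nu\in\Xi$ and $F^N$ is bounded; for $|\beta|=0,1$ the differences $\D^{s-\beta}F^N$ are $O(N^{-(s-|\beta|)})$ supported on $|x|\sim N$, so together with $|\hu_\nu|\le Cc_0^4(1+|x|)\lesssim c_0^4 N$ and $|\D\hu_\nu|_\infty\le Cc_0^4$ the $L^2$ norms scale like $c_0^4 N^{d/2-s+1}$ and $c_0^4 N^{d/2-s}$, respectively. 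Since $s>d/2+1$, both exponents are negative, giving a bound $\|\D^s\hu_\nu^N\|_{L^2}\le Cc_0^4$ independent of $N$. Commutators involving only $\tilde v_\nu\in H^s$ are routine. For the $n_\nu^N$-equation the same commutator is even easier, because $n_\nu^N$ is compactly supported (by the characteristic argument from Lemma \ref{exisloc}), so one may localize $\hu_\nu^N$ by a fixed cutoff before invoking Moser, eliminating the need to track $N$-dependence at all for that equation.
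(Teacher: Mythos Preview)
Your overall strategy---energy estimates for the linear symmetric hyperbolic system, cancellation of the principal cross terms via the symmetry of $A_\nu^j$, control of $Q_\nu^1$ by $c_2$, control of $Q_\nu^2$ via $\|\hu_\nu\|_{T_0,\Xi}\le Cc_0^4$, and a Gr\"onwall closure on $[0,T_1]$---matches the paper's proof, and the resulting ODE inequality and conclusion are correct. Your key observation that $|\D^s\hu_\nu^N|_2\le Cc_0^4$ uniformly in $N$ (via the Leibniz expansion of $\D^s(\hu_\nu F^N)$ and the scaling of $\D^{s-|\beta|}F^N$ together with $s>d/2+1$) is valid. (Minor slip: the $|\beta|=1$ contribution also scales like $N^{d/2-s+1}$, not $N^{d/2-s}$; both exponents are negative, so nothing changes. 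Also, the relevant commutator lemmas are Lemmas~\ref{comm1}--\ref{comm2}, not Lemma~\ref{comm0}.)

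The one point that needs more care is the commutator $[\D^\al,\hu_\nu^N\cdot\D]W_\nu^N$ at \emph{intermediate} levels $2\le l=|\al|<s$. Since you apply $\D^\al$ for every $|\al|\le s$ and sum, you need a bound at each level, and the standard Kato--Ponce commutator estimate at level $l$ calls for $|\D^l\hu_\nu^N|_2$. This is \emph{not} uniformly bounded in $N$ for small $l$: already $|\D^2\hu_\nu^N|_2$ contains the piece $|\D\hu_\nu\,\D F^N|_2\sim c_0^4 N^{d/2-1}$, which diverges for $d\ge 3$. So the sentence ``Kato--Ponce reduces matters to controlling $\|\D^s\hu_\nu^N\|_{L^2}$'' is accurate only at the top level. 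The paper closes this gap by expanding $\D^\sigma\hu_\nu^N$ via Leibniz \emph{inside} the commutator integral and pairing each piece with the remaining factor of $\D^{l-\sigma+1}W_\nu$ through Gagliardo--Nirenberg $L^p$--$L^{p'}$ interpolation (the terms $L_{n_\nu}^{(\sigma)}$, $L_{w_\nu}^{(\sigma)}$ in the proof). An alternative consistent with your write-up is to run the energy estimate only at $l=0$ and $l=s$ and recover the full $H^s$ norm by interpolation; then your bounds $|\D\hu_\nu^N|_\infty\le Cc_0^4$ and $|\D^s\hu_\nu^N|_2\le Cc_0^4$ genuinely suffice. Either route completes the argument with the constants you obtain.
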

\begin{proof} First, applying $\D^l$ to \eqref{LTEP}$_1$ ($0\leq l\leq s$), multiplying the resulting equations by $\D^l W$ and integrating over $\mathbb{R}^d$ yield that
\begin{align}\label{mid1}
    \dfrac{1}{2}\dfrac{\rmd}{\rmd t}|\D^l W_\nu|_2^2=&-\int_{\R^d}(\D^lW_\nu)^\top \sum_{j=1}^d\Big(\D^l\Big( A_\nu^j(\tilde{W}_\nu;\hu_\nu^N)\pa_{x_j}W_\nu\Big)- A_\nu^j(\tilde{W}_\nu;\hu_\nu^N)\D^l\pa_{x_j}W_\nu\Big)\rmd x\nonumber\\    
    &+\frac{1}{2}\sum_{j=1}^d\int_{\R^d}(\D^l W_\nu)^\top\pa_{x_j}A_\nu^j(\tilde{W}_\nu;\hu_\nu^N,1)\D^l W_\nu\rmd x\\
    &+q_\nu\int_{\R^d}(\D^{l}\D\tilde{\Phi})^\top\D^l w_\nu\rmd x+\int_{\R^d}(\D^lW_\nu)^\top\D^l Q_\nu^2(W_\nu;\D\hu_\nu)\rmd x:= \sum_{j=1}^4\mathcal{I}_j^l,\nonumber
\end{align}
with the natural correspondence of $\{\mathcal{I}_j^l\}_{j=1}^4$. By \eqref{galilean}, one obtains
\begin{align*}
|\D\hu^N|_\infty\leq& C|\D \hu|_\infty|F^N|_\infty+C|\hu\D F^N|_\infty\\
\leq&Cc_0^4+\frac{Cc_0^4}{N}(1+|x|)|F'(|x|/N)|\leq Cc_0^4,\qquad N\leq |x|\leq 2N.
\end{align*}
It follows that $\mathcal{I}_2^l$ and $\mathcal{I}_3^l$ can be estimated as follows
\begin{equation}\label{I12loc}
\begin{split}
|\mathcal{I}_2^l|\leq& C(|\D \tilde{\pi}_\nu|_\infty+|\D \tilde{v}_\nu|_\infty+|\D \hu^N|_\infty)|\D^l W_\nu|_2^2\leq C(c_0^4+c_1)\|W_\nu\|_s^2, \\
|\mathcal{I}_3^l|\leq& C|\D^l W_\nu|_2^2+ C\|\D \tilde{\Phi}\|_s^2\leq C\|W_\nu\|_s^2+Cc_2^2.
\end{split}
\end{equation}

For $\mathcal{I}_1^l$, notice that $\mathcal{I}_1^0=0$, and for the case $l=1$:
\[
|\mathcal{I}_1^1|\leq C(|\D \tilde{\pi}_\nu|_\infty+|\D \tilde{v}_\nu|_\infty+|\D \hu_\nu^N|_\infty)|\D W_\nu|_2^2\leq C(c_0^4+c_1)\|W_\nu\|_s^2.
\]
For $2\leq l\leq s$, $\mathcal{I}_1^l$ is a sum of terms as
\[
L^{(\sigma)}:=\int_{\R^d}(\D^lW_\nu)^\top\D^\sigma A_\nu^j(\tilde{W}_\nu,\D\hu^N_\nu,1)\D^{l-\sigma }\pa_{x_j}W_\nu\rmd x, \qquad 1\leq \sigma\leq l, \qquad 1\leq j\leq d.
\]
Then it is clear that for $\sigma=1:$
\begin{equation}\label{midd3}
\Big|L^{(1)}\Big|\leq C|\D^l W|_2^2|\D(\tilde{\pi}_\nu+\tilde{v}_\nu+\hu_\nu^N)|_\infty\leq C(c_0^4+c_1)\|W_\nu\|_s^2.
\end{equation}
For $2\leq \sigma\leq l$, one needs to consider different cases. Let
\[
B_\nu^j(\tilde{W}_\nu)=\left(
		\begin{matrix}
			\tilde{v}_\nu^{(j)}&\dfrac{\gamma_\nu-1}{2}\tilde{\pi}_\nu\xi_j^\top\\
			\dfrac{\gamma_\nu-1}{2}\tilde{\pi}_\nu\xi_j&\tilde{v}_\nu^{(j)}\Id
		\end{matrix}
		\right),
\]
where $\tilde{v}_\nu=(\tilde{v}_\nu^{(1)},\cdots, \tilde{v}_\nu^{(d)})^\top$. Then for $2\leq \sigma\leq l$, one has
\begin{align}\label{midd4}
    |L^{(\sigma)}|\leq&C\Big|\int_{\R^d}(\D^l W_\nu)^\top\D^\sigma B_\nu^j(\tilde{W}_\nu)\D^{l-\sigma }\pa_{x_j}W_\nu\rmd x\Big|\nonumber\\
    &+C\Big|\int_{\R^d}\D^l n_\nu\D^\sigma(\hu_\nu^N)^{(j)}\D^{l-\sigma }\pa_{x_j}n_\nu\rmd x\Big|\\
    &+C\Big|\int_{\R^d}(\D^l w_\nu)^\top\D^\sigma(\hu^N)^{(j)}\D^{l-\sigma }\pa_{x_j}w_\nu\rmd x\Big|:=L_{\tilde{W}_\nu}^{(\sigma)}+L_{n_\nu}^{(\sigma)}+L_{w_\nu}^{(\sigma)},\nonumber
\end{align}
with the natural correspondence of $L_{\tilde{W}_\nu}^{(\sigma)},L_{n_\nu}^{(\sigma)}$ and $L_{w_\nu}^{(\sigma)}$. First, by  Lemma \ref{ga-ni}, one has
\begin{align}\label{midd5}
	|L_{\tilde{W}_\nu}^{(\sigma)}|\leq& C|\D^l W_\nu|_2|\D^\sigma\tilde{W}_\nu|_{\frac{2(l-1)}{\sigma-1}}|\D^{l+1-\sigma }W_\nu|_{\frac{2(l-1)}{l-\sigma}}\nonumber\\
	\leq& |\D^l W_\nu|_2|\D\tilde{W}_\nu|_\infty^{\frac{l-\sigma}{l-1}}|\D^l\tilde{W}_\nu|_2^{\frac{\sigma-1}{l-1}}|\D W_\nu|_\infty^{\frac{\sigma-1}{l-1}}|\D^lW_\nu|_2^{\frac{l-\sigma}{l-1}}\\
 \leq & C\|\tilde{W}_\nu\|_s\|W_\nu\|_s^2\leq Cc_1\|W_\nu\|_s^2.\nonumber
\end{align}
Next, by Lemma \ref{hu}, one obtains
\begin{equation*}
\begin{split}
|\D^2 \hu_\nu^N|_\infty\leq & C|\D^2\hu_\nu|_\infty+C|\D\hu_\nu|_\infty|\D F^N|_\infty+C|\hu_\nu|_\infty|\D^2 F^N|_\infty\\
\leq & C|\D^2\hu_\nu|_\infty+\frac{C}{N}|\D\hu_\nu|_\infty+Cc_0^4(1+|x|)\frac{1}{N^2}\leq Cc_0^4,
\end{split}
\end{equation*}
it follows that for $\sigma=2$:
\begin{equation}\label{midd6}
\Big|L_{n_\nu}^{(2)}+L_{w_\nu}^{(2)}\Big|\leq C|\D^2 \hu_\nu^N|_\infty\| W_\nu\|_s^2\leq Cc_0^4\| W_\nu\|_s^2.
\end{equation}
For cases $3\leq \sigma\leq l$, one has
\begin{align}
	\Big|L_{n_\nu}^{(\sigma)}\Big|\leq&
\Big|\int_{\R^d}\D^l n_\nu\sum_{m'=3}^\sigma m_{\sigma m'}\D^{m'} \hu_\nu \D^{\sigma-m'} F^N\D^{l-\sigma }\pa_{x_j}n_\nu\rmd x\Big|\nonumber\\
&+\Big|\int_{\R^d}\D^l n_\nu m_{\sigma 2}\D^{2} \hu_\nu \D^{\sigma-2} F^N\D^{l-\sigma }\pa_{x_j}n_\nu\rmd x\Big|\nonumber\\
	&+\Big|\int_{\R^d}\D^l n_\nu(\sigma\D\hu_\nu\D^{\sigma-1}F^N\D^{l-\sigma }\pa_{x_j}n_\nu+\hu_\nu\D^\sigma F^N\D^{l-\sigma }\pa_{x_j}n_\nu)\rmd x\Big|\nonumber\\
	\leq& C\sum_{m'=3}^\sigma|\D^l n_\nu|_2|\D^{l+1-\sigma }n_\nu|_{\frac{2(l-1)}{l-\sigma+1}}|\D^{m'} \hu_\nu|_{\frac{2(l-1)}{\sigma-2}}+C|\D^ln_\nu|_2|\D^2\hu_\nu|_\infty|\D^{l-\sigma}\pa_{x_j}n_\nu|_2\nonumber\\
	&\label{midd7}+C|\D^l n_\nu|_2\Big(|\D\hu_\nu|_\infty+c_0^4(1+|x|)N^{-1}\Big)|\D^{l+1-\sigma} n_\nu|_2\\
	\leq&C|\D^l n_\nu|_2\Big(\sum_{m'=3}^\sigma|n_\nu|_\infty^{\frac{\sigma-2}{l-1}}|\D^{l-1}n_\nu|_2^{\frac{l-\sigma+1}{l-1}}|\D^{m'} \hu_\nu|_{2}^{\frac{\sigma-m'}{l-m'+1}}|\D^{m'} \hu_\nu|_{\frac{2(l-1)}{m'-2}}^{\frac{l-\sigma+1}{l-m'+1}}+ c_0^4\|n_\nu\|_s\Big)\nonumber\\
	\leq& C\|n_\nu\|_s^2\Big(\sum_{m'=3}^\sigma|\D^{m'} \hu_\nu|_{2}^{\frac{\sigma-m'}{l-m'+1}}|\D^{l+1} \hu_\nu|_2^{\frac{(m'-2)(l-\sigma+1)}{(l-1)(l-m'+1)}}|\D^2\hu_\nu|_\infty^{\frac{l-\sigma+1}{l-1}}+c_0^4\Big)\nonumber\\
	\leq & Cc_0^4\|n_\nu\|_s^2,\nonumber
\end{align}
with $m_{\sigma m'}$ the binomial coefficients. Similar estimates can be obtained for $L_{w_\nu}^{(\sigma)}$. Then  \eqref{midd7} implies  that for $3\leq \sigma\leq l$,
\begin{equation}\label{midd9}
	\Big|L_{n_\nu}^{(\sigma)}+L_{w_\nu}^{(\sigma)}\Big|\leq Cc_0^4\| W_\nu\|_s^2.
\end{equation}
Substituting \eqref{midd5}, \eqref{midd6} and \eqref{midd9} into \eqref{midd4}, one has
\[
|L^{(\sigma)}|\leq C(c_0^4+c_1)\| W_\nu\|_s^2, \quad \text{for}\,\,\, 2\leq \sigma\leq l,
\]
in which further combining \eqref{midd3} yields
\begin{equation}\label{I3loc}
	|\mathcal{I}_1^l|\leq C(c_0^4+c_1)\|W_\nu\|_s^2.
\end{equation}

For $\mathcal{I}_4^l$, one has for $l=0$ or $l=1$:
\begin{equation}\label{midd1}
	\begin{split}
    |\mathcal{I}_4^0|\leq& |W_\nu|_2^2|\D\hu_\nu|_\infty\leq Cc_0^4|W_\nu|_2^2,\\
    |\mathcal{I}_4^1|\leq& |\D W_\nu|_2(|\D\hu_\nu|_\infty|\D W_\nu|_2+|\D^2\hu_\nu|_\infty|W_\nu|_2)\leq Cc_0^4\|W_\nu\|_{s}^2,
    \end{split}
\end{equation}
and for cases $2\leq l\leq s$, it holds
\begin{align}\label{midd2}
    |\mathcal{I}_4^l|\leq& C\sum_{\sigma=l-1}^l|\D^l W_\nu|_2 |\D^{\sigma+1}\hu_\nu|_2|\D^{l-\sigma}W_\nu|_\infty+C\sum_{\sigma=2}^{l-2}|\D^l W_\nu|_2|\D^{\sigma+1}\hu_\nu\D^{l-\sigma}W_\nu|_2\nonumber\\
    &+C\sum_{\sigma=0}^1|\D^{\sigma+1}\hu_\nu|_\infty|\D^{l-\sigma} W_\nu|_2|\D^{l}W_\nu|_2\nonumber\\
    \leq& C\sum_{\sigma=2}^{l-2}|\D^l W_\nu|_2|\D^{\sigma+1}\hu_\nu|_{\frac{2(l-1)}{\sigma-1}}|\D^{l-\sigma}W_\nu|_{\frac{2(l-1)}{l-\sigma}}+Cc_0^4\|W_\nu\|_s^2\\
    \leq& C\sum_{\sigma=2}^{l-2}|\D^l W_\nu|_2
    |\D^2\hu_\nu|_\infty^{\frac{l-\sigma}{l-1}}|\D^{l+1}\hu_\nu|_2^{\frac{\sigma-1}{l-1}}|W_\nu|_\infty^{\frac{\sigma-1}{l-1}}|\D^{l-1}W_\nu|_2^{\frac{l-\sigma}{l-1}}+Cc_0^4\|W_\nu\|_s^2\nonumber\\
    \leq &Cc_0^4\|W_\nu\|_s^2.\nonumber
\end{align}
Estimates \eqref{midd1}--\eqref{midd2} yield
\begin{equation}\label{I4loc}
	|\mathcal{I}_4^l|\leq Cc_0^4\|W_\nu\|_s^2.
\end{equation}

Due to estimates \eqref{I12loc}, \eqref{I3loc} and \eqref{I4loc}, one concludes
\[
\dfrac{\rm d}{\rmd t}\|W_\nu\|_s^2\leq C(c_0^4+c_1)\|W_\nu\|_s^2+Cc_2^2,
\]
which, along with Gronwall's inequality, implies
\begin{equation}\label{fineslin}
\|W_\nu(t)\|_s^2\leq e^{Cc_2^4t}(\|W_\nu(0)\|_s^2+Cc_2^2t)\leq Cc_0^2,
\end{equation}
for $0\leq t\leq T_1=\min\{T_0, c_2^{-4}\}$. This ends the proof.
\end{proof}

We then give uniform estimates on $\D\vp$.

\begin{lemma}\label{nablavp} The solution $\vp$ to \eqref{LTEP}$_2$ has the following estimates
	\begin{equation}\label{locunieq3}
		\|\D\vp(t)\|_{s}\leq Cc_1^{\frac{2}{\underline{\gamma}-1}}, \qquad {\rm{for}}\,\,\, 0\leq t\leq T_1.
	\end{equation}
\end{lemma}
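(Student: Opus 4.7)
The plan is to estimate $\D\vp$ directly from the Poisson equation \eqref{LTEP}$_2$ using the compact support of $\tilde{\pi}_\nu$ (inherited from the characteristic bound \eqref{compr}), a composition inequality to handle the nonlinearity $\tilde{\pi}_\nu^{2/(\gamma_\nu-1)}$, and finally the a priori bound $\|\tilde{\pi}_\nu\|_s \leq c_1$ from \eqref{linini}. The argument essentially reuses the chain already displayed in \eqref{univp}, but with the quantitative bound on $\tilde{\pi}_\nu$ made explicit so as to produce the exponent $2/(\underline{\gamma}-1)$.

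More concretely, I would first note that since $\supp_x n_\nu^0 \subset B_{R_\nu}$, the estimate \eqref{compr} shows $\supp_x \tilde{\pi}_\nu(t,\cdot) \subset B_{C_0(R_\nu,T^0)}$ for $t\in[0,T_1]$, and therefore so does $\tilde{\pi}_\nu^{2/(\gamma_\nu-1)}$. Hence Lemma \ref{Poisson} applies to \eqref{LTEP}$_2$ and yields
\begin{equation*}
\|\D\vp(t)\|_s \;\leq\; C \sum_{\nu=i,e}\bigl\|\tilde{\pi}_\nu^{2/(\gamma_\nu-1)}(t)\bigr\|_{s-1},
\end{equation*}
where the constant $C$ may depend on $R_\nu$ and $T^0$ but not on $N$.

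Next, under the imposed restrictions $\bar{\gamma} < 1+\min\{4/3,\, 4/(d-1)\}$, $\underline{\gamma}>1$ and $s-1 \leq \min_\nu\{2/(\gamma_\nu-1)\} + 1/2$ (all guaranteed by the hypotheses of Theorem \ref{comthmloc}), the composition estimate from Lemma \ref{comm0} gives
\begin{equation*}
\bigl\|\tilde{\pi}_\nu^{2/(\gamma_\nu-1)}\bigr\|_{s-1} \;\leq\; C\, |\tilde{\pi}_\nu|_\infty^{(3-\gamma_\nu)/(\gamma_\nu-1)}\, \|\tilde{\pi}_\nu\|_{s-1}.
\end{equation*}
Since $s>d/2$, Sobolev embedding and \eqref{linini} provide $|\tilde{\pi}_\nu|_\infty \leq C\|\tilde{\pi}_\nu\|_s \leq C c_1$, so combining the two displays and adding the exponents via $(3-\gamma_\nu)/(\gamma_\nu-1) + 1 = 2/(\gamma_\nu-1)$ gives
\begin{equation*}
\|\D\vp(t)\|_s \;\leq\; C\, c_1^{2/(\gamma_\nu-1)} \;\leq\; C\, c_1^{2/(\underline{\gamma}-1)},
\end{equation*}
where in the final step we used $c_1 > 1$ and $1/(\underline{\gamma}-1) \geq 1/(\gamma_\nu-1)$ for $\nu=i,e$. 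This yields \eqref{locunieq3} on $[0,T_1]$.

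There is essentially no serious obstacle here beyond bookkeeping: the only delicate point is verifying that the composition estimate applies in the range $s-1 \leq 2/(\gamma_\nu-1)+1/2$ with $\gamma_\nu < 3$, but these are precisely the admissibility conditions that were already used in deriving \eqref{univp} and \eqref{middd}. Unlike the $H^s$ energy bound for $(n_\nu,w_\nu)$, this step does not require a Gronwall argument because \eqref{LTEP}$_2$ is an elliptic constraint on $\vp$ at each fixed time; the time dependence enters only through the source $\tilde{\pi}_\nu$, whose $H^s$ norm is already controlled pointwise in time by $c_1$.
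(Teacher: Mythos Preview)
Your approach is correct and matches the paper's: compact support of $\tilde\pi_\nu$ via characteristics, then Lemma~\ref{Poisson}, then the composition bound of Lemma~\ref{comm0}, then $\|\tilde\pi_\nu\|_s\leq c_1$ from \eqref{linini}. One bookkeeping point is worth sharpening. The constant $C_0(R_\nu,T^0)$ in \eqref{compr} that you invoke actually hides $\|\tilde v_\nu\|_{L^\infty([0,T^0]\times\R^d)}$ inside $\tilde C_0$, so quoting it as a support radius would let a $c_1$-dependence leak into the Poisson constant and spoil the claim that the final $C$ is the section's generic constant. The paper handles this by re-deriving the characteristic bound on the shorter interval $[0,T_1]$, obtaining $|X_\nu^N|\leq (R_\nu+c_1T_1)\exp(c_0^4T_1)$, and then using $T_1\leq c_2^{-4}$ together with $c_0<c_1<c_2$ to see that this radius is bounded by a constant depending only on $R_\nu$. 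That is precisely what guarantees the prefactor in $Cc_1^{2/(\underline\gamma-1)}$ is independent of $c_1,c_2$, which in turn is what allows the choice $c_2=Cc_1^{2/(\underline\gamma-1)}$ in \eqref{T*1} to close the iteration.
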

\begin{proof}	Due to \eqref{compr}, by Gronwall's inequality, for $0\leq t\leq T_1$, if $\tilde{\pi}_\nu(t;X(t;\xi_0^\nu))\neq 0$, then one has $|\xi_0^\nu|\leq R_\nu$, and thus
	\[
	|X_\nu^N(t;\xi_0^\nu)|\leq (R_\nu+c_1T_1)\text{exp}(c_0^4T_1).
	\]
	Consequently, by Lemma \ref{Poisson}, one has
	\begin{align*}
		\|\D\vp\|_s\leq& C\sum_{\nu=i,e}(((R_\nu+c_1T_1)\text{exp}(c_0^4T_1))^{\frac{d+2}{2}}+1)\|\tilde{\pi}_\nu^{\frac{2}{\gamma_\nu-1}}\|_{s-1}\nonumber\\
		\leq& C\sum_{\nu=i,e}(((R_\nu+c_1c_2^{-4})\text{exp}(c_0^4c_2^{-4}))^{\frac{d+2}{2}}+1)c_1^{\frac{2}{\gamma_\nu-1}}\leq Cc_1^{\frac{2}{\underline{\gamma}-1}},
	\end{align*}
	for $0\leq t\leq T_1=\min\{T_0, c_2^{-4}\}$. This ends the proof.
\end{proof}

It follows from the estimates  \eqref{fineslin}-\eqref{locunieq3}  that
\[
\|W_\nu(t)\|_s^2\leq Cc_0^2, \qquad \|\D\vp(t)\|_{s}\leq Cc_1^{\frac{2}{\underline{\gamma}-1}},
\]
for $0\leq t\leq T_1=\min\{T_0, c_2^{-4}\}$. Then one can define $c_1$, $c_2$ and  $T^*$ by
\begin{equation}\label{T*1}
	c_1=C^{\frac{1}{2}}c_0, \qquad c_2=Cc_1^{\frac{2}{\underline{\gamma}-1}}=C^{\frac{\underline{\gamma}+1}{\underline{\gamma}-1}}c_0^{\frac{2}{\underline{\gamma}-1}}, \qquad T^*=\min\{T_0,c_2^{-4}\},
\end{equation}
we then deduce that for $0\leq t\leq T^*$,
\begin{equation}\label{locunieq2}
\|W_\nu(t)\|_s^2\leq c_1^2, \qquad \|\D\vp(t)\|_s^2\leq c_2^2.
\end{equation}

In summary, for constants $c_1, c_2$ and the time $T^*$ defined in \eqref{T*1} as well as the given functions $(\tilde{\pi}_\nu,\tilde{v}_\nu,\tilde{\Phi})$ satisfying \eqref{cond1}, \eqref{cond2} and \eqref{linini}, there exists a unique solution $(n_\nu^N,w_\nu^N,\vp^N)$ satisfying
\eqref{locunieq2}. Here it should be noted that the definitions of $(c_1,c_2,T^*)$ are all independent of $N$, but may be dependent on $R_\nu$.

\subsection{Strong convergence from  linearized problems to the nonlinear one} The proof is based on the classical iteration scheme and the existence results for the linearized problem obtained in \S\ref{3.1}. 

We first introduce our iteration scheme. Let $(\tilde{\pi}_\nu,\tilde{v}_\nu,\tilde{\Phi})=(n_\nu^0,w_\nu^0=0,0)$ and $(n_\nu^1,w_\nu^1)$ be the classical solution to \eqref{LTEP}--\eqref{LTEPini}. Since $(n_\nu^0)^{\frac{2}{\gamma-1}}\in H^{s-1}\cap L_{\text{loc}}^1$, then
\[
\vp^1=\dfrac{1}{d(d-2)\omega_n}\int_{\R^d}\dfrac{C_i(n_i^0)^{\frac{2}{\gamma_i-1}}-C_e(n_e^0)^{\frac{2}{\gamma_e-1}}}{|x-y|^{d-2}}\rmd y<\infty,
\]
which is exactly the solution to the Poisson equation \[
\Delta \vp^1=C_i(n_i^0)^{\frac{2}{\gamma_i-1}}-C_e(n_e^0)^{\frac{2}{\gamma_e-1}},
\] 
and thus $\D\vp^1\in L^\infty([0,T];H^s)$ according to Lemma \ref{Poisson}.
Then we construct approximate solutions $(W_\nu^{m+1}:=(n_\nu^{m+1},(w_\nu^{m+1})^\top,\vp^{m+1})$ inductively as follows: Assume that $(W_\nu^m:=(n_\nu^{m},(w_\nu^{m})^\top,\vp^{m})$ has been defined for $m\geq 1$, and let $(W_\nu^{m+1}, \vp^{m+1})$ be the unique solution to \eqref{LTEP}--\eqref{LTEPini} with $(\tilde{\pi}_\nu,\tilde{v}_\nu,\tilde{\Phi})$ replaced by $(n_\nu^m,w_\nu^m,\vp^{m})$ as follows
\begin{equation}\label{lintruniter}
\begin{cases}
	\pt W_\nu^{m+1}+ \displaystyle \sum_{j=1}^d A_\nu^j(\tilde{W}_\nu^m;\hu_\nu^N,1)\pa_{x_j}W_\nu^{m+1}=Q_\nu^1(\D \vp^m)-Q_\nu^2(W_\nu^m;\D\hu_\nu),\\
	\Delta\vp^N=C_i n_i^{2/(\gamma_i-1)}-C_e n_e^{2/(\gamma_e-1)},\\
	\end{cases}
\end{equation}
with the initial data
\begin{equation}\label{lintruniterini}
	(n_\nu^{m+1},w_\nu^{m+1})(0,x)=(n_\nu^0(x),0).
\end{equation}

For simpler interpretation, let $\hat{\Omega}$ be a Banach space satisfying
\[
\hat{\Omega}=\Big\{(f,g,h)(t,x)|\sup_{0\leq t\leq T^*}(\|f(t)\|_s+\|g(t)\|_s)\leq c_1, \sup_{0\leq t\leq T^*}\|\D h\|_{s}\leq c_2\Big\}.
\]
Since $c_1>c_0$ and $w^0=0$, it holds
\[
\|n_\nu^0\|_{s}+\|w_\nu^0\|_s\leq c_1, \qquad \|\D\vp^0\|_s=0\leq c_2.
\]
Besides, by the similar argument as that in Lemma \ref{nablavp}, one has
\[
\|\D \vp^1(t)\|_s\leq c_2, \qquad \text{for}\,\,\,t\in(0,T^*].
\]
This yields that $(n_\nu^0,w_\nu^0,\vp^0)\in\hat{\Omega}$ as well as $(n^1,w^1,\vp^1)\in\hat{\Omega}$.  By the argument used at the end of \S\ref{3.1}, it holds that if $(n_\nu^m,w_\nu^m,\vp^m)\in \hat{\Omega}$, one has $(n_\nu^{m+1},w_\nu^{m+1}, \vp^{m+1})\in\hat{\Omega}$, which shows that the map $\mathcal{T}:(n_\nu^m,w_\nu^m,\vp^m)\to (n_\nu^{m+1},w_\nu^{m+1}, \vp^{m+1})$ defined by \eqref{lintruniter}--\eqref{lintruniterini} is an onto map from $\hat{\Omega}$ to $\hat{\Omega}$ itself.

We now prove the convergence of the whole sequence $(n_\nu^m,w_\nu^m,\vp^m)$ to a limit $(n_\nu,w_\nu,\vp)$ in some strong sense. Define $\bar{W}_\nu^{m+1}:=(\bar{n}_\nu^{m+1},(\bar{w}_\nu^{m+1})^\top)^\top$ and $\bar{\vp}^{m+1}$ by
\[
\bar{n}_\nu^{m+1}=n_\nu^{m+1}-n_\nu^m, \qquad \bar{w}_\nu^{m+1}=w_\nu^{w+1}-w_\nu^m, \qquad \bar{\vp}^{m+1}=\vp^{m+1}-\vp^m.
\]
Notice that $A_\nu^j(\,\cdot\,;\hu_\nu^N,1), Q_\nu^1(\,\cdot\,)$ and $Q_\nu^2(\,\cdot\,;\D\hu_\nu)$ are all linear functions, then it follows from \eqref{lintruniter}--\eqref{lintruniterini} that $\bar{W}_\nu^{m+1}$ and $\bar{\vp}^{m+1}$ satisfy
\begin{equation}\label{chait}
	\begin{cases}
\quad \pt \bar{W}^{m+1} +\displaystyle \sum_{j=1}^d A_\nu^j(W_\nu^m;\hu_\nu^N,1)\pa_{x_j}\bar{W}_\nu^{m+1}\\
=-Q_\nu^1(\D\bar{\vp}^m)-Q_\nu^2(\bar{W}_\nu^{m+1};\D\hu_\nu)-\displaystyle \sum_{j=1}^d A_\nu^j(\bar{W}_\nu^m;0,1)\pa_{x_j}W_\nu^{m},\\
\quad \Delta\bar{\vp}^{m+1}=\sum_{\nu=i,e}q_\nu C_\nu ((n_\nu^{m})^{2/(\gamma_\nu-1)}-(n_\nu^{m-1})^{2/(\gamma_\nu-1)}),
\end{cases}
\end{equation}
with the initial data
\begin{equation}\label{chaitini}
	(\bar{n}_\nu^{m+1},\bar{w}_\nu^{m+1})(0,x)=(0,0).
\end{equation}
Multiplying \eqref{chait}$_1$ by $2\bar{W}_\nu^{m+1}$ and integrating over $\mathbb{R}^d$ yield that
\begin{align}\label{midd10}
	\dfrac{\rmd}{\rmd t}|\bar{W}_\nu^{m+1}|_2^2\leq& C(|\D W_\nu^m|_\infty+|\D\hu_\nu^N|_\infty+|\D\hu_\nu|_\infty)|\bar{W}_\nu^{m+1}|_2^2\nonumber\\
 &+C|\bar{w}^{m+1}|_2|\D\bar{\vp}^m|_2 +C|\bar{W}_\nu^{m+1}|_2|\D W_\nu^m|_\infty|\bar{W}_\nu^{m}|_2\\
	\leq& C|\bar{W}_\nu^{m+1}|_2^2+C(|\D\bar{\vp}^m|_2^2+|\bar{W}_\nu^{m}|_2^2).\nonumber
\end{align}
Besides, by Lemma \ref{Poisson}, the Poisson equation \eqref{chait}$_2$ shows that for $m\geq 2$,
\begin{align}\label{naka000}
\begin{split}
	|\D\bar{\vp}^{m}|_2\leq& C\sum_{\nu=i,e}|((n_\nu^{m-1})^{2/(\gamma_\nu-1)}-(n_\nu^{m-2})^{2/(\gamma_\nu-1)})|_{\frac{2d}{d+2}}\\
	\leq& C\sum_{\nu=i,e}|\bar{n}_\nu^{m-1}|_2\int_0^1|(n_\nu^{m-2}+s\bar{n}_\nu^{m-1})^{\frac{3-\gamma_\nu}{\gamma_\nu-1}}|_d\rmd s\leq C\sum_{\nu=i,e}|\bar{n}_\nu^{m-1}|_2.
\end{split}
\end{align}
Hence, substituting \eqref{naka000} into \eqref{midd10} implies that for $m\geq 2$,
\[
\sum_{\nu=i,e}\dfrac{\rmd}{\rmd t}|\bar{W}_\nu^{m+1}|_2^2\leq C\sum_{\nu=i,e}\left(|\bar{W}_\nu^{m+1}|_2^2+|\bar{W}_\nu^{m}|_2^2+|\bar{W}_\nu^{m-1}|_2^2\right).
\]
By  Gronwall's inequality, it holds that
\[
\sum_{\nu=i,e}|\bar{W}_\nu^{m+1}(t)|_2^2\leq  Ct e^{Ct}\sup_{s\in[0,t]}\sum_{\nu=i,e}(|\bar{W}_\nu^{m}(s)|_2^2+|\bar{W}_\nu^{m-1}(s)|_2^2).
\]
Choose $T^{**}\in(0,T^*)$ small enough such that
\[
CT^{**}e^{CT^{**}}<\dfrac{1}{4},
\]
which, along with \eqref{locunieq2},  implies that
\begin{equation*}\begin{split}
\sum_{\nu=i,e}\sum_{m=3}^\infty\sup_{s\in[0,T^{**}]}|\bar{W}_\nu^{m}(s)|_2^2\leq & \dfrac{1}{2}\sum_{\nu=i,e}\sum_{m=3}^\infty\sup_{s\in[0,T^{**}]}|\bar{W}_\nu^{m}(s)|_2^2+\dfrac{1}{4}\sup_{s\in[0,T^{**}]}\sum_{\nu=i,e}|\bar{W}_\nu^{1}(s)|_2^2\\
&+\dfrac{1}{2}\sup_{s\in[0,T^{**}]}\sum_{\nu=i,e}|\bar{W}_\nu^{2}(s)|_2^2.
\end{split}
\end{equation*}
This implies
\[
\sum_{m=1}^\infty\sum_{\nu=i,e}\sup_{s\in[0,T^{**}]}|\bar{\W}^{m}(s)|_2^2\leq C,
\]
and
\[
\sum_{m=1}^\infty\sup_{s\in[0,T^{**}]}|\D\bar{\vp}^{m}(s)|_2^2\leq \sup_{s\in[0,T^{**}]}|\D\bar{\vp}^{1}(s)|_2^2+ \sum_{m=1}^\infty\sum_{\nu=i,e}\sup_{s\in[0,T^{**}]}|\bar{n}_\nu^{m-1}(s)|_2^2\leq C.
\]
This yields that the whole sequence $(n_\nu^m,w_\nu^m,\vp^m)$ converges to a limit $(n_\nu,w_\nu,\vp)$ in the following strong sense:
\begin{equation}\label{converloc}
	(n_\nu^m,w_\nu^m,\D\vp^m)\to(n_\nu,w_\nu,\D\vp)\quad\text{in}\,\,\,L^\infty([0,T^{**}];H^{s-1}).
\end{equation}
Due to \eqref{locunieq2} and the lower semi-continuity of weak convergence for norms in Sobolev spaces, $(n_\nu,w_\nu,\vp)$ still satisfies \eqref{locunieq2}. Now \eqref{converloc} implies that $(n_\nu,w_\nu,\vp)$ satisfies \eqref{TEP} in the sense of distributions. So the existence of a strong solution is proved.

We then prove the uniqueness of solutions. For $t\in[0,T^{**}]$, $n_\nu$ is still compactly supported and the size of the support of $n_\nu$ is bounded by the constant $C_0(R_\nu,T^{**})$, which is defined in \eqref{compr}. Let $(n_{\nu,1},w_{\nu,1},\vp_1)$ and $(n_{\nu,2},w_{\nu,2},\vp_2)$ be two solutions to \eqref{TEP} satisfying the uniform a priori estimate \eqref{locunieq2}. Set
\[
\tilde{n}_\nu=n_{\nu,1}-n_{\nu,2}, \qquad \tilde{w}_\nu=w_{\nu,1}-w_{\nu,2}, \qquad \tilde{\vp}=\vp_1-\vp_2.
\]
Then similar to \eqref{chait}, $(\tilde{n}_\nu,\tilde{w}_\nu,\tilde{\vp}) $ satisfies
\begin{equation}\label{chaunique}
	\begin{cases}
	\quad \pt \tilde{n}_\nu+(w_{\nu,1}+\hu_\nu^N)\cdot\nabla \tilde{n}_\nu+\dfrac{\gamma_\nu-1}{2}n_{\nu,1}\dive \tilde{w}_\nu\\
	=-\tilde{w}_\nu\cdot\D n_{\nu,2}-\dfrac{\gamma_\nu-1}{2}\tilde{n}_\nu\dive  w_{\nu,2}-\dfrac{\gamma_\nu-1}{2}\tilde{n}_\nu\dive \hu_\nu,\\[2mm]
	\quad\pt \tilde{w}_\nu+((w_{\nu,1}+\hu_\nu^N)\cdot\D\tilde{w}_\nu+\dfrac{\gamma_\nu-1}{2}n_{\nu,1}\nabla \tilde{n}_\nu\\
	=-(\tilde{w}_\nu\cdot\D)w_{\nu,2}-(\tilde{w}_\nu\cdot\nabla)\hu_\nu-\dfrac{\gamma_\nu-1}{2}\tilde{n}_\nu\nabla n_{\nu,2}-\D\tilde{\vp},\\[2mm]
	\quad\Delta\tilde{\vp}=\sum_{\nu=i,e}q_\nu C_\nu (n_{\nu,1}^{2/(\gamma_\nu-1)}-n_{\nu,2}^{2/(\gamma_\nu-1)}),
\end{cases}
\end{equation}
with the initial data
\[
(\tilde{n}_\nu,\tilde{w}_\nu)(0,x)=(0,0).
\]
Using the similar arguments as those for system \eqref{chait}, it holds that for $t\in[0,T^{**}]$
\[
\dfrac{\rmd}{\rmd t}|(\tilde{n}_\nu,\tilde{w}_\nu)(t)|_2^2\leq C|(\tilde{n}_\nu,\tilde{w}_\nu)(t)|_2^2,
\]
which yields $\tilde{n}_\nu=\tilde{w}_\nu=0$. Then the uniqueness is obtained.

We now prove the time-continuity of solutions. Notice that for every $N\geq 1$, 
\[
(n,w,\D\vp)\in L^\infty([0,T^{**}];H^s), \qquad (\pt n,\pt w)\in L^\infty([0,T^{**}];H^{s-1}).
\]
Besides, taking the time derivative to both sides of  \eqref{TEP}$_2$, one obtains
\[
\Delta\pt\vp=\sum_{\nu=i,e}q_\nu C_\nu\frac{1}{\gamma_\nu-1}n^{\frac{3-\gamma_\nu}{\gamma_\nu-1}}\pt n_\nu.
\]
which shows 
$$\pt\D\vp\in L^\infty([0,T^{**}];H^{s-1}).$$
This yields for $s'\in[0,s)$,
\[
(n_\nu,w_\nu,\D\vp)\in C([0,T^{**}];H^{s'}\cap\text{weak-}H^s).
\]
Besides, using the same arguments as in the proof of Lemma \ref{locuni}, one obtains 
\[
\lim_{t\to 0}\sup\|(n_\nu,w_\nu)(t)\|_s\leq \|(n_\nu,w_\nu)(0,\cdot)\|_s=\|n_\nu^0\|_s, 
\]
which shows that $(n_\nu,w_\nu)$ are right continuous at $t=0$ in $H^s$. The time reversibility of equations in \eqref{TEP} yields $(n_\nu,w_\nu)\in C([0,T^{**}];H^s)$ and thus one obtains $(n_\nu,w_\nu)\in C^1([0,T^{**}];H^{s-1})$ by \eqref{TEP}$_1$.  In addition, using the same argument as in \eqref{mid}, one obtains $\D\vp\in C([0,T^{**}];H^s)$, hence the proof for Theorem \ref{comthmloc} is complete. \hfill $\square$

\section{Global existence for truncated system with compactly supported density}

In this section, we will establish the global-in-time uniform estimates for classical solutions $(n_\nu^N,w_\nu^N,\vp^N)$ to the Cauchy problem \eqref{TEP} obtained in Theorem \ref{comthmloc} with
$$ n_\nu^0\in H^s,\quad u_\nu^0\in\Xi,\quad \supp_x n_\nu^0(x)\subset B_{R_\nu}.$$
The main result in this section can be stated as:

\begin{theorem}\label{thm4.1} Let $(s,d,q,\gamma_\nu)$ satisfy \eqref{res1}. If $(\rho_\nu^0,u_\nu^0)$ satisfies the  assumptions $(\rm A_1)$-$(\rm A_2)$ in Theorem \ref{globalthm}, and $\text{supp}_x \rho_\nu^0(x)\subset B_R$, then for any $T>0$, there exists a unique global classical solution $(n_\nu,\eps^{-\beta_\nu}w_\nu,\vp)$ in $[0,T]\times\R^d$ for every $N\geq 1$ to \eqref{TEP} satisfying
	\begin{equation*}
		(n_\nu,\eps^{-\beta_\nu}w_\nu)\in C([0,T];H^s)\cap C^1([0,T];H^{s-1}), \quad \D\vp\in C([0,T];H^s).
	\end{equation*}
\end{theorem}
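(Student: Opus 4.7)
The plan is to promote the local classical solution of Theorem~\ref{comthmloc} to a global one via the standard continuation principle for symmetric hyperbolic systems. Since the local existence time from \eqref{T*1} depends only on $\|(n_\nu,\eps^{-\beta_\nu}w_\nu)\|_s$, the whole task reduces to establishing a time-uniform a-priori bound on $(n_\nu,\eps^{-\beta_\nu}w_\nu)$ in $H^s$, independent of $N$. Once this is in hand, the $H^s$-bound on $\D\vp$ follows from Lemma~\ref{Poisson} together with the propagated compactness of $\supp n_\nu(t,\cdot)$ recorded in \eqref{compr}, and re-applying the local theorem at $T^*,2T^*,\dots$ extends the solution to any $[0,T]$.

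The main analytic tool is the weighted energy ODE inequality \eqref{functional} advertised in \S 2.2. I would first record the dispersive decay of the Burgers reference flow: under the positive-spectrum hypothesis $(\mathrm{A}_1)$, characteristic analysis applied to \eqref{burgers} yields
\[
|\D\hu_\nu(t)|_\infty\leq \frac{C}{1+t},\qquad |\D^2\hu_\nu(t)|_\infty\leq \frac{C}{(1+t)^2},\qquad \dive\hu_\nu(t,x)\geq \frac{d\kappa}{1+t},
\]
where the last inequality provides the dispersive damping mechanism. Applying $\D^l$ to \eqref{TEP}$_1$ for $0\leq l\leq s$ and pairing against $\D^l W_\nu$ as in the proof of Lemma~\ref{locuni}, but this time retaining the positive contribution $\frac{1}{2}\int(\dive \hu_\nu^N)|W_\nu|^2\,\rmd x$ on the left-hand side (which coincides with its untruncated counterpart on $\supp n_\nu$ for $N$ large by \eqref{compr}), produces the damping $\frac{a}{1+t}Y_s$ for some $a=a(\kappa)>1$. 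The right-hand side collects three forcings: the commutator/nonlinear contribution $CY_s^2$, the second-derivative remainder $CY_s/(1+t)^2$, and the Poisson source $C(1+t)^{2/(\gamma-1)}Y_s^{2/(\gamma-1)}$, the last coming from estimating $\D\vp$ via Lemma~\ref{Poisson} together with $L^q$-interpolation on the compactly supported densities.

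With \eqref{functional} in hand, multiplication by the integrating factor $(1+t)^a$ and the smallness of $Y_s(0)$ ensured by hypothesis $(\mathrm{A}_2)$ allow a standard continuity/bootstrap argument to conclude $Y_s(t)\leq C Y_s(0)$ for all $t\ge 0$, uniformly in $\eps$ and $N$. This furnishes the required a-priori bound. Uniqueness is inherited from the $L^2$ difference argument already carried out in \S~\ref{s3}, while the time-continuity assertions in $C([0,T];H^s)\cap C^1([0,T];H^{s-1})$ follow exactly as in the local construction, with the Poisson part $\D\vp\in C([0,T];H^s)$ handled through the same continuity argument as in \eqref{mid}.

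The hardest technical step, in my view, is to verify the precise form of \eqref{functional}, and in particular to ensure that the damping rate $a$ strictly dominates the nonlinear exponent $2/(\gamma-1)$ coming from the Poisson source, so that the integrating factor $(1+t)^a$ closes the inequality. This is exactly the point where the parameter restriction \eqref{res1} on $(s,d,q,\gamma_\nu)$ is invoked in full: the upper bound on $\gamma_\nu$ guarantees enough room between $a$ and $2/(\gamma-1)$, while the admissible range of $q$ is dictated by the $L^q$-interpolation required in the Poisson step. A secondary subtlety is that $\hu_\nu$ and $\D\hu_\nu$ are not globally integrable, so the dispersive damping must be localized to $\supp n_\nu$ via \eqref{compr} in order to eliminate the truncation parameter $N$ and to keep the energy integrals meaningful.
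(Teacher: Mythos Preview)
Your continuation-principle framework and the recognition that a time-uniform a-priori bound closes the argument are correct, but the identification of the damping mechanism is off in a way that prevents the scheme from closing. The term $\tfrac12\int(\dive\hu_\nu^N)\,|\D^lW_\nu|^2$ arising from the symmetric convection enters the energy identity with the \emph{opposite} sign to the one you want: integration by parts of $\int \D^lW_\nu\cdot(\hu_\nu^N\cdot\nabla)\D^lW_\nu$ produces $-\tfrac12\int\dive\hu_\nu^N\,|\D^lW_\nu|^2$ on the left, i.e.\ \emph{growth} of order $\tfrac{d}{2(1+t)}$, not damping. The genuine damping in the paper comes from two different places: (i) the source term $Q_\nu^2(W_\nu;\nabla\hu_\nu)$, which after inserting the expansion $\nabla\hu_\nu=\tfrac{1}{1+t}\mathbb I_d+O((1+t)^{-2})$ becomes $\tfrac{1}{1+t}H_\nu(W_\nu)$ and contributes $\tfrac{b_\nu}{1+t}$ with $b_\nu=\min\{1,\tfrac{d(\gamma_\nu-1)}{2}\}$; and (ii) the leading commutator piece $[\D^l,\hu_\nu\cdot\nabla]W_\nu\sim \tfrac{l}{1+t}\D^lW_\nu$, which contributes an additional $\tfrac{l}{1+t}$ at order $l\ge1$. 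The net damping coefficient at level $l$ is therefore $c_{d,\gamma_\nu,l,\nu}=b_\nu+l-\tfrac d2$, which is \emph{negative} at $l=0$ (and possibly at $l=1$) for $d\ge3$. Consequently the full inhomogeneous $H^s$ norm cannot satisfy an inequality of the form \eqref{functional} with $a>1$, and your bootstrap does not close.

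The paper circumvents this by abandoning the $L^2$ level entirely and working with a mixed functional: $|n_\nu|_\infty$ and $|\eps^{-\beta_\nu}w_\nu|_\infty$ are controlled along characteristics (where the damping is exactly $\tfrac{b_\nu}{1+t}$, no $d/2$ loss), $|n_\nu|_q$ is controlled directly (Lemma~\ref{chp4-2}), and only the homogeneous norms $\dot X_{\nu,1},\dot X_{\nu,s}$ are treated by the $\D^l$-energy method. Each piece is then rescaled by the appropriate power of $(1+t)$ so that the combined quantity $Y_{\nu,s}$ satisfies a closable inequality; the Poisson term is handled through $\|\D\vp\|_{\dot H^1}\le\sum_\nu|n_\nu|_\infty^{\frac{2}{\gamma_\nu-1}-\frac q2}|n_\nu|_q^{q/2}$, which is precisely why the $L^q$ norm (and not just compact support) must be tracked. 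A separate point: your plan to ``localize the damping to $\supp n_\nu$ via \eqref{compr}'' cannot work for the velocity component, since $w_\nu$ is driven by $\nabla\vp$ and is \emph{not} compactly supported; the paper instead handles the truncation error $\hu_\nu^N-\hu_\nu$ by the uniform convergence of $(1+t)\nabla\hu_\nu^N$ (Lemma~\ref{uniconv}), introducing an $\eta\to0$ loss in the damping coefficient.
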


We will prove Theorem \ref{thm4.1} in the following subsections \ref{4.2}--\ref{4.3} via establishing a uniform-in-time weighted estimates. Let $T$ be some positive time and $(n_\nu,w_\nu,\vp)$ in $[0,T]\times\R^d$ be the unique local-in-time solution to \eqref{TEP} obtained in Theorem \ref{comthmloc}. In the rest of this section, we denote $C\geq 1$ a generic constant depending only on fixed constants $(T,A_\nu,\gamma_\nu,\kappa)$, but independent of $N_\nu$, $\eps$ and $R_\nu$, which may differ from line to line, and $C_R>0$ denotes a generic constant depending on $(C,R_\nu)$. 

We first study the truncated solutions to the Burgers equations.

\subsection{Truncated solutions to Burgers equations}\label{4.1}
Let $\hu_\nu$ be the solution to \eqref{burgers} defined in $[0,T]\times \R^d$.
For later use, we need to study more carefully the properties of the first and second order derivatives of  $\hu_\nu^N=\hu_\nu F^N$, where $F^N$ is the bump function defined in \eqref{bump}. We first have the uniform
convergence from  $(1+t)\D\hu_\nu^N$ to $(1+t)\D\hu_\nu$.
\begin{lemma}\label{uniconv} There exists a subsequence of (still denoted by) $\{(1+t)\pa_{x_j}\hu_\nu^N\}_{N\geq 1}$, such that $(1+t)\pa_{x_j}\hu_\nu^N$ converges uniformly to $(1+t)\pa_{x_j}\hu_\nu$ in $[0,T]\times B_{R_1}$ for any finite $R_1>0$ and $T>0$ as well as all integers $1\leq j\leq d$.
\end{lemma}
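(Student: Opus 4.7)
The statement is a direct consequence of the structure of the cutoff $F^N(x)=F(|x|/N)$, which is identically $1$ on $\bar{B}_N$ by construction. Hence $\hu_\nu^N$ agrees with $\hu_\nu$ on any fixed ball as soon as $N$ is large enough, so the ``convergence'' is in fact an eventual equality and there is nothing to extract in the limit. The plan is simply to make this geometric observation precise using the classical Leibniz rule, after which the conclusion of the lemma is immediate for the full sequence $N\to\infty$.

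\textbf{Execution.} Fix $R_1>0$, $T>0$, and an integer $1\leq j\leq d$. For every $N\geq R_1$ and every $x\in B_{R_1}$ one has $|x|/N\leq 1$, so $F^N(x)=1$. Since $F$ is identically $1$ on the closed unit ball and smooth, its gradient vanishes there, and the chain rule yields $\nabla F^N(x)=0$ whenever $|x|\leq N$. Because $\hu_\nu$ is $C^1$ in the space variable on $[0,T]\times\R^d$ by the Burgers analysis already recorded in Lemma \ref{hu} (using $u_\nu^0\in\Xi$ and the monotonicity hypothesis $(\mathrm{A}_1)$ that rules out finite-time gradient blow-up), the Leibniz rule applied to $\hu_\nu^N=\hu_\nu F^N$ gives
\[
\pa_{x_j}\hu_\nu^N(t,x)=(\pa_{x_j}\hu_\nu)(t,x)\,F^N(x)+\hu_\nu(t,x)\,\pa_{x_j}F^N(x)=\pa_{x_j}\hu_\nu(t,x)
\]
for every $(t,x)\in[0,T]\times B_{R_1}$ and every $N\geq R_1$. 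Multiplying by the bounded weight $(1+t)$ yields $(1+t)\pa_{x_j}\hu_\nu^N\equiv (1+t)\pa_{x_j}\hu_\nu$ on $[0,T]\times B_{R_1}$ for all $N\geq R_1$, so the full sequence $N\to\infty$ converges uniformly on $[0,T]\times B_{R_1}$ for every finite $R_1>0$ and every $1\leq j\leq d$; no genuine diagonal extraction is required, the phrasing ``subsequence'' in the statement is merely cosmetic here.

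\textbf{Main obstacle.} There is essentially no obstacle; the lemma is purely geometric and relies only on the fact that $F^N$ stabilises to $1$ on compacts. The sole point that must be verified is the classical differentiability of $\hu_\nu$ in space on $[0,T]\times\R^d$, which is already provided by Lemma \ref{hu} together with the global-in-time Burgers theory under $(\mathrm{A}_1)$. Were the direct pointwise identity unavailable, an Arzel\`a--Ascoli argument based on the weighted uniform bound $(1+t)|\nabla\hu_\nu|_\infty\leq C$ and control of $\pa_t[(1+t)\nabla\hu_\nu^N]$ would still yield the same conclusion along a subsequence, but this machinery is not needed here.
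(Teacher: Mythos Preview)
Your proof is correct and is a genuinely different, far more economical route than the paper's. You exploit the elementary geometric fact that $F^N\equiv 1$ (and hence $\nabla F^N\equiv 0$) on $B_{R_1}$ as soon as $N\geq R_1$, so $\hu_\nu^N$ and $\hu_\nu$ agree identically on $[0,T]\times B_{R_1}$ for all large $N$; the uniform convergence is then trivial eventual equality, and no subsequence is needed. The paper instead splits
\[
(1+t)\pa_{x_j}\hu_\nu^N-(1+t)\pa_{x_j}\hu_\nu=(1+t)\pa_{x_j}\hu_\nu\,(F^N-1)+\psi_N,\qquad \psi_N=(1+t)\hu_\nu\,F'(|x|/N)\,\frac{x_j}{|x|}\,\frac{1}{N},
\]
and runs two separate Arzel\`a--Ascoli arguments: one to get $F^N\to 1$ uniformly, and a second (with a fairly lengthy equicontinuity computation in $(t,x)$) to get $\psi_N\to 0$ uniformly along a subsequence. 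For the lemma \emph{as stated} (convergence on $[0,T]\times B_{R_1}$), this machinery is entirely unnecessary, since on $B_{R_1}$ both $F^N-1$ and $\psi_N$ vanish identically once $N\geq R_1$. Your argument makes this transparent.

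One caveat worth flagging: the paper later \emph{applies} this lemma in (4.15) and (4.22) through the global bound $|\dive(\hu_\nu^N-\hu_\nu)|_\infty\leq \eta/(1+t)$ on all of $\R^d$, which is strictly stronger than local uniform convergence on balls. Your proof (and indeed the lemma as written) delivers only the local statement; the paper's more laborious proof appears to be reaching for the global $L^\infty$ control that the applications actually require, though the lemma's hypothesis ``for any finite $R_1>0$'' does not quite capture that. This is a discrepancy internal to the paper rather than a defect in your argument.
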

\begin{proof} We first prove that $F^N$ converges uniformly to $F(0)=1$. Notice that $\{F^N\}_{N\geq 1}$ is uniformly bounded with respect to $N$. Besides, for any $x^1,x^2\in\R^d$ satisfying $|x^1|\leq |x^2|$,
	\[
	|F^N(x^1)-F^N(x^2)|\leq |F'(|\xi|/N)||x^1-x^2|,
	\]
	for $\xi$ between $x^1$ and $x^2$. Hence the equi-continuity of the sequence is obtained. By the Ascoli-Arzela Theorem, there exists a subsequence of (still denoted by) $F^N$ that converges uniformly to $F(0)=1$ for all $x\in \R^d$. 

 By direct calculations, it holds
 \begin{align*}
 (1+t)\pa_{x_j}\hu_\nu^N-(1+t)\pa_{x_j}\hu_\nu=&(1+t)\pa_{x_j}\hu_\nu (F^N-1)+(1+t)\hu_\nu F'(|x|/N)\frac{x_j}{|x|}\frac{1}{N}\nonumber\\
 :=&(1+t)\pa_{x_j}\hu_\nu (F^N-1)+\psi_N,
 \end{align*}
 with the natural correspondence of $\psi_N$. By the uniform convergence of $\{F^N\}_{N\geq 1}$ and Lemma \ref{hu},  one obtains that for any given $\epsilon>0$, for sufficiently large $N$, of which the choice is independent of $(t,x)$, it holds
	\begin{equation}\label{nakaaa}
	|(1+t)\pa_{x_j}\hu_\nu (F^N-1)|_\infty<|(1+t)\D \hu_\nu|_\infty|F^N-1|< \frac{\epsilon}{2}.
	\end{equation}

Besides, the sequence $\{\psi_N\}_{N\geq 1}$ converges pointwisely to $0$. We tend to use the Ascoli-Arzela theorem to prove that the sequence converges indeed uniformly to $0$. Since $F'(x)\neq 0$ if and only if $1\leq|x|\leq 2$, one obtains from \eqref{galilean} that 
\[
|\psi_N|\leq C(1+t)\frac{(1+|x|)}{N}\Big|F'(|x|/N)\Big|\leq C(1+t),\quad N\leq |x|\leq 2N,
\]
which implies  that $\{\psi_N\}_{N\geq 1}$ is uniformly bounded.

It remains to prove the equi-continuity. Let $0<t^1<t^2\leq T$ be two moments as well as $x^1$ and $x^2$ be two different positions in $B_{R_1}$. Without loss of generality, one can assume $|x^1|\leq|x^2|$. Then it holds
	\begin{align}
		&|\psi_N(t^1,x^1)-\psi_N(t^2,x^2)|\nonumber\\
  \leq&|t^1-t^2||\hu_\nu(t^2,x^2)|\big|F'(|x^2|/N)\frac{(x^2)_j}{|x^2|}\frac{1}{N}\big|\nonumber\\
		&+|1+t^1||\hu_\nu(t^1,x^1)-\hu_\nu(t^2,x^2)|\big|F'(|x^2|/N)\frac{(x^2)_j}{|x^2|}\frac{1}{N}\big|\nonumber\\
		&+|1+t^1||\hu_\nu(t^1,x^1)||F'(|x^1|/N)-F'(|x^2|/N)|\Big|\frac{(x^2)_j}{|x^2|}\frac{1}{N}\Big|\nonumber\\
	&+|1+t^1||\hu_\nu(t^1,x^1)||F'(|x^1|/N)|\Big|\frac{(x^1)_j}{|x^1|}-\frac{(x^2)_j}{|x^2|}\Big|\frac{1}{N}:=\sum_{j=1}^4\tilde{J}_j,\nonumber
	\end{align}
with the natural correspondence of $\{\tilde{J}_j\}_{j=1}^4$, which are treated term by term as follows. For $\tilde{J}_1$, since $\tilde{J}_1=0$ for $x^2\geq 2N$, hence one only needs to consider  $|x^2|\leq 2N$. Hence
\begin{equation}\label{J1}
	|\tilde{J}_1|\leq C|t^1-t^2|(1+|x^2|)\Big|F'(|x^2|/N)\frac{(x^2)_j}{|x^2|}\frac{1}{N}\Big|\leq C|t^1-t^2|.
	\end{equation}
	Similarly, for $\tilde{J}_2$, one only needs to consider $|x^1|\leq |x^2|\leq 2N$. This implies
	\[
	|\tilde{J}_2|\leq \frac{1}{N}|1+t^1||\D\hu_\nu(\tau,\xi)(x^1-x^2))+\pt\hu_\nu(\tau,\xi)(t^1-t^2)|,
	\]
	where $|x^1|\leq |\xi|\leq |x^2|\leq 2N$ and $t^1<\tau<t^2$. Since $\tau>t^1$, one obtains
	\begin{align}
 |\D\hu_\nu(\tau,\xi)|\leq& C(1+\tau)^{-1}\leq C(1+t^1)^{-1},\nonumber\\
	|\pt\hu_\nu(\tau,\xi)|=&|(\hu_\nu(\tau,\xi)\cdot\D)\hu_\nu(\tau,\xi)|\leq C(1+|\xi|)|\D\hu_\nu|_\infty\leq C(1+N)(1+t^1)^{-1}.\nonumber
	\end{align}
	This shows
	\begin{equation}\label{J2}
	|\tilde{J}_2|\leq C(|t^1-t^2|+|x^1-x^2|).
	\end{equation}

	For $\tilde{J}_3$, one only needs to consider $|x^1|\leq 2N$. This is because when $|x^1|>2N$, one has $|x^2|>2N$, which yields $F'(|x^1|/N)=F'(|x^2/N|)=0$. Consequently,
	\begin{equation}\label{J3}
	|\tilde{J}_3|\leq \frac{C}{N^2}(1+t^1)(1+|x^1|)|F''(|\xi|/N)||x^1-x^2|\leq C|x^1-x^2|.
	\end{equation}

	For the last term $\tilde{J}_4$, one only needs to consider $N\leq |x^1|\leq 2N$, which implies $|x^2|\geq N$. Based on this, one obtains
	\begin{equation}\label{J4}
	|\tilde{J}_4|\leq \frac{C}{N}(1+t^1)(1+|x^1|)\Big|\frac{(x^1)_j(|x^2|-|x^1|)+|x^1|((x^1)_j-(x^2)_j)}{|x^1||x^2|}\Big|\leq C|x^1-x^2|.
\end{equation}
	Combining estimates \eqref{J1}--\eqref{J4}, one has obtained the equi-continuity of $\{\psi_N\}_{N\geq 1}$. Thus by the Ascoli-Arzela Theorem, there exists a subsequence of $\{\psi_N\}_{N\geq 1}$ which converges uniformly to $0$ as $N\to\infty$. Then for any given $\epsilon>0$ and sufficiently large $N$ that are independent of $(t,x)$, it holds
 \begin{equation}\label{naka4-1}
      \Big|(1+t)\hu_\nu F'(|x|/N)\frac{x_j}{|x|}\frac{1}{N}\Big|\leq \frac{\epsilon}{2}.
 \end{equation}
 Combining \eqref{nakaaa} and \eqref{naka4-1} ends the proof.
\end{proof}


	

\subsection{Uniform global  a prior estimates}\label{4.2} Substituting \eqref{nablau} into \eqref{TEP}, one has
\begin{equation}\label{TEPeps}
	\begin{cases}
		\quad\pt n_e+(w_e+\hu_e^N)\cdot\nabla n_e+\dfrac{\gamma_e-1}{2} n_e\dive w_e\\[2mm]
		=-\dfrac{\gamma_e-1}{2} n_e\Big(\dfrac{d}{1+t}+\dfrac{\text{Tr}K_e}{(1+t)^2}\Big),\\[2.2mm]
		\quad \pt w_e+((w_e+\hu_e^N)\cdot\nabla)w_e+\dfrac{\gamma_e-1}{2}n_e\nabla n_e\\
  =-\Big(\dfrac{w_e}{1+t}+\dfrac{w_e\cdot K_e}{(1+t)^2}\Big)-\nabla\vp,\\[2mm]
  		\quad\pt n_i+(w_i+\hu_i^N)\cdot\nabla n_i+\dfrac{\gamma_i-1}{2} n_i\dive w_i\\[2mm]
		=-\dfrac{\gamma_i-1}{2} n_i\Big(\dfrac{d}{1+t}+\dfrac{\text{Tr}K_i}{(1+t)^2}\Big),\\[2.2mm]
		\quad \eps^{-2}\pt w_i+\eps^{-2}((w_i+\hu_i^N)\cdot\nabla)w_i+\dfrac{\gamma_i-1}{2}n_i\D n_i\\
		=-\Big(\dfrac{\eps^{-2}w_i}{1+t}+\dfrac{\eps^{-2}w_i\cdot K_i}{(1+t)^2}\Big)+\nabla\vp,\\[2mm]
		\quad \Delta\vp=C_i n_i^{2/(\gamma_i-1)}-C_e n_e^{2/(\gamma_e-1)},
	\end{cases}
\end{equation}
with $\text{Tr}K_\nu$ being the trace of $K_\nu$ and the initial data can be given as
\begin{equation}\label{TEPepsini}
		(n_\nu,w_\nu)(0,x)=(n_\nu^0(x),0).
\end{equation}

The main strategy of the proof in this subsection is as follows. First, we establish uniform estimates with respect to $N,\eps$ and $R_\nu$ in function spaces for $(n_\nu,w_\nu,\vp)$ as follows
\[
n_\nu\in L^\infty([0,T]; \Gamma\cap L^q), \qquad (\eps^{-\beta_\nu}w_\nu,\D\vp)\in L^\infty([0,T]; \Gamma),
\]
for $(\beta_e,\beta_i)=(0,1)$. Second, we prove Theorem \ref{thm4.1} for compactly supported initial data $n_\nu^0$, the support of which satisfies $\supp_x n_\nu^0(x)\subset B_{R_\nu}$. For simplicity, we introduce $W_\nu=(n_\nu, (\eps^{-\beta_\nu}w_\nu)^\top)^\top$ and the following
\begin{equation}
\begin{split}
\dot{X}_{\nu,l}:=&\|W_\nu\|_{\dot{H}^l},\quad\quad\quad\,\, b_\nu:=\min\Big\{\dfrac{d(\gamma_\nu-1)}{2},1\Big\},\quad b=\min\{b_i,b_e\},\\
\,\, c_{d,\gamma_\nu,l,\nu}:=&-\dfrac{d}{2}+l+b_\nu,\quad c_{d,l}=\min\{c_{d,\gamma_i,l,i},c_{d,\gamma_e,l,e}\}, \quad \text{for}\, 1\leq l\leq s.
\end{split}
\end{equation}

One first has the $L^\infty$ estimates on $W_\nu$.
\begin{lemma}\label{chp4-1} There exist constants  $c_{31}^\nu>0 (\nu=i,e)$ depending only on the generic constant $C$, such that
	\begin{align}\label{emnlinfty}
		|n_\nu(t)|_\infty\leq& \frac{e^{\frac{-c_{31}^\nu t}{1+t}}|n_\nu^0|}{(1+t)^{b_\nu}}+\frac{C}{(1+t)^{b_\nu}}\int_0^t(1+\tau)^{b_\nu}|n_\nu(\tau)|_\infty\dot{X}_{\nu,1}^{\frac{2s-d-2}{2s-2}}\dot{X}_{\nu,s}^{\frac{d}{2s-2}}\rmd\tau,\\
\label{emwlinfty}
|\eps^{-\beta_\nu}w_\nu(t)|_\infty\leq&\frac{C}{(1+t)^{b_\nu}}\int_0^t(1+\tau)^{b_\nu}\Big(\mathcal{I}(\tau)+| n_\nu(\tau)|_\infty\dot{X}_{\nu,1}^{\frac{2s-d-2}{2s-2}}\dot{X}_{\nu,s}^{\frac{d}{2s-2}}\rmd\tau\Big),
\end{align}
where 
\[
\mathcal{I}(\tau)=\Big(\sum_{\nu=i,e}|n_\nu|_q^{\frac{q}{2}\frac{2s-d}{2s-2}}|n_\nu|_\infty^{\big(\frac{2}{\gamma_\nu-1}-\frac{q}{2}\big)\frac{2s-d}{2s-2}}\Big)\Big(\sum_{\nu=i,e}|n_\nu|_\infty^{\frac{3-\gamma_\nu}{\gamma_\nu-1}\frac{d-2}{2s-2}}\dot{X}_{\nu,1}^{\frac{1}{s-1}\frac{d-2}{2s-2}}\dot{X}_{\nu,s}^{\frac{s-2}{s-1}\frac{d-2}{2s-2}}\Big).
\]
\end{lemma}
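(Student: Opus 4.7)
My approach is to work along the characteristic curves $X_\nu(t;\xi_0)$ generated by the convection speed $w_\nu+\hu_\nu^N$, which turns the $n_\nu$- and $\eps^{-\beta_\nu}w_\nu$-equations in \eqref{TEPeps} into first-order linear ODEs with explicit time-dependent damping and nonlinear sources. For the density, the characteristic ODE reads
\begin{equation*}
\frac{d n_\nu}{dt}+\Big[\frac{(\gamma_\nu-1)d}{2(1+t)}+\frac{(\gamma_\nu-1)\mathrm{Tr}K_\nu}{2(1+t)^2}\Big]n_\nu=-\frac{\gamma_\nu-1}{2}\,n_\nu\,\dive w_\nu.
\end{equation*}
With the integrating factor $(1+t)^{(\gamma_\nu-1)d/2}\exp\!\big(\tfrac{(\gamma_\nu-1)\mathrm{Tr}K_\nu}{2}\cdot\tfrac{t}{1+t}\big)$ (the second factor comes from $\int_0^t(1+\tau)^{-2}d\tau=t/(1+t)$), the homogeneous evolution yields the decay $(1+t)^{-(\gamma_\nu-1)d/2}e^{-c_{31}^\nu t/(1+t)}$ with $c_{31}^\nu:=(\gamma_\nu-1)\mathrm{Tr}K_\nu/2>0$, positivity being guaranteed by the dispersive spectrum assumption $(\mathrm{A}_1)$. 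A Duhamel representation then absorbs the $\dive w_\nu$ source as an integral in $\tau$.

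To pass from the natural weight $(1+\tau)^{(\gamma_\nu-1)d/2}$ to the stated weight $(1+\tau)^{b_\nu}$, I will use the elementary monotonicity
\begin{equation*}
\Big(\tfrac{1+\tau}{1+t}\Big)^{(\gamma_\nu-1)d/2}\le\Big(\tfrac{1+\tau}{1+t}\Big)^{b_\nu},\qquad 0\le\tau\le t,
\end{equation*}
which holds because $(1+\tau)/(1+t)\le 1$ and $(\gamma_\nu-1)d/2\ge b_\nu=\min\{(\gamma_\nu-1)d/2,1\}$. The source $|\dive w_\nu|_\infty\le C|\D w_\nu|_\infty$ is then controlled by the Gagliardo-Nirenberg inequality $|\D f|_\infty\lesssim\|f\|_{\dot H^1}^{(2s-d-2)/(2s-2)}\|f\|_{\dot H^s}^{d/(2s-2)}$, valid for $s>d/2+1$, which matches exactly the powers of $\dot X_{\nu,1}$ and $\dot X_{\nu,s}$ appearing in \eqref{emnlinfty}. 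Taking the supremum over $\xi_0\in\R^d$ of the resulting pointwise identity gives \eqref{emnlinfty}.

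For the momentum bound, the characteristic ODE for $y_\nu:=\eps^{-\beta_\nu}w_\nu$ is the matrix-linear system
\begin{equation*}
\frac{d y_\nu}{dt}+\frac{y_\nu}{1+t}+\frac{y_\nu\cdot K_\nu}{(1+t)^2}=-\eps^{\beta_\nu}\frac{\gamma_\nu-1}{2}\,n_\nu\D n_\nu+\eps^{\beta_\nu}q_\nu\D\vp.
\end{equation*}
Its fundamental matrix factors as $(1+t)^{-1}\Psi(t)$ with $\Psi$ uniformly bounded since $\int_0^tK_\nu(1+\tau)^{-2}d\tau=K_\nu\,t/(1+t)$ is bounded, so the Duhamel formula combined with the same monotonicity trick yields the prefactor $(1+t)^{-b_\nu}$ against weight $(1+\tau)^{b_\nu}$. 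The $n_\nu\D n_\nu$ piece is handled exactly as in the density estimate and produces the $|n_\nu|_\infty\dot X_{\nu,1}^{(2s-d-2)/(2s-2)}\dot X_{\nu,s}^{d/(2s-2)}$ contribution. For $|\D\vp|_\infty$ I would first interpolate $|\D\vp|_\infty\lesssim|\D\vp|_2^{(2s-d)/(2s-2)}\|\D\vp\|_{\dot H^{s-1}}^{(d-2)/(2s-2)}$, then solve \eqref{TEPeps}$_5$ via Riesz potentials so that $|\D\vp|_2$ is controlled by $\sum_\mu|n_\mu^{2/(\gamma_\mu-1)}|_{L^{2d/(d+2)}}$ through Hardy-Littlewood-Sobolev; this latter norm is in turn log-interpolated between $|n_\mu|_q$ and $|n_\mu|_\infty$ to reproduce the first bracket in $\mathcal{I}(\tau)$. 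Simultaneously $\|\D\vp\|_{\dot H^{s-1}}\lesssim\|n_\mu^{2/(\gamma_\mu-1)}\|_{\dot H^{s-2}}\lesssim|n_\mu|_\infty^{(3-\gamma_\mu)/(\gamma_\mu-1)}\|n_\mu\|_{\dot H^{s-2}}$ by the nonlinear composition Lemma \ref{comm0}, followed by Gagliardo-Nirenberg interpolation between $\dot H^1$ and $\dot H^s$, yields the second bracket.

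The main obstacle is exactly this $L^\infty$ estimate for $\D\vp$: because the Poisson solver is non-local one cannot rely on local Sobolev embedding, and because $n_\nu^{2/(\gamma_\nu-1)}$ is a fractional power one must juggle Hardy-Littlewood-Sobolev, nonlinear composition, and two layers of interpolation in parallel. The upper bounds $1<\gamma_\nu<1+\min\{2/3,4/(d-1),4/q,2d/(d+q)\}$ from \eqref{res1} are exactly what force every interpolation exponent to lie in $[0,1]$, and verifying that the resulting powers in $\mathcal{I}(\tau)$ assemble consistently (with the two overall exponents $(2s-d)/(2s-2)$ and $(d-2)/(2s-2)$ summing to $1$) is the technically delicate bookkeeping at the heart of the proof.
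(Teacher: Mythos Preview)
Your overall architecture---characteristics for the transport part, integrating factor $(1+t)^{b_\nu}$, Gagliardo--Nirenberg for $|\D W_\nu|_\infty$, and elliptic estimates plus nonlinear composition for $|\D\vp|_\infty$---is exactly the paper's. Two points need repair, though.

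First, $K_\nu=K_\nu(t,x)$ is not constant: along a characteristic it is $K_\nu(\tau,X_\nu(\tau;\xi_0))$, so $\int_0^t(1+\tau)^{-2}\mathrm{Tr}K_\nu\,\rmd\tau\neq \mathrm{Tr}K_\nu\cdot t/(1+t)$, and there is no sign information on $\mathrm{Tr}K_\nu$ coming from $(\mathrm A_1)$. The paper instead uses only the bound $|K_\nu|_\infty\le M$ from Lemma~\ref{hu}(1), moves the $K_\nu$ contribution to the right as a source $c_{31}^\nu n_\nu/(1+\tau)^2$ with $c_{31}^\nu$ a constant depending on $M$, and then uses the integrating factor $(1+t)^{b_\nu}e^{c_{31}^\nu t/(1+t)}$. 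The sign of the exponential is immaterial since $e^{\pm c_{31}^\nu t/(1+t)}$ is uniformly bounded.

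Second, your interpolation for $|\D\vp|_\infty$ is shifted by one Sobolev index and does not reproduce the stated $\mathcal I(\tau)$. The exponents $(2s-d)/(2s-2)$ and $(d-2)/(2s-2)$ you wrote come from interpolating $\D\vp$ between $\dot H^1$ and $\dot H^s$, not between $L^2$ and $\dot H^{s-1}$ (the latter would give $(2s-d-2)/(2s-2)$ and $d/(2s-2)$). The paper uses the $\dot H^1$/$\dot H^s$ pair, which sidesteps Hardy--Littlewood--Sobolev altogether: $\|\D\vp\|_{\dot H^1}=|\Delta\vp|_2\le \sum_\nu|n_\nu^{2/(\gamma_\nu-1)}|_2\le C\sum_\nu |n_\nu|_\infty^{2/(\gamma_\nu-1)-q/2}|n_\nu|_q^{q/2}$, and $\|\D\vp\|_{\dot H^s}\le \sum_\nu\|n_\nu^{2/(\gamma_\nu-1)}\|_{\dot H^{s-1}}\le C\sum_\nu|n_\nu|_\infty^{(3-\gamma_\nu)/(\gamma_\nu-1)}\dot X_{\nu,1}^{1/(s-1)}\dot X_{\nu,s}^{(s-2)/(s-1)}$ via Lemma~\ref{comm0}. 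Only this route yields exactly the two brackets in $\mathcal I(\tau)$; your $L^2$/HLS route would give a structurally similar bound but with different powers (e.g.\ $|n_\nu|_q^{q(d+2)/(2d)}$ instead of $|n_\nu|_q^{q/2}$), hence would not prove the lemma as written.
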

\begin{proof} Notice that \eqref{TEPeps} implies that there exist two constants $c_{31}^\nu>0 
 (\nu=i,e)$ depending only on $C$, such that
	\begin{equation}\label{naka4-2}
	\pt n_\nu+(w_\nu+\hu_\nu^N)\cdot\nabla n_\nu+\dfrac{b_\nu}{1+t} n_\nu\leq C|n_\nu|_\infty|\dive w_\nu|_\infty+\frac{c_{31}^\nu n_\nu}{(1+t)^2}.
	\end{equation}
	Multiplying \eqref{naka4-2}  by $(1+t)^{b_\nu}e^{\frac{c_{31}^\nu t}{1+t}}$  yields
	\begin{align*}
	\pt\Big((1+t)^{b_\nu} e^{\frac{c_{31}^\nu}{1+t}}n_\nu \Big)+(w_\nu+\hu_\nu^N)\cdot\nabla \Big((1+t)^{b_\nu}e^{\frac{c_{31}^\nu}{1+t}} n_\nu\Big)	\leq|(1+t)^{b_\nu}e^{\frac{c_{31}^\nu}{1+t}} n_\nu|_\infty|\dive w_\nu|_\infty.
	\end{align*}
Then one obtains
\begin{align*}
|(1+t)^{b_\nu}e^{\frac{c_{31}^\nu t}{1+t}}n_\nu(t)|_\infty\leq  C|n_\nu^0|_\infty+C\int_0^t|(1+\tau)^{b_\nu}n_\nu(\tau)|_\infty|\dive w_\nu(\tau)|_\infty\rmd\tau,
\end{align*}
which, along with the following basic interpolation, implies
\begin{equation}\label{embed00}
	|\D W_\nu|_\infty\leq C\dot{X}_{\nu,1}^{\frac{2s-d-2}{2s-2}}\dot{X}_{\nu,s}^{\frac{d}{2s-2}},
\end{equation}
yields the desired estimate \eqref{emnlinfty}.

Similarly, \eqref{TEPeps}$_2$ and \eqref{TEPeps}$_4$ imply
\begin{equation}\label{naka4-3}
\eps^{-\beta_\nu}\pt w_\nu+\eps^{-\beta_\nu}((w_\nu+\hu_\nu^N)\cdot\nabla)w_\nu+\frac{b_\nu \eps^{-\beta_\nu}w_\nu}{1+t}
\leq C|n_\nu|_\infty|\D n_\nu|_\infty+\dfrac{C|\eps^{-\beta_\nu}w|_\infty}{(1+t)^2}+C|\nabla\vp|_\infty.
\end{equation}
Multiplying  \eqref{naka4-3} by  $(1+t)^{b_\nu}$   yields
	\begin{align*}
	&\pt\Big(\eps^{-\beta_\nu}(1+t)^{b_\nu}w_\nu\Big)+((w_\nu+\hu_\nu^N)\cdot\nabla )\Big(\eps^{-\beta_\nu}(1+t)^{b_\nu}w_\nu\Big)\nonumber\\
	\leq& C(1+t)^{b_\nu}|n_\nu|_\infty|\D n_\nu|_\infty+\frac{C(1+t)^{b_\nu}|\eps^{-\beta_\nu}w_\nu|_\infty}{(1+t)^2}+C(1+t)^{b_\nu}|\D\vp|_\infty,
\end{align*}
which, along with Gronwall's inequality, implies
\begin{align}\label{nakaa}
|(1+t)^{b_\nu}w(t)|_\infty\leq C\int_0^t\Big((1+\tau)^{b_\nu}\left(| n_\nu(\tau)|_\infty|\D n_\nu(\tau)|_\infty+|\D\vp(\tau)|_\infty\right)\Big)\rmd\tau.
\end{align}

		We then bound $|\D \vp|_\infty$. For $s>\dfrac{d}{2}+1$, by the Poisson equation,
	\begin{align}\label{vpvp}
 \begin{split}
		|\D\vp|_\infty\leq& C\|\D\vp\|_{\dot{H}^1}^{\frac{2s-d}{2s-2}}\|\D\vp\|_{\dot{H}^s}^{\frac{d-2}{2s-2}}\\
  \leq& C\Big(\sum_{\nu=i,e}\Big|n_\nu^{2/(\gamma_\nu-1)}\Big|_2^{\frac{2s-d}{2s-2}}\Big)\Big(\sum_{\nu=i,e}\Big\|n_\nu^{2/(\gamma_\nu-1)}\Big\|_{\dot{H}^{s-1}}^{\frac{d-2}{2s-2}}\Big).
\end{split}
 \end{align}
	By the interpolation theories, we deduce that for $\frac{2}{\gamma_\nu-1}>\frac{q}{2}$, one easily has
	\begin{align}\label{L2}
		|n_\nu^{2/(\gamma_\nu-1)}|_2= \Big(\int_{\R^d}|n_\nu|^{\frac{4}{\gamma_\nu-1}}\rmd x\Big)^{\frac{1}{2}}
		\leq C|n_\nu|_\infty^{\frac{2}{\gamma_\nu-1}-\frac{q}{2}}|n|_q^{\frac{q}{2}},
	\end{align}
	and by Lemma \ref{comm0}, for $\gamma_\nu<3$ and $0\leq s-1\leq  \frac{2}{\gamma_\nu-1}+\frac{1}{2}$, one has
	\begin{align}\label{Hs}
		\|n_\nu^{2/(\gamma_\nu-1)}\|_{\dot{H}^{s-1}}\leq C|n_\nu|_\infty^{\frac{3-\gamma_\nu}{\gamma_\nu-1}}\|n_\nu\|_{\dot{H}^{s-1}}\leq C|n_\nu|_\infty^{\frac{3-\gamma_\nu}{\gamma_\nu-1}}\dot{X}_{\nu,1}^{\frac{1}{s-1}}\dot{X}_{\nu,s}^{\frac{s-2}{s-1}}.
	\end{align}
	Of course, there is no upper bound for $s$ if $\frac{2}{\gamma_\nu-1}$ is an integer. Similarly, since $s>\frac{d}{2}+1 $ and $s\geq 3$, one needs the following restriction on $\gamma_\nu$:
	\[
	\bar{\gamma}<1+\min\Big\{\frac{4}{3},\dfrac{4}{d-1}\Big\}.
	\]
	Combining estimates \eqref{nakaa}-\eqref{Hs} yields \eqref{emwlinfty}.
\end{proof}

The next lemma studies the $L^q$ estimate for $n_\nu$.
\begin{lemma}\label{chp4-2}  For $q$ satisfying \eqref{res1}, one has
	\begin{align}\label{mid88}
		\dfrac{\rmd}{\rmd t}|n_\nu|_q+\Big(\dfrac{d(\gamma_\nu-1)}{2}-\dfrac{d}{q}\Big)\dfrac{1}{1+t}|n_\nu|_q\leq C|n_\nu|_q\Big(\dot{X}_{\nu,1}^{\frac{2s-d-2}{2s-2}}\dot{X}_{\nu,s}^{\frac{d}{2s-2}}+\dfrac{1}{(1+t)^2}\Big).
	\end{align}
\end{lemma}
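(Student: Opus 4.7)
The plan is a weighted $L^q$ energy estimate on the density equation \eqref{TEPeps}$_1$ (the ion case \eqref{TEPeps}$_3$ is identical since the density equation carries no $\eps^{-\beta_\nu}$ scaling). I would multiply by $qn_\nu^{q-1}$ (using $n_\nu\geq 0$), invoke $qn_\nu^{q-1}\nabla n_\nu=\nabla n_\nu^q$, integrate over $\R^d$, and integrate by parts the transport term $(w_\nu+\hu_\nu^N)\cdot\nabla n_\nu^q$ to obtain
\[
\frac{\rmd}{\rmd t}|n_\nu|_q^q=\int n_\nu^q\dive(w_\nu+\hu_\nu^N)\,\rmd x-\frac{q(\gamma_\nu-1)}{2}\int n_\nu^q\dive w_\nu\,\rmd x-\frac{q(\gamma_\nu-1)}{2}\int n_\nu^q\Big(\frac{d}{1+t}+\frac{\mathrm{Tr}\,K_\nu}{(1+t)^2}\Big)\rmd x.
\]

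The decisive step is to extract the correct dispersive damping from $\dive\hu_\nu^N$. Splitting $\int n_\nu^q\dive\hu_\nu^N\,\rmd x=\int n_\nu^q\dive\hu_\nu\,\rmd x+\int n_\nu^q(\dive\hu_\nu^N-\dive\hu_\nu)\,\rmd x$ and invoking the explicit rarefaction formula $\dive\hu_\nu=d/(1+t)+\mathrm{Tr}\,K_\nu/(1+t)^2$ underlying \eqref{TEPeps}, the $d/(1+t)$ piece combines with the explicit damping $-\frac{q(\gamma_\nu-1)d}{2(1+t)}|n_\nu|_q^q$ on the right of \eqref{TEPeps}$_1$ to produce exactly $-\frac{q}{1+t}(\frac{d(\gamma_\nu-1)}{2}-\frac{d}{q})|n_\nu|_q^q$, which I move to the left-hand side. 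The leftover $\mathrm{Tr}\,K_\nu/(1+t)^2$ contribution is controlled by $C(1+t)^{-2}|n_\nu|_q^q$ using the uniform bound $|\mathrm{Tr}\,K_\nu|\leq C$; the residual $(1-q(\gamma_\nu-1)/2)\int n_\nu^q\dive w_\nu\,\rmd x$ is bounded via H\"older and the Gagliardo--Nirenberg estimate \eqref{embed00}, namely $|\dive w_\nu|_\infty\leq C\dot{X}_{\nu,1}^{(2s-d-2)/(2s-2)}\dot{X}_{\nu,s}^{d/(2s-2)}$. Dividing the resulting ODE by $q|n_\nu|_q^{q-1}$ then yields \eqref{mid88}.

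The main technical obstacle I anticipate is the bump-truncation defect $\int n_\nu^q(\dive\hu_\nu^N-\dive\hu_\nu)\,\rmd x$, since $\dive\hu_\nu^N-\dive\hu_\nu=(F^N-1)\dive\hu_\nu+\hu_\nu\cdot\nabla F^N$ is a priori only $O((1+t)^{-1})$ in $L^\infty$ rather than $O((1+t)^{-2})$. The cleanest resolution is to exploit the propagation bound \eqref{compr}, which confines $\supp_x n_\nu(t,\cdot)$ to a ball of radius $C_0(R_\nu,T)$ independent of $N$: once $N\geq C_0(R_\nu,T)$, we have $F^N\equiv 1$ on $\supp n_\nu$ and the defect vanishes identically. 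Since these uniform estimates are ultimately used inside the $N\to\infty$ passage, this suffices. If instead one insists on every $N\geq 1$, the dispersive growth bound $|\hu_\nu|\leq C(1+|x|)/(1+t)$ guaranteed by $(A_1)$, combined with the annular support $N\leq|x|\leq 2N$ of $\nabla F^N$, gives $|\hu_\nu\cdot\nabla F^N|_\infty\leq C(1+t)^{-1}$, and the residual can be absorbed into the dispersive damping on the left-hand side provided the slack in the coefficient $\frac{d(\gamma_\nu-1)}{2}-\frac{d}{q}$ is respected (which is ensured by the range \eqref{res1}).
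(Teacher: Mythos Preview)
Your approach matches the paper's: it compresses the multiply-by-$qn_\nu^{q-1}$ computation you describe into the words ``direct calculations'' and then invokes \eqref{embed00}. Your first handling of the truncation defect via compact support (for $N\geq C_0(R_\nu,T)$ one has $F^N\equiv 1$ on $\supp n_\nu$, so the defect vanishes) is correct and is the resolution implicit in \S\ref{4.2}.

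Your alternative resolution, however, rests on an incorrect decay claim: $(A_1)$ together with \eqref{galilean} yields only $|\hu_\nu(t,x)|\leq C_T(1+|x|)$, not $C(1+|x|)/(1+t)$ (indeed $\hu_\nu(t,0)=u^0(x_0)$ for the $x_0$ with $\Omega(t;x_0)=0$, and this need not decay). Consequently $|\hu_\nu\cdot\nabla F^N|_\infty$ is merely $O(1)$ on the annulus $N\leq|x|\leq 2N$, and such an $O(1)$ contribution cannot be absorbed into an $O((1+t)^{-1})$ damping term. The paper's own device for this kind of defect (Lemma~\ref{uniconv}, used in Lemma~\ref{chp4-3}) again depends on the compact support of $n_\nu$: the uniform convergence $(1+t)\dive\hu_\nu^N\to(1+t)\dive\hu_\nu$ in Lemma~\ref{uniconv} is stated only on $[0,T]\times B_{R_1}$, which suffices precisely because the integrand $n_\nu^q(\dive\hu_\nu^N-\dive\hu_\nu)$ is supported in a fixed ball. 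Stick with your first argument.
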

\begin{proof} Direct calculations show that for $\eps\in(0,1]$,
	\[
	\dfrac{\rmd}{\rmd t}|n_\nu|_q+\Big(\dfrac{d(\gamma_\nu-1)}{2}-\dfrac{d}{q}\Big)\dfrac{1}{1+t}|n_\nu|_q\leq C|n_\nu|_q\Big(|\dive w_\nu|_\infty+\dfrac{1}{(1+t)^2}\Big),
	\]
	which, along with \eqref{embed00}, yields \eqref{mid88}.
\end{proof}

We then obtain the high order uniform estimates of solutions.

\begin{lemma}\label{chp4-3} Let $\eta>0$ such that $\eta\to 0$ as $N\to\infty$.  It holds
	\begin{align}
 \begin{split}
		\dfrac{\rmd}{\rmd t}\dot{X}_{\nu,1}+\dfrac{c_{d,\gamma_\nu,1,\nu}-\eta}{1+t}\dot{X}_{\nu,1}\leq& C\dot{X}_{\nu,1}^{\frac{4s-d-4}{2s-2}}\dot{X}_{\nu,s}^{\frac{d}{2s-2}}+\frac{C}{(1+t)^2}\dot{X}_{\nu,1}\\
		\label{estx2}&+C(1+t)^{\frac{d}{2}-3}|W_\nu|_\infty+C \sum_{\nu=i,e}|n_\nu|_q^{\frac{q}{2}}|n_\nu|_\infty^{\frac{2}{\gamma_\nu-1}-\frac{q}{2}},\\
\end{split}
\end{align}
as well as
  \begin{align}
		\label{estxs}\dfrac{\rmd}{\rmd t}\dot{X}_{\nu,s}+\dfrac{c_{d,\gamma,s}-\eta}{1+t}\dot{X}_{\nu,s}\leq& C\dot{X}_{\nu,1}^{\frac{2s-d-2}{2s-2}}\dot{X}_{\nu,s}^{\frac{2s-2+d}{2s-2}}+\frac{C}{(1+t)^2}\dot{X}_{\nu,s}+C(1+t)^{\frac{d}{2}-s-2}|W_\nu|_\infty\nonumber\\
		&+C\sum_{\nu=i,e}|n_\nu|_\infty^{\frac{3-\gamma_\nu}{\gamma_\nu-1}}\dot{X}_{\nu,1}^{\frac{1}{s-1}}\dot{X}_{\nu,s}^{\frac{s-2}{s-1}}\nonumber\\
  &+C(1+t)^{\frac{d}{2}-s-1}\dot{X}_{\nu,1}^{\frac{2s-d-2}{2s-2}}\dot{X}_{\nu,s}^{\frac{d}{2s-2}}\\
  &+C((1+t)^{-3}+\frac{\eta}{1+t})\dot{X}_{\nu,1}^{\frac{1}{s-1}}\dot{X}_{\nu,s}^{\frac{s-2}{s-1}}\nonumber\\
  &+\frac{C}{N^{\min\{1,s-\frac{d}{2}-1\}}}\|\D W_\nu\|_{s-1}.\nonumber
 \end{align}
\end{lemma}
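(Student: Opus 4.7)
My plan is to derive both inequalities via weighted $\dot{H}^l$ energy estimates (with $l=1$ and $l=s$) applied to the symmetric hyperbolic system \eqref{TEPeps} for $W_\nu=(n_\nu,(\eps^{-\beta_\nu}w_\nu)^\top)^\top$. Applying $\nabla^l$ to each equation, taking the $L^2$ inner product with $\nabla^l W_\nu$ and summing, the symmetry of the off-diagonal entries $\eps^{\beta_\nu}\tfrac{\gamma_\nu-1}{2}n_\nu\xi_j$ in $A_\nu^j$ from \eqref{defQ} forces the cancellation of the top-order coupling between $n_\nu$ and $\eps^{-\beta_\nu}w_\nu$. The principal transport term then collapses, after integration by parts, into $\tfrac{1}{2}\int\dive(w_\nu+\hu_\nu^N)|\nabla^l W_\nu|^2\rmd x$, while the sub-principal commutator $[\nabla^l,(w_\nu+\hu_\nu^N)\cdot\nabla]W_\nu$ contributes, at its top Leibniz level, a term of the form $l\int(\nabla\hu_\nu^N\cdot\nabla^l W_\nu)\cdot\nabla^l W_\nu\rmd x$.

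The central calculation isolates the damping coefficient $c_{d,\gamma_\nu,l,\nu}/(1+t)$ on the LHS: using the decompositions $\nabla\hu_\nu=I_d/(1+t)+K_\nu/(1+t)^2$ and $\dive\hu_\nu=d/(1+t)+\text{Tr}K_\nu/(1+t)^2$ from \eqref{nablau}, the divergence piece contributes $-d/(2(1+t))$ and the leading commutator contributes $+l/(1+t)$, while the explicit linear damping built into \eqref{TEPeps} (namely the $-\frac{(\gamma_\nu-1)d}{2(1+t)}n_\nu$ and $-\eps^{-\beta_\nu}w_\nu/(1+t)$ terms) supplies the remaining $+b_\nu/(1+t)$ after using $b_\nu=\min\{d(\gamma_\nu-1)/2,1\}$; together these produce the sharp coefficient $(-d/2+l+b_\nu)/(1+t)$. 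The $-\eta/(1+t)$ correction absorbs the truncation discrepancy $\nabla\hu_\nu^N-\nabla\hu_\nu$, whose $L^\infty$ norm on any fixed compact region tends to zero as $N\to\infty$ by Lemma \ref{uniconv}; the $K_\nu/(1+t)^2$ contributions give the $C(1+t)^{-2}\dot{X}_{\nu,l}$ RHS term.

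The remaining RHS terms come from standard sources: \textit{(i)} the subleading commutators $[\nabla^l,A_\nu^j]\pa_j W_\nu$ are handled by Kato--Ponce/Moser inequalities (Lemma \ref{comm0}) combined with the Gagliardo--Nirenberg embedding $|\nabla W_\nu|_\infty\leq C\dot{X}_{\nu,1}^{(2s-d-2)/(2s-2)}\dot{X}_{\nu,s}^{d/(2s-2)}$, yielding the purely nonlinear interpolation terms; \textit{(ii)} the electric source $\nabla\vp$ is bounded via the Poisson equation \eqref{TEPeps}$_5$ (Lemma \ref{Poisson}) in its $\dot{H}^1$ and $\dot{H}^s$ norms together with \eqref{L2} and \eqref{Hs}, giving both the $|n|_q^{q/2}|n|_\infty^{2/(\gamma_\nu-1)-q/2}$ term in \eqref{estx2} and the $|n|_\infty^{(3-\gamma_\nu)/(\gamma_\nu-1)}\dot{X}_{\nu,1}^{1/(s-1)}\dot{X}_{\nu,s}^{(s-2)/(s-1)}$ term in \eqref{estxs}; \textit{(iii)} the factors $(1+t)^{d/2-3}|W_\nu|_\infty$ and $(1+t)^{d/2-s-2}|W_\nu|_\infty$ arise whenever a pairing is naturally controlled by $|W_\nu|_\infty$ but must be converted to $L^2$ by multiplying by $|\supp_x W_\nu|^{1/2}\leq C(1+t)^{d/2}$, a bound that follows from the characteristic analysis underlying \eqref{compr}; \textit{(iv)} the high-order truncation correction $CN^{-\min\{1,s-d/2-1\}}\|\nabla W_\nu\|_{s-1}$ in \eqref{estxs} reflects the $H^{s-1}$ size of $\nabla(\hu_\nu^N-\hu_\nu)$, computed via $\pa^k F^N=O(N^{-k})$ supported on $N\leq|x|\leq 2N$ combined with the derivative bounds for $\hu_\nu$ from Lemma \ref{hu}.

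The main obstacle will be the precise bookkeeping required to preserve the sharp damping coefficient: every occurrence of $\nabla\hu_\nu^N$ at top order must be decomposed as $I_d/(1+t)+(\nabla\hu_\nu^N-\nabla\hu_\nu)+K_\nu/(1+t)^2+\hu_\nu\otimes\nabla F^N$, with each piece routed correctly --- to the damping, the $\eta/(1+t)$ correction, the $1/(1+t)^2$ RHS, or the $N^{-\alpha}$ truncation term --- so that the clean form $c_{d,\gamma_\nu,l,\nu}/(1+t)$ survives and $\eta$ genuinely vanishes as $N\to\infty$. A secondary technical issue in the $\dot{H}^s$ estimate is handling the intermediate commutator levels $2\leq k\leq s-1$: although $\nabla^k\hu_\nu$ is $O((1+t)^{-2})$ in $L^\infty$ by Lemma \ref{hu}, its pairing with $\nabla^{s-k+1}W_\nu$ must be evaluated in $L^2$ on the expanding support of $W_\nu$, which is what produces the $(1+t)^{-3}\dot{X}_{\nu,1}^{1/(s-1)}\dot{X}_{\nu,s}^{(s-2)/(s-1)}$ and $(1+t)^{d/2-s-1}\dot{X}_{\nu,1}^{(2s-d-2)/(2s-2)}\dot{X}_{\nu,s}^{d/(2s-2)}$ terms in \eqref{estxs}.
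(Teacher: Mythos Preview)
Your overall strategy --- symmetric-hyperbolic $\dot H^l$ energy estimates, extraction of the sharp damping coefficient $(-d/2+l+b_\nu)/(1+t)$ from the three sources you identify, absorption of the truncation error $\nabla\hu_\nu^N-\nabla\hu_\nu$ into $\eta/(1+t)$ via Lemma~\ref{uniconv}, and the Poisson estimates for $\nabla\vp$ --- matches the paper's proof exactly. The decomposition of the commutator and the use of Gagliardo--Nirenberg to produce the interpolation exponents are also the same.

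There is, however, a genuine error in your point \textit{(iii)}. The factors $(1+t)^{d/2-3}|W_\nu|_\infty$ and $(1+t)^{d/2-s-2}|W_\nu|_\infty$ do \emph{not} arise by converting an $L^\infty$ control on $W_\nu$ into $L^2$ via $|\supp_x W_\nu|^{1/2}$. First, $w_\nu$ is not compactly supported (it is driven by $\nabla\vp$, which is global), so that mechanism is not even available; second, the support of $n_\nu$ carries an $R_\nu$-dependent constant via \eqref{compr}, and the whole purpose of Lemma~\ref{chp4-3} is to produce estimates \emph{uniform} in $R_\nu$. The correct source of those powers is the $Q_\nu^3$ term: applying the product estimate (Lemma~\ref{comm1}) to $\nabla^l(W_\nu K_\nu)$ gives the two pieces $|K_\nu|_\infty\|W_\nu\|_{\dot H^l}$ and $|W_\nu|_\infty\|K_\nu\|_{\dot H^l}$, and it is the latter --- with $\|K_\nu\|_{\dot H^l}=(1+t)^2\|\nabla^{l+1}\hu_\nu\|_{L^2}\leq C(1+t)^{d/2-l}$ by Lemma~\ref{hu}(2) --- that produces $(1+t)^{d/2-l-2}|W_\nu|_\infty$ after division by $(1+t)^2$. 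The same remark applies to your closing sentence: the powers $(1+t)^{-3}$ and $(1+t)^{d/2-s-1}$ in \eqref{estxs} come from $|\nabla^2\hu_\nu|_\infty$ and $|\nabla^s\hu_\nu|_2$ respectively (again Lemma~\ref{hu}), not from any expanding support of $W_\nu$. If you implement the argument via the product/commutator lemmas rather than the support heuristic, everything goes through as in the paper.
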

\begin{proof}
System \eqref{TEPeps}$_1$--\eqref{TEPeps}$_4$ can be written into
\begin{equation}\label{symmglobal}
	\pt W_\nu+ \displaystyle\sum_{j=1}^d A_\nu^j(W_\nu;\hu_\nu^N,\eps)\pa_{x_j}W_\nu+\frac{H_\nu(W_\nu;\eps)}{1+t}=Q_\nu^1(\D\vp)-\frac{Q_\nu^3(W_\nu;\eps)}{(1+t)^2},
\end{equation}
where $A_\nu^j(W_\nu;\hu_\nu^N,\eps), Q_\nu^1(\D\vp)$ are defined in \eqref{defQ} and
\begin{equation}\label{defH}
	\begin{split}
		H_\nu(W_\nu,\eps)=\left(\begin{matrix}
			\dfrac{d(\gamma_\nu-1)}{2}n_\nu\\ \eps^{-\beta_\nu}w_\nu
		\end{matrix}\right), \quad Q_\nu^3(W_\nu;\eps)=\left(\begin{matrix}\dfrac{\gamma_\nu-1}{2} n_\nu\text{Tr}K_\nu\\ \eps^{-\beta_\nu} w_\nu\cdot K_\nu\end{matrix}\right).
	\end{split}
\end{equation}

For $l=1$ or $l=s$, applying $\D^l$ to \eqref{symmglobal}, multiplying by $\D^l W_\nu$ and integrating over $\R^d$, one obtains
\begin{align}\label{gstart1}
\begin{split}
	&\dfrac{1}{2}\dfrac{\rmd}{\rmd t}|\D^l W_\nu|_2^2+\dfrac{b_\nu}{1+t}|\D^l W_\nu|_2^2\\
	\leq& \dfrac{1}{2}\sum_{j=1}^d\int_{\R^d}(\D^l W_\nu)^\top\pa_{x_j}A_\nu^j(W_\nu;\hu_\nu^N,\eps)\D^l W_\nu\rmd x+q_\nu\int_{\R^d}(\D^l\D\vp)^\top\D^l (\eps^{-\beta_\nu}w_\nu)\rmd x\\
	&-\int_{\R^d}(\D^l W_\nu)^\top\D^l\Big(\sum_{j=1}^dA_\nu^j(W_\nu;\hu_\nu^N,\eps)\pa_{x_j} W_\nu\Big)-\sum_{j=1}^dA_\nu^j(W_\nu;\hu_\nu^N,\eps)\D^l\pa_{x_j}W_\nu\rmd x\\
&-\dfrac{C}{(1+t)^2}\int_{\R^d}(\D^l W_\nu)^\top \D^l(Q_\nu^3(W_\nu;\eps))\rmd x:=\sum_{j=1}^4I_{\nu,j}^l,
\end{split}
\end{align}
with the natural correspondence of $\{I_{\nu,j}^l\}_{j=1}^4$.

	{\cuti{Estimates for $I_{\nu,1}^l$.}} For $I_{\nu,1}^l$, by noting the expansions of $\D\hu$, one has
 \begin{equation}\label{mid300}
\begin{split}
	|I_{\nu,1}^l|\leq& \dfrac{1}{2}|\D W_\nu|_\infty|\D^lW_\nu|_2^2+\frac{1}{2}\Big|\int_{\R^d}\dive\hu_\nu|\D^lW_\nu|^2\rmd x\Big|\\
	&+\frac{1}{2}\Big|\int_{\R^d}\dive (\hu_\nu^N-\hu_\nu)|\D^lW_\nu|^2\rmd x\Big|\\
	\leq& C|\D W_\nu|_\infty|\D^lW_\nu|_2^2+\dfrac{d}{2}\dfrac{1}{1+t}|\D^lW_\nu|_2^2+\dfrac{C}{(1+t)^2}|\D^lW_\nu|_2^2\\
 &+\frac{1}{2}\Big|\int_{\R^d}\dive (\hu_\nu^N-\hu_\nu)|\D^lW_\nu|^2\rmd x\Big|.
\end{split}
\end{equation}
By Lemma \ref{uniconv}, $(1+t)\dive \hu^N$ converges uniformly to $(1+t)\dive\hu$, which yields for any given $\eta>0$, for sufficiently large $N$, it holds
\begin{align}\label{naka4-4}
\begin{split}
\frac{1}{2}\Big|\int_{\R^d}\dive (\hu_\nu^N-\hu_\nu)|\D^lW_\nu|^2\rmd x\Big|\leq& C|\dive\hu_\nu^N-\dive\hu_\nu|_\infty|\D^l W_\nu|_2^2\\
\leq& \frac{\eta}{4(1+t)}|\D^l W_\nu|_2^2.
\end{split}
\end{align}
Substituting \eqref{naka4-4} into \eqref{mid300} yields
\begin{equation}\label{I1}
|I_{\nu,1}^l|\leq C|\D W_\nu|_\infty\dot{X}_{\nu,l}^2+\dfrac{d}{2}\dfrac{1}{1+t}|\D^lW_\nu|_2^2+\dfrac{C}{(1+t)^2}|\D^lW_\nu|_2^2+\frac{\eta}{4(1+t)}|\D^l W_\nu|_2^2.
\end{equation}

{\cuti{Estimates for $I_{\nu,2}^l$.}} We first treat the potential term. For the case $l=1$,
\begin{equation}\label{X1vp}
|\D (\D\vp)|_2\leq\sum_{\nu=i,e}|n_\nu^{2/(\gamma_\nu-1)}|_2\leq C\sum_{\nu=i,e}|n_\nu|_q^{\frac{q}{2}}|n_\nu|_\infty^{\frac{2}{\gamma_\nu-1}-\frac{q}{2}}.
\end{equation}
For the case $l=s$, one has similarly to \eqref{Hs},
\begin{equation}\label{Xsvp}
	|\D^s (\D\vp)|_2\leq \sum_{\nu=i,e}\|n^{2/(\gamma-1)}\|_{\dot{H}^{s-1}}\leq C\sum_{\nu=i,e}|n_\nu|_\infty^{\frac{3-\gamma_\nu}{\gamma_\nu-1}}\dot{X}_{\nu,1}^{\frac{1}{s-1}}\dot{X}_{\nu,s}^{\frac{s-2}{s-1}}.
\end{equation}
Now the estimates for $I_{\nu,2}^l$ goes
\begin{equation}\label{I2}
	\begin{split}
	|I_{\nu,2}^1|	\leq& C\dot{X}_{\nu,1} \sum_{\nu=i,e}|n_\nu|_q^{\frac{q}{2}}|n_\nu|_\infty^{\frac{2}{\gamma_\nu-1}-\frac{q}{2}},\\
	|I_{\nu,2}^s|	\leq& C\dot{X}_{\nu,s}\sum_{\nu=i,e}|n_\nu|_\infty^{\frac{3-\gamma_\nu}{\gamma_\nu-1}}\dot{X}_{\nu,1}^{\frac{1}{s-1}}\dot{X}_{\nu,s}^{\frac{s-2}{s-1}}.
\end{split}
\end{equation}

{\cuti{Estimates for $I_{\nu,3}^l$.}} Let
\[
B_\nu^j(W_\nu):=\left(
\begin{matrix}
	w_\nu^{(j)}&\eps^{\beta_\nu}\dfrac{\gamma_\nu-1}{2}n_\nu\xi_j^\top\\
	\eps^{\beta_\nu}\dfrac{\gamma_\nu-1}{2}n_\nu\xi_j&w_\nu^{(j)}\Id
\end{matrix}
\right), \ \ 
E_\nu^j(\hu_\nu^N):=\left(
\begin{matrix}
	(\hu_\nu^N)^{(j)}&0\\
	0&(\hu_\nu^N)^{(j)}\Id
\end{matrix}
\right),
\]
with $w_\nu=(w_\nu^{(1)},\cdots, w_\nu^{(d)})^\top$. Define
\[
J_\nu^l:=\D^l\Big(\sum_{j=1}^d A_\nu^j(W_\nu;\hu_\nu^N,\eps)\pa_{x_j}W_\nu\Big)-\sum_{j=1}^d A_\nu^j(W_\nu;\hu_\nu^N,\eps)\D^l\pa_{x_j}W_\nu.
\]
For $l\geq 1$, we decompose the differential operator $\D^l$ as
\[
\D^l:=\pa_x^\al:=\sum_{\al_1+\cdots+\al_d=l}\frac{\pa^{l}}{\pa_{x_1}^{\al_1}\cdots\pa_{x_k}^{\al_d}},
\]
where $\al=(\al_1,\cdots,\al_d)\in\N^d$ be any multi-indices satisfying $\sum_{j=1}^d\al_j=l$.
Noticing the expansion for $\hu_\nu$ in Lemma \ref{hu}, one has
\begin{align}
	J_\nu^l:= F_{\nu,1}^l+F_{\nu,2}^l+F_{\nu,3}^l+G_\nu^l,\nonumber
\end{align}
in which $\{F_{\nu,j}^l\}_{j=1}^3$ and $G_\nu^l$ are defined as
\begin{align}
	F_{\nu,1}^l:=&\D^l\Big(\sum_{j=1}^d B_\nu^j(W_\nu)\pa_{x_j}W_\nu\Big)-\sum_{j=1}^d B_\nu^j(W_\nu)\D^l\pa_{x_j}W_\nu,\nonumber\\   F_{\nu,2}^l:=&\sum_{j=1}^d\sum_{\substack{1\leq i\leq d\\ \al_i\neq 0}}\al_i\pa_{x_i}(\hu_\nu^{(j)}\mathbb{I}_{d+1})\pa_x^{\al^i}\pa_{x_j} W_\nu,\nonumber\\  
 F_{\nu,3}^l:=&\sum_{j=1}^d\sum_{\substack{1\leq i\leq d\\ \al_i\neq 0}}\al_i\pa_{x_i}(E_\nu^j(\hu_\nu^N)-\hu_\nu^{(j)}\mathbb{I}_{d+1})\pa_x^{\al^i}\pa_{x_j} W_\nu, \quad
	G_\nu^l:=J_\nu^l-\sum_{j=1}^3F_{\nu,j}^l,\nonumber
\end{align}
where $\al^i$ is a multi-index satisfying $\pa_{x_i}\pa_x^{\al^i}=\pa_x^\al$.
By lemma \ref{comm2}, one has
\begin{align}\label{F1}
\begin{split}
	\Big|\int_{\R^d}(\D^l W_\nu)^\top F_{\nu,1}^l\rmd x\Big|\leq & C|\D^l W_\nu|_2\Big(\|W_\nu\|_{\dot{H}^l}|\D W_\nu|_\infty+|\D W_\nu|_\infty\|\D W_\nu\|_{\dot{H}^{l-1}}\Big)\\
 \leq & C|\D W_\nu|_\infty\dot{X}_{\nu,l}^2.
\end{split}
\end{align}
Next, one obtains from the expansion for $\hu_\nu$ in Lemma \ref{hu} that
\[
F_{\nu,2}^l= \frac{l}{1+t}\D^l W_\nu+\sum_{\substack{1\leq i\leq d\\ \al_i\neq 0}}\frac{\al_i (K_\nu)_{i,i}(t,x)}{(1+t)^2}\D^l W_\nu+\sum_{\substack{1\leq j\leq d\\ j\neq i}}\sum_{\substack{1\leq i\leq d\\ \al_i\neq 0}}\al_i\frac{(K_\nu)_{i,j}(t,x)}{(1+t)^2}\pa_x^{\al^i}\pa_{x_j}W_\nu,
\]
where $(K_\nu)_{i,j}(t,x)$ is the element of $K_\nu(t,x)$ on the $i$-th row and $j$-th column. Hence
\begin{equation}\label{F2}
\int_{\R^d}(\D^l W_\nu)^\top F_{\nu,2}^l\rmd x\geq \dfrac{l}{1+t}|\D^l W_\nu|_2^2-\dfrac{CM}{(1+t)^2}|\D^l W|_2^2.
\end{equation}
By the uniform convergence of $(1+t)\pa_{x_j}\hu_\nu^N$, it holds
\begin{equation}\label{F3}
\Big|\int_{\R^d}(\D^l W_\nu)^\top F_{\nu,3}^l\rmd x\Big|\leq \frac{\eta}{4(1+t)}|\D^l W_\nu|_2^2.
\end{equation}
Since $G_\nu^1=0$, one concludes from estimates \eqref{F1}--\eqref{F3} for the case $l=1$ that
\begin{equation}\label{I31}
	I_{\nu,3}^1+\dfrac{1}{1+t}\dot{X}_{\nu,1}^2\leq C\Big(|\D W|_\infty+\dfrac{M}{(1+t)^2}+\frac{\eta}{4(1+t)}\Big)\dot{X}_{\nu,1}^2.
\end{equation}

It remains to treat the term containing $G_\nu^s$. Notice that $G_\nu^s$ is composed of terms like
\[
\hat{G}_{m',m'',\sigma}:=\D^{m'}\hu_\nu\D^{m''}F^N\D^{s-\sigma}\D w_\nu, \quad \sigma:=m'+m''\geq 2.
\]
Let $m''=0$ first. In this case, one obtains
\begin{align}\label{Gs-1}
    |\hat{G}_{m',0,m'}|_2\leq& |F^N|_\infty|\D^s(\hu_\nu\D W_\nu)-\hu_\nu\D^s\D W_\nu-s\D\hu_\nu\D^{s-1}\D W_\nu|_2\nonumber\\
    \leq & C|\D^s \hu_\nu|_2|\D W_\nu|_\infty+C|\D^2\hu_\nu|_\infty|\D^{s-1}W_\nu|_2,
\end{align}
in which Lemma \ref{comm2} is used. We assume in the following $m''\geq 1$, and thus $m'\leq s-1$. We will treat different cases according to the classifications of $\sigma,s$ and $d$.

We first consider the case  $\sigma=s$. One obtains
\begin{align}\label{sigma-s}
\begin{split}
    |\hat{G}_{m',m'',s}|_2\leq& C|\hat{G}_{0,s,s}|_2+C|\hat{G}_{1,s-1,s}|_2+C\sum_{j=2}^{s-1}|\hat{G}_{j,s-j,s}|_2\\
    \leq & C(N|\D^s F^N|_2+|\D \hu_\nu|_\infty|\D^{s-1}F^N|_2)|\D W_\nu|_\infty\\
    &+C\sum_{j=2}^{s-1}|\D^j \hu_\nu|_2|\D^{s-j} F^N|_\infty|\D W_\nu|_\infty\\
    \leq& \frac{C}{N^{\min\{1,s-\frac{d}{2}-1\}}}\dot{X}_{\nu,1}^{\frac{2s-d-2}{2s-2}}\dot{X}_{\nu,s}^{\frac{d}{2s-2}}.
    \end{split}
\end{align}

For $\sigma=2$, it holds
\begin{align}\label{sigma-2}
    |\hat{G}_{m',m'',2}|_2\leq& C|\hat{G}_{0,2,2}|_2+C|\hat{G}_{1,1,2}|_2\nonumber\\
    \leq & C|N|\D^2 F^N|_\infty+\D \hu_\nu|_\infty|\D F^N|_\infty)|\D^{s-1} W_\nu|_2  \leq \frac{C}{N}\dot{X}_{\nu,1}^{\frac{1}{s-1}}\dot{X}_{\nu,s}^{\frac{s-2}{s-1}}.
\end{align}

We now treat the difficult case $3\leq \sigma\leq s-1$. Notice that this implies $s\geq 4$. When $s=3$, we have necessarily $d=3$. There are no intermediate cases since $\sigma=3$ (estimated in \eqref{sigma-s}) or $\sigma=2$ (estimated in \eqref{sigma-2}). In this case, we have to classify whether $d$ is odd or even. If $d$ is odd, 

\noindent\underline{Sub-case A: $s-\frac{d+1}{2}<3$}. If so, one obtains $\sigma\geq 3>s-\frac{d+1}{2}$. Since $d$ is odd, we know $\sigma>s-\frac{d}{2}$, which yields $\frac{d}{s-\sigma}>2$. Consequently,
\begin{align}\label{sigma-mid-easy}
\begin{split}
    |\hat{G}_{m',m'',\sigma}|_2\leq& C|\hat{G}_{0,\sigma,\sigma}|_2+C|\hat{G}_{1,\sigma-1,\sigma}|_2+C\sum_{j=2}^{\sigma-1}|\hat{G}_{j,\sigma-j,\sigma}|_2\\
    \leq & C(N|\D^\sigma F^N|_{\frac{2d}{d-2(s-\sigma)}}+|\D \hu_\nu|_\infty|\D^{\sigma-1}F^N|_{\frac{2d}{d-2(s-\sigma)}})|\D^{s-\sigma}\D W_\nu|_\frac{d}{s-\sigma}\\
    &+C\sum_{j=2}^{\sigma-1}|\D^j\hu_\nu|_{\frac{2d}{d-2(s-\sigma)}}|\D^{\sigma-j}F^N|_\infty|\D^{s-\sigma}\D W_\nu|_\frac{d}{s-\sigma},\\
    \leq& \frac{C}{N^{\min\{1,s-\frac{d}{2}-1\}}}\dot{X}_{\nu,1}^{\frac{2s-d-2}{2s-2}}\dot{X}_{\nu,s}^{\frac{d}{2s-2}}.
\end{split}
\end{align}

\noindent\underline{Sub-case B: $s-\frac{d+1}{2}\geq 3$}. If so, not all $\sigma$ satisfy $\frac{d}{s-\sigma}>2$. Since $d\geq 2$, one obtains $s-\frac{d+1}{2}$ is between $3$ and $s-1$. If $\sigma\in(s-\frac{d+1}{2},s-1]$, the estimates are exactly the same as \eqref{sigma-mid-easy}. If $\sigma\in[3,s-\frac{d+1}{2}]$, one has
\begin{align}\label{sigma-mid-hard}
    |\hat{G}_{m',m'',\sigma}|_2\leq& C|\hat{G}_{0,\sigma,\sigma}|_2+C|\hat{G}_{1,\sigma-1,\sigma}|_2+C\sum_{j=2}^{\sigma-1}|\hat{G}_{j,\sigma-j,\sigma}|_2\nonumber\\
    \leq & C((1+|x|)|\D^\sigma F^N|_d+|\D \hu_\nu|_\infty|\D^{\sigma-1}F^N|_d)|\D^{s-\sigma}\D W_\nu|_\frac{2d}{d-2}\nonumber\\
    &+C\sum_{j=2}^{\sigma-1}|\D^j\hu_\nu|_{d}|\D^{\sigma-j}F^N|_\infty|\D^{s-\sigma}\D W_\nu|_\frac{2d}{d-2}\\
    \leq& C\Big(\frac{1}{N^{\sigma-2}}+\sum_{j=2}^{\sigma-1}\frac{1}{N}|\D^{s+1}\hu_\nu|_2^{\frac{2j-4+d-2}{2s-2}}|\D^2\hu_\nu|_2^{\frac{2(s-j)+4-d}{2s-2}}\Big)|\D^{s-\sigma+2} W_\nu|_2\nonumber\\
    \leq & \frac{C}{N}|\D^{s-\sigma+2} W_\nu|_2\leq \frac{C}{N}\dot{X}_{\nu,1}^{\frac{2\sigma-4}{2s-2}}\dot{X}_{\nu,s}^{\frac{2s-2\sigma+2}{2s-2}},\nonumber
\end{align}
where the interpolation inequality is used in third-to-last inequality, which is valid since in our setting, $\frac{2j-4+d-2}{2s-2}\in(0,1)$ due to $j\geq 2, d\geq 3$ and
\begin{equation*}
 \quad s\geq 3+\frac{d+1}{2}\geq 5, \quad 2j-4+d-2\leq 2\sigma+d-8\leq 2s-9.
\end{equation*}
The case that $d$ is even follows similarly. We only replace the indicator $s-\frac{d+1}{2}$ with $s-\frac{d}{2}$. 

Hence, combining estimates \eqref{Gs-1}--\eqref{sigma-mid-hard} yields
\begin{equation}\label{Gs}
\begin{split}
    |G_\nu^s|_2\leq &C|\D^s \hu_\nu|_2\dot{X}_{\nu,1}^{\frac{2s-d-2}{2s-2}}\dot{X}_{\nu,s}^{\frac{d}{2s-2}}+C(1+t)^{-3}\dot{X}_{\nu,1}^{\frac{1}{s-1}}\dot{X}_{\nu,s}^{\frac{s-2}{s-1}}\\
    &+\frac{C}{N^{\min\{1,s-\frac{d}{2}-1\}}}\|\D W_\nu\|_{s-1}.
    \end{split}
\end{equation}
Consequently, for the case $l=s$, estimates \eqref{F1}--\eqref{F3} and \eqref{Gs} imply that
\begin{align}\label{I3s}
	|I_{\nu,3}^s|+\dfrac{s}{1+t}\dot{X}_{\nu,s}^2\leq& C\Big(|\D W_\nu|_\infty+\dfrac{M}{(1+t)^2}+\frac{\eta/4}{(1+t)}\Big)\dot{X}_{\nu,s}^2+C(1+t)^{-3}\dot{X}_{\nu,1}^{\frac{1}{s-1}}\dot{X}_{\nu,s}^{\frac{s-2}{s-1}+1}\nonumber\\
	&+C(1+t)^{\frac{d}{2}-s-1}\dot{X}_{\nu,1}^{\frac{2s-d-2}{2s-2}}\dot{X}_{\nu,s}^{\frac{d}{2s-2}+1}\\
 &+\frac{C}{N^{\min\{1,s-\frac{d}{2}-1\}}}\|\D W_\nu\|_{s-1}\dot{X}_{\nu,s}.\nonumber
 \end{align}

{\cuti{Estimates for $I_{\nu,4}^l$.}} For $l\geq 1$, one obtains by Lemma \ref{comm1},
\begin{align}\label{I4l}
\begin{split}
|I_{\nu,4}^l|\leq& C(1+t)^{-2}|\D^l W_\nu|_2(\| W_\nu\|_{\dot{H}^l}|K_\nu|_\infty+C|W_\nu|_\infty\|K_\nu\|_{\dot{H}^{l}})\\
\leq& \dfrac{M}{(1+t)^2}\dot{X}_{\nu,l}^2+C(1+t)^{\frac{d}{2}-l-2}|W_\nu|_\infty\dot{X}_l.
\end{split}
\end{align}
Substituting estimates \eqref{I1}, \eqref{I2}, \eqref{I31}, \eqref{I3s} and \eqref{I4l} into \eqref{gstart1}, one   obtains the desired estimates  \eqref{estx2} and \eqref{estxs}.
\end{proof}

For simplicity, we introduce the following notations:
\begin{equation*}
\begin{split}
	(n_\nu^{\infty},w_\nu^\infty)(t)=&(1+t)^{-1}(|n_\nu|_\infty, |w_\nu|_\infty),\,\, n_\nu^q(t)=(1+t)^{-\frac{d}{q}-1}|n_\nu|_q,\,\, \\
 \dot{Y}_{\nu,\sigma}(t)=&(1+t)^{\sigma-\frac{d}{2}-1}\dot{X}_{\nu,\sigma}(t),\quad 1\leq \sigma\leq s.
 \end{split}
\end{equation*}
Let us also set $a_\nu=1+b_\nu\in(1,2]$ and
\[
\tilde{Y}_{\nu,s}=n_\nu^q+\dot{Y}_{\nu,1}+\dot{Y}_{\nu,s}, \quad Y_{\nu,s}=\tilde{Y}_{\nu,s}+n_\nu^\infty+\eps^{-\beta_\nu}w_\nu^\infty.
\]
We then have the following lemma regarding the estimates for $Y_{\nu,s}$.
\begin{lemma} There exists a constant $\delta>0$, which is independent of $(N,\eps,R_\nu)$ such that if $Y_s(0)\leq \delta$, it holds
	\begin{equation}\label{resultnaka}
		Y_{\nu,s}(t)\leq \dfrac{CY_{i,s}(0)+CY_{e,s}(0)}{(1+t)^{1+\min\{1,\frac{d(\gamma_\nu-1)}{2}\}-\eta}},\quad t\in[0,T].
	\end{equation}
\end{lemma}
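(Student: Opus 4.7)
The plan is to combine the $L^\infty$, $L^q$, $\dot{H}^1$ and $\dot{H}^s$ bounds from Lemmas~\ref{chp4-1}--\ref{chp4-3} into a single ODE inequality for $Y_{\nu,s}(t)$ with a time-dependent damping coefficient, and then to close this inequality by a bootstrap on the smallness of $Y_{\nu,s}(0)$. The rescaling factors built into $n_\nu^\infty,\, \eps^{-\beta_\nu}w_\nu^\infty,\, n_\nu^q,\, \dot Y_{\nu,1}$ and $\dot Y_{\nu,s}$ are chosen precisely so that, after multiplying the estimates of Lemmas~\ref{chp4-1}--\ref{chp4-3} by the appropriate powers of $(1+t)$, the linear damping coefficient on each component becomes $\tfrac{a_\nu-\eta}{1+t}$ with $a_\nu=1+b_\nu>1$. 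Concretely, I would multiply \eqref{emnlinfty} and \eqref{emwlinfty} by $(1+t)^{-1}$, \eqref{mid88} by $(1+t)^{-d/q-1}$, and \eqref{estx2}--\eqref{estxs} by $(1+t)^{1-d/2}$ and $(1+t)^{s-d/2-1}$ respectively, so that every error term can be re-expressed as a monomial in the rescaled variables times an integrable-in-$t$ weight.

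The next step is to show that after this rescaling all the right-hand side contributions have one of three forms: (i) an integrable-in-time linear piece of the form $\frac{C}{(1+t)^2}Y_{\nu,s}$ coming from the $K_\nu$-terms, (ii) a genuinely nonlinear piece of the form $C\big(Y_{i,s}+Y_{e,s}\big)^{\alpha}$ with $\alpha\ge 2$ arising from $|\D W_\nu|_\infty\dot X_{\nu,\sigma}^2$, from $|n_\nu|_q^{q/2}|n_\nu|_\infty^{2/(\gamma_\nu-1)-q/2}$ and from $|n_\nu|_\infty^{(3-\gamma_\nu)/(\gamma_\nu-1)}\dot X_{\nu,1}^{1/(s-1)}\dot X_{\nu,s}^{(s-2)/(s-1)}$, and (iii) a vanishing truncation remainder $\frac{C}{N^{\min\{1,s-d/2-1\}}}\|\D W_\nu\|_{s-1}$ from the commutator term $G_\nu^s$. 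Summing the five rescaled inequalities over the components of $Y_{\nu,s}$, and over $\nu=i,e$ because $\D\vp$ couples the two species through the Poisson equation, one arrives at the master inequality
\begin{equation}\label{masterODE}
\frac{\rmd}{\rmd t}\!\sum_{\nu}Y_{\nu,s}+\sum_{\nu}\frac{a_\nu-\eta}{1+t}Y_{\nu,s}\;\le\;\frac{C}{(1+t)^2}\sum_{\nu}Y_{\nu,s}+C\Big(\sum_{\nu}Y_{\nu,s}\Big)^{2}+C\Big(\sum_{\nu}Y_{\nu,s}\Big)^{2/(\underline\gamma-1)}+o_N(1),
\end{equation}
which is the bipolar analogue of the model functional inequality \eqref{functional} announced in the strategy section.

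With \eqref{masterODE} in hand, I close the argument by a standard continuity/bootstrap scheme. Let $a_{\min}=\min_\nu a_\nu>1$ and multiply \eqref{masterODE} by the integrating factor $(1+t)^{a_{\min}-\eta}$; the damping term moves to the left and becomes a total derivative, while the right-hand side becomes integrable once one uses the a-priori smallness $\sum_\nu Y_{\nu,s}(t)\le 2\delta$ to control the quadratic and $2/(\underline\gamma-1)$-th power contributions by $C\delta\,\sum_\nu Y_{\nu,s}(t)$. Because $a_{\min}>1$, the exponent $a_{\min}-\eta$ still exceeds $1$ for $\eta$ small, so $(1+t)^{-2+a_{\min}-\eta}$ is integrable in $t$ and a Gronwall-type argument produces $\sum_\nu Y_{\nu,s}(t)\le C\big(Y_{i,s}(0)+Y_{e,s}(0)\big)(1+t)^{-a_{\min}+\eta}$, uniformly in $N$ and $\eps$. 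A standard continuation argument (shrinking $\delta$ if necessary and using that the local solution of Theorem~\ref{comthmloc} keeps $Y_{\nu,s}$ continuous in $t$) shows the bootstrap assumption is self-improving, yielding \eqref{resultnaka} on $[0,T]$ for any $T>0$.

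The main obstacle I expect is item (ii) above, namely verifying that every nonlinear contribution from the $\dot{H}^s$-estimate \eqref{estxs}, after rescaling, is indeed of the form $C\bigl(\sum_\nu Y_{\nu,s}\bigr)^{\alpha}$ with $\alpha\ge 2$ and with a time weight that is at worst $(1+t)^{-2}$. The term that is most dangerous is $C(1+t)^{d/2-s-2}|W_\nu|_\infty\dot X_{\nu,s}$, whose weight is very negative only when $s$ is large, so that it must be matched against the factor $(1+t)^{s-d/2-1}$ from the rescaling and the extra factor $(1+t)^{-1}$ hidden in $n_\nu^\infty,w_\nu^\infty$; similarly the interpolation identities that appear in \eqref{X1vp}--\eqref{Xsvp} must be tracked carefully so that the resulting powers of $Y_{\nu,s}$ are strictly greater than $1$, which is exactly where the condition \eqref{res1} on $(s,d,q,\gamma_\nu)$ is used. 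Once this bookkeeping is carried out, the $N$-dependent remainder in \eqref{Gs} is clearly harmless because $\eta\to 0$ as $N\to\infty$, and the proof is concluded.
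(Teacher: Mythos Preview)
Your overall plan---rescale each estimate by the right power of $(1+t)$, sum into a single inequality, and close by a continuity argument---is the correct strategy and matches the paper. However, there is a genuine gap in both the master inequality you write and in your closing argument.

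The displayed master inequality is too optimistic: you have dropped a growing time weight. After rescaling, the Poisson--coupling contributions in \eqref{estx2}--\eqref{estxs}, namely $\sum_\nu |n_\nu|_q^{q/2}|n_\nu|_\infty^{2/(\gamma_\nu-1)-q/2}$ and $\sum_\nu |n_\nu|_\infty^{(3-\gamma_\nu)/(\gamma_\nu-1)}\dot X_{\nu,1}^{1/(s-1)}\dot X_{\nu,s}^{(s-2)/(s-1)}$, do \emph{not} become $C(\sum_\nu Y_{\nu,s})^{2/(\underline\gamma-1)}$; the correct form is $C(1+t)^{2/(\gamma_\nu-1)}(\sum_\nu Y_{\nu,s})^{2/(\gamma_\nu-1)}$. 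This is exactly the last term in the model inequality \eqref{functional} that you cite but then omit from your display. With that growing weight restored, your closing step---bounding $(\sum Y)^\alpha$ by $C\delta\sum Y$ under the bootstrap $\sum Y\le 2\delta$---fails: after multiplying by the integrating factor $(1+t)^{a_{\min}-\eta}$ you are left with a linear term in $Z=(1+t)^{a_{\min}-\eta}\sum Y$ whose coefficient is $C\delta^{\alpha-1}(1+t)^{2/(\gamma-1)}$, and Gronwall then produces a factor $\exp\big(C\delta^{\alpha-1}(1+t)^{2/(\gamma-1)+1}\big)$ that blows up; the bootstrap cannot close with $\delta$ independent of $T$.

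What the paper does instead is bootstrap directly on the weighted quantity $Z(t)=\sum_\nu e^{c_3 t/(1+t)}(1+t)^{a_\nu-\eta}Y_{\nu,s}(t)$, assume $Z(t)\le c_4 Z(0)$, and observe that the dangerous term, once expressed in $Z$, carries the weight $(1+t)^{(1-\bar a)\frac{2}{\bar\gamma-1}+\bar a}$. The key algebraic fact is that this exponent is strictly less than $-1$, which is precisely where the restriction $\gamma_\nu<5/3$ from \eqref{res1} enters (since $\bar a=1+\min\{d(\bar\gamma-1)/2,1\}$ and $2/(\bar\gamma-1)>3$). Integrating in $t$ then yields a $T$--independent bound $Z(t)\le (c_6+\tfrac14 c_4)Z(0)+Cc_4^2Z(0)^2+Cc_4^{2/(\underline\gamma-1)}Z(0)^{2/(\bar\gamma-1)}$, and the bootstrap closes for $c_4$ large and $Z(0)$ small. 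A secondary point: the $L^\infty$ pieces \eqref{emnlinfty}--\eqref{emwlinfty} are integral, not differential, inequalities, so they cannot simply be ``summed'' into your ODE; the paper substitutes the bootstrap bound $Z\le c_4 Z(0)$ directly into those integrals and feeds the result back into the differential inequality for $\tilde Y_{\nu,s}=n_\nu^q+\dot Y_{\nu,1}+\dot Y_{\nu,s}$.
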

\begin{proof} 

From Lemmas \ref{chp4-1}--\ref{chp4-3}, one concludes there exists a constant $c_{32}>0$, such that
\begin{align}
	\dfrac{\rmd}{\rmd t}\tilde{Y}_{\nu,s}+\dfrac{a_\nu-\eta}{1+t}\tilde{Y}_{\nu,s}\leq& \dfrac{c_{32}\tilde{Y}_{\nu,s}}{(1+t)^2}+\dfrac{C(n_\nu^\infty+\eps^{-\beta_\nu}w_\nu^\infty)}{(1+t)^2}+CY_{\nu,s}^2\nonumber\\
 \label{ys1} &+C\sum_{\nu=i,e}(1+t)^{\frac{2}{\gamma_\nu-1}}\tilde{Y}_{\nu,s}^{\frac{2}{\gamma_\nu-1}}\\
&+C(N^{\max\{-1,\frac{d}{2}+1-s\}}+\eta)Y_{\nu,s}+C\sum_{\nu=i,e}(1+t)^{\frac{2}{\gamma_\nu-1}}(n_\nu^\infty)^{\frac{3-\gamma_\nu}{\gamma_\nu-1}}\tilde{Y}_{\nu,s},\nonumber\\
	\label{n1}n_\nu^\infty(t)\leq& \frac{e^{-\frac{c_{31}t}{1+t}}|n_\nu^0|_\infty}{(1+t)^{a_\nu}}+\frac{C}{(1+t)^{a_\nu}}\int_0^t (1+\tau)^{a_\nu}Y_{\nu,s}^2(\tau)\rmd\tau,\\
\label{w1}\eps^{-\beta_\nu}w_\nu^\infty(t)\leq&\frac{C}{(1+t)^{a_\nu}}\int_0^t\Big((1+\tau)^{a_\nu}\tilde{\mathcal{I}}(\tau)+(1+\tau)^{a_\nu}Y_{\nu,s}^2(\tau)\Big)\rmd\tau,
\end{align}
where
\[
\tilde{\mathcal{I}}(\tau)=\Big(\sum_{\nu=i,e}((1+\tau)Y_{\nu,s})^{\frac{2}{\gamma_\nu-1}\frac{2s-d}{2s-1}}\Big)\Big(\sum_{\nu=i,e}((1+\tau)Y_{\nu,s})^{\frac{2}{\gamma_\nu-1}\frac{d-2}{2s-1}}\Big).
\]
Now let $c_3:=\max\{c_{31},c_{32}\}$ and $Z(t):=\sum_{\nu=i,e}e^{\frac{c_3t}{1+t}}(1+t)^{a_\nu-\eta}Y_{\nu,s}(t)$. Suppose that 
\begin{equation}\label{assmp}
	Z(t)\leq c_4Z(0),\qquad \text{for}\,\, t\in[0,T],
\end{equation}
with $c_4\geq 1$ to be determined in \eqref{c4}. Without loss of generality, we assume at present  $Z(0)<1$. Then \eqref{n1} implies that
\begin{align} \label{n3}
\begin{split}
   e^{\frac{c_3t}{1+t}}(1+t)^{a_\nu-\eta} n_\nu^\infty(t)\leq& C|n_\nu^0|_\infty+C\int_0^t\frac{1}{(1+t)^{a_\nu-2\eta}}Z^2(\tau)\rmd\tau,\\
    \leq& CZ(0)+ Cc_4^2Z^2(0),
\end{split}
\end{align}
provided that $2\eta<a_\nu-1$. Next, \eqref{w1} implies that
\begin{align}
\begin{split}
	\eps^{-\beta_\nu}e^{\frac{c_3t}{1+t}}(1+t)^{a_\nu-\eta} w_\nu^\infty(t)\leq& C\int_0^t \Big((1+\tau)^{(1-\bar{a})\frac{2}{\bar{\gamma}-1}+\frac{2\eta}{\bar{\gamma}-1}+\bar{a}}c_4^{\frac{2}{\underline{\gamma}-1}}Z(0)^{\frac{2}{\bar{\gamma}-1}}(\tau)\\
 \label{w2}&+ \frac{1}{(1+t)^{a_\nu-2\eta}}c_4^2(Z(0))^2(\tau)\Big)\rmd\tau,
\end{split}
\end{align}
where $\bar{a}=\max\{a_i,a_e\}$. Notice $\bar{a}=\min\{\frac{d(\bar{\gamma}-1)}{2}+1,2\}$. Since $\gamma_i,\gamma_e<\frac{5}{3}$, one obtains
\[
\frac{2}{\underline{\gamma}-1}> 3, \quad d(1-\frac{\gamma-1}{2})>2,
\]
which implies
\begin{equation}\label{agamma}
    (1-\bar{a})\frac{2}{\bar{\gamma}-1}+\bar{a}<-1.
\end{equation}
We may choose $\eta$ sufficiently small so that $ (1-\bar{a})\frac{2}{\bar{\gamma}-1}+\bar{a}+\eta$ is still less than $-1$.  Further combining $2\eta<a_\nu-1$, \eqref{w2} yields
\begin{equation}\label{w3}
\eps^{-\beta_\nu}e^{\frac{c_3t}{1+t}}(1+t)^{a_\nu-\eta} w_\nu^\infty(t)\leq Cc_4^{\frac{2}{\underline{\gamma}-1}}Z^{\frac{2}{\bar{\gamma}-1}}(0)+ Cc_4^2Z^2(0).
\end{equation}

In addition, substituting estimates \eqref{n3} and \eqref{w3} into \eqref{ys1} yields
\begin{equation}\label{naka4-5}
\begin{split}
&\dfrac{\rmd}{\rmd t}((1+t)^{a_\nu-\eta}e^{\frac{c_3t}{1+t}}\tilde{Y}_{\nu,s})\\
\leq&\frac{(1+t)^{a_\nu-\eta}e^{\frac{c_3t}{1+t}}(n_\nu^\infty+\eps^{-\beta_\nu}w_\nu^\infty)}{(1+t)^2}+\frac{Cc_4^2Z^2(0)}{(1+t)^{a_\nu-\eta}}\\
&+C(1+t)^{(1-\bar{a})\frac{2}{\bar{\gamma}-1}+\frac{2\eta}{\bar{\gamma}-1}+\bar{a}-\eta}c_4^{\frac{2}{\underline{\gamma}-1}}Z^{\frac{2}{\bar{\gamma}-1}}(0)+C(N^{\max\{-1,\frac{d}{2}+1-s\}}+\eta)c_4Z(0)\\
\leq&\frac{CZ(0)+Cc_4^{\frac{2}{\underline{\gamma}-1}}Z^{\frac{2}{\bar{\gamma}-1}}(0)+ Cc_4^2Z^2(0)}{(1+t)^2}+\frac{Cc_4^2Z^2(0)}{(1+t)^{a_\nu-\eta}}\\
&+C(N^{\max\{-1,\frac{d}{2}+1-s\}}+\eta)c_4Z(0)+C(1+t)^{(1-\bar{a})\frac{2}{\bar{\gamma}-1}+\frac{2\eta}{\bar{\gamma}-1}+\bar{a}-\eta}c_4^{\frac{2}{\underline{\gamma}-1}}Z^{\frac{2}{\bar{\gamma}-1}}(0).
\end{split}
\end{equation}
Integrating \eqref{naka4-5} over $[0,t]$ with $t\in(0,T]$, one obtains
\begin{align*}
	&(1+t)^{a_\nu-\eta}e^{\frac{c_3t}{1+t}}\tilde{Y}_{\nu,s}(t)\nonumber\\
		\leq& CZ(0)+Cc_4^{\frac{2}{\underline{\gamma}-1}}Z^{\frac{2}{\bar{\gamma}-1}}(0)+ Cc_4^2Z^2(0)+C(N^{\max\{-1,\frac{d}{2}+1-s\}}+\eta)c_4Z(0).
\end{align*}
Notice that $\eta\to 0$ as $N\to\infty$ and the fact $s>\frac{d}{2}+1$, for sufficiently large $N$, it holds
\[
N^{\max\{-1,\frac{d}{2}+1-s\}}+\eta<\frac{1}{4C},
\]
based on which one obtains that there exists a constant $c_5>0$, such that
\begin{equation}\label{ys3}
    (1+t)^{a_\nu-\eta}e^{\frac{c_3t}{1+t}}\tilde{Y}_{\nu,s}(t)\leq \big(c_5+\frac{1}{4}c_4\big)Z(0)+Cc_4^2Z^2(0)+Cc_4^{\frac{2}{\underline{\gamma}-1}}Z^{\frac{2}{\bar{\gamma}-1}}(0).
\end{equation}
Combining \eqref{n3}, \eqref{w3} and \eqref{ys3}, there exists a constant $c_6>0$ such that
\[
Z(t)\leq \big(c_6+\frac{1}{4}c_4\big)Z(0)+Cc_4^2Z^2(0)+Cc_4^{\frac{2}{\underline{\gamma}-1}}Z^{\frac{2}{\bar{\gamma}-1}}(0).
\]
Consequently, choose $c_4$ large enough and then $Z(0)$ small enough, such that
\begin{equation}\label{c4}
c_6<\frac{c_4}{12}, \qquad \Big(Cc_4+Cc_4^{\frac{3-\underline{\gamma}}{\underline{\gamma}-1}}Z^{\frac{5-3\bar{\gamma}}{\bar{\gamma}-1}}(0)\Big)Z(0)<\frac{1}{3},
\end{equation}
then $Z(t)$ is well-defined for $t\in[0,T]$ satisfying $Z(t)<\frac{2}{3}c_4Z(0)$. This implies \eqref{resultnaka}.
\end{proof}

Estimate \eqref{resultnaka} implies
\begin{equation}\label{univ0}
    \begin{split}
	|(n_\nu,\eps^{-\beta_\nu}w_\nu)(t)|_\infty\leq& C(1+t)^{-\min\{1,\frac{d(\gamma_\nu-1)}{2}\}+\eta},\\
	|n_\nu(t)|_q \leq& C(1+t)^{\frac{d}{q}-\min\{1,\frac{d(\gamma_\nu-1)}{2}\}+\eta},\\
	\|(n_\nu,\eps^{-\beta_\nu}w_\nu)(t)\|_{\dot{H}^\sigma} \leq& C(1+t)^{\frac{d}{2}-\sigma-\min\{1,\frac{d(\gamma_\nu-1)}{2}\}+\eta}, \qquad \sigma\in[1,s],\\
	\end{split}
\end{equation}
as well as for $\bar{b}=\max\{b_i,b_e\}$:
\begin{equation}\label{univp20}
	\begin{split}
		|\D \vp(t)|_\infty\leq& C(1+t)^{1-\frac{2\bar{b}}{\bar{\gamma}-1}+O(\eta)},\\
		\|\D \vp(t)\|_{\dot{H}^\sigma} \leq& C(1+t)^{\frac{d}{2}-\sigma+1-\frac{2\bar{b}}{\bar{\gamma}-1}+O(\eta)}, \qquad \sigma\in[1,s],\\
	\end{split}
\end{equation}
for any $t\in [0,T]$. This indeed implies the global existence of regular solutions to \eqref{TEP}. It is worth mentioning that \eqref{univ0} is uniformly bounded with respect to $N,\eps$ and $R_\nu$.

For later use, we derive estimates for $|\eps^{-\beta_\nu}w_\nu|_q$.
\begin{lemma} It holds
\begin{equation}\label{wq}
	|\eps^{-\beta_\nu}w_\nu|_q\leq C(1+t)^{\frac{d}{q}-\min\{\frac{d(\gamma_\nu-1)}{2},1\}+\eta},
\end{equation}
\end{lemma}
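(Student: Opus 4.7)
\medskip

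\textbf{Proof proposal.} The strategy parallels the $L^q$ estimate for $n_\nu$ obtained in Lemma \ref{chp4-2}, but the right-hand side now contains the potential and pressure forcing rather than just $\mathrm{div}\,w_\nu$, so the main work is controlling those forcing terms in $L^q$ via the a priori bounds \eqref{univ0}--\eqref{univp20} that we already have in hand.

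First, I would perform the $L^q$ energy identity by multiplying \eqref{TEPeps}$_2$ (for $\nu=e$) or \eqref{TEPeps}$_4$ (for $\nu=i$) by $|\eps^{-\beta_\nu}w_\nu|^{q-2}\eps^{-\beta_\nu}w_\nu$ and integrating over $\R^d$. Integration by parts on the convection piece gives
\[
\int |w_\nu|^{q-2}w_\nu\cdot((w_\nu+\hu_\nu^N)\cdot\nabla) w_\nu\,\rmd x=-\frac1q\int \dive(w_\nu+\hu_\nu^N)\,|w_\nu|^q\,\rmd x .
\]
Using the expansion $\nabla\hu_\nu=(1+t)^{-1}\Id+(1+t)^{-2}K_\nu$ from Lemma \ref{hu}, together with the truncation-error control of Lemma \ref{uniconv} (which produces a term of order $\eta/(1+t)$), and the explicit damping $\eps^{-\beta_\nu}w_\nu/(1+t)$, I obtain an ODI of the shape
\[
\frac{\rmd}{\rmd t}|\eps^{-\beta_\nu}w_\nu|_q+\Big(1-\frac{d}{q}-\eta\Big)\frac{|\eps^{-\beta_\nu}w_\nu|_q}{1+t}\le C|\eps^{-\beta_\nu}w_\nu|_q\Big(|\nabla w_\nu|_\infty+\frac{1}{(1+t)^2}\Big)+C|\nabla\vp|_q+C|n_\nu\nabla n_\nu|_q .
\]
This is the exact analogue of \eqref{mid88}, now with source terms coming from the pressure and the Poisson coupling.

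Second, I would bound the two source terms using interpolation from the already-uniform estimates. For the pressure term, $|n_\nu\nabla n_\nu|_q\le |n_\nu|_\infty\,|\nabla n_\nu|_q$; the factor $|n_\nu|_\infty$ decays like $(1+t)^{-b_\nu+\eta}$ by \eqref{univ0}, and $|\nabla n_\nu|_q$ is controlled by Gagliardo--Nirenberg interpolation between $|n_\nu|_q$ and $\|n_\nu\|_{\dot H^s}$, both of which have known decay from \eqref{univ0}. For the potential term, I would either interpolate $|\nabla\vp|_q\le C|\nabla\vp|_2^{2/q}|\nabla\vp|_\infty^{1-2/q}$ using \eqref{univp20}, or equivalently apply Hardy--Littlewood--Sobolev to $\Delta\vp=\sum_\nu q_\nu C_\nu n_\nu^{2/(\gamma_\nu-1)}$ to write $|\nabla\vp|_q\le C|n_\nu^{2/(\gamma_\nu-1)}|_r$ with $1/r=1/q+1/d$, and then interpolate this $L^r$ norm between $|n_\nu|_q$ and $|n_\nu|_\infty$. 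In either route, using $\gamma_\nu<1+4/(d-1)$ from \eqref{res1}, the resulting time weight is strictly more decaying than $(1+t)^{d/q-b_\nu+\eta-1}$.

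Third, I close by multiplying the ODI by the integrating factor $(1+t)^{1-d/q-\eta}$, integrating from $0$ to $t$, and using the initial condition $w_\nu(0,x)=0$ together with $|\nabla w_\nu|_\infty\le C(1+t)^{-1-b_\nu+\eta}$ (which follows from \eqref{embed00} and \eqref{univ0}). The terms involving $|\nabla w_\nu|_\infty$ and $(1+t)^{-2}$ are time-integrable against the integrating factor and contribute bounded quantities, while the source integrals produce the dominant scale $(1+t)^{d/q-b_\nu+\eta}$, which gives \eqref{wq}.

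The main obstacle is matching the time-decay exponents through all the interpolations: one has to verify that the worst source, namely $|\nabla\vp|_q$ near the critical end of $\gamma_\nu$, still decays at a rate compatible with the desired exponent $d/q-b_\nu$. This is where the restrictions on $(s,d,q,\gamma_\nu)$ in \eqref{res1}, in particular the lower bound $1<\gamma_\nu$ and the upper bound forcing $2\bar b/(\bar\gamma-1)\ge 1+b_\nu-d/q$ (up to $\eta$), get used; the rest is routine book-keeping mirroring the time-weighted argument already performed for $|n_\nu|_q$.
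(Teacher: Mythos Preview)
Your proposal is correct and follows essentially the same approach as the paper: derive the $L^q$ ODI from the momentum equation via the symmetric hyperbolic structure, feed in the already-established pointwise and $L^q$ bounds \eqref{univ0}, and close by Gronwall with a time-weighted integrating factor. Two minor remarks. First, your interpolation option $|\nabla\vp|_q\le C|\nabla\vp|_2^{2/q}|\nabla\vp|_\infty^{1-2/q}$ does not work here, since no uniform-in-$R_\nu$ bound on $|\nabla\vp|_2$ is available in \eqref{univp20}; the paper (and your alternative route) uses Hardy--Littlewood--Sobolev directly, $|\nabla\vp|_q\le C\sum_\nu |n_\nu^{2/(\gamma_\nu-1)}|_{qd/(d+q)}$, and then interpolates between $|n_\nu|_q$ and $|n_\nu|_\infty$, which requires precisely the constraint $\gamma_\nu<1+2d/(d+q)$ from \eqref{res1}. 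Second, for the pressure term the paper takes the simpler splitting $|n_\nu\nabla n_\nu|_q\le |\nabla n_\nu|_\infty|n_\nu|_q$, avoiding the extra Gagliardo--Nirenberg step you propose; either works. The integrability of the worst source (the potential term) is secured by \eqref{agamma}, which your last paragraph alludes to correctly.
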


\begin{proof}
Notice that \eqref{TEPeps}$_2$, \eqref{TEPeps}$_4$ and \eqref{univ0} implies
\begin{align}\label{resultnakaq}
&\dfrac{\rmd}{\rmd t}|\eps^{-2\beta_\nu}w_\nu|_q+(1-\frac{d}{q}-\eta)\frac{|\eps^{-\beta_\nu}w_\nu|_q}{1+t}\nonumber\\
\leq& C\eps^{-2\beta_\nu}|\D w_\nu|_\infty |w_\nu|_q+C|\D n_\nu|_\infty|n_\nu|_q+\frac{C|\eps^{-\beta_\nu}w_\nu|_q}{(1+t)^2}+|\D\vp|_q\\
\leq&\frac{C|\eps^{-\beta_\nu}w_\nu|_q}{(1+t)^{a+O(\eta)}}+C(1+t)^{1-2a_\nu+\frac{d}{q}+O(\eta)}+|\D\vp|_q,\nonumber
\end{align}
in which by noticing $\frac{2}{\gamma_\nu-1}>\frac{d+q}{d}$, one has
\begin{equation}\label{vpq}
\begin{split}
|\D\vp|_q\leq &\sum_{\nu=i,e}|n_\nu^{\frac{2}{\gamma-1}}|_{\frac{qd}{d+q}}\leq C\sum_{\nu=i,e}|n_\nu|_\infty^{\frac{2}{\gamma-1}-\frac{d+q}{d}}|n_\nu|_q^{\frac{d+q}{d}}\\
\leq & C(1+t)^{\frac{d+q}{q}-(\bar{b}-\eta)\frac{2}{\bar{\gamma}-1}}.
\end{split}
\end{equation}
Likewise, let $w_\nu^q=(1+t)^{-\frac{d}{q}-1}|w_\nu|_q$, then \eqref{resultnakaq} yields
\[
\dfrac{\rmd}{\rmd t}((1+t)^{a_\nu-\eta}\eps^{-\beta_\nu}w_\nu^q)\leq C\frac{(1+t)^{a_\nu-\eta}\eps^{-\beta_\nu}w_\nu^q}{(1+t)^{a_\nu+O(\eta)}}+C(1+t)^{-a_\nu-\eta}+C(1+t)^{(1-\bar{a})\frac{2}{\gamma-1}+\bar{a}+O(\eta)},
\]
which implies the desired $L^q$ estimate \eqref{wq} by the Gronwall inequality.
\end{proof}

\subsection{Uniform global  existence}\label{4.3} By similar arguments as \eqref{compr}, for any given $T>0$, $\supp_x n_\nu\subset B_{C_0(R_\nu,T)}$. Consequently, it holds $n\in L^\infty([0,T];H^s)$. Next, Lemma \ref{Poisson} and \eqref{X1vp}--\eqref{Xsvp} imply that  $\D\vp\in L^\infty([0,T];H^s)$. Similarly as \eqref{wq}, one obtains
\[
|\eps^{-\beta_\nu}w_\nu|_2\leq C_R(1+t)^{\frac{d}{2}-\min\{\frac{d(\gamma-1)}{2},1\}+\eta},
\]
which implies $\eps^{-\beta_\nu}w_\nu\in L^\infty([0,T];L^2)$. It is worth mentioning that the estimates are independent of $N$ but dependent on the size of the support for $n_\nu^0$. The time-continuity of solutions can be obtained via the same arguments as those at the end of \S \ref{s3}. \hfill $\square$

\section{Uniform global  existence for compactly supported initial data}

In this section, we will consider the global-in-time well-posedness of strong solutions to the Cauchy problem \eqref{sys-symm} with compactly supported $n_\nu^0$. 

The main result in this section can be stated as:

\begin{theorem}\label{thm5.1} Let $(s,d,q,\gamma_\nu)$ satisfy \eqref{res1}. If $(\rho_\nu^0,u_\nu^0)$ satisfies the  assumptions $(\rm A_1)$-$(\rm A_2)$ in Theorem \ref{globalthm}, and $\text{supp}_x \rho_\nu^0(x)\subset B_R$, then for any $T>0$, there exists a unique global strong solution $(n_\nu,\eps^{-\beta_\nu}w_\nu,\vp)$ in $[0,T]\times\R^d$ to \eqref{sys-symm} satisfying
	\begin{equation}\label{regu5.1}
		(n_\nu,\D\vp)\in C([0,T];H^s), \qquad \eps^{-\beta_\nu}w_\nu\in C([0,T];H_{\text{loc}}^{s'})\cap L^\infty([0,T];H^s),
	\end{equation}
	for any constant $s'\in[0,s)$.
\end{theorem}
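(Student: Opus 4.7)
My plan is to obtain the global strong solution as the limit $N \to \infty$ of the classical solutions $(n_\nu^N, \eps^{-\beta_\nu} w_\nu^N, \vp^N)$ to the truncated problem \eqref{TEP} furnished by Theorem \ref{thm4.1}. The $N$-independent estimates \eqref{univ0}, \eqref{univp20} and \eqref{wq}, together with the uniform support containment $\supp_x n_\nu^N \subset B_{C_0(R,T)}$ derived from the characteristic argument recalled in \S\ref{4.3}, provide all the a priori control I need. Banach--Alaoglu then yields, along a subsequence (still indexed by $N$), weak-$*$ limits
\[
(n_\nu^N,\,\eps^{-\beta_\nu}w_\nu^N,\,\D\vp^N) \rightharpoonup^{*} (n_\nu,\,\eps^{-\beta_\nu}w_\nu,\,\D\vp) \quad \text{in}\ L^\infty([0,T];H^s),
\]
with $n_\nu$ compactly supported in $B_{C_0(R,T)}$ by weak-$*$ lower semi-continuity.

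To pass to the limit in the nonlinear terms I need strong compactness in some weaker norm. Reading off uniform $L^\infty([0,T];H^{s-1})$ bounds for $(\pt n_\nu^N,\pt w_\nu^N)$ directly from the equations \eqref{TEP} together with the established a priori estimates, the Aubin--Lions lemma gives strong convergence in $C([0,T];H^{s'}_{\text{loc}})$ for every $s'\in[0,s)$; for $n_\nu^N$ this upgrades to global strong convergence in $C([0,T];H^{s'})$ thanks to the uniform compactness of supports. Coupled with the uniform convergence of $(1+t)\D\hu_\nu^N$ on bounded sets supplied by Lemma \ref{uniconv}, I can pass to the limit termwise in the sense of distributions in every equation of \eqref{TEP}; the nonlinear pressure term $C_\nu n_\nu^{N,\,2/(\gamma_\nu-1)}$ entering the Poisson equation is handled via Lemma \ref{comm0} applied to the strongly convergent sequence $n_\nu^N$. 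Lower semi-continuity preserves the uniform bounds \eqref{univ0}--\eqref{wq} for the limit.

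For uniqueness I form the equations for the difference of two strong solutions, exactly as in \eqref{chaunique}, and run an $L^2$ energy estimate; the compact support of both densities permits the Poisson bound $|\D(\vp_1-\vp_2)|_2 \le C\sum_\nu |n_{\nu,1}-n_{\nu,2}|_2$ as in \eqref{naka000}, so Grönwall's inequality closes. To promote the regularity to \eqref{regu5.1}, the uniform-in-$t$ compact support of $n_\nu$ combined with the symmetric-hyperbolic energy identity for \eqref{sys-symm}, its time-reversibility, and the standard upper/lower semi-continuity argument yields $n_\nu \in C([0,T];H^s)$; the Poisson equation then gives $\D\vp \in C([0,T];H^s)$ via Lemma \ref{Poisson} and the continuity argument used in \eqref{midd}--\eqref{mid}. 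The key obstacle is that $w_\nu$ does \emph{not} inherit the compact support: the source terms $\D\vp$ and $-(w_\nu\cdot\D)\hu_\nu$ in \eqref{sys-MUK} propagate non-compactly-supported data through the $w_\nu$-equation, so the Aubin--Lions argument for $w_\nu^N$ only delivers compactness in $C([0,T];H^{s'}_{\text{loc}})$. Together with the global weak-$*$ bound in $L^\infty([0,T];H^s)$, this gives precisely the asymmetric regularity statement \eqref{regu5.1}, which explains why the theorem records only local-in-space strong time continuity for the velocity component.
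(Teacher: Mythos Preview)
Your existence and time-continuity arguments track the paper's proof in \S5 closely and are correct. The one substantive gap is in uniqueness. You propose to run an $L^2$ energy estimate ``exactly as in \eqref{chaunique}'', but that estimate was carried out for the \emph{truncated} system, whose convection velocity $\hu_\nu^N$ is compactly supported and hence globally bounded. After passing $N\to\infty$ the limiting system \eqref{kepsEP} carries the full $\hu_\nu$, which grows linearly at infinity by \eqref{galilean}. The formal integration by parts
\[
\int_{\R^d} \eps^{-2\beta_\nu}\tw_\nu\cdot(\hu_\nu\cdot\nabla)\tw_\nu\,\rmd x \;=\; -\tfrac12\int_{\R^d}\dive\hu_\nu\,|\eps^{-\beta_\nu}\tw_\nu|^2\,\rmd x
\]
is therefore not justified a priori: $\hu_\nu|\tw_\nu|^2$ need not be integrable, and the weighted norm $|\,|x|\tw_\nu\,|_2$ is not controlled by the $H^s$ bounds at hand. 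The paper handles this by multiplying the difference by a cut-off $F^k=F(|x|/k)$, deriving an energy inequality for $(F^k\tn_\nu,\,F^k\tw_\nu)$ with commutator error terms supported in the annulus $\{k\le|x|\le 2k\}$, and letting $k\to\infty$; see \eqref{chauniqi}--\eqref{uniqeq3}. Your argument needs this localization step.

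A minor imprecision: you assert uniform $L^\infty([0,T];H^{s-1})$ bounds on $(\pt n_\nu^N,\pt w_\nu^N)$, but since $|\hu_\nu^N|_\infty$ is only \emph{locally} bounded uniformly in $N$, these bounds hold only in $H^{s-1}(B_{R_0})$ for each fixed $R_0$ (cf.\ \eqref{unipt}). Your conclusion of strong convergence in $C([0,T];H^{s'}_{\text{loc}})$ is nonetheless the correct one.
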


Let $T>0$ be any time and $(n_\nu^{N,\eps},\eps^{-\beta_\nu}w_\nu^{N,\eps},\vp^{N,\eps})$ in $[0,T]\times \R^d$ be the unique global-in-time solution to the truncated system \eqref{TEP} obtained in Theorem \ref{thm4.1}. In the rest of this section, we denote $C\geq 1$ a generic constant depending only on fixed constants $(T,A_\nu,\gamma_\nu,\kappa)$, but independent of $N$, $\eps$ and $R_\nu$, which may differ from line to line. 

\subsection{Uniform global  existence} Since the  solution sequence $(n_\nu^{N,\eps},\eps^{-\beta_\nu}w_\nu^{N,\eps},\nabla \vp^{N,\eps})$ are uniformly bounded in $L^\infty([0,T];H^s)$, there exists a subsequence of solutions (still denoted by) $(n_\nu^{N,\eps},\eps^{-\beta_\nu}w^{N,\eps},\D\vp^{N,\eps})$, which converges to a limit $(n_\nu^\eps, \eps^{-\beta_\nu}w^\eps, \D\vp^\eps)$ in the weak-* sense as $N\to\infty$:
\begin{equation}\label{convNwe}
	\begin{split}
		(n_\nu^{N,\eps},\eps^{-\beta_\nu}w_\nu^{N,\eps},\D\vp^{N,\eps})\rightharpoonup & (n_\nu^\eps, \eps^{-\beta_\nu}w_\nu^\eps, \D\vp^\eps)\quad \text{weakly*\,\,in}\,\,\, L^\infty([0,T];H^s).
	\end{split}
\end{equation}
Noticing that  $\eta\to 0$ as $N\to\infty$, by \eqref{univ0}, \eqref{wq} and the lower semi-continuity of weak convergence for norms in Sobolev spaces, one obtains
\begin{equation}\label{uni1}
	\begin{split}
		|(n_\nu,\eps^{-\beta_\nu}w_\nu)(t)|_\infty\leq& C(1+t)^{-\min\{1,\frac{d(\gamma_\nu-1)}{2}\}},\\
	|(n_\nu,\eps^{-\beta_\nu}w_\nu)(t)|_q \leq& C(1+t)^{\frac{d}{q}-\min\{1,\frac{d(\gamma_\nu-1)}{2}\}},\\
	\|(n_\nu,\eps^{-\beta_\nu}w_\nu)(t)\|_{\dot{H}^\sigma} \leq& C(1+t)^{\frac{d}{2}-\sigma-\min\{1,\frac{d(\gamma_\nu-1)}{2}\}}, \qquad ,\\
	\end{split}
\end{equation}
for any $\sigma\in[1,s]$ and $t\in[0,T]$. Moreover, by lemma \ref{ga-ni},
\begin{align}\label{nablaw}
	|\D (n_\nu,\eps^{-\beta_\nu}w_\nu)(t)|_\infty
	\leq& C\| (n_\nu,\eps^{-\beta_\nu}w_\nu)(t)\|_{\dot{H}^1}^{\frac{2s-d-2}{2s-2}}\| (n_\nu,\eps^{-\beta_\nu}w_\nu)(t)\|_{\dot{H}^s}^{\frac{d}{2s-2}}\nonumber\\
 \leq & C(1+t)^{-1-\min\{1,\frac{d(\gamma_\nu-1)}{2}\}},
\end{align}
and by \eqref{univp20} and \eqref{vpq}, for any $\sigma\in[1,s]$,
\begin{equation}\label{univp2}
	\begin{split}
		|\D \vp^\eps(t)|_\infty\leq& C(1+t)^{1-\frac{2\bar{b}}{\bar{\gamma}-1}},\\
		|\D \vp^\eps(t)|_q \leq& C(1+t)^{\frac{d+q}{q}-\frac{2\bar{b}}{\bar{\gamma}-1}},\\
		\|\D \vp^\eps(t)\|_{\dot{H}^\sigma} \leq& C(1+t)^{\frac{d}{2}-\sigma+1-\frac{2\bar{b}}{\bar{\gamma}-1}}.
	\end{split}
\end{equation}
It is worth emphasizing that estimates \eqref{uni1}--\eqref{univp2} are  uniform with respect to $\eps$ and $R_\nu$.

Next, \eqref{TEPeps} implies the following relations holds:
\begin{equation*}
	\begin{cases}
		\pt n_e^{N,\eps}=-(w_e^{N,\eps}+\hu_e^N)\cdot\nabla n_e^{N,\eps}-\dfrac{\gamma_e-1}{2} n_e^{N,\eps}\dive w_e^{N,\eps}-\dfrac{\gamma_e-1}{2} n_e^{N,\eps}\dive \hu_e,\\[2.2mm]
		 \pt w_e^{N,\eps}=-((w_e^{N,\eps}+\hu_e^N)\cdot\nabla)w_e^{N,\eps}-\dfrac{\gamma_e-1}{2}n_e^{N,\eps}\nabla n_e^{N,\eps}-w_e^{N,\eps}\cdot\D\hu_e-\D\vp^{N,\eps},\\[2mm]
  		\pt n_i^{N,\eps}=-(w_i^{N,\eps}+\hu_i^N)\cdot\nabla n_i^{N,\eps}+\dfrac{\gamma_i-1}{2} n_i^{N,\eps}\dive w_i^{N,\eps}-\dfrac{\gamma_i-1}{2} n_i^{N,\eps}\dive \hu_i,\\[2.2mm]
		\frac{1}{\eps}\pt w_i^{N,\eps}=-\frac{1}{\eps}(((w_i^{N,\eps}+\hu_i^N)\cdot\nabla)w_i^{N,\eps}+(w_i^{N,\eps}\cdot\D)\hu_i)-\dfrac{\gamma_i-1}{2}\eps n_i^{N,\eps}\D n_i^{N,\eps}+\eps\nabla\vp^{N,\eps},
	\end{cases}
\end{equation*}
it holds  that for any finite constant $R_0>0$ and finite time $T>0$,
\begin{equation}\label{unipt}
	\|\pt n_\nu^{N,\eps}(t)\|_{H^{s-1}(B_{R_0})}^2+\|\pt (\eps^{-\beta_\nu}w^{N,\eps})(t)\|_{H^{s-1}(B_{R_0})}^2\leq C_0(R_0,T),
\end{equation}
for $0\leq t\leq T$, where the constant $C_0(R_0,T)>0$ depends only on $R_0$ and $T$. Consequently, there exists a subsequence of solutions (still denoted by) $(n_\nu^{N,\eps},\eps^{-\beta_\nu}w_\nu^{N,\eps},\D\vp^{N,\eps})$ converging to the same limit $(n_\nu^\eps,w_\nu^\eps,\D\vp^\eps)$ in the following strong sense:
\begin{equation}\label{convNst}
(n_\nu^{N,\eps},\eps^{-\beta_\nu}w_\nu^{N,\eps},\D\vp^{N,\eps})\to (n_\nu^\eps,\eps^{-\beta_\nu}w^\eps,\D\vp^\eps) \quad \text{strongly\,\,in}\,\,\,\,\, C([0,T];H^{s-1}(B_{R_0})).
\end{equation}
Hence, combining the strong convergence in \eqref{convNst}, the weak convergences in \eqref{convNwe} and \eqref{nablaw} show that $(n_\nu^\eps,w_\nu^\eps,\vp^\eps)$ is a strong solution to the following problem
\begin{equation}\label{kepsEP}
	\begin{cases}
		\pt n_e^{\eps}+(w_e^{\eps}+\hu_e)\cdot\nabla n_e^{\eps}+\dfrac{\gamma_e-1}{2} n_e^{\eps}\dive w_e^{\eps}\\
  \quad=-\dfrac{\gamma_e-1}{2} n_e^{\eps}\Big(\dfrac{d}{1+t}+\dfrac{\text{Tr}K_e}{(1+t)^2}\Big),\\[2.2mm]
		\pt w_e^{\eps}+((w_e^{\eps}+\hu_e)\cdot\nabla)w_e^{\eps}+\dfrac{\gamma_e-1}{2}n_e^{\eps}\nabla n_e^{\eps} \\
  \quad=-\Big(\dfrac{w_e^{\eps}}{1+t}+\dfrac{w_e^{\eps}\cdot K_e}{(1+t)^2}\Big)-\nabla\vp^\eps,\\[2mm]
  		\pt n_i^{\eps}+(w_i^{\eps}+\hu_i)\cdot\nabla n_i^{\eps}+\dfrac{\gamma_i-1}{2} n_i^{\eps}\dive w_i^{\eps}\\
    \quad=-\dfrac{\gamma_i-1}{2} n_i^{\eps}\Big(\dfrac{d}{1+t}+\dfrac{\text{Tr}K_i}{(1+t)^2}\Big),\\[2.2mm]
		\eps^{-2}(\pt w_i^{\eps}+((w_i^{\eps}+\hu_i)\cdot\nabla)w_i^{\eps})+\dfrac{\gamma_i-1}{2}n_i^{\eps}\D n_i^{\eps}\\
  \quad=-\Big(\dfrac{\eps^{-2}w_i^{\eps}}{1+t}+\dfrac{\eps^{-2}w_i^{\eps}\cdot K_i}{(1+t)^2}\Big)+\nabla\vp^\eps,\\
  \Delta \vp=C_i n_i^{2/{\gamma_i-1}}-C_en_e^{2/{\gamma_e-1}}, \\
  (n_\nu^\eps,\eps^{-\beta_\nu}w_\nu^\eps)(0,x)=(n_\nu^0(x),0).
	\end{cases}
\end{equation}

\subsection{Uniqueness and time continuity}
\subsubsection{Uniqueness} Let $(n_{\nu,1},w_{\nu,1},\vp_1)$ and $(n_{\nu,1},w_{\nu,1},\vp_1)$  be two strong solutions to \eqref{kepsEP}. For simplicity, we denote 
\[
\tn_\nu:=n_{\nu,1}-n_{\nu,2}, \,\,\tw_\nu:=w_{\nu,1}-w_{\nu,2}, \,\, \tvp:=\vp_1-\vp_2.
\]
Set $F^k=F(|x|/k)$, and
\[
\tn_\nu^k=F^k\tn_\nu, \,\, \tw_\nu^k=F^k\tw_\nu,  \,\, \D\tvp^k=F^k\D\tvp.
\]
Hence $(\tn_i^k,\tw_i^k,\tvp^k)$ satisfies the following problem
\begin{equation}\label{chauniqi}
	\begin{cases}
	\pt \tn_i^k+(w_{i,1}+\hu_i)\cdot\nabla \tn_i^k+\dfrac{\gamma_i-1}{2}n_{i,1}\dive \tw_i^k\\
    \quad=-\dfrac{\gamma_i-1}{2}\tn_i^k\frac{d}{1+t} -\dfrac{\gamma_i-1}{2}\tn_i^k\dfrac{\text{Tr} K_i}{(1+t)^2}+(w_{i,1}+\hu_i)\tn_i\D F^k\\
  \quad\quad\,-\tw_i^k\D n_{i,2}+\dfrac{\gamma_i-1}{2}n_{i,1}w_i^k\cdot\D F^k-\dfrac{\gamma_i-1}{2}\tn_{e}^k\dive w_{i,2},
 \\
\eps^{-2} \pt \tw_i^k+\eps^{-2}((w_{i,1}+\hu_i)\cdot\nabla)\tw_i^k+\dfrac{\gamma_i-1}{2}n_{i,1}\D\tn_i^k\\
\quad=-\dfrac{\eps^{-2}\tw_i}{1+t}-\dfrac{\eps^{-2}\tw_i\cdot K_i}{(1+t)^2}+\D\tvp^k+\eps^{-2}((w_{i,1}+\hu_i)\cdot\nabla)F^k\tw_i\\
\quad\quad\,-\eps^{-2}(\tw_i\cdot\D)w_{i,2}+\dfrac{\gamma_i-1}{2}(n_{i,1}\tn_i\D F^k-\tn_i^k\D n_{i,2}),\\
  (\tn_i^k,\eps^{-1}\tw_i^k)=(0,0).
	\end{cases}
\end{equation}
and $(\tn_e^k,\tw_e^k,\tvp^k)$ satisfies the following problem
\begin{equation}\label{chauniqe}
    \begin{cases}
        	\pt \tn_e^k+(w_{e,1}+\hu_e)\cdot\nabla \tn_e^k+\dfrac{\gamma_e-1}{2}n_{e,1}\dive \tw_e^k\\
  \quad=-\dfrac{\gamma_e-1}{2}\tn_e^k\Big(\dfrac{d}{1+t} +\dfrac{\text{Tr} K_e}{(1+t)^2}\Big)+(w_{e,1}+\hu_e)\tn_e\D F^k\\
  \quad\quad\,+\dfrac{\gamma_e-1}{2}n_{e,1}\tw_e\cdot\D F^k-\tw_e^k\D n_{e,2}-\dfrac{\gamma_e-1}{2}\tn_{e}^k\dive w_{e,2},
 \\
		
  \pt \tw_e^k+((w_{e,1}+\hu_e)\cdot\nabla)\tw_e^k+\dfrac{\gamma_e-1}{2}n_{e,1}\D\tn_e^k\\
  \quad=-\dfrac{\tw_e}{1+t}-\dfrac{\tw_e\cdot K_e}{(1+t)^2}+\D\tvp^k+((w_{e,1}+\hu_e)\cdot\nabla)F^k\tw_e\\

  \quad\quad\,-(\tw_e\cdot\D)w_{e,2}+\dfrac{\gamma_e-1}{2}n_{e,1}\tn_e\D F^k-\dfrac{\gamma_e-1}{2}\tn_e^k\D n_{e,2},\\
   \Delta \tvp^k-\D\tvp\cdot\D F^k=\displaystyle\sum_{\nu=i,e}q_\nu C_\nu\dfrac{2}{\gamma_\nu-1}\tn_\nu^k\displaystyle \int_0^1 (n_{\nu,1}+\theta\tn_\nu)^{\frac{2}{\gamma_\nu-1}-1}\rmd\theta,\\
   (\tn_e^k,\tw_e^k)=(0,0),
    \end{cases}
\end{equation}
Denote $\mathcal{W}_{\nu,j}=(n_{\nu,j},\eps^{-\beta_\nu}w_{\nu,j}^\top)^\top\,(j=1,2)$ and $\mathbb{W}_\nu=(\tn_\nu^k,(\eps^{-\beta_\nu}\tw_\nu^k)^\top)^\top$. Then \eqref{chauniqi} and \eqref{chauniqe}$_{1}$--\eqref{chauniqe}$_{2}$ can be rewritten into
\begin{equation}\label{uniquestart}
	\pt\mathbb{W}_\nu+\sum_{j=1}^d A_\nu^j(\mathcal{W}_{\nu,1};\hu_\nu,\eps)\pa_{x_j}\mathbb{W}_\nu+\frac{H_\nu(\mathbb{W}_\nu;\eps)}{1+t}+\frac{Q_\nu^3(\mathbb{W}_\nu;\eps)}{(1+t)^2}=Q_\nu^1(\D\vp)+Q_\nu^4+Q_\nu^5,
\end{equation}
where the functions $A_\nu^j(\cdot;\hu_\nu,\eps)$ and $Q_\nu^1(\cdot)$ are defined in \eqref{defQ}, $H_\nu(\cdot;\eps)$ and $Q_\nu^3(\cdot;\eps)$ are defined in \eqref{defH}, and 
\begin{equation*}
	\begin{split}
		Q_\nu^4=&-\left(\begin{matrix}\tw_\nu^k\D n_{\nu,2}+\dfrac{\gamma_\nu-1}{2}\tn_\nu^k\dive w_{\nu,2}\\[3mm] (\tw_\nu^k\cdot\D)w_{\nu,2}+\dfrac{\gamma_\nu-1}{2}\tn_\nu^k\D n_{\nu,2}\end{matrix}\right),\\
		Q_\nu^5=&\left(\begin{matrix}\tn_\nu (w_{\nu,1}+\hu_\nu)\D F^k+\dfrac{\gamma_\nu-1}{2}n_{\nu,1}\tw_\nu\cdot\D F^k\\[3mm] \eps^{-\beta_\nu}(((w_{\nu,1}+\hu_\nu)\cdot\D)F^k)\tw_\nu+\eps^{\beta_\nu}\dfrac{\gamma_\nu-1}{2}\tn_\nu n_{\nu,1}\D F^k\end{matrix}\right).
	\end{split}
\end{equation*}
Multiplying \eqref{uniquestart} by $\mathbb{W}_{\nu}$ yields
\begin{align}\label{uniqeq1}
\begin{split}
		\dfrac{1}{2}\dfrac{\rmd}{\rmd t}|\mathbb{W}_\nu|_2^2=& \dfrac{1}{2}\sum_{j=1}^d\int_{\R^d}(\mathbb{W}_\nu)^\top\pa_{x_j}A_\nu^j(\mathcal{W}_{\nu,1};\hu_\nu,\eps)\mathbb{W}_\nu\rmd x+q_\nu\eps^{-\beta_\nu}\int_{\R^d}(\tw_\nu^k)^\top\D\tvp^k\rmd x\\
  &-\int_{\R^d}(\mathbb{W}_\nu)^\top\Big(\frac{H_\nu(\mathbb{W}_\nu;\eps)}{1+t}+\frac{Q_\nu^3(\mathbb{W}_\nu;\eps)}{(1+t)^2}-Q_\nu^4-Q_\nu^5\Big)\rmd x\\
		\leq& C(|\D\mathcal{W}_{\nu,2}|_\infty+|\D\mathcal{W}_{\nu,1}|_\infty+1)|\mathbb{W}_\nu|_2^2+C|\D\tvp^k|_2|\mathbb{W}_\nu|_2+I_{\nu,k}\\
		\leq& C|\mathbb{W}_\nu|_2^2+C|\D\tvp^k|_2|\mathbb{W}_\nu|_2+I_{\nu,k},
\end{split}
\end{align}
where the error term $I_{\nu,k}$ is given and estimated by
\begin{align}\label{uniqeq2}
\begin{split}
|I_k|=& \dfrac{C}{k}\int_{k\leq |x|\leq 2k}(|\hu_\nu|_\infty+|\mathcal{W}_{\nu,1}|_\infty)|(\tn_\nu,\eps^{-\beta_\nu}\tw_\nu)|^2\rmd x\\
\leq& C\int_{k\leq |x|\leq 2k}|(\tn_\nu,\eps^{-\beta_\nu}\tw_\nu)|^2\rmd x,
\end{split}
\end{align}
for $0\leq t\leq T$. Besides, by Lemma \ref{Poisson},
\begin{align}\label{uniqeq3}
\begin{split}
|\D \tvp^k|_2\leq& C\sum_{\nu=i,e}\Big|\tn_\nu^k\displaystyle \int_0^1 (n_{\nu,1}+\theta\tn_\nu)^{\frac{2}{\gamma-1}-1}\rmd\theta\Big|_{\frac{2d}{d+2}}+|\D\tvp\cdot\D F^k|_{\frac{2d}{d+2}}\\
\leq& C\sum_{\nu=i,e}\sum_{j=1}^2|\tn_\nu^k|_2|n_{\nu,j}^{\frac{2}{\gamma_\nu-1}-1}|_d+\Big(\int_{k\leq|x|\leq 2k}|\D\tvp\cdot\D F^k|^{\frac{2d}{d+2}}\rmd x\Big)^{\frac{d+2}{2d}}\\
\leq& C\sum_{\nu=i,e}|\tn_\nu^k|_2\sum_{j=1}^2|n_{\nu,j}|_q^{\frac{d}{q}}|n_{\nu,j}|_\infty^{\frac{3-\gamma_\nu}{\gamma_\nu-1}-\frac{d}{q}}+\Big(\int_{k\leq|x|\leq 2k}|\D\tvp|^2\rmd x\Big)^{1/2}|\D F^k|_d.
\end{split}
\end{align}

Combining \eqref{uniqeq1}--\eqref{uniqeq3}, one concludes that
\[
\dfrac{\rmd}{\rmd t}\sum_{\nu=i,e}|\mathbb{W}_\nu|_2^2\leq C\sum_{\nu=i,e}|\mathbb{W}_\nu|_2^2+C\int_{k\leq|x|\leq 2k}|\D\tvp|^2\rmd x+C\int_{k\leq |x|\leq 2k}|(\tn_\nu,\eps^{-\beta_\nu}\tw_\nu)|^2\rmd x,
\]
which, along with Gronwall's inequality, implies that for $t\in[0,T]$,
\[
\sum_{\nu=i,e}|\mathbb{W}_\nu|_2^2\leq Ce^{CT}\int_0^Te^{-c\tau}\Big(\int_{k\leq|x|\leq 2k}|\D\tvp|^2\rmd x+\int_{k\leq |x|\leq 2k}|(\tn_\nu,\eps^{-\beta_\nu}\tw_\nu)|^2\rmd x\Big)\rmd\tau\to 0
\]
as $ k\to\infty$.   Consequently, $\tn_\nu=\tw_\nu\equiv 0$. Then the uniqueness is obtained. \hfill $\square$

\subsubsection{Time continuity} From \eqref{convNwe} and \eqref{kepsEP}, one obtains
\[
(n_\nu^\eps,\eps^{-\beta_\nu}w_\nu^\eps,\D\vp^\eps)\in L^\infty([0,T];H^s), \,\, (\pt n_\nu^\eps,\eps^{-\beta_\nu}\pt w_\nu^\eps)\in L^\infty([0,T];H_{\text{loc}}^{s-1}).
\]
By Lemma \ref{aubin}, it holds that for $s'\in[0,s)$,
\[
(n_\nu,\eps^{-\beta_\nu}w_\nu)\in C([0,T];H_{\text{loc}}^{s'}\cap\text{weak-}H_{\text{loc}}^s).
\]
Since $n_\nu$ are compactly supported, one has $n_\nu\in C([0,T];H^{s'}\cap\text{weak-}H^s)$. Similarly,
\[
\lim_{t\to 0}\sup\|n_\nu(t)\|_s\leq \|n_\nu^0\|_s,
\]
which shows $n_\nu$ is right continuous at $t=0$ in $H^s$. The time reversibility of equations in \eqref{kepsEP} yields $n_\nu\in C([0,T];H^s)$. The time continuity of $\D\vp$ follows similarly as \eqref{mid}. \hfill $\square$

\section{Uniform global  estimates and convergence analysis for general data}

In this section, we will first extend the global-in-time well-posedness in Theorem \ref{thm5.1} to the more general case in which $n_\nu^0$ are still small, but is not necessary to be compactly supported. Then, based on the global existence of regular solutions, we study the global-in-time convergence in infinity-ion mass limit for system \eqref{sys-symm}. In the rest of this section, we denote $C\geq 1$ a generic constant depending only on fixed constants $(T,A_\nu,\gamma_\nu,\kappa)$, but independent of $N$, $\eps$ and any truncation parameters, which may differ from line to line.

In the following subsections, we will prove Theorems \ref{globalthm}--\ref{thm2.2}.

\subsection{Initial approximation}
For initial data $n_\nu^0\in \Gamma\cap L^q$ and $\mk\geq 1$, set
\[
n_{\nu,0}^\mk=n_\nu^0 F^\mk.
\]
Since $n_{\nu,0}^\mk$ is  bounded in  $H^s$, by Theorem \ref{thm5.1}, there exist a series of approximate solutions $(n_\nu^{\mk,\eps}, \eps^{-\beta_\nu}w_\nu^{\mk,\eps},\vp^{\mk,\eps})$ to \eqref{kepsEP} satisfying \eqref{regu5.1} with the initial data
\begin{equation}\label{initialk}
(n_\nu^{\mk,\eps}, \eps^{-\beta_\nu}w_\nu^{\mk,\eps})(0,x)=(n_\nu^0 F^\mk,0).
\end{equation}

We need to show that $n_{\nu,0}^\mk$ is uniformly bounded in $\Gamma\cap L^q$ with respect to $\mk$ and $\eps$. First it follows from the definitions of $F$ and $n_{\nu,0}^\mk$ that  $n_{\nu,0}^\mk$ is uniformly bounded in $L^\infty\cap L^q$. Second, for the $\dot{H}^1$ norm, one obtains
\begin{align}
	\|n_{\nu,0}^\mk\|_{\dot{H}^1}=&|\D n_\nu^0 F^\mk+ n_\nu^0 \D F^\mk|_2\nonumber\\
 \leq& C|\D n_\nu^0|_2+|n_\nu^0|_{\frac{2d}{d-2}}|\D F^\mk|_d\leq C|\D n_\nu^0|_2\leq C. \nonumber
\end{align}
For the $\dot{H}^s$ norm, it holds
\[
\D^s(n_\nu^0 F^\mk)=\sum_{l=1}^s m_{sl}\D^l n_\nu^0\D^{s-l} F^\mk+n_\nu^0\D^s F^\mk,
\]
in which $m_{sl}>0$ are binomial constants. Then,
\begin{align}
	|\D^l n_\nu^0\D^{s-l} F^\mk|_2\leq& |\D^l n_\nu^0|_2|\D^{s-l} F^\mk|_\infty\leq C\mk^{l-s},\qquad 1\leq l\leq s,\nonumber\\
	|n_\nu^0\D^s F^\mk|_2\leq& C|n_\nu^0|_{\frac{2d}{d-2}} |\D^s F^\mk|_d\leq C \mk^{1-s},\nonumber
\end{align}
which implies the uniform boundedness of $n_{\nu,0}^\mk$ in $\Gamma\cap L^q$.

\subsection{Global existence, uniqueness and time continuity}
\subsubsection{Passing to  the limit \texorpdfstring{$\mk\to\infty$}{}}
We now pass to the limit  $\mk\to\infty$ in the equations in \eqref{kepsEP} with the initial data \eqref{initialk}. Since the equations in \eqref{kepsEP} do not contain the parameter $\mk$ and the initial data $n_{\nu,0}^\mk$ is uniformly bounded with respect to $\mk$ in $\Gamma\cap L^q$. Hence, based on the uniform estimates \eqref{uni1}, in which the generic constant $C$ is independent of the size of the support of $n_{\nu,0}^\mk$, one concludes that the solution sequence $\{n_\nu^{\mk,\eps}\}_{\mk\geq 1}$ is bounded in $ L^\infty([0,T];\Gamma\cap L^q)$, and $\{(\eps^{-\beta_\nu}w_\nu^{\mk,\eps},\D\vp^{\mk,\eps})\}_{\mk\geq 1}$ is bounded in $L^\infty([0,T];\Gamma)$. Consequently, there exists a subsequence of solutions (still denoted by)  $(n_\nu^{\mk,\eps}, \eps^{-\beta_\nu}w_\nu^{\mk,\eps},\vp^{\mk,\eps})$ that converges to a limit $(n_\nu^\eps,\eps^{-\beta_\nu}w_\nu^\eps,\vp^\eps)$ in the following weakly-* sense:
\begin{equation}\label{convkwe}
	\begin{split}
n_\nu^{\mk,\eps}\rightharpoonup n_\nu^\eps\quad&\text{weakly*\,\,in}\,\,\, L^\infty([0,T];\Gamma\cap L^q),\\
		(\eps^{-\beta_\nu}w_\nu^{\mk,\eps},\D\vp^{\mk,\eps})\rightharpoonup (\eps^{-\beta_\nu}w_\nu^\eps,\D\vp^{\eps})\quad &\text{weakly*\,\,in}\,\,\, L^\infty([0,T];\Gamma).
	\end{split}
\end{equation}
Besides, the lower semi-continuity of weak convergence for norms in Sobolev spaces implies that $(n_\nu^\eps,\eps^{-\beta_\nu}w_\nu^\eps,\vp^\eps)$ still satisfies \eqref{uni1}--\eqref{univp2}.

Next, system \eqref{kepsEP} implies for any finite constant $R_0>0$ and finite time $T>0$,
\begin{equation}\label{uni2pt}
	\|\pt n_\nu^{\mk,\eps}(t)\|_{H^{s-1}(B_{R_0})}^2+\|\eps^{-\beta_\nu}\pt w_\nu^{\mk,\eps}(t)\|_{H^{s-1}(B_{R_0})}^2\leq C_0(R_0,T),
\end{equation}
for $0\leq t\leq T$, where the constant $C_0(R_0,T)>0$ depends only on $R_0$ and $T$. Consequently, there exists a subsequence of solutions (still denoted by)  $(n_\nu^{\mk,\eps}, \eps^{-\beta_\nu}w_\nu^{\mk,\eps},\vp^{\mk,\eps})$ that converges to the same limit $(n_\nu^\eps,\eps^{-\beta_\nu}w_\nu^\eps,\vp^\eps)$ in the following strong sense:
\begin{equation}\label{convkst}
	(n_\nu^{\mk,\eps},\eps^{-\beta_\nu}w_\nu^{\mk,\eps},\vp^{\mk,\eps})\to (n_\nu^\eps,\eps^{-\beta_\nu}w_\nu^\eps,\vp^\eps) \quad \text{strongly\,\,in}\,\,\,\,\, C([0,T];H^{s-1}(B_{R_0})).
\end{equation}
Hence, combining the strong convergence in \eqref{convkst}, the weak convergence in \eqref{convkwe} and \eqref{uni1}--\eqref{nablaw} shows that $(n_\nu^\eps,\eps^{-\beta_\nu}w_\nu^\eps,\vp^\eps)$ is a strong solution to \eqref{kepsEP}.

\subsubsection{Uniqueness}Let $(n_{\nu,1},w_{\nu,1},\vp_1)$ and $(n_{\nu,2},w_{\nu,2},\vp_2)$  be two strong solutions to \eqref{kepsEP}. Let
\[
\tn_\nu:=n_{\nu,1}-n_{\nu,2}, \,\, \tw_\nu:=w_{\nu,1}-w_{\nu,2}, \,\, \tvp:=\vp_1-\vp_2.
\]
Set $F^k=F(|x|/k)$, and
\[
\tn_\nu^k=F^k\tn_\nu, \quad \tw_\nu^k=F^k\tw_\nu,  \quad \D\tvp^k=F^k\D\tvp.
\]
Then $(\tn_\nu,\tw_\nu,\tvp)$ satisfies \eqref{chauniqi}--\eqref{chauniqe}. Denote $\mathcal{W}_{\nu,j}=(n_{\nu,j},\eps^{-\beta_\nu}w_{\nu,j}^\top)^\top\,(j=1,2)$ and $\mathbb{W}_\nu=(\tn_\nu^k,\eps^{-\beta_{\nu}}(\tw_\nu^k)^\top)^\top$. Then $\mathbb{W}_\nu$ satisfies \eqref{uniquestart},  $\D\mathbb{W}_\nu\in L^\infty([0,T];L^2)$ and
\[
|\D\mathbb{W}_\nu|_2\leq |\D F^k|_d|(\tn_\nu,\eps^{-\beta_\nu}\tw_\nu)|_{\frac{2d}{d-2}}+|F^k|_\infty|\D(\tn_\nu,\eps^{-\beta_\nu}\tw_\nu)|_2\leq C|\D(\tn_\nu,\eps^{-\beta_{\nu}}\tw_\nu)|_2.
\]

Applying $\D$ to \eqref{uniquestart}, multiplying by $\D \mathbb{W}_\nu$ and integrating over $\R^d$ yield 
\begin{align}\label{uniqeq2}
	\dfrac{1}{2}\dfrac{\rmd}{\rmd t}|\D \mathbb{W}_\nu|_2^2=& \dfrac{1}{2}\sum_{j=1}^d\int_{\R^d}(\D\mathbb{W})^\top\pa_{x_j} A_\nu^j(\mathcal{W}_{\nu,1};\hu_\nu,\eps)\D\mathbb{W}\rmd x+q_\nu\eps^{-\beta_\nu}\int_{\R^d}(\D^2\tvp^k)^\top\D\tw_\nu^k\rmd x\nonumber\\
 &-\int_{\R^d}(\D\mathbb{W})^\top\D\frac{H_\nu(\mathbb{W}_\nu;\eps)}{1+t}+\frac{\D Q_\nu^3(\mathbb{W}_\nu;\eps)}{(1+t)^2}\rmd x+\int_{\R^d}(\D\mathbb{W})^\top\D Q_\nu^4\rmd x\nonumber\\
 &+\int_{\R^d}(\D\mathbb{W})^\top\D Q_\nu^5\rmd x-\sum_{j=1}^d\int_{\R^d}(\D\mathbb{W})^\top\D A_\nu^j(\mathcal{W}_\nu;\hu_\nu,\eps)\pa_{x_j}\mathbb{W}_\nu\rmd x
:=\sum_{j=1}^6\hat{I}_{\nu,j},\nonumber
\end{align}
with the natural correspondence of $\{\hat{I}_{\nu,j}\}_{j=1}^6$, of which the estimates are as follows:
\begin{equation}\label{mid2}
	\begin{split}
	|\hat{I}_{\nu,1}|\leq& C(|\D\mathcal{W}_{\nu,2}|_\infty+|\D\hu|_\infty)|\D\mathbb{W}_\nu|_2^2,\\
	|\hat{I}_{\nu,2}|\leq& |\D\mathbb{W}_\nu|_2\Big(\Big|\D\tvp\cdot\D F^k+\displaystyle\sum_{\nu=i,e}q_\nu C_\nu\dfrac{2}{\gamma_\nu-1}\tn_\nu^k\displaystyle \int_0^1 (n_{\nu,1}+\theta\tn_\nu)^{\frac{2}{\gamma_\nu-1}-1}\rmd\theta\Big|_2\Big)\\
	\leq& |\D\mathbb{W}_\nu|_2\sum_{\nu=i,e}|\tn_\nu^k|_{\frac{2d}{d-2}}(|n_{\nu,1}^{\frac{2}{\gamma-1}}|_d+|n_{\nu,2}^{\frac{2}{\gamma-1}}|_d)+I_{\nu,k}^1\\
	\leq& |\D\mathbb{W}_\nu|_2\sum_{\nu=i,e}|\D\tn_\nu^k|_2\sum_{j=1}^2|n_{\nu,j}|_q^{\frac{q}{d}}|n_{\nu,j}|_\infty^{\frac{2}{\gamma-1}-\frac{q}{d}}+I_{\nu,k}^1\leq  C|\D\mathbb{W}_\nu|_2^2+I_{\nu,k}^1,\\
 |\hat{I}_{\nu,3}|\leq&  C|\D\mathbb{W}_\nu|_2^2+C|\D\mathbb{W}_\nu|_2(|\D\mathbb{W}_\nu|_2|K_\nu|_\infty+|\mathbb{W}_\nu|_{\frac{2d}{d-2}}|K_\nu|_d)\leq C|\D\mathbb{W}_\nu|_2^2,\\
 |\hat{I}_{\nu,4}|\leq& C|\D\mathbb{W}_\nu|_2(|\D^2\mathcal{W}_{\nu,2}|_d|\mathbb{W}_\nu|_{\frac{2d}{d-2}}+|\D\mathbb{W}_\nu|_2|\D\mathcal{W}_{\nu,2}|_\infty)\leq  C|\D\mathbb{W}|_2^2,\\	
 |\hat{I}_{\nu,6}|\leq& C(|\D\mathcal{W}_{\nu,2}|_\infty+|\D\hu|_\infty)|\D\mathbb{W}_\nu|_2^2,
\end{split}
\end{equation}
where by Lemma \ref{Poisson}, $I_{\nu,k}^1$ is defined and estimated as follows:
\begin{align}\label{mid3}
\begin{split}
	|I_{\nu,k}^1|\leq& C|\D\mathbb{W}_\nu|_2\|\D\tvp\|_{L^{\frac{2d}{d-2}}(\R^d\backslash B_{k})}\leq C|\D\mathbb{W}_\nu|_2\sum_{\nu=i,e}\sum_{j=1}^2\|n_{\nu,j}^{\frac{2}{\gamma_\nu-1}}\|_{L^2(\R^d\backslash B_{k})}\\
	\leq& C|\D\mathbb{W}_\nu|_2\sum_{\nu=i,e}\sum_{j=1}^2|n_{\nu,j}|_\infty^{\frac{2}{\gamma_\nu-1}-\frac{q}{2}}\|n_{\nu,j}\|_{L^q(\R^d\backslash B_{k})}^{\frac{q}{2}}.
\end{split}
\end{align}
For $\hat{I}_{\nu,5}$, we first bound the $\dot{H}^1$ norm of $Q_\nu^5$. Notice that
\begin{align}
\|\tn_\nu w_{\nu,1}\D F^k\|_{\dot{H}^1}\leq& C|\D\tn_\nu|_2|w_{\nu,1}|_\infty|\D F^k|_\infty+C|\tn_\nu|_{\infty}|\D w_{\nu,1}|_2|\D F^k|_\infty\nonumber\\
&+C|\tn_\nu|_{\frac{2d}{d-2}}|w_{\nu,1}|_\infty|\D^2 F^k|_d\leq\frac{C}{k}. \nonumber
\end{align}
Similarly, one obtains
\[
\|n_{\nu,1} \tw_\nu\cdot\D F^k\|_{\dot{H}^1}+\|\eps^{-\beta_\nu}(w_{\nu,1}\cdot\D) F^k\tw_\nu\|_{\dot{H}^1}+\|\tn_\nu n_{\nu,1}\D F^k\|_{\dot{H}^1}\leq \frac{C}{k}.
\]
For the term containing $\hu_\nu$, one obtains
\begin{align}
	\|\tn_\nu \hu_\nu\cdot\D F^k\|_{\dot{H}^1}\leq C\|\D\tn_\nu\|_{L^2(\R^d\backslash B_k)}+\|\tn_\nu\|_{L^\frac{2d}{d-2}(\R^d\backslash B_k)}\leq C\|\D\tn_\nu\|_{L^2(\R^d\backslash B_k)}, \nonumber
\end{align}
in which one has used the fact that for $k\leq |x|\leq 2k$, 
\begin{align*}
|\hu_\nu \D F^k|_\infty\leq&C(1+|x|)|\D F^k|_\infty\leq C,\\
|\hu_\nu\D^2 F^k|_d\leq& C(1+|x|)|\D^2 F^k|_d\leq C.
\end{align*}
Similarly one obtains
\[
\|\eps^{-\beta_\nu}(\hu_\nu\cdot\D) F^k\tw_\nu\|_{\dot{H}^1}\leq \|\eps^{-\beta_\nu}\D\tw_\nu\|_{L^2(\R^d\backslash B_k)}.
\]
Hence, one concludes
\begin{equation}\label{mid4}
|\hat{I}_{\nu,5}|\leq \frac{C|\D\mathbb{W}_\nu|_2}{k}+C|\D\mathbb{W}_\nu|_2(\|\D\tn_\nu\|_{L^2(\R^d\backslash B_k)}+\|\D\tw_\nu\|_{L^2(\R^d\backslash B_k)}).
\end{equation}
Combining \eqref{mid2}--\eqref{mid4}, it holds that
\begin{align}
\dfrac{\rmd}{\rmd t}|\D\mathbb{W}_\nu|_2^2\leq& C|\D\mathbb{W}_\nu|_2^2+C|\D\mathbb{W}_\nu|_2\sum_{\nu=i,e}\sum_{j=1}^2|n_{\nu,j}|_\infty^{\frac{2}{\gamma-1}-\frac{q}{2}}\|n_{\nu,j}\|_{L^q(\R^d\backslash B_{k})}^{\frac{q}{2}}\nonumber\\
&+\frac{C|\D\mathbb{W}_\nu|_2}{k}+C|\D\mathbb{W}_\nu|_2(\|\D\tn_\nu\|_{L^2(\R^d\backslash B_k)}+\|\D\tw_\nu\|_{L^2(\R^d\backslash B_k)}),
\end{align}
which, along with by  Gronwall's inequality, implies
\begin{equation*}
\begin{split}
|\D\mathbb{W}(t)|_2^2
\leq& Ce^{CT}\int_0^Te^{-c\tau}\Big(C|\D\mathbb{W}_\nu|_2\sum_{\nu=i,e}\sum_{j=1}^2|n_{\nu,j}|_\infty^{\frac{2}{\gamma-1}-\frac{q}{2}}\|n_{\nu,j}\|_{L^q(\R^d\backslash B_{k})}^{\frac{q}{2}}+\frac{C|\D\mathbb{W}_\nu|_2}{k}\\
&+C|\D\mathbb{W}_\nu|_2(\|\D\tn_\nu\|_{L^2(\R^d\backslash B_k)}+\|\D\tw_\nu\|_{L^2(\R^d\backslash B_k)})\Big)\rmd\tau
\to 0, \qquad \text{as}\,\,\, k\to\infty.
\end{split}
\end{equation*}
Consequently, $\D\tn_\nu=\D\tw_\nu\equiv 0$, which implies $n_\nu$ and $\eps^{-\beta_\nu}w_\nu$ are constants over $[0,T]\times\R^d$. Since $(n_\nu,\eps^{-\beta_\nu}w)\in L^q$, $\tn_\nu=\tw_\nu\equiv 0$. Hence the uniqueness is obtained. \hfill $\square$

\subsubsection{Time continuity} From \eqref{uni1} and \eqref{kepsEP}, one obtains that
\[
(n_\nu^\eps,\eps^{-\beta_\nu}w_\nu^\eps,\D\vp^\eps)\in L^\infty([0,T];\Gamma), \qquad (\pt n_\nu^\eps,\eps^{-\beta_\nu}\pt w_\nu^\eps,\pt\D\vp^\eps)\in L^\infty([0,T];H_{\text{loc}}^{s-1}).
\]
This yields for $s'\in[0,s)$, $(n_\nu^\eps,\eps^{-\beta_\nu}w_\nu^\eps,\D\vp^\eps)\in C([0,T];H_{\text{loc}}^{s'})$.  \hfill $\square$

\subsection{Global convergence analysis for infinity-ion mass limit} We are now ready to prove Theorem \ref{thm2.2}. Suppose the following initial convergence hold
 \begin{align*}
     n_\nu^0 \rightharpoonup& \bar{n}_\nu^0 \quad \text{weakly in}\,\,\, L^q\cap \Gamma,\nonumber\\
		u_\nu^0 \rightharpoonup& \bar{u}_\nu^0 \quad \text{weakly in}\,\,\, \Xi.\nonumber
 \end{align*}
Based on the uniform estimates \eqref{uni1}--\eqref{univp2}, one concludes that the solution sequence $\{n_\nu^\eps\}_{\eps>0} $ are bounded in $L^\infty([0,T];\Gamma\cap L^q)$ and the sequence $\{\eps^{-\beta_\nu}w_\nu^\eps\}_{\eps>0}$ are bounded in $L^\infty([0,T];\Gamma)$. Hence, there exist functions $\bar{n}_\nu\in L^\infty([0,T];\Gamma\cap L^q)$ and $\bar{w}_e\in L^\infty([0,T];\Gamma)$ such that as $\eps\to 0$, up to subsequences
\begin{align}\label{wee001}
\begin{split}
	 n_\nu^\eps \rightharpoonup& \bar{n}_\nu \quad \text{weakly-* in} \,\,\,L^\infty([0,T];\Gamma\cap L^q),\\
	w_e^\eps\rightharpoonup& \bar{w}_e \quad \text{weakly-* in} \,\,\,L^\infty([0,T];\Gamma),\\
 w_i^\eps\rightharpoonup& 0 \quad\,\,\,\, \text{strongly\,\,\,in} \,\,\,L^\infty([0,T];\Gamma).
\end{split}
\end{align}
Besides, by the arguments for the solution to the Burgers equations in Lemma \ref{hu} , one obtains that the sequence $\{\hu_\nu\}_{\eps>0}$ are bounded in $\Xi$. Consequently, there exists a $\bar{\hu}_\nu\in \Xi$, such that as $\eps\to 0$, up to subsequences
\begin{equation}\label{wee02}
\hu_\nu\rightharpoonup \bar{\hu}_\nu \quad \text{weakly-* in} \,\,\,L^\infty([0,T];\Xi).
\end{equation}
Let
\begin{equation}\label{xis}
\Xi^s=\{f\in L^1_{\text{loc}}(\mathbb{R}^3): |\nabla f|_\infty+\|\nabla^2 f\|_{s-2}<\infty\}.
\end{equation}
Then $\bar{w}_e,\bar{\hu}_\nu\in\Xi^s$. For simplicity, let $\bar{w}_i:=0$ and  $\bar{u}_\nu=\bar{w}_\nu+\bar{\hu}_\nu.$ It is easy to check that $\bar{u}_\nu\in\Xi^s$.

Now we study the strong convergence of solutions. Notice that it is clear that $u_i^\eps$ converges to $\bar{\hu}_i$, which is the solution to Burgers equations with initial data $u_i^0$. Next, by \eqref{kepsEP}, $\{\pt n_i^\eps\}_{\eps>0}$ is bounded in $L^\infty([0,T];H_{{\rm{loc}}}^{s-1})$, which implies that $\{n_i^\eps\}_{\eps>0}$ is relatively compact in $C([0,T];H_{{\rm{loc}}}^{s-1})$. Consequently, we can pass to the limit $\eps\to 0$ in \eqref{kepsEP}$_3$ in the sense of distributions to get that $\bar{n}_i$ satisfies
\begin{equation}\label{ion-n-limit}
\pt\bar{n}_i+\bar{\hu}_i\cdot\D\bar{n}_i+\frac{\gamma_i-1}{2}\bar{n}_i\dive\bar{\hu}_i=0,\quad \bar{n}_i(0,x)=\bar{n}_i^0,
\end{equation}
which shows that the ion system converges to the following problem
\begin{equation}\label{ion-limit}
    \begin{cases}
\pt\bar{n}_i+\bar{\hu}_i\cdot\D\bar{n}_i+\frac{\gamma_i-1}{2}\bar{n}_i\dive\bar{\hu}_i=0,\\
       \pt \bar{\hu}_i+(\bar{\hu}_i\cdot\D)\bar{\hu}_i=0,\\
       t=0:(\bar{n}_i,\bar{\hu}_i)=(\bar{n}_i^0,\bar{u}_i^0)(x).
    \end{cases}
\end{equation}

We then pass to the limit in the electron system together with the Poisson equation. By noticing system \eqref{kepsEP} and \eqref{burgers}, it holds that for any $T>0$,
\[(\pt n_e, \pt w_e,\pt\hu_e)\in L^\infty([0,T];H_{\text{loc}}^{s-1}),\]
which implies $\{n_e,w_e,\hu_e\}_{\eps>0}$ are relatively compact in $C([0,T];H_{\text{loc}}^{s-1})$, which, along with the weak convergences \eqref{wee001} and \eqref{wee02}, implies that $(\bar{n}_e,\bar{w}_e,\bar{\vp})$ satisfies the following problem
\begin{equation}\label{electron-limit}
    \begin{cases}
       \pt\bar{n}_e+(\bar{w}_e+\bar{\hu}_e)\cdot\D\bar{n}_e+\dfrac{\gamma_e-1}{2}\bar{n}_e\dive\bar{w}_e=-\dfrac{\gamma_e-1}{2}\bar{n}_i\dive\bar{\hu}_e,\\[2mm]
       \pt\bar{w}_e+((\bar{w}_e+\bar{\hu}_e)\cdot\D)\bar{w}_e+\dfrac{\gamma_e-1}{2}\bar{n}_e\cdot\bar{n}_e=-\D\bar{\vp}-(\bar{w}_e\cdot\D)\bar{\hu}_e,\\
       \Delta\bar{\vp}=C_i (\bar{n}_i(t,x))^{2/(\gamma_i-1)}-C_e \bar{n}_e^{2/(\gamma_e-1)},\\
       (\bar{n}_e,\bar{w}_e)(0,x)=(\bar{n}_e^0(x),0),
    \end{cases}
\end{equation}
which is a decoupled system for $(\bar{n}_e,\bar{w}_e,\bar{\vp})$ since $\bar{n}_i$ can be solved through \eqref{ion-n-limit}. Besides, the limit system for electron system in \eqref{burgers} is
\begin{equation}\label{zs00}
\pt\bar{\hu}_e+(\bar{\hu}_e\cdot\D)\bar{\hu}_e=0, \qquad \bar{\hu}_e(0,x)=\bar{u}_e^0(x).
\end{equation}
It follows from  \eqref{electron-limit} and the equation in \eqref{zs00} that $(\bar{n}_e,\bar{u}_e,\bar{\vp})$ satisfies the following problem
\begin{equation}\label{Euler00}
\begin{cases}
\pt \bar{n}_e+\bar{u}_e\cdot\nabla \bar{n}_e+\dfrac{\gamma_e-1}{2}\bar{n}_e\dive \bar{u}_e=0,\\
\pt\bar{u}_e+(\bar{u}_e\cdot\D)\bar{u}_e+\dfrac{\gamma_e-1}{2}\bar{n}_e\D\bar{n}_e=0,\\
\Delta\bar{\vp}=C_i (\bar{n}_i(t,x))^{2/(\gamma_i-1)}-C_e \bar{n}_e^{2/(\gamma_e-1)},\\
(\bar{n}_e,\bar{w}_e)(0,x)=(\bar{n}_e^0,\bar{u}_e^0)(x),
\end{cases}
\end{equation}
which recovers system \eqref{Euler0} by defining $
\bar{\rho}_e=\Big(\bar{n}_e\sqrt{\frac{(\gamma_e-1)^2}{4A_e\gamma_e}}\Big)^{\frac{2}{\gamma_e-1}}$ and the doping profile $\rho_i(t,x)=C_i(\bar{n}_i(t,x))^{2/(\gamma_i-1)}$. In addition, by the  lower semi-continuity of weak convergence for norms in Sobolev spaces, $(\bar{n}_e,\bar{w}_e, \D\bar{\vp})$ still satisfies \eqref{uni1}--\eqref{nablaw}.

\section{Global error estimates}
In this section, the global error estimates  between strong solutions to bipolar and unipolar Euler-Poisson equations are considered. One assumes $u_\nu^0$ to be independent of $\eps$, and consequently, $\bar{\hu}_\nu=\hu_\nu$.  Due to the non-integrability of $(u_\nu,\D u_\nu)$, the main strategy for the proof of global error estimates is to first establish the error estimates between ({\bf{TEP}}) and the truncated unipolar Euler-Poisson equations (defined in \eqref{TEuler}) with truncated initial data, and then pass to the limit $(N,\mk)\to\infty$ to obtain the error estimates between  bipolar and unipolar Euler-Poisson equations.

For initial data $n_\nu^0\in \Gamma\cap L^q$ and $\mk\geq 1$, one similarly sets 
\[
n_{\nu,0}^\mk=n_\nu^0 F^\mk.
\]
Since $n_{\nu,0}^\mk$ is  compactly supported and bounded in $\dot{H}^s$, then it  is  also bounded in  $H^s$. Then by Theorem \ref{thm4.1}, there exists a unique global strong solution $(n_\nu^{\zb},\eps^{-\beta_\nu}w_\nu^\zb,$ $\vp^\zb)$ to ({\bf{TEP}}) \eqref{TEP} with the initial data
\begin{equation}\label{initial7.1}
	(n_\nu^{\zb},\eps^{-\beta_\nu}w_\nu^\zb)(0,x)=(n_{\nu,0}^\mk(x),0).
\end{equation}
Since  \eqref{TEP} iteself does not contain the parameter $\mk$ and the initial data $n_{\nu,0}^\mk$ is uniformly bounded with respect to $\mk$ in $\Gamma\cap L^q$, one obtains that  $(n_\nu^{\zb},\eps^{-\beta_\nu}w_\nu^\zb,\vp^\zb)$  still satisfies the decay estimates \eqref{univ0}--\eqref{univp20}. 

In the following, we denote $C\geq 1$ a generic constant depending only on fixed constants $(T,A_\nu,\gamma_\nu,\kappa)$ but independent of the parameters $(\eps,N,\mathcal{K})$, which may differ from line to line.

\subsection{Error estimates between truncated systems} In this subsection, we will derive the truncated unipolar Euler-Poisson equations by passing to the limit $\eps\to 0$ in system \eqref{TEP}. Suppose there exists a function $\bar{n}_{\nu,0}^{\mk}\in \Gamma\cap L^q$, such that as $\eps\to 0$,
\[
n_{\nu,0}^\mk\rightharpoonup\bar{n}_{\nu,0}^{\mk}\quad \text{weakly\,\,in}\,\,\, \Gamma\cap L^q.
\]
Based on uniform estimates \eqref{univ0}--\eqref{univp20}, one concludes that  $\{n_\nu^{\zb}\}_{\eps>0}
$ is bounded in $L^\infty([0,T];\Gamma\cap L^q)$ and the sequence $\{\eps^{-\beta_\nu}w_\nu^\zb\}_{\eps>0}$ is bounded in $L^\infty([0,T];\Gamma)$. Hence, there exist functions $\bar{n}_\nu^{N,\mk}\in L^\infty([0,T];\Gamma\cap L^q)$ and $\bar{w}_e^{N,\mk}\in L^\infty([0,T];\Gamma)$ such that as $\eps\to 0$, up to subsequences
\begin{align}\label{conv000}
\begin{split}
	 n_\nu^\zb \rightharpoonup& \bar{n}_\nu^{N,\mk} \quad \,\text{weakly-* in} \,\,\,L^\infty([0,T];\Gamma\cap L^q),\\
	w_e^\zb\rightharpoonup& \bar{w}_e^{N,\mk} \quad \text{weakly-* in} \,\,\,L^\infty([0,T];\Gamma),\\
 w_i^\zb\to& 0 \quad\,\,\,\, \quad\,\,\,\text{strongly\,in} \,\,\,L^\infty([0,T];\Gamma).
\end{split}
\end{align}
Apparently $\bar{w}_e^\zbo\in\Xi^s$, where $\Xi^s$ is defined in \eqref{xis}. Let $\bar{u}_e^\zbo=\bar{w}_e^\zbo+\hu_e.$ It is easy to check that $\bar{u}_e^\zbo\in\Xi^s$. By system \eqref{TEP}, $\{\pt n_i^\zb\}_{\eps>0}$ is bounded in $L^\infty([0,T];H_{{\rm{loc}}}^{s-1})$, which implies that there exists a subsequence of solutions (still denoted by) $\{n_i^\zb\}_{\eps>0}$ converging to the same limit $\bar{n}_i^\zbo$ in the following strong sense
\begin{equation}\label{conv001}
    n_i^\zb\to \bar{n}_i^\zbo, \,\,\text{strongly\,in}\,\,C([0,T];H_{{\rm{loc}}}^{s-1}).
\end{equation}
Notice that $u_i^\zb$ converges to $\bar{\hu}_i$, which is the solution to Burgers equations with initial data $u_i^0$. Hence, combining the strong convergence \eqref{conv001}, the weak convergence \eqref{conv000} show that $(\bar{n}_i^\zbo,\hu_i)$ satisfies the following problem
\begin{equation}\label{ion-limit-g}
    \begin{cases}
\pt\bar{n}_i^\zbo+\hu_i^N\cdot\D\bar{n}_i^\zbo+\frac{\gamma_i-1}{2}\bar{n}_i^\zbo\dive\hu_i=0,\\
       \pt \hu_i+(\hu_i\cdot\D)\hu_i=0,\\
       t=0:(\bar{n}_i^\zbo,\hu_i)=(\bar{n}_{i,0}^\mk,u_i^0)(x).
    \end{cases}
\end{equation}

We then pass to the limit in the electron system together with the Poisson equation. By noticing system \eqref{TEP}, it holds that for any $T>0$,
\begin{equation}\label{strongconv}
(\pt n_e^\zb, \pt w_e^\zb)\in L^\infty([0,T];H_{\text{loc}}^{s-1}),
\end{equation}
which implies that there exists a subsequence of (still denoted by) $\{n_e^\zb,w_e^\zb\}_{\eps>0}$ converging to the same limit $(\bar{n}_e^\zbo,\bar{w}_e^\zbo)$ in the following strong sense:
\begin{equation}\label{conv101}
    (n_e^\zb,w_e^\zb)\to (\bar{n}_e^\zbo,\bar{w}_e^\zbo),\,\,\,\text{strongly\,in}\,\,C([0,T];H_{{\rm{loc}}}^{s-1}).
\end{equation}
According to \eqref{conv000} and \eqref{conv101}, one can pass to the limit $\eps\to 0$ in \eqref{TEP} to obtain a limit system as follows
\begin{equation}\label{TEuler}
    \begin{cases}
       \pt\bar{n}_e^\zbo+(\bar{w}_e^\zbo+\hu_e^N)\cdot\D\bar{n}_e^\zbo+\dfrac{\gamma_e-1}{2}\bar{n}_e^\zbo\dive\bar{w}_e^\zbo\\
       \qquad\quad\,\,=-\dfrac{\gamma_e-1}{2}\bar{n}_i^\zbo\dive\hu_e,\\[2mm]
       \pt\bar{w}_e^\zbo+((\bar{w}_e^\zbo+\hu_e^N)\cdot\D)\bar{w}_e^\zbo+\dfrac{\gamma_e-1}{2}\bar{n}_e^\zbo\cdot\bar{n}_e^\zbo\\
       \qquad\quad\,\,=-\D\bar{\vp}^\zbo-(\bar{w}_e^\zbo\cdot\D)\hu_e,\\[2mm]
       \Delta\bar{\vp}^\zbo=C_i (\bar{n}_i^\zbo(t,x))^{2/(\gamma_i-1)}-C_e (\bar{n}_e^\zbo)^{2/(\gamma_e-1)},\\
       (\bar{n}_e^\zbo,\bar{w}_e^\zbo)(0,x)=(\bar{n}_{e,0}^\mk(x),0),
    \end{cases}
\end{equation}
which is a decoupled system for $(\bar{n}_e^\zbo,\bar{w}_e^\zbo,\bar{\vp}^\zbo)$ since $\bar{n}_i^\zbo$ can be solved through \eqref{ion-limit-g}$_1$. 
In addition, by the lower semi-continuity of weak convergence for norms in Sobolev spaces,  $(\bar{n}_\nu^\zbo,\bar{w}_e^\zbo)$ still satisfies \eqref{univ0} and $\D\bar{\vp}^\zbo$ still satisfies \eqref{univp20}.

 Define
\[
\pi_\nu= n_\nu^\zb-\bar{n}_\nu^\zbo, \,\, v_e=w_e^\zb-\bar{w}_e^\zbo,\,\, \chi=\vp^\zb-\bar{\vp}^\zbo,\,\, \pi_\nu^0=n_{\nu,0}^\mk-\bar{n}_{\nu,0}^{\mk}.
\]
In addition, we adopt the similar notation
\[
b_\nu=\min\Big\{1,\dfrac{d(\gamma_\nu-1)}{2}\Big\}, \,\, a_\nu=1+b_\nu, \,\,c_{d,\gamma_\nu,l,\nu}:=-\dfrac{d}{2}+l+b_\nu, \,\,1\leq l\leq s.
\]

\subsubsection{Errors for ion variables} Estimates \eqref{univ0} imply the error estimate for $w_i^\zb$:
\[
|w_i^\zb-0|_{\infty}\leq C\eps(1+t)^{-b_{\nu}+\eta},\quad \|w_i^\zb-0\|_{\dot{H}^\sigma}\leq C\eps(1+t)^{\frac{d}{2}-\sigma-b_\nu+\eta}.
\]
Now subtract \eqref{ion-limit-g}$_1$ from the equation for $n_i$ in \eqref{TEP} yields the error equation:
\begin{align}\label{error-pi-i}
&\pt \pi_i+(w_i^\zb+\hu_i^N)\cdot\D\pi_i+\frac{\gamma_i-1}{2}\pi_i(\frac{d}{1+t}+\frac{\text{Tr} K_i}{(1+t)^2})\nonumber\\
=&-w_i^\zb\cdot\D \bar{n}_i^\zbo-\frac{\gamma_i-1}{2}n_i^\zb\dive w_i^\zb,
\end{align}
with the initial data
\[
\pi_i(0,x)=\pi_i^0(x).
\]

The estimates for $\pi_i$ are obtained in the following lemma. 
\begin{lemma} It holds 
\begin{equation}\label{pi-i-est}
    |\pi_i(t)|_\infty+\|\D\pi_i(t)\|_{s-1}\leq C\eps^{\min\{r,1\}}, \quad \text{for}\,\, t\in[0,T].
\end{equation}    
\end{lemma}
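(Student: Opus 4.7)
The plan is to treat the error equation \eqref{error-pi-i} as a linear transport--with--damping equation for $\pi_i$, with transport velocity $w_i^\zb+\hu_i^N$, damping coefficient $\frac{d(\gamma_i-1)}{2(1+t)}$ (plus a lower-order $K_i/(1+t)^2$ contribution), and source terms bilinear in $(n_i^\zb,\bar n_i^\zbo,w_i^\zb)$. The key observation is that by the uniform estimates \eqref{univ0}--\eqref{nablaw}, $|w_i^\zb|_\infty\leq C\eps(1+t)^{-b_i+\eta}$ and $|\D^\sigma w_i^\zb|_2$ is of order $\eps$ times polynomial decay for $\sigma\leq s$, so the whole right-hand side of \eqref{error-pi-i} carries a factor of $\eps$. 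Combined with the hypothesis $|\pi_i^0|_\infty+\|\D\pi_i^0\|_{s-1}\leq C\eps^r$ (inherited from the assumed convergence on $(\rho_\nu^0)^{(\gamma_\nu-1)/2}$ via \eqref{MUKtran}), the bound $O(\eps^{\min\{r,1\}})$ should then follow.

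For the $L^\infty$ piece, I would propagate along the characteristic $\dot X=(w_i^\zb+\hu_i^N)(t,X)$ and, after multiplying by the weight $(1+t)^{b_i}$, obtain
\begin{align*}
\tfrac{d}{dt}\!\left[(1+t)^{b_i}|\pi_i(t,X(t))|\right]\leq C(1+t)^{b_i}\!\left(|w_i^\zb|_\infty|\D\bar n_i^\zbo|_\infty+|n_i^\zb|_\infty|\D w_i^\zb|_\infty+\tfrac{|\pi_i|_\infty}{(1+t)^2}\right)\!.
\end{align*}
Inserting the decay bounds \eqref{univ0}--\eqref{nablaw}, the first two source terms are each $O\bigl(\eps(1+t)^{-1-2b_i+\eta}\bigr)$, so their product with $(1+t)^{b_i}$ is integrable on $[0,\infty)$; a direct Gronwall argument then yields $|\pi_i(t)|_\infty\leq C(1+t)^{-b_i}(\eps^r+\eps)\leq C\eps^{\min\{r,1\}}$.

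For $\|\D\pi_i\|_{s-1}$, for each $1\leq l\leq s$ I would apply $\D^l$ to \eqref{error-pi-i}, pair with $\D^l\pi_i$ in $L^2$, and integrate by parts on the transport term; the $\tfrac{1}{2}\dive(w_i^\zb+\hu_i^N)$ so generated combines with the damping (and with the expansion of $\D\hu_i^N$ from Section~\ref{4.1}) to leave a net coefficient of order $c_{d,\gamma_i,l,i}/(1+t)$ in front of $|\D^l\pi_i|_2^2$, exactly as in Lemma \ref{chp4-3}. The commutator $[\D^l,(w_i^\zb+\hu_i^N)\cdot\D]\pi_i$ is split into an $\hu_i^N$-part, handled by the Moser bookkeeping of \S\ref{4.2} together with Lemma \ref{uniconv}, and a $w_i^\zb$-part, which carries a factor of $\eps$ through the decay estimates on $w_i^\zb$. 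The source terms $w_i^\zb\cdot\D\bar n_i^\zbo$ and $n_i^\zb\dive w_i^\zb$ are estimated in $\dot H^l$ by Moser-type product inequalities, each factor containing $w_i^\zb$ contributing an $\eps$. Weighting by $(1+t)^{l+b_i-d/2}$ and summing over $1\leq l\leq s$, the energy inequality closes by Gronwall.

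The main obstacle I expect is the bookkeeping of decay rates at the top order $l=s$: to close the estimate on $[0,T]$ uniformly in $(\eps,N,\mk)$, every commutator and source time integral must converge with an explicit $\eps$ prefactor. This forces careful interpolation between $\dot H^1$ and $\dot H^s$ on mixed terms such as $|\D^{\sigma}w_i^\zb|_2\,|\D^{l+1-\sigma}\bar n_i^\zbo|_\infty$, and exploitation of the uniform convergence $(1+t)\D\hu_i^N\to(1+t)\D\hu_i$ of Lemma \ref{uniconv} to absorb the spatial truncation. Fortunately the structure mirrors the a priori analysis of \S\ref{4.2} with one fewer coupling, so the same techniques should adapt.
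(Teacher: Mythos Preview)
Your $L^\infty$ argument is essentially the paper's: multiply by a $(1+t)^{b_i}$ weight, integrate along characteristics of $w_i^\zb+\hu_i^N$, bound the sources by $O(\eps)(1+t)^{-1-2b_i+\eta}$ via \eqref{univ0}--\eqref{nablaw}, and apply Gronwall.

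For $\|\D\pi_i\|_{s-1}$, however, you take a genuinely different route. You propose to rerun the time-weighted machinery of \S\ref{4.2}: extract the damping coefficient $c_{d,\gamma_i,l,i}/(1+t)$, weight by $(1+t)^{l+b_i-d/2}$, and verify that every source and commutator time-integral converges with an explicit $\eps$ prefactor. The paper does something much cruder. Since the constant $C$ in \eqref{pi-i-est} is allowed to depend on $T$, it simply bounds each term $J_{i,1}^l,\dots,J_{i,6}^l$ by $C\|\D\pi_i\|_{s-1}^2+C\eps^{2\min\{1,r\}}$ without any time weight or damping extraction: the transport/divergence piece is estimated by $|\dive\hu_i^N|_\infty\leq C$, the commutator with $\hu_i^N$ by $\|\D^\sigma\hu_i^N\|_1$ boundedness, and the sources $w_i^\zb\cdot\D\bar n_i^\zbo$, $n_i^\zb\dive w_i^\zb$ by Moser product estimates with the uniform $O(\eps)$ bounds on $w_i^\zb$. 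The already-proved $L^\infty$ bound on $\pi_i$ feeds the $K_i$ term. A plain Gronwall on $[0,T]$ then closes.

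Your approach would also work, but the decay-rate bookkeeping you anticipate as ``the main obstacle'' is entirely avoidable here: the lemma only asks for control on a finite interval, so the paper's unweighted argument is both shorter and sidesteps the top-order interpolation issues you flag.
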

\begin{proof} We begin with the $L^\infty$ estimate for $n_i^\zb$. Notice that \eqref{error-pi-i} implies
    	\begin{align}\label{eq7-1}
	&\pt\pi_i+(w_i^\zb+\hu_i^N)\cdot\D\pi_i+\dfrac{b_i-\eta}{1+t}\pi\nonumber\\
	\leq& C|n_i^\zb|_\infty|\dive w_i^\zb|_\infty+\frac{C|\pi_i|_\infty}{(1+t)^2}+C|w_i^\zb|_\infty|\D\bar{n}^\zbo|_\infty\\
	\leq&  \frac{C|\pi_i|_\infty}{(1+t)^2}+C\eps(1+t)^{-b_\nu+\eta}(1+t)^{-a_\nu+\eta}.\nonumber
	\end{align}
	Multiplying \eqref{eq7-1} by  $(1+t)^{b_i-\eta}$  yields
	\begin{align*}
	&\pt((1+t)^{b_i-\eta}\pi_i)+(w^\zb+\hu_i^N)\cdot\D((1+t)^{b_i-\eta}\pi_i)\leq C\eps(1+t)^{-a_\nu+\eta}+\frac{C|\pi_i|_\infty}{1+t},
	\end{align*}
which, along with Gronwall's inequality, yields that
\begin{equation}\label{pi-i-infty}
    |\pi_i|_\infty\leq C\eps+C|\pi_i^0|_\infty\leq C\eps^{\min\{r,1\}}.
\end{equation}

Next, applying $\D^l$ with $1\leq l\leq s$ on both sides of \eqref{error-pi-i}, multiplying by $\D^l\pi_i$  and integrating over $\R^d$ yield
\begin{align*}
    &\frac{1}{2}\frac{\rmd}{\rmd t}|\D^l \pi_i|_2^2+\frac{b_i}{1+t}|\D^l \pi_i|_2^2\nonumber\\
    \leq& \frac{1}{2}\left|\int_{\R^d}(\dive w_i^\zb+\dive \hu_i^N)|\D^l \pi_i|^2\rmd x\right|+C\left|\int_{\R^d}\D^l(w_i^\zb\cdot\D \bar{n}_i^\zbo)\D^l\pi_i\rmd x\right|\nonumber\\
    &+C\left|\int_{\R^d}\D^l\pi_i\Big(\D^l(w_i^\zb\cdot\D\pi_i)-w_i^\zb\D^l\D \pi_i\Big)\rmd x\right|\nonumber\\
    &+C\left|\int_{\R^d}\Big(\D^l(\hu_i^N\cdot\D\pi_i)-\hu_i^N\D^l\D \pi_i\Big)\D^l\pi_i\rmd x\right|+C\left|\int_{\R^d}\D^l(\frac{\gamma_i-1}{2}\pi_i\frac{\text{Tr} K_i}{(1+t)^2})\D^l\pi_i\rmd x\right|\nonumber\\
    &+C\left|\int_{\R^d}\D^l(\frac{\gamma_i-1}{2}n_i^\zb\dive w_i^\zb)\D^l\pi_i\rmd x\right|:=\sum_{j=1}^6 J_{i,j}^l,
\end{align*}
with the natural correspondence of $\{J_{i,j}^l\}$. 

By noticing the expansion for $\D\hu_i$ as well as the uniform convergence of $\dive \hu_i^N$, it holds
\begin{align}\label{eq7-2}
    |J_{i,1}^l|\leq& C\eps(1+t)^{-a_\nu+\eta}|\D^l\pi_i|_2^2+\frac{d}{2(1+t)}|\D^l\pi_i|_2^2+\frac{C}{(1+t)^2}|\D^l\pi_i|_2^2\leq C\|\D\pi_i\|_{s-1}^2,
\end{align}
For $J_{i,2}^{l}$ to $J_{i,6}^l$, by the uniform estimates \eqref{univ0}--\eqref{univp20}, one obtains
\begin{align}\label{eq7-3}
\begin{split}
|J_{i,2}^l|\leq& |\D^{l}\pi_i|_2(|w_i^\zb|_\infty|\D^{l+1}\bar{n}_i^\zbo|_2+|\D^l w_i^\zb|_2|\D\bar{n}_i^\zbo|_\infty)\\
\leq & C\eps \|\D\pi_i\|_{s-1}+C\|\D\pi_i\|_{s-1}^2\leq C\|\D\pi_i\|_{s-1}^2+C\eps^2,\\
|J_{i,3}^{l}|\leq & C|\D^{l}\pi_i|_2(|\D^{s} w_i^\zb|_2|\D \pi_i|_\infty+C|\D w_i^\zb|_\infty|\D^{l}\pi_i|_2)\\
\leq& C\eps(1+t)^{\frac{d}{2}-s-b_i+\eta}(1+t)^{-a_i+\eta}|\D^{l}\pi_i|_2+C\eps(1+t)^{-a_i+\eta}|\D^{l}\pi_i|_2^2\\
\leq & C\eps \|\D\pi_i\|_{s-1}+C\|\D\pi_i\|_{s-1}^2\leq C\|\D\pi_i\|_{s-1}^2+C\eps^2,\\
|J_{i,5}^l|\leq& C|\D^{l}\pi_i|_2(|\pi_i|_\infty|\D^l K_i|_2+|\D^l \pi_i|_2|K_i|_\infty)\leq C\|\D\pi_i\|_{s-1}^2+C\eps^{2\min\{1,r\}},\\
|J_{i,6}^l|\leq& C|\D^{l}\pi_i|_2(|n_i^\zb|_\infty|\D^l \dive w_i^\zb|_2+|\D^l n_i^\zb|_2|\dive w_i^\zb|_\infty)\\
\leq &C\eps \|\D\pi_i\|_{s-1}^2\leq  C\|\D\pi_i\|_{s-1}^2+C\eps^2,
\end{split}
\end{align}
where one has used \eqref{pi-i-infty} in the estimate for $J_{i,6}^l$. Finally for $J_{i,4}^l$, it holds that
\begin{align}\label{mid--1}
    |J_{i,5}^l|\leq & C\sum_{\sigma=2}^{l-1}\Big|\int_{\R^d}(\D^l\pi_i)^\top\D^\sigma \hu_i^N \D^{l-\sigma+1}\pi_i\rmd x\Big|\nonumber\\
    &+C|\D\hu_i^N|_\infty|\D^l \pi_i|_2^2+C|\D^l \hu_i^N|_2|\D\pi_i|_\infty|\D^l\pi_i|_2\\
    \leq & C\sum_{\sigma=2}^{l-1}\|\D^\sigma \hu_i^N\|_1\|\D^{l-\sigma+1}\pi_i\|_{1}|\D^l\pi_i|_2+C\|\D\pi_i\|_{s-1}^2\leq C\|\D\pi_i\|_{s-1}^2,\nonumber
\end{align}
where one has used the boundedness of $|\D^{m'}\hu_\nu^N|_2$ for $2\leq m'\leq s-1$. Combining estimates \eqref{eq7-2}--\eqref{mid--1}, one obtains
\begin{align*}
    \frac{\rmd}{\rmd t}|\D^l \pi_i|_2^2\leq C\|\D\pi_i\|_{s-1}^2+C\eps^{2\min\{1,r\}}.
\end{align*}
It follows from Gronwall's inequality that
\[
\|\D\pi_i\|_{s-1}^2\leq C\eps^2+C\|\D\pi_i^0(x)\|_{s-1}^2\leq C\eps^{2\min\{r,1\}},
\]
which, along with \eqref{pi-i-infty}, implies the desired estimates. 
\end{proof}

\subsubsection{Errors for electron variables}

Subtracting \eqref{TEuler}$_1$--\eqref{TEuler}$_3$ from the equations for $n_e,w_e$ and $\D\vp$ in \eqref{TEP} respectively, one deduces that $(\pi_e,v_e,\chi)$ satisfies the following problem
\begin{equation}\label{cha}
	\begin{cases}
		\quad \pt\pi_e+(w_e^\zb+\hu_e^N)\cdot\D\pi_e+\dfrac{\gamma_e-1}{2}n_e^\zb
		\dive v_e\\
  =-\dfrac{\gamma_e-1}{2}\pi_e\dive \hu_e-v_e\cdot\D\bar{n}_e^\zbo-\dfrac{\gamma_e-1}{2}\pi_e\dive \bar{w}_e^\zbo,\\[2mm]
		\quad \pt v_e+((w_e^\zb+\hu_e^N)\cdot\D)v_e+\dfrac{\gamma_e-1}{2}n_e^\zb\D\pi_e+(v_e\cdot\D)\hu_e\\[2mm]
	=-(v_e\cdot\D)\bar{w}_e^\zbo-\dfrac{\gamma_e-1}{2}\pi_e\D\bar{n}_e^\zbo-\D\chi,\\
  \quad \Delta \chi= C_i((n_i^\zb)^{\frac{2}{\gamma_i-1}}-(\bar{n}_i^\zbo)^{\frac{2}{\gamma_i-1}})-C_e((n_e^\zb)^{\frac{2}{\gamma_e-1}}-(\bar{n}_e^\zbo)^{\frac{2}{\gamma_e-1}}),\\
  (\pi_e,v_e)(0,x)=(\pi_e^0(x),0).
	\end{cases}
\end{equation}
We denote $U_e=(\pi_e,v_e^\top)^\top$ and $\mathcal{X}_e=|U_e|_\infty+\|\D U_e\|_{s-1}$.

The uniform $L^\infty$ estimates of $(\pi_e,v_e)$ can be obtained through the following lemma.
\begin{lemma}\label{errl1} It holds
\begin{equation}\label{e-infty}
    |U_e(t)|_\infty\leq C\eps^{\min\{1,r\}}+C\int_0^t\mathcal{X}_e(\tau)\rmd \tau, \quad \text{for}\,\, t\in[0,T].
\end{equation}
\end{lemma}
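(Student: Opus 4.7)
The plan is to work with the characteristic form of the $(\pi_e,v_e)$-system \eqref{cha}. Let $D_t := \pt + (w_e^\zb+\hu_e^N)\cdot\D$ denote differentiation along the characteristics of the transport velocity $w_e^\zb + \hu_e^N$. The damping structure originates from the expansion $\D\hu_e = \frac{\Id}{1+t} + \frac{K_e}{(1+t)^2}$ (cf.\ \eqref{nablau} and Lemma \ref{hu}), which reveals isotropic damping of order $1/(1+t)$ in the evolution of $\pi_e$ (with coefficient $d(\gamma_e-1)/2$) and of $v_e$ (with coefficient $1$). After substituting into \eqref{cha} and taking absolute values, the damping contributions on the left are nonnegative and may be dropped; the remaining inequality reads
\begin{equation*}
D_t|\pi_e| \le |R_\pi|, \qquad D_t|v_e|\le |R_v| + |\D\chi|_\infty,
\end{equation*}
where $R_\pi,R_v$ collect all nonlinear source terms plus the lower-order residual damping $O((1+t)^{-2})$.

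Each piece of $R_\pi,R_v$ is estimated by $C\mathcal{X}_e(t)$, up to an integrable decay factor, using the Sobolev embedding $\|f\|_{L^\infty}\lesssim \|f\|_{s-1}$ (valid since $s-1>d/2$) together with the uniform bounds \eqref{univ0}--\eqref{univp20}, which pass to the limit system $(\bar{n}_e^\zbo,\bar{w}_e^\zbo)$ by the lower semi-continuity used in \S 6. For instance, $|\tfrac{\gamma_e-1}{2}n_e^\zb\dive v_e|\le C|n_e^\zb|_\infty\|\D v_e\|_{s-1}\le C(1+t)^{-b_e+\eta}\mathcal{X}_e$, and the interaction terms involving $\D\bar{n}_e^\zbo$ and $\dive\bar{w}_e^\zbo$ contribute a factor $(1+t)^{-1-b_e+\eta}$. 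All such factors are integrable in $t\in[0,T]$ and thus absorbed into $C\int_0^t\mathcal{X}_e\,\rmd\tau$.

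The crucial new ingredient is the pointwise control of $\D\chi$ through the Poisson equation \eqref{cha}$_3$. By the mean value theorem,
\begin{equation*}
\Delta\chi = \sum_{\nu=i,e}\tfrac{2q_\nu C_\nu}{\gamma_\nu-1}\pi_\nu\int_0^1 (\bar{n}_\nu^\zbo+\theta\pi_\nu)^{\frac{2}{\gamma_\nu-1}-1}\rmd\theta,
\end{equation*}
so the ion contribution carries a factor $\pi_i$ which is already $O(\eps^{\min\{r,1\}})$ by the preceding lemma \eqref{pi-i-est}, while the electron contribution factors through $\pi_e$ and decaying powers of $n_e^\zb,\bar{n}_e^\zbo$. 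The $L^\infty$ bound on $\D\chi$ follows from the Gagliardo-Nirenberg interpolation $|\D\chi|_\infty\le C\|\D\chi\|_{\dot{H}^1}^{(2s-d)/(2s-2)}\|\D\chi\|_{\dot{H}^s}^{(d-2)/(2s-2)}$ (as in \eqref{vpvp}), combined with Lemma \ref{Poisson} and the commutator estimate in Lemma \ref{comm0}; the strengthened constraint \eqref{res2} (one derivative beyond \eqref{res1}) is precisely what is required for the $\dot{H}^s$ part to be controlled by $\|\D\pi_\nu\|_{s-1}$ rather than by $\|\D\pi_\nu\|_s$. The net estimate takes the form $|\D\chi(t)|_\infty\le C(1+t)^{-\alpha}\mathcal{X}_e(t) + C\eps^{\min\{r,1\}}(1+t)^{-\alpha}$ for a suitable $\alpha=\alpha(d,\gamma_\nu)>0$ large enough to be integrable.

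Integrating $D_t|\pi_e|+D_t|v_e|$ from $0$ to $t$ along characteristics, and using $|U_e(0)|_\infty = |\pi_e^0|_\infty\le C\eps^r$ from the hypothesis on the initial data together with $|n_{e,0}^\mk - \bar{n}_{e,0}^\mk|_\infty\le |n_e^0-\bar{n}_e^0|_\infty$, we obtain the desired bound $|U_e(t)|_\infty \le C\eps^{\min\{r,1\}} + C\int_0^t\mathcal{X}_e(\tau)\,\rmd\tau$. The $\eps^{\min\{r,1\}}$ term collects the initial data contribution together with the $O(\eps^{\min\{r,1\}})$ Poisson coupling from the ion sector. I expect the main obstacle to be the Poisson step: verifying that $\|\D\chi\|_{\dot{H}^s}$ closes against $\|\D U_e\|_{s-1}$ (without producing an uncontrolled $\|\D U_e\|_s$) requires the strengthened parameter window \eqref{res2} and the careful commutator bookkeeping used throughout \S 4, since the nonlinearity $n^{2/(\gamma-1)}$ loses a derivative against the Poisson elliptic gain.
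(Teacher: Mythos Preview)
Your proposal is correct and follows essentially the same route as the paper: transport along characteristics for $\pi_e$ and $v_e$, bound all source terms by $C\mathcal{X}_e$, and control $|\D\chi|_\infty$ via the interpolation $|\D\chi|_\infty\le C\|\D\chi\|_{\dot H^1}^{(2s-d)/(2s-2)}\|\D\chi\|_{\dot H^s}^{(d-2)/(2s-2)}$ together with the Poisson equation and the ion estimate \eqref{pi-i-est}. The only difference is cosmetic: you track the damping $b_\nu/(1+t)$ and time-decay factors explicitly, whereas the paper simply drops the damping and uses crude bounds $|\dive\hu_e|_\infty,|\D\bar n_e^\zbo|_\infty,\ldots\le C$ on the finite interval $[0,T]$, arriving directly at $|\D\chi|_\infty\le C\eps^{\min\{1,r\}}+C\mathcal{X}_e$ without any $(1+t)^{-\alpha}$ weight.
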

\begin{proof} According to \eqref{cha}$_1$, one has
	\begin{align}
	&\pt\pi_e+(w_e^\zb+\hu_e^N)\cdot\D\pi_e\nonumber\\
	\leq& C|n_e^\zb|_\infty|\dive v_e|_\infty+C|\pi_e|_\infty+C|v_e|_\infty|\D\bar{n}_e^\zbo|_\infty+C|\pi_e|_\infty|\dive \bar{w}_e^\zbo|_\infty\nonumber\\
	\leq&  C|U_e|_\infty+C\|\D U_e\|_{s-1}\leq C\mathcal{X}_e, \nonumber
	\end{align}
 which, along with Gronwall's inequality, implies that 
 \begin{equation}\label{pi-e-infty}
     |\pi_e(t)|_\infty\leq C\eps^{r}+C\int_0^t\mathcal{X}_e(\tau)\rmd \tau.
 \end{equation}
 Similarly, one obtains
	\begin{align}\label{mid--0}
 \begin{split}
	&\pt v_e+((w_e^\zb+\hu_e^N)\cdot\D)v_e\\
	\leq& C|n_e^\zb|_\infty|\D\pi_e|_\infty+C|v_e|_\infty(|\D\hu_e|_\infty+|\D\bar{w}_e^\zbo|_\infty)\\
 &+C|\pi_e|_\infty|\D\bar{n}_e^\zbo|_\infty+C|\D\chi|_\infty\\
	\leq& C|U_e|_\infty+C\|\D U_e\|_{s-1}+C|\D\chi|_\infty,
\end{split}
 \end{align}
 where $|\D\chi|_\infty$ can be estimated as follows:
 \begin{align*}
     |\D\chi|_\infty\leq& C\|\D\chi\|_{\dot{H}^1}^{\frac{2s-d}{2s-2}}\|\D\chi\|_{\dot{H}^s}^{\frac{d-2}{2s-2}},
 \end{align*}
 in which
 \begin{align*}
     \|\D\chi\|_{\dot{H}^1}\leq& \sum_{\nu=i,e}|(n_\nu^\zb)^{\frac{2}{\gamma_\nu-1}}-(\bar{n}_\nu^\zbo)^{\frac{2}{\gamma_\nu-1}}|_2\\
     \leq& C\sum_{\nu=i,e}|\pi_\nu|_{\frac{2d}{d-2}}\leq C\eps^{\min\{1,r\}}+C\mathcal{X}_e,\\
    \|\D\chi\|_{\dot{H}^s} \leq& \sum_{\nu=i,e}\|(n_\nu^\zb)^{\frac{2}{\gamma_\nu-1}}-(\bar{n}_\nu^\zbo)^{\frac{2}{\gamma_\nu-1}}\|_{\dot{H}^{s-1}}\\
    \leq& \sum_{\nu=i,e} |\pi_\nu|_\infty|\D^{s-1}\hat{n}_\nu^{\frac{2}{\gamma_\nu-1}-1}|_2+|\D^{s-1}\pi_\nu|_2|\hat{n}_\nu^{\frac{2}{\gamma_\nu-1}-1}|_\infty\\
    \leq & C\eps^{\min\{1,r\}}+C\mathcal{X}_e,
 \end{align*}
for $\hat{n}_\nu$ between $n_\nu^\zbo$ and $\bar{n}_\nu^\zb$. This yields that
\begin{equation}\label{vpvpvp0}
|\D\chi|_\infty\leq C\eps^{\min\{1,r\}}+C\mathcal{X}_e.
\end{equation}
Substituting \eqref{vpvpvp0} into \eqref{mid--0}, one obtains
 \begin{equation}\label{v-e-infty}
     |v_e(t)|_\infty\leq C\eps^{\min\{1,r\}}+C\int_0^t\mathcal{X}_e(\tau)\rmd \tau.
 \end{equation}
Combining \eqref{v-e-infty} with \eqref{pi-e-infty} ends the proof.
\end{proof}

The next lemma concerns the high order estimates on the error variables $(\pi_e,v_e)$.

\begin{lemma}\label{errl2} It holds
\begin{equation}\label{e-high}
      \|\D U_e(t)\|_{s-1}\leq C\eps^{\min\{1,r\}}+C\int_0^t\mathcal{X}_e(\tau)\rmd \tau, \quad \text{for}\,\, t\in[0,T].
\end{equation}
\end{lemma}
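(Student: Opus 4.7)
The plan is a weighted $L^2$ energy estimate on $\D^l U_e$ for every $1\le l\le s$, arranged so that the right-hand side splits into a piece quadratic in $|\D^l U_e|_2$ (absorbable by a Gronwall argument costing a $T$-dependent constant) plus a piece linear in $|\D^l U_e|_2$ whose coefficient is $\eps^{\min\{1,r\}}+\mathcal{X}_e(t)$. Applying $\D^l$ to the electron error system \eqref{cha}$_1$--\eqref{cha}$_2$ and taking the $L^2$-inner product with $\D^l U_e$, the symmetric-hyperbolic structure (coefficient matrix $A_e^j(W_e^\zb;\hu_e^N,1)$ from \eqref{defQ}) yields an identity for $\tfrac12\tfrac{\rmd}{\rmd t}|\D^l U_e|_2^2$ that splits into four families: (a) the half-divergence term $\tfrac12\sum_j\int\pa_{x_j}A_e^j|\D^l U_e|^2\,\rmd x$, (b) the commutator $[\D^l,A_e^j\pa_{x_j}]U_e$, (c) the Poisson coupling $-\int(\D^l\D\chi)^\top\D^l v_e\,\rmd x$, and (d) the inhomogeneous remainders built from $\bar n_e^\zbo,\bar w_e^\zbo,\D\hu_e$, and $K_e$. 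I intend to handle (a), (b), and (d) in direct analogy with the proof of Lemma \ref{chp4-3}: the uniform-in-$(N,\mk,\eps)$ estimates \eqref{univ0}--\eqref{univp20} (together with the lower semi-continuity of weak convergence that transfers the same bounds to the limits $\bar n_e^\zbo,\bar w_e^\zbo$), the uniform convergence of $(1+t)\pa_{x_j}\hu_e^N$ from Lemma \ref{uniconv}, and the Moser estimate Lemma \ref{comm2} will bound each such integral by $C|\D^l U_e|_2^2+C|\D^l U_e|_2\mathcal{X}_e$.

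The main obstacle is (c): I must control $|\D^l\D\chi|_2$ with a constant independent of $(N,\mk)$, even though the supports of $n_\nu^\zb$ and $\bar n_\nu^\zbo$ grow with $\mk$. Writing
\[
\Delta\chi=\sum_{\nu=i,e}q_\nu C_\nu\frac{2}{\gamma_\nu-1}\pi_\nu\int_0^1(\bar n_\nu^\zbo+\theta\pi_\nu)^{\frac{2}{\gamma_\nu-1}-1}\,\rmd\theta,
\]
I plan to apply Lemma \ref{Poisson} to convert $|\D^l\D\chi|_2$ into $\|\Delta\chi\|_{\dot H^{l-1}}$ for $l\ge 1$, and then to use Moser's Lemma \ref{comm0} on the product above, expressing everything in terms of $L^\infty\cap L^q\cap \dot H^\sigma$ norms of $\pi_\nu,n_\nu^\zb,\bar n_\nu^\zbo$ and never their bare $L^2$ norms, in precisely the spirit of \eqref{X1vp}--\eqref{Xsvp} and \eqref{vpq}. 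The ion contribution is then $O(\eps^{\min\{1,r\}})$ by the just-proved estimate \eqref{pi-i-est}, while the electron contribution is $O(\mathcal{X}_e)$, giving altogether $|\D^l\D\chi|_2\le C(\eps^{\min\{1,r\}}+\mathcal{X}_e)$.

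Assembling (a)--(d) yields, for each $1\le l\le s$,
\[
\frac{\rmd}{\rmd t}|\D^l U_e|_2^2\le C|\D^l U_e|_2^2+C|\D^l U_e|_2\bigl(\eps^{\min\{1,r\}}+\mathcal{X}_e(t)\bigr).
\]
Regularizing by working with $\sqrt{|\D^l U_e|_2^2+\delta^2}$, dividing through, and sending $\delta\to 0^+$, followed by Gronwall's inequality (which absorbs the quadratic part into the $T$-dependent constant $e^{CT}\le C$), will deliver
\[
|\D^l U_e(t)|_2\le C|\D^l U_e(0)|_2+C\int_0^t\bigl(\eps^{\min\{1,r\}}+\mathcal{X}_e(\tau)\bigr)\,\rmd\tau.
\]
Invoking the initial convergence $|\D^l\pi_e^0|_2\lesssim\eps^r$ and summing the resulting bounds over $1\le l\le s$ produces \eqref{e-high}.
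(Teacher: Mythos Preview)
Your proposal is correct and follows essentially the same route as the paper: the paper rewrites \eqref{cha}$_1$--\eqref{cha}$_2$ in symmetric-hyperbolic form, applies $\D^l$, and decomposes the resulting energy identity into five pieces $J_{e,1}^l,\dots,J_{e,5}^l$ that correspond exactly to your (a), (c), (b), and (d) (with (d) split into the $Q_e^2$- and $Q_e^6$-parts); the crucial Poisson bound $|\D^{l+1}\chi|_2\le C(\eps^{\min\{1,r\}}+\mathcal X_e)$ is obtained there via the same product/composite estimate you outline, with the ion piece controlled by \eqref{pi-i-est}. The only cosmetic difference is the closing step: the paper derives $\frac{\rmd}{\rmd t}\|\D U_e\|_{s-1}^2\le C\mathcal X_e^2+C\eps^{2\min\{1,r\}}$ and integrates directly, whereas your $\delta$-regularization and division yields the linear Gronwall inequality and hence the stated form \eqref{e-high} more cleanly.
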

\begin{proof} Let $W_e^\zb=(n_e^\zb,w_e^\zb)$. System \eqref{cha}$_1$--\eqref{cha}$_2$ can be rewritten into the symmetric hyperbolic form:
\begin{equation}\label{globalerrsymm}
 \begin{split}
		&\pt U_e+ \sum_{j=1}^d A_e^j(W_e^\zb;\hu_\nu^N,\eps)\pa_{x_j}U_e=Q_e^1(\D\chi)-Q_e^2(U_e;\D\hu_\nu)-Q_e^6(U_e),
  \end{split}
\end{equation}
where
\[
Q_e^6(U_e)=\left(\begin{matrix}v_e\cdot\D\bar{n}_e^\zbo+\dfrac{\gamma_e-1}{2}\pi_e\dive \bar{w}_e^\zbo\\(v_e\cdot\D)\bar{w}_e^\zbo+\dfrac{\gamma_e-1}{2}\pi_e\D\bar{n}_e^\zbo\end{matrix}\right).
\]
For $1\leq l\leq s$, applying $\D^l$ to \eqref{globalerrsymm}, multiplying by $\D^l U_e$ and integrating over $\R^d$, one obtains that by the properties of the symmetrizable hyperbolic structure of the system,
	\begin{align}\label{estart1}
		\dfrac{1}{2}\dfrac{\rmd}{\rmd t}|\D^lU_e|_2^2=& \dfrac{1}{2}\sum_{j=1}^d\int_{\R^d}(\D^lU_e)^\top\pa_{x_j}A_e^j(W_e^\zb;\hu_\nu^N,\eps)\D^lU_e\rmd x-\int_{\R^d}(\D^l v_e)^\top\D^{l+1}\chi\rmd x\nonumber\\
		&-\int_{\R^d}(\D^lU_e)^\top\sum_{j=1}^d [\D^l,A_e^j(W_e^\zb;\hu_\nu^N,\eps)]\pa_{x_j}U_e\rmd x\nonumber\\
		&-\int_{\R^d}(\D^lU_e)^\top\D^l Q_e^2(U_e;\D\hu_\nu)\rmd x\\
  &-\int_{\R^d}(\D^lU_e)^\top \D^l Q_e^6(U_e)\rmd x
		:=\sum_{j=1}^5 J_{e,j}^l,\nonumber
	\end{align}
	with the natural correspondence of $\{J_{e,j}^l\}_{j=1}^5$ and 
 \[
 [\D^l,A_e^j(W_e^\zb;\hu_\nu^N,\eps)]\pa_{x_j}U_e=\D^l\Big(A_e^j(W_e^\zb;\hu_\nu^N,\eps)\pa_{x_j}U_e\Big)-A_e^j(W_e^\zb;\hu_\nu^N,\eps)\D^l\pa_{x_j}U_e.
 \]
	
	{\cuti{Estimates for $J_{e,1}^l$.}} By noting the expansions of $\D\hu_e$, one has:
	\begin{align}\label{ej1}
		|J_{e,1}^l|\leq  \dfrac{1}{2}|\D W_e^\zb|_\infty|\D^lU_e|_2^2+\dfrac{1}{2}\Big|\int_{\R^d}\dive\hu_e^N|\D^lU_e|^2\rmd x\Big|\leq C\mathcal{X}_e^2.
	\end{align}
	
	{\cuti{Estimates for $J_{e,2}^l$.}} It holds that 
 \begin{align}\label{vpvpvp}
     |\D^{l+1}\chi|_2\leq & \sum_{\nu=i,e}\|(n_\nu^\zb)^{\frac{2}{\gamma_\nu-1}}-(\bar{n}_\nu^\zbo)^{\frac{2}{\gamma_\nu-1}}\|_{\dot{H}^{l-1}}\nonumber\\
     \leq & \sum_{\nu=i,e} |\pi_\nu|_\infty|\D^{l-1}\hat{n}_\nu^{\frac{2}{\gamma_\nu-1}-1}|_2+|\D^{l-1}\pi_\nu|_2|\hat{n}_\nu^{\frac{2}{\gamma_\nu-1}-1}|_\infty\leq C\eps^{\min\{1,r\}}+C\mathcal{X}_e,
 \end{align}
 which yields
	\begin{align}\label{ej2}
	|J_{e,2}^l|\leq& C|\D^l v_e|_2|\D^{l+1}\chi|_2  \leq C\eps^{2\min\{1,r\}}+C\mathcal{X}_e^2.
	\end{align}
	
	{\cuti{Estimates for $J_{e,3}^l$.}} Let $B_e^j=A_e^j(W_e^\zb;\hu_\nu^N,\eps)-(\hu_\nu^N)^{(j)}\mathbb{I}_{d+1}$. One obtains first
 \begin{align}\label{eq7-6}
 \begin{split}
     &\Big|\int_{\R^d}(\D^l U_e)^\top\sum_{j=1}^d(\D^l(B_e^j\pa_{x_j}U_e)-B_e^j\D^l\pa_{x_j}U_e\rmd x\Big|\\
     \leq& C|\D^l U_e|_2(|\D^l W_e^\zb|_2|\D U_e|_\infty+|\D^l U_e|_2|W_e^\zb|_\infty)\leq C\mathcal{X}_e^2.
\end{split}
 \end{align}
 Similar to \eqref{mid--1}, one obtains 
\begin{equation}\label{eq7-7}
   \begin{split}
 &\Big|\int_{\R^d}(\D^l U_e)^\top\sum_{j=1}^d(\D^l((\hu_\nu^N)^{(j)}\mathbb{I}_{d+1}\pa_{x_j}U_e)-(\hu_\nu^N)^{(j)}\mathbb{I}_{d+1}\D^l\pa_{x_j}U_e\rmd x\Big|\\
 \leq & C\|\D U_e\|_{s-1}^2\leq C\mathcal{X}_e^2.
  \end{split} 
  \end{equation}
 Combining \eqref{eq7-6}--\eqref{eq7-7} yields
 \begin{equation}\label{ej3}
     |J_{e,3}^l|\leq C\mathcal{X}_e^2.
 \end{equation}

	{\cuti{Estimates for $J_{e,4}^l$ and $J_{e,5}^l$.}}  Similar to \eqref{midd1} and \eqref{midd2}, one obtains
	\begin{equation}\label{ej4}
	|J_{e,4}^l|\leq  C\|\hu_\nu\|_{\Xi}\mathcal{X}_e^2\leq C\mathcal{X}_e^2.
	\end{equation}
	By Lemma \ref{comm1}, it holds 
	\begin{equation}\label{ej5}
	|J_{e,5}^l|\leq C|\D^l U_e|_2\Big(\| U_e\|_{\dot{H}^l}|\D(\bar{n}_e^\zbo,\bar{w}_e^\zbo)|_\infty+C|U_e|_\infty\|\D(\bar{n}_e^\zbo,\bar{w}_e^\zbo)\|_s\Big)\leq C\mathcal{X}_e^2.
	\end{equation}
	Substituting \eqref{ej1} and \eqref{ej2}--\eqref{ej5} into \eqref{estart1} and summing for all $1\leq l\leq s$ yield
 \begin{equation}\label{eq7-8}
 \frac{\rmd}{\rmd t}\|\D U_e\|_{s-1}^2\leq C\mathcal{X}_e^2+C\eps^{2\min\{1,r\}}.
 \end{equation}
 Integrating \eqref{eq7-8} over $[0,t]$ for any $t\in(0,T]$ yields the desired estimates. 
\end{proof}

Now we are ready to prove Theorem \ref{thm2.3}. Combining \eqref{e-infty} and \eqref{e-high}, one obtains
\[
\mathcal{X}_e(t)\leq C\eps^{\min\{1,r\}}+\int_0^t\mathcal{X}_e(\tau)\rmd \tau,
\]
which, along with by Gronwall's inequality, implies
\begin{equation}\label{e-err}
\mathcal{X}_e(t)\leq  C\eps^{\min\{1,r\}}.
\end{equation}
This ends the proof. \hfill $\square$

\subsection{Error estimates between original systems} From \eqref{pi-i-est}, \eqref{vpvpvp0},\eqref{vpvpvp} and \eqref{e-err}, one obtains the following estimate:
\begin{equation}\label{errnaka}
    |(\pi_i,\pi_e,v_e,\D\chi)|_{\infty}+\|\D(\pi_i,\pi_e,v_e,\D\chi)\|_{s-1}\leq  C\eps^{\min\{1,r\}}.
\end{equation}
which is the error estimates between solutions to \eqref{TEP} and \eqref{ion-limit-g}$_1$,\eqref{TEuler}. It remains to pass to the limit $(N,\mk)\to\infty$ in \eqref{errnaka} to get the desired error estimates.

First, based on the analysis in \S 5 and \S 6, one concludes that when $(N,\mk)\to\infty$, the solution sequence $(n_\nu^\zb,\eps^{-\beta_\nu}w_\nu^\zb,\D\vp^\zb)$ converges weakly to $(n_\nu^\eps,\eps^{-\beta_\nu}w_\nu^\eps,\D\vp^\eps)$ in $L^\infty([0,T];\Gamma)$, which is the unique regular solution to the Cauchy problem \eqref{kepsEP}. It remains to pass to the limit $(N,\mk)\to\infty$ in the truncated Euler system \eqref{TEuler}.

\subsubsection{Passing to the limit \texorpdfstring{$N\to\infty$}{}}Based on \eqref{errnaka} and \eqref{univ0}, one obtains that
\begin{equation}\label{conv666}
|(\bar{n}_i^\zbo,\bar{n}_e^\zbo,\bar{w}_e^\zbo,\D\bar{\vp}^\zbo)(t)|_\infty+\|\D(\bar{n}_i^\zbo,\bar{n}_e^\zbo,\bar{w}_e^\zbo,\D\bar{\vp}^\zbo)(t)\|_{s-1}\leq C,
\end{equation}
which implies the bounded-ness of $\{(\bar{n}_\nu^\zbo,\bar{w}_e^\zbo,\D\bar{\vp}^\zbo)\}_{N\geq 1}$ in $L^\infty([0,T];\Gamma)$, leading to the existence of a subsequence of solutions (still denoted by)  $(\bar{n}_\nu^\zbo, \bar{w}_e^\zbo,\D\bar{\vp}^\zbo)$ that converges to a limit $(\bar{n}_i^\mk,\bar{n}_e^\mk,\bar{w}_e^\mk,\D\bar{\vp}^\mk)$ in the following weakly-* sense:
\begin{equation}\label{convekwe}
	\begin{split}
		(\bar{n}_i^\zbo,\bar{n}_e^\zbo, \bar{w}_e^\zbo,\D\bar{\vp}^\zbo)\rightharpoonup &(\bar{n}_i^\mk,\bar{n}_e^\mk,\bar{w}_e^\mk,\D\bar{\vp}^\mk)\quad \text{weakly*\,\,in}\,\,\, L^\infty([0,T];\Gamma).
	\end{split}
\end{equation}
Besides, it yields from \eqref{conv666} and the lower semi-continuity of weak convergence for norms in Sobolev spaces implies that $(\bar{n}^\mk,\bar{w}^\mk,\D\bar{\vp}^\mk)$ satisfies 
\begin{equation}\label{resufin1}
|(\bar{n}_i^\mk,\bar{n}_e^\mk,\bar{w}_e^\mk,\D\bar{\vp}^\mk)(t)|_\infty+\|\D(\bar{n}_i^\mk,\bar{n}_e^\mk,\bar{w}_e^\mk,\D\bar{\vp}^\mk)(t)\|_{s-1}\leq C.
\end{equation}
Moreover, by  \eqref{TEuler}, one has  $\{(\pt \bar{n}_e^\zbo,\pt \bar{w}_e^\zbo)\}_{N\geq 1}$ is bounded in 
$L^\infty([0,T];H_{\text{loc}}^{s-1})$.  Then  there exists a subsequence of solutions (still denoted by)  $\{(\bar{n}_e^\zbo,\bar{w}_e^\zbo)\}_{N\geq 1}$ that converges to the same limit $(\bar{n}_e^\mk,\bar{w}_e^\mk)$ in the following strong sense:
\begin{equation}\label{convekst}
	(\bar{n}_e^\zbo,\bar{w}_e^\zbo)\to (\bar{n}_e^\mk,\bar{w}_e^\mk)\quad \text{strongly\,\,in}\,\,\,\,\, C([0,T];H_{\text{loc}}^{s-1}).
\end{equation}
Hence, combining the strong convergence in \eqref{convekst}, the weak convergence in \eqref{convekwe} and \eqref{resufin1} shows that $(\bar{n}_e^\mk,\bar{w}_e^\mk)$ is a strong solution to the following problem
\begin{equation}\label{TEuler1}
	\begin{cases}
\pt \bar{n}_i^\mk+\hu_i\cdot\nabla \bar{n}_i^\mk+\dfrac{\gamma_i-1}{2}\bar{n}_i^\mk\dive \hu_i=0,\\[2mm]		
  \pt \bar{n}_e^\mk+(\bar{w}_e^\mk+\hu_e)\cdot\nabla \bar{n}_e^\mk+\dfrac{\gamma_e-1}{2}\bar{n}_e^\mk\dive (\bar{w}_e^\mk+\hu_e)=0,\\[2mm]
		\pt \bar{w}_e^\mk+((\bar{w}_e^\mk+\hu_e)\cdot\nabla)\bar{w}_e^\mk+(\bar{w}_e^\mk\cdot\D)\hu_e+\dfrac{\gamma_e-1}{2}\bar{n}_e^\mk\nabla \bar{n}_e^\mk=-\D\bar{\vp}^\mk,\\[2mm]
  (\bar{n}_\nu^\mk, \bar{w}_e^\mk)(0,x)=((\bar{n}_\nu^0)^{\mk}(x),0).
	\end{cases}
\end{equation}
These are sufficient to pass to the limit $N\to\infty$ in \eqref{errnaka} to obtain that
\begin{equation}\label{errnaka2}
\begin{split}
|(n_\nu^{\mk,\eps}-\bar{n}_\nu^{\mk},w_e^{\mk,\eps}-\bar{w}_e^\mk,\D\vp^{\mk,\eps}-\D\bar{\vp}^\mk)|_{\infty} &\\
+\|\D(n_\nu^{\mk,\eps}-\bar{n}_\nu^{\mk},w_e^{\mk,\eps}-\bar{w}_e^\mk,\D\vp^{\mk,\eps}-\D\bar{\vp}^\mk)\|_{s-1}\leq & C\eps^{\min\{1,r\}}.
\end{split}
\end{equation}

\subsubsection{Passing to the limit \texorpdfstring{$\mk\to\infty$}{}}One obtains from \eqref{resufin1} that $\{(\bar{n}_\nu^\mk,\bar{w}_e^\mk,\D\bar{\vp}^\mk)\}_{\mk\geq 1}$ is bounded in $L^\infty([0,T];\Gamma)$, leading to the existence of a subsequence of solutions (still denoted by)  $(\bar{n}_\nu^\mk,\bar{w}_e^\mk,\D\bar{\vp}^\mk)$ that converges to a limit $(\bar{n}_\nu^*,\bar{w}_e^*,\D\bar{\vp}^*)$ in the following sense:
\begin{equation}\label{convewe}
	\begin{split}
		(\bar{n}_i^\mk,\bar{n}_e^\mk,\bar{w}_e^\mk,\D\bar{\vp}^\mk)\rightharpoonup &(\bar{n}_i^*,\bar{n}_e^*,\bar{w}_e^*,\D\bar{\vp}^*)\quad\text{weakly*\,\,in}\,\,\, L^\infty([0,T];\Gamma).
	\end{split}
\end{equation}
Besides, the lower semi-continuity of weak convergence for norms in Sobolev spaces implies that $(\bar{n}_i^*,\bar{n}_e^*,\bar{w}_e^*,\D\bar{\vp}^*)$ still satisfies \eqref{resufin1}. In addition, by system \eqref{TEuler1}, one obtains that the sequence $\{(\pt \bar{n}_i^\mk,\pt\bar{n}_e^\mk,\pt \bar{w}^\mk)\}_{\mk\geq 1}$ is bounded in $L^\infty([0,T];H_{\text{loc}}^{s-1})$.  Consequently, there exists a subsequence of solutions (still denoted by)  $\{(\bar{n}_i^\mk,\bar{n}_e^\mk,\bar{w}_e^\mk)\}_{\mk\geq 1}$ that converges to the same limit $(\bar{n}_i^*,\bar{n}_e^*,\bar{w}_e^*)$ in the following strong sense:
\begin{equation}\label{convest2}
	(\bar{n}_i^\mk,\bar{n}_e^\mk,\bar{w}_e^\mk)\to (\bar{n}_i^*,\bar{n}_e^*,\bar{w}_e^*) \quad \text{strongly\,\,in}\,\,\,\,\, C([0,T];H_{\text{loc}}^{s-1}).
\end{equation}
Hence, combining the strong convergence in \eqref{convest2}, the weak convergence in \eqref{convewe} and \eqref{resufin1} shows that $(\bar{n}_e^*,\bar{w}_e^*,\D\bar{\vp}^*)$ is a solution to \eqref{Euler00} and $\bar{n}_i^*$ is a solution to \eqref{ion-n-limit}. By the uniqueness of the Euler system (c.f. \cite{Grassin1998}), one obtains $(\bar{n}_e^*,\bar{w}_e^*,\D\bar{\vp}^*)$ is exactly the same solution $(\bar{n}_e,\bar{w}_e,\D\bar{\vp})$. These are sufficient to pass to the limit $\mk\to\infty$ in \eqref{errnaka2} to obtain the desired global error estimates. \hfill $\square$

\bigskip

\section*{Appendix: some basic lemmas}
\setcounter{section}{8}
\setcounter{lemma}{0}
\setcounter{equation}{0}

In this appendix,  we list some lemmas which were used frequently in
the previous sections.
The first one is the  well-known Gagliardo-Nirenberg inequality.
\begin{lemma}\cite{Nirenberg1959}\label{ga-ni}\
	Let $1 \leq q_1,  r \leq \infty$ and  $f\in L^{q_1}(\mathbb{R}^d)$ satisfying  $\D^i f\in L^r(\mathbb{R}^d)$.     Suppose also that  real numbers $\theta$ and $q_2$,  and  natural numbers $m$, $i$  and $j$ satisfy
	$$\frac{1}{{q_2}} = \frac{j}{d} + \Big( \frac{1}{r} - \frac{i}{d} \Big) \theta + \frac{1 - \theta}{q_1} \quad \text{and} \quad
	\frac{j}{i} \leq \theta \leq 1.
	$$
	Then $\D^jf\in L^{q_2}(\mathbb{R}^d)$, and  there exists a constant $C$ depending only on $(i, d, j, q_1, r,\theta)$ such that the following estimates hold:
	\begin{equation}\label{33}
	\begin{split}
	\| \nabla^{j} f \|_{L^{{q_2}}} \leq C \| \nabla^{i} f \|_{L^{r}}^{\theta} \| f \|_{L^{q_1}}^{1 - \theta}.
	\end{split}
	\end{equation}
	Moreover, if $j = 0$, $ir < d$ and $q_1 = \infty$, then it is necessary to make the additional assumption that either f tends to zero at infinity or that f lies in $L^{\tilde{q}}(\mathbb{R}^d)$ for some finite $\tilde{q} > 0$;
	if $1 < r < \infty$ and $i -j -d/r$ is a non-negative integer, then it is necessary to assume also that $\theta \neq 1$.
\end{lemma}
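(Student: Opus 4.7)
My plan is to follow Nirenberg's original scheme, which reduces the general inequality to an interpolation between a Sobolev embedding endpoint and an $L^{q_1}$ endpoint via Hölder. First I would verify that the arithmetic relation
\[
\frac{1}{q_2}=\frac{j}{d}+\Big(\frac{1}{r}-\frac{i}{d}\Big)\theta+\frac{1-\theta}{q_1}
\]
is forced by scaling: applying the candidate inequality to $f_\lambda(x)=f(\lambda x)$ and matching powers of $\lambda$ on both sides gives exactly this homogeneity relation. This makes clear why the exponents are rigid and also explains why an extra decay assumption is needed when $j=0$, $ir<d$ and $q_1=\infty$: without it, constants would be admissible and the inequality would be false at scale $\lambda\to 0$.

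Next, I would establish the case $j=0$, $q_1<\infty$, $ir<d$, which is the ``generic'' situation. The key step is the Sobolev embedding $\dot W^{i,r}(\mathbb R^d)\hookrightarrow L^{q^*}(\mathbb R^d)$ with $\tfrac1{q^*}=\tfrac1r-\tfrac ir$, which I would regard as known (it follows either from the Hardy--Littlewood--Sobolev inequality applied to the Riesz potential representation $f=I_i(\Delta^{i/2}f)$ up to constants, or iteratively from the $i=1$ Gagliardo--Nirenberg--Sobolev estimate). Combining this with Hölder's inequality in the form
\[
|f|_{q_2}\le |f|_{q^*}^{\theta}\,|f|_{q_1}^{1-\theta},\qquad \frac{1}{q_2}=\frac{\theta}{q^*}+\frac{1-\theta}{q_1},
\]
valid whenever $q_1\le q_2\le q^*$ (the scaling identity for $j=0$ forces $\tfrac1{q_2}$ to lie in this interval precisely when $0\le\theta\le 1$), immediately yields the $j=0$ bound. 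For general $0<j<i$, I would apply this $j=0$ result to $g=\nabla^{j}f$ with the Sobolev order $i-j$ in place of $i$; the remaining work is to convert $|g|_{q_1}=|\nabla^{j}f|_{q_1}$ back into $|f|_{q_1}$ by another Gagliardo--Nirenberg interpolation of the same type with parameter $\theta'=j/i$, and then to iterate. A cleaner route, which avoids recursion, is to prove directly for any $0\le j<i$ the two-scale bound $|\nabla^{j}f|_{p}\le C|\nabla^{i}f|_{r}^{j/i}|f|_{q_1}^{1-j/i}$ using a Littlewood--Paley decomposition $f=\sum_k \Delta_k f$: on low frequencies Bernstein gives $|\nabla^{j}\Delta_k f|_p\lesssim 2^{jk}|f|_{q_1}$ with a gain, on high frequencies $|\nabla^{j}\Delta_k f|_p\lesssim 2^{-(i-j)k}|\nabla^{i}f|_r$, and the optimal cutoff produces the geometric mean. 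This handles the two endpoints, and a single Hölder interpolation in the target exponent then produces the full $\theta\in[j/i,1]$ range.

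The main obstacles are the two exceptional regimes flagged in the statement. The first is the endpoint $j=0$, $ir<d$, $q_1=\infty$: here the proof above needs a truncation argument, since one can only control $|f|_{q^*}$ by $|\nabla^{i}f|_r$ and must recover the $|f|_{q_1}$ factor from the assumption that $f$ vanishes at infinity (or lies in some $L^{\tilde q}$); the trick is to split $f=f\chi_{\{|f|>\lambda\}}+f\chi_{\{|f|\le\lambda\}}$ and optimize in $\lambda$. The second is the critical Sobolev embedding $i-j-d/r\in\mathbb Z_{\ge 0}$ with $1<r<\infty$: at the critical index one only has $\dot W^{i,r}\hookrightarrow \mathrm{BMO}$ rather than $L^\infty$, so the endpoint $\theta=1$ fails, and the inequality must be stated with $\theta<1$ strictly, as in the lemma. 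Apart from these two delicate cases, the proof is a bookkeeping exercise combining Sobolev embedding, Hölder, and Bernstein inequalities; the scaling identity ensures every step has the correct homogeneity.
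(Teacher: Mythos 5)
The paper does not prove this lemma at all: it is quoted verbatim from Nirenberg's 1959 paper (cited as such in the appendix), so there is no in-paper argument to compare yours against. Your sketch is the standard textbook route — scaling to pin down the exponent relation, the homogeneous Sobolev embedding plus H\"older interpolation for the $j=0$ case, a Littlewood--Paley/Bernstein two-sided bound for the endpoint $\theta=j/i$, and a final interpolation in the target exponent — and at that level it is sound and consistent with how the result is proved in the literature. Two small points to tighten if you were to write it out: the Sobolev exponent should read $\tfrac1{q^*}=\tfrac1r-\tfrac id$ (you wrote $\tfrac1r-\tfrac ir$), and the dyadic optimization at the endpoint $\theta=j/i$ only closes when the low-frequency exponent $j+\tfrac d{q_1}-\tfrac dp=\tfrac ji\bigl(i-\tfrac dr+\tfrac d{q_1}\bigr)$ is strictly positive (equivalently the high-frequency exponent strictly negative); in the degenerate case $i-\tfrac dr+\tfrac d{q_1}=0$, i.e.\ $q_1=q^*$, the geometric-series summation fails and one must instead fall back on the Sobolev embedding applied directly to $\nabla^j f$. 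Your treatment of the two exceptional regimes ($q_1=\infty$ with $ir<d$ via truncation, and the critical case where $\dot W^{i,r}$ embeds only into $\mathrm{BMO}$ so $\theta=1$ must be excluded) is the right explanation for the caveats in the statement.
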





As a consequence of Aubin-Lion lemma, one has the following conclusion (c.f. \cite{Simon1987}). 

\begin{lemma} \cite{Simon1987}\label{aubin} Let $X_0,X$ and $X_1$ be three Banach spaces satisfying $X_0\subset X\subset X_1$. Suppose that $X_0$ is compactly embedded in $X$ and $X$ is continuously embedded in $X_1$.
	
\noindent  {\rm{(1)}} Let $\hat{F}$ be bounded in $L^p([0,T];X_0)$ with $1\leq p <\infty$, and $\frac{\pa \hat{F}}{\pa t}$ be bounded in $L^1([0,T];X_1)$, then $\hat{F}$ is relatively compact in $L^p([0,T];X)$.
  
\noindent  {\rm{(2)}} Let $\tilde{F}$ be bounded in $L^{\infty}([0,T];X_0)$ and $\frac{\pa \tilde{F}}{\pa t}$ be bounded in $L^q([0,T];X_1)$ with $q>1$, then $\tilde{F}$ is relatively compact in $C([0,T];X)$.

\end{lemma}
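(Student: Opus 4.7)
The plan is to reduce both parts to a standard interpolation trick (often called Ehrling's lemma) combined, respectively, with the Riesz--Fréchet--Kolmogorov compactness criterion in $L^p$ for part (1) and with Arzelà--Ascoli in $C([0,T];X)$ for part (2).

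The first step is to establish the following interpolation inequality: the compact embedding $X_0\hookrightarrow X$ implies that for every $\eta>0$ there exists $C_\eta>0$ such that
\begin{equation*}
\|f\|_X \leq \eta\,\|f\|_{X_0}+C_\eta\,\|f\|_{X_1}\qquad\text{for all } f\in X_0.
\end{equation*}
I would prove this by contradiction: if it fails there is $\eta_0>0$ and a sequence $f_n\in X_0$ with $\|f_n\|_{X_0}=1$ and $\|f_n\|_X>\eta_0+n\,\|f_n\|_{X_1}$. Compactness of $X_0\hookrightarrow X$ yields a subsequence converging to some $f$ in $X$, so $\|f\|_X\geq \eta_0$; but then $\|f_n\|_{X_1}\to 0$ forces $f=0$ via the continuous embedding $X\hookrightarrow X_1$, a contradiction.

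For part (1), I would invoke the Riesz--Fréchet--Kolmogorov characterization of precompact sets in $L^p([0,T];X)$: boundedness is given, and it remains to control the translates $\|\hat F_n(\cdot+h)-\hat F_n(\cdot)\|_{L^p([0,T-h];X)}$ uniformly in $n$ as $h\to 0$. Applying Ehrling's inequality pointwise in $t$ gives
\begin{equation*}
\|\hat F_n(t+h)-\hat F_n(t)\|_X \leq \eta\bigl(\|\hat F_n(t+h)\|_{X_0}+\|\hat F_n(t)\|_{X_0}\bigr)+C_\eta\!\int_t^{t+h}\!\!\|\partial_\tau\hat F_n(\tau)\|_{X_1}\,\rmd\tau.
\end{equation*}
Taking $L^p_t$ norms, the first term contributes $C\eta$ and the second, by Young's inequality for the time convolution with the $L^1$-bound on $\partial_t\hat F_n$, contributes $C_\eta\omega(h)$ with $\omega(h)\to 0$. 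Choosing $\eta$ first small and then $h$ small, uniform smallness follows, giving the required precompactness in $L^p([0,T];X)$.

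For part (2), the strategy is Arzelà--Ascoli in $C([0,T];X)$. Pointwise precompactness of $\{\tilde F_n(t)\}$ in $X$ for each $t$ follows directly from its boundedness in $X_0$ and the compact embedding $X_0\hookrightarrow X$. Equicontinuity in $X$ is obtained by combining Ehrling with the Hölder estimate
\begin{equation*}
\|\tilde F_n(t_1)-\tilde F_n(t_2)\|_{X_1}\leq \|\partial_t\tilde F_n\|_{L^q([0,T];X_1)}\,|t_1-t_2|^{1-1/q},
\end{equation*}
where the exponent $1-1/q>0$ is precisely where the hypothesis $q>1$ enters. Given $\epsilon>0$, choose $\eta$ small so that $2\eta\sup_n\|\tilde F_n\|_{L^\infty([0,T];X_0)}<\epsilon/2$, then pick $\delta>0$ so that $C_\eta\cdot C\delta^{1-1/q}<\epsilon/2$; this yields $\|\tilde F_n(t_1)-\tilde F_n(t_2)\|_X<\epsilon$ whenever $|t_1-t_2|<\delta$, uniformly in $n$.

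The main technical hurdle is Ehrling's inequality itself, since it is the only place where the compactness of the embedding $X_0\hookrightarrow X$ is genuinely used; once it is established, both conclusions follow by standard compactness machinery. A secondary subtle point is that part (1) is false for $p=\infty$ in general and part (2) fails at $q=1$, so the hypotheses $p<\infty$ and $q>1$ must be used precisely in the Young/Hölder step, not merely for bookkeeping.
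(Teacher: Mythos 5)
The paper offers no proof of this lemma: it is quoted directly from Simon \cite{Simon1987}, so there is no in-paper argument to compare with. Your sketch is essentially the standard (indeed Simon's own) route: the Ehrling-type inequality $\|f\|_X\leq\eta\|f\|_{X_0}+C_\eta\|f\|_{X_1}$ obtained by contradiction from the compact embedding, then a translation estimate in $L^p$ for part (1) and Arzel\`a--Ascoli for part (2), with $p<\infty$ and $q>1$ entering exactly where you say.

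One step, however, is mis-stated and needs repair. In part (1) you invoke the Riesz--Fr\'echet--Kolmogorov criterion in the form ``boundedness plus smallness of translates suffices''. For vector-valued $L^p([0,T];X)$ with $X$ infinite dimensional this is false: take $\hat F_n(t)\equiv e_n$ with $(e_n)$ an orthonormal sequence of a Hilbert space $X$; the family is bounded, its translates vanish identically, yet it has no subsequence converging in $L^p([0,T];X)$. The correct criterion (Simon's Theorem 1) additionally requires that the time averages $\int_{t_1}^{t_2}\hat F_n(t)\,\rmd t$ form a relatively compact subset of $X$ for every $0<t_1<t_2<T$. In your setting this is immediate — the averages are bounded in $X_0$ and hence precompact in $X$ by the compact embedding — but it must be stated, since it is a second, independent use of the compact embedding beyond Ehrling's inequality. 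A smaller point of the same flavour in part (2): the $L^\infty([0,T];X_0)$ bound controls $\tilde F_n(t)$ only for a.e.\ $t$; since $\partial_t\tilde F_n\in L^q([0,T];X_1)$ makes $\tilde F_n$ continuous with values in $X_1$, each value $\tilde F_n(t)$ lies in the $X$-closure of a fixed $X_0$-ball, which is compact in $X$, and it is this observation that yields pointwise precompactness (and the $X_0$-difference bound used in your equicontinuity step) for every $t$ rather than almost every $t$. With these two clarifications your argument closes correctly.
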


The next  one is used to improve a weak convergence to a strong one.

\begin{lemma}\label{con}\cite{Majda1984} If the function sequence $\{w_n\}_{n=1}^\infty$ converges weakly to $w$ in a Hilbert space $X$, then it converges strongly to $w$ in $X$ if and only if
	\[
	\|W\|_X\geq \lim\sup_{n\to\infty}\|w_n\|_X.
	\]
	
\end{lemma}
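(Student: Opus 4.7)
The plan is to prove this standard Radon–Riesz type criterion by exploiting the Hilbert space inner product identity $\|w_n-w\|_X^2 = \|w_n\|_X^2 - 2\langle w_n, w\rangle_X + \|w\|_X^2$, together with the weak lower semi-continuity of the norm. Note the statement reads $\|W\|_X$, which I interpret as $\|w\|_X$.

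First I would handle the easy direction $(\Rightarrow)$. Assuming $w_n \to w$ strongly, the reverse triangle inequality $|\|w_n\|_X - \|w\|_X| \leq \|w_n - w\|_X \to 0$ gives $\|w_n\|_X \to \|w\|_X$, and in particular $\limsup_{n\to\infty} \|w_n\|_X = \|w\|_X$, which yields the inequality (both $\leq$ and $\geq$).

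For the nontrivial direction $(\Leftarrow)$, I would assume $w_n \rightharpoonup w$ weakly and $\|w\|_X \geq \limsup_{n\to\infty} \|w_n\|_X$. The key is to first upgrade this to norm convergence $\|w_n\|_X \to \|w\|_X$. The weak lower semi-continuity of the norm in a Hilbert space (a consequence of the fact that $\|w\|_X = \sup_{\|\varphi\|_X \leq 1} \langle w, \varphi \rangle_X$ is a supremum of weakly continuous linear functionals) gives $\|w\|_X \leq \liminf_{n\to\infty} \|w_n\|_X$. Combined with the assumed reverse inequality, we obtain
\begin{equation*}
\|w\|_X \leq \liminf_{n\to\infty} \|w_n\|_X \leq \limsup_{n\to\infty} \|w_n\|_X \leq \|w\|_X,
\end{equation*}
so $\lim_{n\to\infty} \|w_n\|_X = \|w\|_X$. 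Finally, the inner product expansion
\begin{equation*}
\|w_n - w\|_X^2 = \|w_n\|_X^2 - 2\langle w_n, w\rangle_X + \|w\|_X^2
\end{equation*}
together with $\langle w_n, w \rangle_X \to \langle w, w \rangle_X = \|w\|_X^2$ (by weak convergence applied to the fixed test vector $w \in X$) yields $\|w_n - w\|_X^2 \to \|w\|_X^2 - 2\|w\|_X^2 + \|w\|_X^2 = 0$.

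There is no real obstacle here; the only subtle step is recognizing that weak lower semi-continuity of the Hilbert space norm is what converts the given one-sided limsup estimate into full norm convergence. Everything else is a direct algebraic manipulation in the inner product. This lemma is exactly the tool invoked in earlier sections (e.g., in passing from weak-$*$ convergence to strong convergence in $L^\infty([0,T];H^{s-1}_{\mathrm{loc}})$ via boundedness in stronger spaces), and its Hilbert-space structure is essential since the analogous statement fails in general Banach spaces.
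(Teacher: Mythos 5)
Your proof is correct: the paper itself gives no proof of this lemma (it is simply quoted from Majda's book), and your argument --- weak lower semi-continuity of the norm to upgrade the one-sided limsup bound to $\|w_n\|_X \to \|w\|_X$, followed by the expansion $\|w_n-w\|_X^2 = \|w_n\|_X^2 - 2\langle w_n,w\rangle_X + \|w\|_X^2$ and weak convergence tested against $w$ --- is exactly the standard proof the citation refers to. Your reading of $\|W\|_X$ as $\|w\|_X$ is also the right interpretation of what is evidently a typo in the statement.
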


The fourth  lemma concerns the $L^p$ estimates of the Riesz potential.

\begin{lemma}\label{HLS}\cite{Aubin1982}  Let $0<\al<d$ and $1<p<q<\infty$. The Riesz potential $I_\al f=(-\Delta)^{-\al/2}$ on $\R^d$ is  defined as
\[
(I_\al f)(x)=\int_{\R^d}\dfrac{f(y)}{|x-y|^{d-\al}}\rmd y.
\]
Then for $(p,q)$ satisfying $1/q=1/p-\alpha/d$, there exists a constant $C$ depending only on $p$, such that the following estimate holds:
\[
|I_\al f|_q\leq C|f|_p.
\]
\end{lemma}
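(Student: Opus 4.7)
The plan is to prove the Hardy--Littlewood--Sobolev inequality by first establishing a pointwise bound on $I_\alpha f(x)$ in terms of the Hardy--Littlewood maximal function $Mf(x)$ and $|f|_p$, and then invoking the $L^p$-boundedness of $M$. This avoids the detour through the Marcinkiewicz interpolation theorem while still using the core idea behind it.

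The first step is the splitting estimate. For each $x\in\mathbb{R}^d$ and each radius $R>0$, I would write
\[
|I_\alpha f(x)| \leq \int_{|x-y|\leq R}\frac{|f(y)|}{|x-y|^{d-\alpha}}\,\mathrm{d}y + \int_{|x-y|>R}\frac{|f(y)|}{|x-y|^{d-\alpha}}\,\mathrm{d}y =: A(R) + B(R).
\]
For $A(R)$, I would decompose the ball $\{|x-y|\leq R\}$ into dyadic annuli $\{2^{-k-1}R<|x-y|\leq 2^{-k}R\}$ ($k\geq 0$) and use the definition of $Mf$ on each annulus to obtain $A(R) \leq C\,R^{\alpha}\,Mf(x)$, where the geometric series converges precisely because $\alpha>0$. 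For $B(R)$, Hölder's inequality with exponents $p$ and $p'=p/(p-1)$ gives
\[
B(R) \leq |f|_p \left(\int_{|x-y|>R}|x-y|^{-(d-\alpha)p'}\,\mathrm{d}y\right)^{1/p'} = C\,R^{\alpha - d/p}\,|f|_p,
\]
where the outer integral converges precisely because the hypothesis $1/q = 1/p - \alpha/d > 0$ forces $(d-\alpha)p' > d$.

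The second step is to optimize in $R$. Balancing $R^{\alpha}Mf(x)$ against $R^{\alpha - d/p}|f|_p$ yields the choice $R = (|f|_p/Mf(x))^{p/d}$ (assuming $Mf(x)\neq 0$; otherwise the bound is trivial), and substituting back gives the pointwise inequality
\[
|I_\alpha f(x)| \leq C\,(Mf(x))^{1 - \alpha p/d}\,|f|_p^{\alpha p/d} = C\,(Mf(x))^{p/q}\,|f|_p^{1-p/q}.
\]
The third step is to take the $L^q$ norm of both sides. Using $q(1-p/q) = q - p$ and the classical strong $(p,p)$ bound $|Mf|_p \leq C_p|f|_p$ (valid for $p>1$), I would conclude
\[
|I_\alpha f|_q \leq C\,\bigl(|Mf|_p\bigr)^{p/q}\,|f|_p^{1-p/q} \leq C\,|f|_p^{p/q}\,|f|_p^{1-p/q} = C\,|f|_p,
\]
which is exactly the required bound.

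The main (small) obstacle is verifying that the two integrals $A(R)$ and $B(R)$ are finite and have the claimed shape uniformly in $x$; this is where the conditions $0<\alpha<d$, $p>1$, and $1/q = 1/p - \alpha/d$ with $q<\infty$ each play a role (they ensure, respectively, convergence of the dyadic sum, finiteness of $|Mf|_p$, and convergence of the tail integral in Hölder). The restriction $q<\infty$ is essential: if $1/p = \alpha/d$ the tail integral diverges and the argument breaks down, matching the known failure of the strong-type bound at the endpoint $p=d/\alpha$. Since the paper only applies the lemma in the range stated, no endpoint analysis is needed here.
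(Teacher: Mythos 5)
Your argument is correct: the splitting into $A(R)+B(R)$, the dyadic-annulus bound $A(R)\leq CR^{\alpha}Mf(x)$ (convergent since $\alpha>0$), the H\"older bound $B(R)\leq CR^{\alpha-d/p}|f|_p$ (the tail integral converging exactly because $1/q=1/p-\alpha/d>0$), the optimization giving the pointwise estimate $|I_\alpha f(x)|\leq C(Mf(x))^{p/q}|f|_p^{1-p/q}$, and the final passage through the strong $(p,p)$ maximal inequality (legitimate since $p>1$) all check out, and your exponent bookkeeping $1-\alpha p/d=p/q$ is right. Note, however, that the paper does not prove this lemma at all: it is quoted as the classical Hardy--Littlewood--Sobolev inequality with a citation to Aubin's book, where the standard proofs go through rearrangement or Marcinkiewicz interpolation of the weak-type endpoint. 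What you have written is the Hedberg maximal-function proof, a genuinely different and somewhat more elementary route: it trades the interpolation machinery for the pointwise domination by $Mf$, at the cost of invoking the $L^p$-boundedness of the maximal operator (itself a covering-lemma plus interpolation fact, so the saving is organizational rather than fundamental). Either way the constant in fact depends on $d$, $\alpha$, and $p$ (hence on $q$), which is harmless for the paper's use but slightly finer than the lemma's phrasing ``depending only on $p$''; your observation that the argument degenerates at the endpoint $q=\infty$ is also accurate and consistent with how the lemma is applied in the paper.
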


The fifth lemma concerns the regularity  estimates of  Poisson equation.

\begin{lemma}\label{Poisson} \cite{Gilbarg1983}  Let $f \in H^{s-1}$. Suppose $\Phi$ solves the following  Poisson equation
\[
\Delta \Phi=f,
\]
then for $q>\max\{1,\frac{2}{d-1}\}$, there exists a constant $C>0$ depending only on $q$ and $d$, such that the following estimate holds:
\[
|\D\Phi|_q\leq C|f|_{\frac{qd}{q+2}}.
\]
In addition, if $f$ has compact support, i.e., supp$_x f\subset B_{\tilde{R}}$, then it holds
\[
\|\D\Phi\|_s\leq  C(\tilde{R}^{\frac{d+2}{2}}+1)\|f\|_{s-1}.
\]
where the constant $C>0$ is independent of $\tilde{R}$.
\end{lemma}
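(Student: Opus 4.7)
The first inequality is the standard Hardy--Littlewood--Sobolev bound applied to the Newton potential representation. In dimension $d \geq 3$, the decaying solution of $\Delta \Phi = f$ is $\Phi(x) = -c_d \int_{\mathbb{R}^d} |x-y|^{2-d} f(y)\,\rmd y$, so
\[
|\nabla \Phi(x)| \leq c_d'\int_{\mathbb{R}^d} \frac{|f(y)|}{|x-y|^{d-1}}\,\rmd y = c_d'\,(I_1|f|)(x).
\]
Applying Lemma \ref{HLS} with $\alpha = 1$ and the relation $1/q = 1/p - 1/d$ yields $|\nabla\Phi|_q \leq C|f|_p$ for the appropriate source exponent, and the admissibility condition $p>1$ translates into the stated lower bound on $q$.

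For the second inequality, I would decompose $\|\nabla\Phi\|_s$ into homogeneous pieces and treat the $L^2$ part and the higher-order parts separately. For $1 \leq k \leq s$, Plancherel's theorem gives
\[
\|\nabla\Phi\|_{\dot H^k}^2 = \int_{\mathbb{R}^d}|\xi|^{2k+2}|\widehat \Phi(\xi)|^2\,\rmd\xi = \int_{\mathbb{R}^d}|\xi|^{2k-2}|\widehat f(\xi)|^2\,\rmd\xi = \|f\|_{\dot H^{k-1}}^2,
\]
so $\|\nabla\Phi\|_{\dot H^k} \leq \|f\|_{s-1}$ with no dependence on $\tilde R$. Equivalently, this is standard Calderón--Zygmund regularity: the Riesz transforms $R_iR_j = \partial_i\partial_j(-\Delta)^{-1}$ are bounded on every homogeneous Sobolev space for $k \geq 1$.

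The low-frequency piece $|\nabla \Phi|_2$ is where compact support enters. Applying the first part of the lemma with $q=2$ (allowed since $d \geq 3$ implies $2 > 2/(d-1)$) gives $|\nabla\Phi|_2 \leq C|f|_{2d/(d+2)}$. Since $\supp_x f \subset B_{\tilde R}$, Hölder's inequality yields
\[
|f|_{2d/(d+2)} \leq |B_{\tilde R}|^{(d+2)/(2d)}|f|_\infty \leq C\,\tilde R^{(d+2)/2}|f|_\infty,
\]
and the Sobolev embedding $H^{s-1} \hookrightarrow L^\infty$ (valid because $s-1 > d/2$ by the standing assumption $s > d/2+1$) gives $|f|_\infty \leq C\|f\|_{s-1}$. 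Combining these bounds with the high-order estimates and summing over $0 \leq k \leq s$ produces $\|\nabla\Phi\|_s \leq C(\tilde R^{(d+2)/2}+1)\|f\|_{s-1}$ with $C$ independent of $\tilde R$.

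I do not expect a serious obstacle here, since the argument is standard once the strategy is fixed. The only delicate step is the low-frequency $L^2$ bound: Calderón--Zygmund theory alone cannot control $|\nabla\Phi|_2$ by $|f|_2$ in $\mathbb{R}^d$ without some additional input, and the compact-support hypothesis is used precisely to convert the HLS-required $L^{2d/(d+2)}$ norm of $f$ into an $L^\infty$ norm via a volume factor, whose exponent $(d+2)/2$ matches the one in the statement.
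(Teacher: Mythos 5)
Your proposal is correct and follows essentially the same route as the paper: the Newtonian potential representation combined with the Riesz potential bound of Lemma \ref{HLS} for the $L^q$ estimate, and, in the compactly supported case, the same splitting into the low-order piece (the $q=2$ case of the first part, H\"older over $B_{\tilde{R}}$ giving the volume factor $\tilde{R}^{\frac{d+2}{2}}$, and the embedding $H^{s-1}\hookrightarrow L^\infty$) plus the homogeneous estimates $\|\nabla\Phi\|_{\dot H^k}\leq C\|f\|_{\dot H^{k-1}}$ for $1\leq k\leq s$. The only cosmetic differences are that you justify the derivative bounds via Plancherel/Riesz transforms where the paper cites classical elliptic theory, and your HLS computation correctly produces the exponents $|f|_{\frac{qd}{q+d}}$ with $q>\frac{d}{d-1}$, exactly as in the paper's own proof (the $\frac{qd}{q+2}$ and $\frac{2}{d-1}$ in the statement appear to be typos).
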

\begin{proof} One obtains the following representation formula of solutions  for $d\geq 3$:
\begin{equation}\label{eq8-1}
\Phi=-\dfrac{1}{d(d-2)\omega_n}\int_{\R^d}\dfrac{f(y)}{|x-y|^{d-2}}\rmd y,
\end{equation}
with $\omega_n$ the volume of a unit ball. Since $f\in H^{s-1}$, then the representation \eqref{eq8-1} is well-defined. By the classical elliptic theories (c.f. \cite{Gilbarg1983}), one has
\[
\D\Phi=\dfrac{1}{d\omega_n}\int_{\R^d}\dfrac{(x-y)f(y)}{|x-y|^{d}}\rmd y.
\]
By Lemma \ref{HLS}, for $q>\max\{1,\frac{d}{d-1}\}$,
\[
|\D\Phi|_q\leq C|f|_{\frac{qd}{q+d}}.
\]
In addition, if $f$ has compact support, i.e., supp$_x f\subset B_{\tilde{R}}$ for some positive $\tilde{R}$, then
\[
|\D\Phi|_2\leq C|f|_{\frac{2d}{d+2}}\leq C|f|_\infty\tilde{R}^{\frac{d+2}{2}}.
\]
For high order derivatives, it holds
\begin{align*}
|\D^l \D\Phi|_2\leq C|\D^{l-1} f|_2, \quad 1\leq l\leq s,
\end{align*}
which  implies  that $\D \Phi\in H^s$.
\end{proof}

The sixth to eighth lemmas concern the product estimates and composite function estimates, which can be found in a series of papers (See \cites{Danchin2021,Kateb2003,Kato1988}).

\begin{lemma} \cite{Kateb2003}\label{comm0}Let $\al\geq 1$ and $0\leq \sigma<\al+1/2$. Then there exists a constant $C$ depending only on $\al$ and $\sigma$, such that 
	\[
	\||z|^\al\|_{\dot{H}^\sigma}\leq C|z|_\infty^{\al-1}\|z\|_{\dot{H}^\sigma}.
	\]
\end{lemma}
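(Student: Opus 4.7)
The plan is to establish the estimate in three stages corresponding to whether $\sigma$ is zero, a positive non-integer, or an integer, and then combine these by interpolation or a Leibniz-type decomposition to cover the whole range $0\le\sigma<\alpha+1/2$. The idea throughout is to exploit the pointwise Lipschitz-type bound
\[
\bigl|\,|a|^\alpha-|b|^\alpha\bigr|\le \alpha\max(|a|,|b|)^{\alpha-1}|a-b|, \qquad \alpha\ge 1,
\]
which follows from the mean value theorem and converts a composition estimate into one that trades an $L^\infty$ norm of $z$ against a difference of values of $z$.

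For $0<\sigma<1$, I would use the Slobodeckij characterization of the homogeneous Sobolev space,
\[
\bigl\|\,|z|^\alpha\bigr\|_{\dot H^\sigma}^2\;\simeq\;\int_{\mathbb R^d}\!\!\int_{\mathbb R^d}\frac{\bigl|\,|z(x+h)|^\alpha-|z(x)|^\alpha\bigr|^2}{|h|^{d+2\sigma}}\,dh\,dx,
\]
and plug in the pointwise inequality above, bounding $\max(|z(x+h)|,|z(x)|)^{\alpha-1}\le|z|_\infty^{\alpha-1}$ before taking the double integral. This immediately yields the claim for $\sigma\in(0,1)$, and the trivial case $\sigma=0$ is just Hölder's inequality with $p=2/\alpha\cdot\alpha$ once we pull out $|z|_\infty^{\alpha-1}$.

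For integer $\sigma=k\ge 1$, I would apply the Faà di Bruno / Leibniz formula to $F(z)=|z|^\alpha$, which produces a finite sum of monomials of the form
\[
|z|^{\alpha-m}\,\partial^{\beta_1}z\cdots\partial^{\beta_m}z,\qquad m\le k,\quad |\beta_1|+\cdots+|\beta_m|=k,\ |\beta_i|\ge 1.
\]
Each such term is then estimated in $L^2$ by putting $|z|^{\alpha-m}$ in $L^\infty$ (which is where the restriction $\sigma<\alpha+1/2$ enters, via the range of admissible exponents $m$ consistent with $|z|^{\alpha-m}$ being bounded by a product involving $|z|_\infty^{\alpha-1}$ after rebalancing) and applying the Gagliardo–Nirenberg inequality of Lemma 8.1 (the one labeled \texttt{ga-ni} in the paper) to the product of derivatives, yielding
\[
\Bigl\||z|^{\alpha-m}\prod_i\partial^{\beta_i}z\Bigr\|_{L^2}\le C|z|_\infty^{\alpha-1}\|z\|_{\dot H^k}.
\]
Summing over the finitely many monomials closes the integer case. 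For general $\sigma=k+\sigma'$ with $k\in\mathbb N$ and $\sigma'\in[0,1)$, I would combine the two previous steps by applying $\partial^k$ first (using the Faà di Bruno decomposition as above) and then applying the Slobodeckij-type argument to the resulting products; alternatively, one can interpolate between integer Sobolev spaces using the Riesz–Thorin style interpolation adapted to $\dot H^\sigma$.

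The main obstacle is the integer/high-derivative case: making the Faà di Bruno expansion rigorous when $\alpha-m<0$ requires care, because factors $|z|^{\alpha-m}$ are singular at the zero set of $z$. One must absorb the negative powers of $|z|$ against the products of derivatives using Gagliardo–Nirenberg in a way that respects the sharp upper bound $\sigma<\alpha+1/2$; this is exactly the regime in which the composition $F\circ z$ barely retains enough Sobolev regularity, and an inadmissible rearrangement would either lose a factor of $|z|_\infty$ or force a stronger hypothesis on $\sigma$. Handling this cleanly is where one relies most heavily on the paradifferential calculus / Bony decomposition as in Kateb's argument, splitting $F(z)$ into low–high and high–low paraproducts and a remainder, each bounded in $\dot H^\sigma$ provided $\sigma<\alpha+1/2$.
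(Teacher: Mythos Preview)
The paper does not supply its own proof of this lemma; it is quoted from Kateb \cite{Kateb2003} as an auxiliary tool in the appendix, so there is no in-paper argument to compare against.

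Your outline is in the right spirit. The cases $\sigma=0$ and $0<\sigma<1$ are handled correctly by the pointwise Lipschitz bound together with the Gagliardo--Slobodeckij seminorm. The integer case via Fa\`a di Bruno is also fine \emph{provided} $k\le\lfloor\alpha\rfloor$, since then each $F^{(m)}(t)=c_m|t|^{\alpha-m}\,\mathrm{sgn}(t)^m$ is bounded and the Gagliardo--Nirenberg rebalancing you describe goes through. The genuine obstacle, which you correctly flag, arises once $\sigma$ exceeds the classical differentiability of $t\mapsto|t|^\alpha$: the chain-rule expansion is then only formal, the factors $|z|^{\alpha-m}$ blow up on the zero set of $z$, and no rearrangement of the derivative factors via Gagliardo--Nirenberg alone rescues the estimate. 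Kateb's actual proof does not attempt to salvage the pointwise expansion in that regime; it works instead through a Littlewood--Paley decomposition and Bony's paraproduct, controlling each piece of $|z|^\alpha$ in $\dot H^\sigma$ (more generally in Besov spaces) under the sharp threshold $\sigma<\alpha+\tfrac12$. Your closing paragraph correctly defers to that machinery, so what you have is an accurate roadmap rather than a self-contained proof.
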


\begin{lemma} \cites{Kato1988,Majda1984} \label{comm1}Let $s\geq 1$ and $\beta$ be the multi-index satisfying $|\beta|\leq s$. If $f,g\in L^\infty\cap H^s$, then there exists a constant $C_s$ depending only on $s$, such that
	\[
	|\D^\beta(fg)|_2\leq C_s(|f|_\infty|\D^s g|_2+ |g|_\infty|\D^s f|_2).
	\]
	
\end{lemma}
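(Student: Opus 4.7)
The plan is to prove the Kato--Ponce type product inequality via the Leibniz rule, H\"older's inequality in a carefully chosen exponent split, and the Gagliardo--Nirenberg interpolation (Lemma~\ref{ga-ni}), followed by Young's inequality to convert the resulting geometric mean into an arithmetic one.

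\textbf{Step 1: Leibniz expansion and reduction to top-order terms.}
For a multi-index $\beta$ with $|\beta|=l\leq s$, I would first use the Leibniz rule to write
\[
\D^{\beta}(fg)=\sum_{\beta_{1}+\beta_{2}=\beta}\binom{\beta}{\beta_{1}}\D^{\beta_{1}}f\,\D^{\beta_{2}}g,
\]
and estimate each term $|\D^{\beta_{1}}f\,\D^{\beta_{2}}g|_{2}$ separately. When $\beta_{1}=0$ the term is trivially bounded by $|f|_{\infty}|\D^{\beta_{2}}g|_{2}\leq |f|_{\infty}\|g\|_{\dot H^{s}}$ (after an interpolation in the intermediate order), and symmetrically for $\beta_{2}=0$; so the main work concerns $1\leq |\beta_{1}|=k\leq l-1$.

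\textbf{Step 2: H\"older split and Gagliardo--Nirenberg interpolation.}
For such intermediate $k$, apply H\"older with exponents $p=2l/k$, $q=2l/(l-k)$ (which satisfy $1/p+1/q=1/2$) to get
\[
|\D^{\beta_{1}}f\,\D^{\beta_{2}}g|_{2}\leq |\D^{\beta_{1}}f|_{2l/k}\,|\D^{\beta_{2}}g|_{2l/(l-k)}.
\]
Then Lemma~\ref{ga-ni} (Gagliardo--Nirenberg), applied with endpoints $L^{\infty}$ and $\dot{H}^{l}$, yields
\[
|\D^{\beta_{1}}f|_{2l/k}\leq C\,|f|_{\infty}^{1-k/l}\,\|\D^{l}f\|_{2}^{k/l},\qquad
|\D^{\beta_{2}}g|_{2l/(l-k)}\leq C\,|g|_{\infty}^{k/l}\,\|\D^{l}g\|_{2}^{(l-k)/l}.
\]
Multiplying gives the geometric-mean bound
\[
|\D^{\beta_{1}}f\,\D^{\beta_{2}}g|_{2}\leq C\bigl(|f|_{\infty}\|\D^{l}g\|_{2}\bigr)^{(l-k)/l}\bigl(|g|_{\infty}\|\D^{l}f\|_{2}\bigr)^{k/l}.
\]

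\textbf{Step 3: Young's inequality and reduction to $\D^{s}$.}
Applying Young's inequality $a^{\theta}b^{1-\theta}\leq \theta a+(1-\theta)b$ with $\theta=k/l$ converts the geometric mean into
\[
|\D^{\beta_{1}}f\,\D^{\beta_{2}}g|_{2}\leq C\bigl(|f|_{\infty}\|\D^{l}g\|_{2}+|g|_{\infty}\|\D^{l}f\|_{2}\bigr).
\]
Finally, to pass from $\|\D^{l}\cdot\|_{2}$ (with $l\leq s$) to $\|\D^{s}\cdot\|_{2}$ in the statement, I would invoke Gagliardo--Nirenberg once more in the form $\|\D^{l}h\|_{2}\leq C\,|h|_{\infty}^{1-l/s}\|\D^{s}h\|_{2}^{l/s}$ and absorb the $L^{\infty}$ factor, again using Young's inequality, into the two allowed terms $|f|_{\infty}|\D^{s}g|_{2}$ and $|g|_{\infty}|\D^{s}f|_{2}$. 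Summing over the finitely many multi-indices $\beta_{1}+\beta_{2}=\beta$ and absorbing the binomial coefficients into $C_{s}$ completes the argument.

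\textbf{Main obstacle.}
The only subtle point is verifying the admissibility of the Gagliardo--Nirenberg exponents in Step~2 for all intermediate $k$: one must check that the interpolation parameter $\theta=k/l\in(0,1)$ and that the integrability conditions (in particular the case $q_{1}=\infty$ in Lemma~\ref{ga-ni}) are satisfied, which forces the mild side condition that $f,g$ decay at infinity or lie in some $L^{\tilde q}$. Under the hypothesis $f,g\in L^{\infty}\cap H^{s}$, the $H^{s}\hookrightarrow L^{\tilde q}$ embedding supplies the required decay, so this is a routine verification rather than a genuine obstruction.
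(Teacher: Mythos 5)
The paper gives no proof of Lemma \ref{comm1}; it is quoted directly from Kato--Ponce and Majda, so the only question is whether your argument is sound. Your Steps 1 and 2 are exactly the classical Moser-estimate proof that underlies the citation: Leibniz, H\"older with exponents $2l/k$ and $2l/(l-k)$, and Gagliardo--Nirenberg (Lemma \ref{ga-ni}) between $L^\infty$ and $\dot H^l$ with $\theta=k/l$ (the admissibility check in your ``main obstacle'' paragraph is indeed routine, since $j=k\geq 1$ there). For $|\beta|=l$ this correctly yields
\[
|\D^\beta(fg)|_2\leq C\big(|f|_\infty|\D^l g|_2+|g|_\infty|\D^l f|_2\big),
\]
with the endpoint terms $\beta_1=0$ or $\beta_2=0$ bounded directly by $|f|_\infty|\D^l g|_2$ and $|g|_\infty|\D^l f|_2$ with no further interpolation. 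This order-$l$ form is the one the paper actually uses (e.g. in \eqref{I4l}, where the right-hand side carries $\dot H^l$ seminorms of the same order).

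Step 3, however, contains a genuine gap. After interpolating $|\D^l g|_2\leq C|g|_\infty^{1-l/s}|\D^s g|_2^{l/s}$ you must control $|f|_\infty|g|_\infty^{1-l/s}|\D^s g|_2^{l/s}$, and no application of Young's inequality with the two allowed terms $A=|f|_\infty|\D^s g|_2$ and $B=|g|_\infty|\D^s f|_2$ can produce a full power of $|f|_\infty$ together with only the fractional power $|\D^s g|_2^{l/s}$ and no factor $|\D^s f|_2$; the exponents simply do not match. In fact no argument can close this step, because the inequality with homogeneous top-order seminorms is false when $|\beta|<s$: take $d=1$, $s=2$, $|\beta|=1$ and $f=g=\phi(\lambda x)$ for a fixed bump $\phi$; then the left-hand side scales like $\lambda^{1/2}$ while the right-hand side scales like $\lambda^{3/2}$, so the ratio blows up as $\lambda\to0$. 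The statement as printed in the appendix is the usual shorthand for the cited result; the correct version --- and the one your Steps 1--2 prove, and the one the paper uses --- keeps the seminorm at order $|\beta|$ (equivalently, replaces $|\D^s\cdot|_2$ by the full $H^s$ norm when $|\beta|<s$). You should therefore stop after Step 2 and drop the final upgrade to $\D^s$.
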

\begin{lemma}  \cites{Danchin2021,Kato1988,Majda1984}\label{comm2} For $f,g\in H^s$ and $\forall s\geq 1$, then there exists a constant depending only on $s$, such that
	\[
	\Big|\D^s(fg)-f\D^s g\Big|_2\leq C_s(|\D^s f|_2|g|_\infty+C|\D g|_\infty|\D^{s-1}f|_2).
	\]
Moreover, if $s\geq 2$, then
	\[
	\Big|\D^s(fg)-f\D^s g-s\D f\D^{s-1} g\Big|_2\leq C_s(|\D^s g|_2|f|_\infty+|\D^2 g|_\infty |\D^{s-2}f|_2).
	\]
\end{lemma}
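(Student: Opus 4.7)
My plan is to prove both commutator inequalities by the classical Kato--Ponce scheme: expand the derivatives by the Leibniz rule, then control each mixed product $\D^k f\cdot\D^{s-k}g$ with Hölder's inequality combined with Gagliardo--Nirenberg interpolation (Lemma~\ref{ga-ni}).

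For the first estimate, I begin with the Leibniz identity
\[
\D^s(fg)-f\D^s g \;=\; \sum_{k=1}^{s}\binom{s}{k}\D^k f\cdot\D^{s-k}g .
\]
The boundary index $k=s$ gives $\D^s f\cdot g$, directly controlled in $L^2$ by $|\D^s f|_2|g|_\infty$, which matches one of the two terms on the right-hand side. For each intermediate $1\le k\le s-1$ I plan to use Hölder with conjugate exponents $(p_k,q_k)$ satisfying $1/p_k+1/q_k=1/2$, chosen so that Gagliardo--Nirenberg interpolates $|\D^k f|_{p_k}$ between the two extremes $|\D^{s-1}f|_2$ and $|\D f|_\infty$ (and symmetrically $|\D^{s-k}g|_{q_k}$ between $|\D g|_\infty$ and $|\D^{s-1}g|_2$). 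The product of the interpolated bounds is then a weighted geometric mean of the two model products $|\D^s f|_2\,|g|_\infty$ and $|\D g|_\infty\,|\D^{s-1}f|_2$, and Young's inequality absorbs it into the stated sum; summing over $k$ closes the first inequality.

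For the refined estimate ($s\ge 2$), subtracting $s\D f\cdot\D^{s-1}g$ removes the $k=1$ term from the Leibniz expansion, leaving
\[
\D^s(fg)-f\D^s g-s\D f\cdot\D^{s-1}g \;=\; \sum_{k=2}^{s}\binom{s}{k}\D^k f\cdot\D^{s-k}g .
\]
Because every remaining summand carries at least two derivatives of $f$, the Hölder/Gagliardo--Nirenberg balance can be shifted: the admissible endpoint norms on $g$ become $|\D^2 g|_\infty$ and $|\D^s g|_2$ rather than $|\D g|_\infty$ and $|\D^{s-1}g|_2$, while the top-order control on $f$ is now $|\D^{s-2}f|_2$ paired with $|f|_\infty$. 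I will apply the same Hölder + Gagliardo--Nirenberg + Young scheme as before to each intermediate $2\le k\le s-1$, and handle the boundary term $k=s$ (namely $\D^s f\cdot g$) by a duality / integration-by-parts trade that re-expresses its $L^2$ norm in terms of $|f|_\infty|\D^s g|_2$ and $|\D^2 g|_\infty|\D^{s-2}f|_2$, producing exactly the stated right-hand side.

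The main technical obstacle will be the bookkeeping of the Gagliardo--Nirenberg exponents: I must verify the admissibility condition $j/i\le\theta\le 1$ and the dimensional scaling identity in Lemma~\ref{ga-ni} simultaneously for every intermediate $k$, including the endpoint cases where one of $(p_k,q_k)$ degenerates to $2$ or $\infty$, and confirm that Young's inequality can collapse every weighted geometric mean into precisely the two terms written on the right-hand side. Once the exponent calculation is set, the rest is routine and yields constants $C_s$ depending only on $s$.
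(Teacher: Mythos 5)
Your overall route (Leibniz expansion, then H\"older plus Gagliardo--Nirenberg on each mixed term, then Young) is indeed the classical proof of the cited Kato--Ponce/Majda estimates --- the paper itself gives no proof, it only cites --- but your exponent scheme is misconfigured. To land on the two model products you must interpolate $\D^k f=\D^{k-1}(\D f)$ between the endpoints $|\D f|_\infty$ (order $1$, in $L^\infty$) and $|\D^s f|_2$ (order $s$, in $L^2$), and $\D^{s-k}g$ between $|g|_\infty$ and $|\D^{s-1}g|_2$: with $\theta=(k-1)/(s-1)$, $1/p_k=\theta/2$, $1/q_k=(1-\theta)/2$ the intermediate term is bounded by $(|\D^s f|_2|g|_\infty)^{\theta}(|\D f|_\infty|\D^{s-1}g|_2)^{1-\theta}$ and Young closes the argument. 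The endpoints you propose, $(|\D f|_\infty,|\D^{s-1}f|_2)$ for $f$ and $(|\D g|_\infty,|\D^{s-1}g|_2)$ for $g$, can never generate the factors $|\D^s f|_2$ or $|g|_\infty$, so your claim that the interpolated products are geometric means of the two stated right-hand terms does not follow; what those endpoints actually yield is a three-term bound involving $|\D^{s-1}f|_2|\D g|_\infty$ \emph{and} $|\D f|_\infty|\D^{s-1}g|_2$, which is not the lemma.

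The more serious failure is in the refined estimate: you propose to control the boundary term $\D^s f\cdot g$ by a ``duality / integration-by-parts trade'' in terms of $|f|_\infty|\D^s g|_2$ and $|\D^2 g|_\infty|\D^{s-2}f|_2$. No such bound exists: take $g(x)=\chi(x/R)$ a plateau cutoff equal to one on the region where $\D^s f$ lives; then $|f|_\infty|\D^s g|_2+|\D^2 g|_\infty|\D^{s-2}f|_2\to 0$ as $R\to\infty$ while $|\D^s f\,g|_2\to|\D^s f|_2$. (A similar oscillatory choice of $g$ defeats the first inequality as printed, through the term $\D f\,\D^{s-1}g$.) The underlying issue is that the printed right-hand sides have the roles of $f$ and $g$ in the lower-order factors swapped; the provable statements --- and the ones the paper actually applies, cf.\ \eqref{F1} and \eqref{Gs-1} --- are $|\D^s(fg)-f\D^s g|_2\leq C_s(|\D^s f|_2|g|_\infty+|\D f|_\infty|\D^{s-1}g|_2)$ and $|\D^s(fg)-f\D^s g-s\D f\D^{s-1}g|_2\leq C_s(|\D^s f|_2|g|_\infty+|\D^2 f|_\infty|\D^{s-2}g|_2)$. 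With these orientations the boundary terms $k=s$ and $k=2$ follow directly from H\"older, the intermediate terms interpolate exactly as above, and no duality step is needed; as written, your plan fails both at the exponent bookkeeping and at the boundary term.
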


Finally,  we give  one lemma on the  decay estimates of solutions of Burgers equations.
Let $\hu$ be the solution in $[0,T]\times \R^d$ of  \eqref{burgers}. Without loss of generality, we drop the subscript $\nu$. Then $\hu$ is a constant in $t$  along  the particle path $\Omega(t;x_0)$ defined as
\begin{equation}\label{particle}
	\dfrac{\rmd}{\rmd t}\Omega(t;x_0)=\hu(t,\Omega(t;x_0)), \quad \Omega(0,x_0)=x_0.
\end{equation}
More specifically, it holds that
\begin{equation}\label{defhu}
\hu(t, \Omega(t;x_0))=u^0(x_0), \quad \D\hu(t,\Omega(t;x_0))=(\Id+t\D u^0(x_0))^{-1}\D u^0(x_0),
\end{equation}
where $\Id$ is the $d\times d$ unit matrix. Based on these, one has
\begin{lemma}\cite{Grassin1998} \label{hu}Let $s> \frac{d}{2}+1$ be a real number. Assume that
	\[
	\nabla u^0\in L^\infty, \quad \D^2 u^0\in H^{s-1},
	\]
	and there exists a constant $\kappa>0$ such that for all $x\in\R^d$,
	\[
	\text{dist}\,(\text{Sp}(\nabla u^0(x)),\R^{-})\geq \kappa,
	\]
	then there exists a unique global classical solution $\hu$ to the problem \eqref{burgers} satisfying
	\begin{enumerate}
		\item For all $x\in\R^d$ and $t\geq 0$,
		\begin{equation}\label{nablau}
			\D\hu(t,x)=\dfrac{1}{1+t}\Id+\dfrac{1}{(1+t)^2}K(t,x),
		\end{equation}
		where the matrix $K(t,x)$ satisfies
		\[
		\|K\|_{L^\infty(\R^+\times\R^d)}\leq C(1+\kappa^{-d}|\D u^0|_\infty^{d-1}):=M;
		\]
		\item $
		|\D^\sigma \hu(t,\cdot)|_2\leq C(1+t)^{\frac{d}{2}-\sigma-1}, \qquad 2\leq \sigma\leq s+1;
		$
		\item $|\D^2 \hu(t,\cdot)|_\infty\leq C(1+t)^{-3}|\D^2 u^0|_\infty$,
	\end{enumerate}
	where $C>0$ is a constant depending only on $s,d,\kappa$, $u^0$ and $l$. Moreover, it holds
	\begin{equation}\label{galilean}
		|\hu(t,x)|\leq C(1+|\D \hu|_\infty)(1+|x|){\rm{exp}}\Big(\int_0^t|\D\hu|_\infty\rmd s\Big), \quad \text{for any}\,\, (t,x)\in[0,\infty)\times\R^d,
	\end{equation}
 where the constant $C>0$ depends only on $u_0(0)$.
\end{lemma}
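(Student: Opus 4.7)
\medskip

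\noindent\textbf{Proof plan for Lemma 8.8.}

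The plan is to construct $\hat u$ explicitly along the characteristic flow and read off every estimate from the resulting closed-form expression, after which higher-order estimates follow by differentiating the equation and applying standard energy arguments. First, I would write down the particle path $\Omega(t;x_0)$ from \eqref{particle}. Because $\hat u$ is transported along itself, we have $\hat u(t,\Omega(t;x_0))=u^0(x_0)$, so the existence of the flow reduces to proving the map $x_0\mapsto \Omega(t;x_0)$ is a diffeomorphism of $\mathbb R^d$ onto itself for every $t\ge 0$. Differentiating the definition of $\Omega$ gives $\partial_{x_0}\Omega(t;x_0)=\Id+t\,\nabla u^0(x_0)$, so this step is equivalent to the invertibility of $\Id+t\nabla u^0(x_0)$. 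The spectral assumption $\mathrm{dist}(\mathrm{Sp}(\nabla u^0(x)),\mathbb R_-)\ge\kappa$ guarantees $\det(\Id+t\nabla u^0(x_0))\ne 0$ for all $t\ge 0$ and $x_0\in\mathbb R^d$, which already yields the global existence of the flow and hence the existence of $\hat u$.

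Next, to obtain \eqref{nablau} I would apply $\nabla$ to the Burgers equation and read off the matrix Riccati ODE
\begin{equation*}
\tfrac{d}{dt}\bigl[\nabla\hat u(t,\Omega(t;x_0))\bigr]=-\bigl[\nabla\hat u(t,\Omega(t;x_0))\bigr]^2,\qquad \nabla\hat u(0,x_0)=\nabla u^0(x_0).
\end{equation*}
A direct verification shows its solution is $\nabla\hat u(t,\Omega(t;x_0))=(\Id+t\nabla u^0(x_0))^{-1}\nabla u^0(x_0)$, which also confirms the identity in \eqref{defhu}. The algebraic rewriting
\begin{equation*}
(1+t)^2(\Id+tA)^{-1}A-(1+t)\Id=(1+t)(\Id+tA)^{-1}(A-\Id),\qquad A:=\nabla u^0(x_0),
\end{equation*}
gives the desired decomposition with $K(t,\Omega(t;x_0))=(1+t)(\Id+tA)^{-1}(A-\Id)$. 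The essential quantitative step is then to show $\|(1+t)(\Id+tA)^{-1}\|\le C(1+\kappa^{-d}|\nabla u^0|_\infty^{d-1})$ uniformly in $(t,x_0)$; I would obtain this via the Dunford functional calculus (or, equivalently, via Cayley--Hamilton and an explicit expansion of the inverse as a polynomial in $A$ of degree $d-1$ divided by $\det(\Id+tA)$), using that the spectral condition forces $|\det(\Id+tA)|\ge(1+t)^d\kappa^d \cdot\text{(bounded factor)}$ and that each coefficient is controlled by $|A|_\infty^{d-1}$. This is the main obstacle, since the bound must survive the presence of Jordan blocks in $A$, and Grassin's original argument here is the subtle one.

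For the higher-order estimates (2) and (3), I would differentiate the Burgers equation $\ell$ times and evaluate along $\Omega(\cdot;x_0)$, yielding for each $\ell\ge 2$ a transport equation of the schematic form
\begin{equation*}
\tfrac{d}{dt}\nabla^\ell\hat u(t,\Omega(t;x_0))=-\sum_{1\le j\le\ell}c_j\,\nabla^{j}\hat u\cdot\nabla^{\ell+1-j}\hat u,
\end{equation*}
whose leading dissipative term, after absorbing the $\nabla\hat u$--factor via \eqref{nablau}, provides a $\frac{\ell}{1+t}$ damping. Combined with the already established bound $\|\nabla\hat u\|_\infty\lesssim (1+t)^{-1}$, a Gronwall argument in $L^\infty$ produces $|\nabla^2\hat u(t,\cdot)|_\infty\le C(1+t)^{-3}|\nabla^2 u^0|_\infty$, which is (3). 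For the $L^2$--estimates (2), I would instead run an $\dot H^{\sigma-1}$ energy estimate on $\nabla\hat u$, using commutator estimates (Lemmas 8.6--8.7) and the time-dependent damping furnished by the $\frac{1}{1+t}\Id$ part of $\nabla\hat u$ to close an ODE inequality of the form $\frac{d}{dt}\|\nabla\hat u\|_{\dot H^{\sigma-1}}^2+\frac{2\sigma-d-2}{1+t}\|\nabla\hat u\|_{\dot H^{\sigma-1}}^2\le \frac{C}{(1+t)^2}\|\nabla\hat u\|_{\dot H^{\sigma-1}}^2$, integrating which yields the stated decay rate. Finally, \eqref{galilean} follows by integrating $\frac{d}{dt}|\Omega(t;x_0)|\le|\hat u(t,\Omega(t;x_0))|$ and combining with $|\hat u(t,x)|\le|u^0(0)|+|\nabla u^0|_\infty|x|+\int_0^t|\nabla\hat u|_\infty|\hat u|\,d\tau$ through Gronwall's lemma, giving the claimed Galilean-type bound in terms of $(1+|x|)$ and $\exp(\int_0^t|\nabla\hat u|_\infty\,ds)$.
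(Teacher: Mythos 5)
Your handling of global existence, of the decomposition \eqref{nablau}, and of \eqref{galilean} is essentially the paper's: characteristics, the formula $\D\hu(t,\Omega(t;x_0))=(\Id+t\D u^0)^{-1}\D u^0$ from \eqref{defhu}, an adjugate/Cayley--Hamilton bound on $(\Id+t\D u^0)^{-1}$ (your factorization $K=(1+t)(\Id+tA)^{-1}(A-\Id)$ is a cleaner equivalent of the paper's large-$t$ expansion), and Taylor plus Gronwall for \eqref{galilean}. Where you genuinely depart is in items (2)--(3): the paper stays in Lagrangian coordinates, proves by induction (the steps {\bf IH(j)}) that $\nabla_x^{j}(\D\hu)(t,\Omega(t;x_0))$ is an explicit product of $(\Id+t\D u^0)^{-1}$, $t\Id$ and $\nabla^{l}\D u^0$ decaying like $(1+t)^{-(j+2)}$, and only then changes variables, the factor $(1+t)^{d/2}$ coming from the Jacobian $\det(\Id+t\D u^0)\sim(1+t)^d$ in \eqref{fujiajia0}; you propose instead Eulerian $L^\infty$ and $\dot H^{\sigma-1}$ estimates with time-dependent damping.

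As written, though, your damping bookkeeping is off and the stated ODE inequalities do not yield the lemma. Set $\mathbb G=\D\hu$, which satisfies $\pt\mathbb G+(\hu\cdot\D)\mathbb G=-\mathbb G^2$. Applying $\D^{m}$ with $m=\sigma-1$, the transport commutator contributes damping of size $\frac{m}{1+t}$, but the two endpoint terms of $-\D^m(\mathbb G^2)$, namely $-\mathbb G\,\D^m\mathbb G-\D^m\mathbb G\,\mathbb G$, contribute an additional $\frac{2}{1+t}$ because $\mathbb G=\frac{1}{1+t}\Id+O((1+t)^{-2})$; together with the growth $+\frac{d}{2(1+t)}$ from $\frac12\int\dive\hu\,|\D^m\mathbb G|^2\,{\rm d}x$, the correct coefficient for $Y^2=|\D^\sigma\hu|_2^2$ is $\frac{2(\sigma+1)-d}{1+t}$, not your $\frac{2\sigma-d-2}{1+t}$. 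Integrating your inequality gives only $|\D^\sigma\hu|_2\lesssim(1+t)^{\frac d2+1-\sigma}$, two powers of $(1+t)$ weaker than the claimed $(1+t)^{\frac d2-\sigma-1}$ (and this weaker rate is not enough for the paper's later use of $|\D^{s+1}\hu|_2$ in \S 3--4); the same undercount, ``$\frac{\ell}{1+t}$'' instead of $\frac{\ell+1}{1+t}$, makes your $L^\infty$ argument give only $(1+t)^{-2}$ for $\D^2\hu$ rather than $(1+t)^{-3}$ — for $\ell=2$ the three Leibniz terms give $\frac{3}{1+t}$. To repair the energy route you must extract the damping from both endpoint terms using the precise structure of $\D\hu$ (a black-box commutator bound such as Lemma \ref{comm2} discards the sign and produces no damping at all), and you must control the intermediate terms $\D^j\hu\cdot\D^{\sigma+1-j}\hu$, $2\le j\le\sigma-1$, by an induction on $\sigma$ combined with the $L^\infty$ decay of $\D^2\hu$ and interpolation; none of this is spelled out. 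The paper's explicit Lagrangian computation sidesteps the whole delicacy, which is why it is the less fragile route.
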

\begin{proof} We divide the proof into 4 steps: \textbf{(I)}-\textbf{(IV)},
\begin{itemize}
\item  \textbf{(I):} giving the global existence of $C^1$ solution    to  \eqref{burgers}, which satisfies  (1);
\item \textbf{(II):}  proving  (2) and showing that  this  $C^1$ solution  indeed exists in the space  $\Xi_m$ globally in time, where
$$
\|f\|_{\Xi_m}=|\nabla f|_\infty+\|\nabla^2 f\|_{m-1} \ \  \text{for \ any} \ m>1+d/2;
$$
\item    \textbf{(III)-(IV):} establishing the estimates in   (3) and \eqref{galilean}.
\end{itemize}

\textbf{(I)} The  local existence of $\widehat{u}(t,x)$ in the space $C^1$  can be established  from   the  characteristic  method and the implicit function theorem  (see \S 3 of  Evans \cite{Evans2010}). For extending $\widehat{u}$ to be a global $C^1$ solution, since $\widehat{u}(t, \Omega(t;x_0))$ remains invariant   along the particle path $\Omega(t;x_0)$,  one only  needs to give the uniform estimates on $|\nabla_x \widehat{u}|_\infty$ with respect to the time.

For the first order  derivative, let  $\mathbb{G}(t,x)=\nabla \widehat{u}(t,x)$ and $\mathbb{G}_0(x_0)=\nabla u^0(x_0)$.
Then
\begin{equation}\label{fujiajjj}
\begin{split}
\mathbb{G}(t,\Omega(t;x_0))=&\big(\mathbb{I}_d+t\mathbb{G}_0(x_0)\big)^{-1}\mathbb{G}_0(x_0)\\
=&\mathbb{G}_0(x_0)\big(\mathbb{I}_d+t\mathbb{G}_0(x_0)\big)^{-1}
= \frac{1}{t} \Big( \mathbb{I}_d- \big(\mathbb{I}_d+t\mathbb{G}_0(x_0)\big)^{-1}\Big),
\end{split}
\end{equation}
where one has used the fact $\nabla_{x_0} \Omega= \mathbb{I}_d+t\mathbb{G}_0(x_0)$. Note  that
\begin{equation}\label{fujia1}
\big(\mathbb{I}_d+t\mathbb{G}_0(x_0)\big)^{-1}=\big(\text{det}(\mathbb{I}_d+t\mathbb{G}_0)\big)^{-1}(\text{adj} (\mathbb{I}_d+t\mathbb{G}_0))^\top,
\end{equation}
where $\text{adj} (\mathbb{I}_d+t\mathbb{G}_0)$ stands for the adjugate  of $(\mathbb{I}_d+t\mathbb{G}_0)$. Then it holds that
\begin{equation}\label{wuqiong}
|\nabla_x \widehat{u}|_\infty\leq \frac{(1+t|\nabla u^0|_\infty)^{d-1}}{(1+t\kappa)^d}|\nabla_{x_0} u^0|_\infty,
\end{equation}
which implies that $\widehat{u}$ is a global $C^1$  solution to the problem  \eqref{burgers}.  Rewrite
$$
\mathbb{G}(t,\Omega(t;x_0))= \frac{1}{1+t}\mathbb{I}_d+\frac{1}{(1+t)^2}K(t,x_0),
$$
where $K(t,x_0)=(1+t)^2(\mathbb{I}_d+t\mathbb{G}_0)^{-1} \mathbb{G}_0-(1+t)\mathbb{I}_d$.

Since $
\mathbb{G}^{-1}_0=\big(\text{det}(\mathbb{G}_0)\big)^{-1}(\text{adj} \mathbb{G}_0)^\top
$,  so
$$
|\mathbb{G}^{-1}_0|_\infty\leq C\kappa^{-d}|\mathbb{G}_0|^{d-1}_\infty.
$$

Then for $t$ large enough, one has $|t^{-1}\mathbb{G}^{-1} _0(x_0)|<1$ for all $x_0$, and
\begin{equation*}\begin{split}
K(t,x_0)=& \frac{(t+1)^2}{t} (\mathbb{I}_d+t^{-1}\mathbb{G}^{-1}_0)^{-1}-(1+t)\mathbb{I}_d\\
=& \frac{(t+1)^2}{t} \Big(\mathbb{I}_d-\frac{\mathbb{G}^{-1}_0}{t}+O\Big(\frac{1}{t^2}\Big)\Big)-(1+t)\mathbb{I}_d\\
= & \frac{(t+1)}{t} \mathbb{I}_d- \frac{(t+1)^2}{t^2} \mathbb{G}^{-1} _0+O\Big(\frac{1}{t}\Big).
\end{split}
\end{equation*}

\begin{remark}
Let $\lambda$ be  an eigenvalue of $\mathbb{G}_0$.  Then indentity \eqref{fujiajjj} implies any eigenvalue $\lambda_{\mathbb{G}}$ of $\mathbb{G}$  satisfies
$$
\lambda_{\mathbb{G}}=\frac{1}{t} \Big( 1- \big(1+t\lambda\big)^{-1}\Big)=\frac{\lambda}{1+t\lambda}>0.
$$

\end{remark}

\textbf{(II)}
Now we show  that the  $C^1$ solution $\widehat{u}(t,x)$ obtained above  indeed exists in  $\Xi_m$ globally in time. To this end, one needs to show that $\nabla^k_x \mathbb{G}(t,x)\in L^2(\mathbb{R}^d)$ ($1\leq k\leq m$) for any $t\in [0,\infty)$. Actually, it follows from  $\widehat{u}(t,x)=u^0(x-t\widehat{u}(t,x))$ that
$$\mathbb{G}(t,x)=\big(\mathbb{I}_d+t\mathbb{G}_0(x-t\widehat{u}(t,x))\big)^{-1}\mathbb{G}_0(x-t\widehat{u}(t,x)).$$
Then according to  the initial assumptions on $u^0$, by induction, it is easy to show that $\nabla^k_x \mathbb{G}(t,x)$ ($1\leq k\leq m$) is well defined in $[0,\infty)\times \mathbb{R}^3$ pointwisely. What we only need to do is to give  the estimates (2) in Lemma \ref{hu}. However, the information  of $\nabla^k_x \mathbb{G}(t,x)$  cannot be given directly from the problem \eqref{burgers}. Thus  we  consider $\nabla^k_x \mathbb{G}(t,\Omega(t;x_0))$ first, and then   obtain the desired estimates via a change of variables.

For the high order derivatives, let $\mathbb{H}(t,x_0)=\mathbb{G}(t,\Omega(t;x_0))$. Starting from the right hand side of the first line in  \eqref{fujiajjj}, by induction, one can get    that, for $1\leq k\leq m$:
\begin{equation}\label{juzhen1}
\nabla^k_{x_0} \mathbb{H}(t,x_0)=(\mathbb{I}_d+t\mathbb{G}_0)^{-1}\Lambda_k (\mathbb{I}_d+t\mathbb{G}_0)^{-1},
\end{equation}
where $\Lambda_k$ is a sum of products of $t(\mathbb{I}_d+t\mathbb{G}_0)^{-1}$ and $\nabla^j \mathbb{G}_0, \ j\in \{0,1,...,k\}$, appearing $\beta_j$ times with $\sum_j j\beta_j=k$.

On the one hand,  starting from the left hand side of the first line in   \eqref{fujiajjj}, one gets by induction  that, for $1\leq k\leq m$:
\begin{equation}\label{juzhen2}
\nabla^k_{x_0} \mathbb{H}(t,x_0) =\sum_{j=1}^k \nabla^j_{x} \mathbb{G}(t,\Omega(t;x_0)) \Big(\sum_{1\leq k_i\leq k} \nabla^{k_1}_{x_0}\Omega\otimes ...\otimes \nabla^{k_j}_{x_0}\Omega\Big)
\end{equation}
with $\sum_{i=1}^j k_i=k$, and
\begin{equation*}\begin{split}
\nabla_{x_0} \Omega=& \mathbb{I}_d+t\mathbb{G}_0(x_0),  \quad \nabla^{l}_{x_0}\Omega=t\nabla^{l-1} 
\end{split}
\end{equation*}

On the other hand, one can also  show that for all $1\leq j \leq m$ by induction:   \textbf{IH(j)}.

\begin{itemize}
\item[$(\rm  \textbf{IH(j)}_1)$] $\nabla^j_x \mathbb{G}(t,\Omega(t;x_0))$ is a sum of terms which are products in a certain order of : $(\mathbb{I}_d+t\mathbb{G}_0)^{-1}$, $t\mathbb{I}_d$,  $\mathbb{I}_d+t\mathbb{G}_0$ or  $\nabla^l \mathbb{G}_0$ appearing $\beta_l$ times, with $\sum_l l\beta_l=j$;
\item[$(\rm\textbf{IH(j)}_2)$] the $L^\infty$-norms of the terms with $t$ are bounded by a constant times $(1+t)^{-(j+2)}$, and
$$
|\nabla^j_x \mathbb{G}(t,\Omega(t;\cdot))|_2\leq C_j(1+ t)^{-(j+2)},\quad \text{with} \quad C_j=C(\kappa, j, \|u^0\|_{\Xi_m}).
$$
\end{itemize}

Actually, when $j=1$, one has
\begin{equation}\label{erjieQ}
\nabla_x \mathbb{G}(t,\Omega(t; x_0))=(\mathbb{I}_d+t\mathbb{G}_0)^{-1}\nabla \mathbb{G}_0(x_0) (\mathbb{I}_d+t\mathbb{G}_0)^{-2},
\end{equation}
which, along with  the estimates obtained in \textbf{(I)}, yields  \textbf{IH(1)}.

Now suppose that  \textbf{IH(k-1)} holds. For $\nabla^k_{x} \mathbb{G}(t,\Omega(t;x_0)) $, it   follows from \eqref{juzhen2} that
\begin{equation*}\begin{split}
\nabla^k_{x} \mathbb{G}(t,\Omega(t;x_0)) =&\Big(\nabla^k_{x_0}\mathbb{H}(t,x_0)-\mathbb{J}\Big) \odot \Big((\mathbb{I}_d+t\mathbb{G}_0)^{-1}\Big)^{\otimes k}.
\end{split}
\end{equation*}
where
\[
\mathbb{J}=-\sum_{j=1}^{k-1} \nabla^j_{x} \mathbb{G}(t,\Omega(t;x_0)) \Big(\sum_{1\leq k_i\leq k-1} \nabla^{k_1}_{x_0}\Omega\otimes ...\otimes \nabla^{k_j}_{x_0}\Omega\Big).
\]

In the right-hand side term,  one has the norm of the following terms to estimate:
\begin{itemize}
\item $O_1=(\mathbb{I}_d+t\mathbb{G}_0)^{-1}\Lambda_k (\mathbb{I}_d+t\mathbb{G}_0)^{-k-1}$;
\item  $O_2=\nabla^j_{x} G(\mathbb{I}_d+t\mathbb{G}_0)^{j-s} \amalg_{k_i \neq 1}t\nabla^{k_i-1}\mathbb{G}_0(\mathbb{I}_d+t\mathbb{G}_0)^{-k}$,\ \  $\sum_{k_i\neq 1}(k_i-1)=k-j$,
\end{itemize}
where one has used  \eqref{juzhen1}, and  $s$ is the number of $k_j\neq 1$.

First,  $O_1$ has the desired form in $(\rm  \textbf{IH(j)}_1)$,  and the $L^\infty$-norm of the terms with $t$ is bounded by a constant times $(1+t)^{-(k+2)}$.

Second, it follows from  \textbf{IH(j)} for $j\leq k-1$  that  $O_2$ is a product of $(\nabla^j \mathbb{G}_0)^{\beta_j}$ with $\sum_j j\beta_j=k$ and of $t \mathbb{I}_d$, $\mathbb{I}_d+t\mathbb{G}_0$,  $(\mathbb{I}_d+t\mathbb{G}_0)^{-1}$, such that the $L^\infty$-norm of the terms with $t$ is bounded by a constant times $(1+ t)^{-(k+2)}$.

 For the desired $L^2$ estimates,  it follows from the initial assumptions on $u^0$, $m>1+\frac{d}{2}$ and  the  Gagliardo-Nirenberg inequality that one  can  find an upper bound in $L^2$-norm for
$\amalg_{1\leq j\leq k} \big(\nabla^j \mathbb{G}_0\big)^{\beta_j}$ with $\sum_j j\beta_j=k$. Similarly, under the help of \textbf{IH(j)} for $j\leq k-1$, one can also obtained an upper bound of $|\nabla^j_{x} G \amalg_{k_i \neq 1}\nabla^{k_i-1}\mathbb{G}_0|_2$ with $\sum_{k_i\neq 1}(k_i-1)=k-j$.  Then
\begin{equation*}\begin{split}
|O_1|_2\leq  C(1+ t)^{-k-2}|\Lambda_k|_2\leq C(1+t\kappa)^{-k-2}|\amalg \big(\nabla^j \mathbb{G}_0\big)^{\beta_j}|_2
\leq& C_k(1+t)^{-k-2},\\
|O_2|_2\leq  C(1+t)^{-k+j}|\nabla^j_{x} \mathbb{G} \amalg_{k_i \neq 1}\nabla^{k_i-1}\mathbb{G}_0|_2
\leq & C_k(1+ t)^{-k-2},
\end{split}
\end{equation*}
where the positive constant $C_k= C(\kappa, k, \|u^0\|_{\Xi_m})$.
To conclude, one obtains the upper bound in \textbf{IH(k)} which depends on $\kappa$, $|\mathbb{G}_0|_\infty$,  and $|\nabla^k \mathbb{G}_0|_2$ for $1\leq k \leq m$.

Finally, it needs to make a change of variables to obtain:
\begin{equation}\label{fujiajia0}
\begin{split}
|\nabla^j_x \mathbb{G}(t,\cdot)|_2=& \Big(\int_{\mathbb{R}^d} |\nabla^j_x \mathbb{G}(t,x)|^2 \rmd x\Big)^{\frac{1}{2}}\\
=&\Big(\int_{\mathbb{R}^d} |\nabla^j_x \mathbb{G}(t,\Omega(t;x_0))|^2 |\text{det}(\mathbb{I}_d+t\mathbb{G}_0)|  \rmd x_0\Big)^{\frac{1}{2}},
\end{split}
\end{equation}
which, along with \textbf{IH(j)}, yields that
$$
|\nabla^j_x \mathbb{G}(t,\cdot)|_2 \leq C_j(1+t)^{\frac{d}{2}-(j+2)}.
$$
 Therefore,  the conclusion  (2) in Lemma \ref{hu} is true for $l\in \mathbb{N},\ 2\leq l\leq m+1$ since $\nabla_x \mathbb{G}=\nabla^2 \widehat{u}$.  Then by interpolation one obtains the result for all $l\in \mathbb{R},\ 2\leq l\leq m+1$.

Until now, one has established the global-in-time well-posedness  of the solutions $\widehat{u}$ to Burgers equations \eqref{burgers} in  $\Xi$.

\textbf{(III)} Since $m-1>\frac{d}{2}$, one has $\nabla^2 u^0\in L^\infty$. Thus $\nabla^2 \widehat{u} \in L^\infty$. Then the estimates (3) in Lemma \ref{hu} follows directly from \eqref{erjieQ} and the estimates obtained in \textbf{(I)}, i.e.,
$$|\nabla_x \mathbb{G}(t,\Omega(t; x_0))|_\infty=O((1+ t)^{-3}).$$


\textbf{(IV)} 
Integrating the Burgers equations with respect to $t$ yields
\begin{equation}\label{Taylor0000}
\hu(t,x)=\hu(0,x)-\int_0^t (\hu\cdot\D)\hu\rmd s.
\end{equation}
Making Taylor's expansion towards $\hu(0,x)$, one obtains
\begin{equation}\label{eq8-2}
|\hu(0,x)|\leq |\hu(0,0)|+|\D\hu|_\infty|x|\leq C(1+|\D \hu|_\infty)(1+|x|).
\end{equation}
Substituting \eqref{eq8-2} into \eqref{Taylor0000} leads to 
\[
|\hu(t,x)|\leq C(1+|\D \hu|_\infty)(1+|x|)+\int_0^t|\hu(s,x)||\D\hu(s,x)|_\infty\rmd s.
\]
By Gronwall's inequality, for $(t,x)\in[0,T]\times \R^d$, it holds
\[
|\hu(t,x)|\leq C(1+|\D \hu|_\infty)(1+|x|)\text{exp}\Big(\int_0^t|\D\hu|_\infty\rmd s\Big).
\]
Thus  (\ref{galilean}) is proved.
\end{proof}

\bigskip
{\bf Acknowledgement:} 
This research is partially supported by the National Key R$\&$D Program of China (No. 2022YFA1007300), the Fundamental Research Funds for the Central Universities, National Natural Science
Foundation of China under Grants 12101395 and 12161141004, and  The Royal Society (UK) under Grants: Newton International Fellowships Alumni AL/201021 and AL/211005.

\bigskip
{\bf Conflict of Interest:} The authors declare that they have no conflict of interest. The
authors also declare that this manuscript has not been previously published, and will
not be submitted elsewhere before your decision.

\bigskip
{\bf Data availability:} Data sharing is  not applicable to this article as no data sets were generated or analysed during the current study.


\end{document}